\documentclass[10pt,reqno]{amsart}

\usepackage[dvipsnames]{xcolor}
\usepackage{amsmath,amssymb,amsthm,amsfonts,enumerate,tikz,bm,tikz-cd}
\usepackage{mathrsfs}
\usetikzlibrary{matrix,arrows,positioning,calc}
\usepackage[all]{xy}
\usepackage[bookmarksnumbered,colorlinks]{hyperref}
\usepackage{url}
\numberwithin{equation}{section}
\usepackage{graphicx}
\usepackage[T1]{fontenc}
\usepackage{textcomp}
\usepackage{palatino,helvet}
\usepackage{scrextend}
\deffootnote{2em}{0em}{\thefootnotemark\quad}

\theoremstyle{definition}
\newtheorem{definition}{Definition}[section]
\newtheorem{remark}[definition]{Remark}
\newtheorem{example}[definition]{Example}

\theoremstyle{plain}
\newtheorem{theorem}[definition]{Theorem}
\newtheorem{proposition}[definition]{Proposition}
\newtheorem{lemma}[definition]{Lemma}
\newtheorem{corollary}[definition]{Corollary}

\theoremstyle{remark}

\usepackage{paralist}

%Letras especiales

\newcommand{\mcA}{\mathcal{A}}
\newcommand{\mcB}{\mathcal{B}}
\newcommand{\mcC}{\mathcal{C}}

\newcommand{\mcF}{\mathcal{F}}
\newcommand{\mcG}{\mathcal{G}}

\newcommand{\mcI}{\mathcal{I}}

\newcommand{\mcM}{\mathcal{M}}
\newcommand{\mcO}{\mathcal{O}}
\newcommand{\mcP}{\mathcal{P}}
\newcommand{\mcT}{\mathcal{T}}

\newcommand{\mcW}{\mathcal{W}}
\newcommand{\mcX}{\mathcal{X}}
\newcommand{\mcY}{\mathcal{Y}}
\newcommand{\mcZ}{\mathcal{Z}}
\newcommand{\mcGP}{\mathcal{GP}}
\newcommand{\mcGI}{\mathcal{GI}}
\newcommand{\mcWGP}{\mathcal{WGP}}
\newcommand{\mcWGI}{\mathcal{WGI}}

\newcommand{\mcDP}{\mathcal{DP}}
\newcommand{\mfC}{\mathfrak{C}}
\newcommand{\mfF}{\mathfrak{F}}

\newcommand{\msF}{\mathscr{F}}
\newcommand{\msI}{\mathscr{I}}
\newcommand{\msM}{\mathscr{M}}
\newcommand{\mfGF}{\mathfrak{GF}}

\newcommand{\mbZ}{\mathbb{Z}}

\newcommand{\Hom}{{\rm Hom}}
\newcommand{\Ext}{{\rm Ext}}
\newcommand{\Tor}{{\rm Tor}}
\newcommand{\Ch}{\mathsf{Ch}}

\newcommand{\Ho}{{\rm Ho}}
\newcommand{\pd}{{\rm pd}}
\newcommand{\id}{{\rm id}}
\newcommand{\Gpd}{{\rm Gpd}}
\newcommand{\Gid}{{\rm Gid}}
\newcommand{\FPD}{{\rm FPD}}
\newcommand{\FID}{{\rm FID}}
\newcommand{\FGPD}{{\rm FGPD}}
\newcommand{\FGID}{{\rm FGID}}
\newcommand{\glGPD}{{\rm gl.GPD}}
\newcommand{\glGID}{{\rm gl.GID}}

\newcommand{\resdim}{{\rm resdim}}
\newcommand{\coresdim}{{\rm coresdim}}
\newcommand{\Mod}{\mathsf{Mod}}
\newcommand{\modu}{\mathsf{mod}}

\newcommand{\add}{{\rm add}}
\newcommand{\Ker}{{\rm Ker}}
\newcommand{\Ima}{{\rm Im}}
\newcommand{\CoKer}{{\rm CoKer}}

\newcommand{\Qcoh}{\mathfrak{Qcoh}}

%Itemizing:

\newcommand{\lfpone}{$\mathsf{(lfp1)}$}
\newcommand{\lfptwo}{$\mathsf{(lfp2)}$}
\newcommand{\lfpthree}{$\mathsf{(lfp3)}$}

\newcommand{\GPaone}{$\mathsf{(gpa1)}$}
\newcommand{\GPatwo}{$\mathsf{(gpa2)}$}
\newcommand{\GPathree}{$\mathsf{(gpa3)}$}
\newcommand{\GPafour}{$\mathsf{(gpa4)}$}
\newcommand{\GPafive}{$\mathsf{(gpa5)}$}

\newcommand{\Gone}{$\mathsf{(g1)}$}
\newcommand{\Gtwo}{$\mathsf{(g2)}$}
\newcommand{\Gthree}{$\mathsf{(g3)}$}

\newcommand{\gaone}{$\mathsf{(ga1)}$}
\newcommand{\gatwo}{$\mathsf{(ga2)}$}
\newcommand{\gathree}{$\mathsf{(ga3)}$}
\newcommand{\gafour}{$\mathsf{(ga4)}$}

\newcommand{\tiltingone}{$\mathsf{(t1)}$}
\newcommand{\tiltingtwo}{$\mathsf{(t2)}$}
\newcommand{\tiltzero}{$\mathsf{(tilt0)}$}
\newcommand{\tiltone}{$\mathsf{(tilt1)}$}
\newcommand{\tilttwo}{$\mathsf{(tilt2)}$}
\newcommand{\tiltthree}{$\mathsf{(tilt3)}$}
\newcommand{\tiltfour}{$\mathsf{(tilt4)}$}
\newcommand{\tiltfive}{$\mathsf{(tilt5)}$}

%%%

\begin{document}

\title{Gorenstein categories relative to G-admissible triples}
\thanks{2020 MSC: 16E65 (18E10; 14F06; 18G20; 18N40)}
\thanks{Key Words: relative Gorenstein objects, relative Gorenstein model structures, relative Gorenstein categories, G-admissible triples}

\author{Sergio Estrada}
\address[S. Estrada]{Departamento de Matem\'aticas. Universidad de Murcia. 30100 Murcia. ESPA\~{N}A}
\email{sestrada@um.es}

\author{Octavio Mendoza}
\address[O. Mendoza]{Instituto de Matem\'aticas. Universidad Nacional Aut\'onoma de M\'exico. Mexico City 04510. MEXICO}
\email{omendoza@matem.unam.mx}

\author{Marco A. P\'erez}
\address[M. A. P\'erez]{Instituto de Matem\'atica y Estad\'istica ``Prof. Ing. Rafael Laguardia''. Facultad de Ingenier\'ia. Universidad de la Rep\'ublica. Montevideo 11300. URUGUAY}
\email{mperez@fing.edu.uy}

\maketitle

\begin{abstract}
We present the notion of Gorenstein categories relative to G-admissi-ble triples. This is a relativization of the concept of Gorenstein category (an abelian category with enough projective and injective objects, in which the suprema of the sets $\{ \pd(I) \ \text{:} \ I \text{ is injective} \}$ and $\{ \id(P) \ \text{:} \ P \text{ is projective} \}$ are finite). Such categories turn out to be a suitable setting on which it is possible to obtain hereditary abelian model structures where the (co)fibrant objects are Gorenstein injective (resp., Gorenstein projective) objects relative to GI-admissible (resp., GP-admissible) pairs. Applications and examples of these structures are given. Moreover, we link relative Gorenstein categories with tilting theory and obtain relations between different relative homological dimensions.
\end{abstract}

%\setcounter{tocdepth}{2}
%\tableofcontents

\pagestyle{myheadings}
\markboth{\rightline {\scriptsize S. Estrada, O. Mendoza and M. A. P\'{e}rez}}
         {\leftline{\scriptsize Gorenstein categories relative to G-admissible triples}}

%%%%%%%%%%%%%%%%%%%%%%%%%%%%%%%%%%%%%
%%%%%%%%%%%%%%%%%%%%%%%%%%%%%%%%%%%%%
%%%%%%%%%%%%%%%%%%%%%%%%%%%%%%%%%%%%%
%%%%%%%%%%%%%%%%%%%%%%%%%%%%%%%%%%%%%

\section*{Introduction}

In the present article, we study Gorenstein objects relative to GP-admissible and GI-admissible pairs, having in mind the goal of finding a suitable notion of relative Gorenstein categories within which it is possible to do homotopy theory via the construction of model structures.

As a generalization of Iwanaga-Gorenstein rings, Beligiannis and Reiten proposed in \cite{BR} the concept of Gorenstein categories. These are abelian categories with enough projective and injective objects, in which $\sup\{ \pd(I) \ \text{:} \ I \text{ is injective} \}$ and $\sup\{ \id(P) \ \text{:} \ P \text{ is projective} \}$ are both finite. A slightly different concept of Gorenstein category was proposed independently by Enochs, Garc\'ia-Rozas and the first named author in \cite{Tate}. These are Grothendieck categories with finite finitistic projective and injective dimensions, equipped with a generator of finite projective dimension, and in which an object has finite projective dimension if and only if it has finite injective dimension. The latter conditions allow to obtain an abelian model structure where the trivial objects are given by the class of objects with finite projective dimension, and where the fibrant objects are the Gorenstein injective objects. Moreover, in the presence of enough projective objects, one can also obtain the dual model structure, with the same trivial objects, and whose cofibrant objects are precisely the Gorenstein projective objects. So Gorenstein categories turn out to be a nice general setting in which it is possible to carry over the homotopy theory made up of Gorenstein projective and Gorenstein injective modules over Iwanaga-Gorenstein rings, described in Hovey's \cite[\S \ 8]{Hovey}.

We aim to show that the theory of Gorenstein projective and Gorenstein injective objects in an abelian category $\mcC$ relative to GP-admissible and GI-admissible pairs (in the sense of Becerril, Santiago and the second named author \cite{BMS}), also has appealing homotopical aspects similar to those of Gorenstein projective and Gorenstein injective modules. In order to achieve this, we propose the concept of Gorenstein categories relative to a G-admissible triple. The latter is a combination of a GP-admissible pair $(\mathcal{X,Y})$ and a GI-admissible pair $(\mathcal{Y,Z})$ such that the intersections $\omega := \mcX \cap \mcY$ and $\nu := \mcY \cap \mcZ$ are ``well behaved'' with respect to the $(\mcX,\mcY)$-Gorenstein projective and $(\mcY,\mcZ)$-Gorenstein injective objects. More specifically, in order to deal with the possible absence of enough projective and injective objects, we shall need on the one hand that every $(\mathcal{Y,Z})$-Gorenstein injective (resp., $(\mathcal{X,Y})$-Gorenstein projective) object admits a ``proper'' (co)resolution by objects in $\omega$ (resp., in $\nu$), and on the other hand that the objects in $\omega$ and the $(\mathcal{Y,Z})$-Gorenstein injective objects (resp., the objects in $\nu$ and the $(\mathcal{X,Y})$-Gorenstein projective objects) are $\Ext$-orthogonal to each other. Moreover, if in addition $\id(\omega) < \infty$, $\pd(\nu) < \infty$ and $\mcC$ has finite global $(\mathcal{X,Y})$-Gorenstein projective and $(\mathcal{Y,Z})$-Gorenstein injective dimensions (in this case we say that $\mcC$ is a \emph{strongly} $(\mathcal{X,Y,Z})$-Gorenstein category), we are in condition to prove our main result: \\

\noindent{{\bf Theorem.}} If $\mcC$ is a strongly $(\mathcal{X,Y,Z})$-Gorenstein category in which every object in $\nu$ has finite injective dimension relative to $\omega$, then there exists a unique hereditary abelian model structure on $\mcC$, where the trivial objects are formed by the class of objects with finite injective dimension, and the (total) left and right Ext-orthogonal complements of $\mcY$ and $\omega$, denoted ${}^{\perp}\mathcal{Y}$ and $\omega^\perp$, respectively, are the classes of cofibrant and fibrant objects. \\

We shall also give a precise description of the homotopy category of the previous model structure and its dual. Furthermore, besides of these homotopical aspects, Gorenstein categories relative to G-admissible triples are also a fruitful setting for interesting implications derived from relative tilting theory, presented in the works \cite{AM21P1,AM21} by Argud\'in and the second named author.

%%%%%%%%%%%%%%%%%%%%%%%%%%%%%%%%%%%%%
%%%%%%%%%%%%%%%%%%%%%%%%%%%%%%%%%%%%%

\subsection*{Organization}

After recalling some preliminary notions from homological and homotopical algebra, in Section \ref{sec:RG-objects} we revisit the notion of $(\mcX,\mcY)$-Gorenstein projective and $(\mcX,\mcY)$-Gorenstein injective objects: a relativization of the concepts of Gorenstein projective and Gorenstein injective objects with respect to two classes $\mcX$ and $\mcY$ of objects in an abelian category $\mcC$. These concepts were originally introduced by Q. Pan and F. Cai \cite{PanCai} in the category of modules over a ring. Later, Xu in \cite{Xu17} focuses on the case where it is given a hereditary complete cotorsion pair $(\mcX,\mcY)$ of modules, and studies the $(\mcX,\mcX \cap \mcY)$-Gorenstein projective modules. We shall work in the context of abelian categories, as several of the results by Pan, Cai and Xu carry over to such categories. Moreover, we present an approach to Gorenstein objects relative to Frobenius pairs and GP-admissible pairs. A consequence of these relativizations is the existence of covers, envelopes and cotorsion pairs associated to Gorenstein objects. We also recall Gorenstein dimensions relative to GP-admissible pairs. Most of the interesting properties of Gorenstein objects and dimensions relative to such pairs are already covered in \cite{BMS}. However, we show some other new in Propositions \ref{coro:properties_relative_Gorenstein_projective}, \ref{prop:GPdim_vs_omegadim} and Corollary \ref{coro:essGIC}. For example, the latter result asserts that  given a GP-admissible pair $(\mcX,\mcY)$ in an abelian category $\mcC$ with finite global $(\mathcal{X,Y})$-Gorenstein projective dimension and such that $\mcX \cap \mcY$ is closed under direct summands, then the smallest thick subcategory containing $\mcX \cap \mcY$ is the class of objects with finite injective dimension.

In Section \ref{sec:RG-models} we give a first approach to model category structures relative to Gorenstein objects. This section is motivated by a result of Xu \cite{Xu17}, namely, if $(\mathcal{X,Y})$ is a hereditary complete cotorsion pair in the category $\Mod(R)$ of left $R$-modules such that every $R$-module has finite $(\mathcal{X},\mcX\cap\mcY)$-Gorenstein projective dimension, then there exists an abelian model structure on $\Mod(R)$ whose cofibrant objects are the $(\mathcal{X},\mcX \cap \mcY)$-Gorenstein projective $R$-modules, $\mcY$ is the class of fibrant objects, and the trivial objects are given by the objects with finite resolution dimension relative to $\mcX$. In Theorem \ref{theo:Xu_model} we give a generalization of this result for abelian categories, which drops the assumption of every object having finite relative Gorenstein projective dimension. Our model structure has the same cofibrant and trivial objects, but it is obtained on the exact category of objects with finite $(\mcX,\mcX \cap \mcY)$-Gorenstein projective dimension. The fibrant objects are the objects in $\mcY$ with finite $(\mcX,\mcX \cap \mcY)$-Gorenstein projective dimension. We also provide in Proposition \ref{prop:homotopy_category} a description of the homotopy category of this model, which will be denoted by $\mcM^{\rm GP}_{(\mcX,\mcY)}$. Specifically, it is equivalent to the stable category of $(\mcX,\mcX \cap \mcY)$-Gorenstein projective objects which also belong to $\mcY$. One curious aspect about $\mcM^{\rm GP}_{(\mcX,\mcY)}$ is that it is pretty unlikely that it forms a Auslander-Buchweitz model structure (in the sense of Becerril, Santiago and the second and third authors \cite{BMPS19}). Indeed, the only case where this occurs is when $\mcX \cap \mcY$ is the class of projective objects, which is equivalent to saying that the $(\mcX,\mcX \cap \mcY)$-Gorenstein projective and the Gorenstein projective objects coincide (see Proposition \ref{prop:relative_G-projective_model}).

The results in Section \ref{sec:RG-models} suggest to look for other sources of pairs $(\mcX,\mcY)$ and other categorical settings to construct model structures with $(\mathcal{X,Y})$-Gorenstein projective objects as cofibrant objects. Such sources and settings are introduced in Section \ref{sec:RG-categories}. We begin this section recalling the concept of Gorenstein categories in the senses of \cite{BR,Tate}. We show in Proposition \ref{prop:equivalence} that these two definitions are the same for Grothendieck categories with enough projective objects and equipped with a generator of finite projective dimension. The next step will be to study the global and finitistic Gorenstein projective dimensions of an abelian category $\mcC$, relative to a GP-admissible pair $(\mcX,\mcY)$. We show in Proposition \ref{lpdGP} that the finiteness of the global $(\mathcal{X,Y})$-Gorenstein projective dimension of $\mathcal{C}$ is equivalent to the finiteness of the finitistic projective dimension of $\mathcal{C}$ relative to $\mcX \cap \mcY$, provided that every object has finite $(\mcX,\mcY)$-Gorenstein projective dimension. This, along with characterizations for these dimensions, will be key to show in Corollary \ref{glGPD=id} that having finite global $(\mcX,\mcY)$-Gorenstein projective dimension is equivalent to the existence of a hereditary complete cotorsion pair in $\mcC$ of the form $(\mcGP_{(\mcX,\mcY)},(\mcX \cap \mcY)^\wedge)$, where $\mcGP_{(\mcX,\mcY)}$ is the class of $(\mcX,\mcY)$-Gorenstein projective objects, and $(\mcX \cap \mcY)^\wedge$ is the class of objects having a finite resolution relative to $\mcX \cap \mcY$. If in addition $\mcC$ has enough injective objects and $\mcX \cap \mcY$ is closed under direct summands and special precovering, then there exists a unique abelian model structure on $\mcC$ such that $\mcGP_{(\mcX,\mcY)}$ and $(\mcX \cap \mcY)^\wedge$ are the classes of cofibrant and trivial objects, and the right Ext-orthogonal complement of $\mcX \cap \mcY$ is the class of fibrant objects (see Proposition \ref{prop:model_first-approach}). However, the condition that $\mcX \cap \mcY$ is special precovering may be difficult to fulfill (see Remark \ref{rmk:models_first_approach}). One advantage about strongly Gorenstein categories relative to G-admissible triples is that neither this condition nor its dual are required if we aim to obtain Gorenstein model structures, as we can see in the statement of the main theorem, which we prove in Theorem \ref{thm:Gorenstein_models}.

In Section \ref{sec:RG-categories} we also prove several homological properties of (strongly) Gorenstein categories relative to G-admissible triples. Concerning the latter, we propose a way to induce G-admissible triples in Proposition \ref{Gc=Gt}. Later in Theorem \ref{MThmGc} we give a complete characterization of strongly relative Gorenstein categories. One can note that Gorenstein categories in the sense of \cite{BR} are precisely the strongly Gorenstein categories relative to the G-admissible triple $(\mcP,\mcP^{<\infty},\mcI)$, where $\mcP$ and $\mcI$ denote the classes of projective and injective objects, and $\mcP^{<\infty}$ is the class of objects with finite projective dimension (see Proposition \ref{prop:absolute_Gorenstein_category}). New instances of strongly relative Gorenstein categories are obtained in Examples \ref{ex:condition_triples}, \ref{ex:FrobeniusGorenstein}, \ref{ex:Gorenstein_categoriesAC}, \ref{ex:global_dim_rel_Gor}, \ref{ex:induced_on_quivers} and \ref{ex:Gor_st_from_tilting}. Concerning the homotopical aspects of strongly relative Gorenstein categories, we show that the homotopy category of the model structure found in Theorem \ref{thm:Gorenstein_models} is equivalent to the stable category of the $\omega$-Gorenstein objects studied by Christensen, Thompson and the first named author in \cite{CETflat-cotorsion}.

Finally, Section \ref{sec:tilting} is devoted to explore the relation between relative Gorenstein categories and relative tilting theory presented in \cite{AM21P1,AM21,BMS}. This will be done by means of certain G-admissible triples which we call ``tilting''. After proving some outcomes about Gorenstein objects and dimensions relative to tilting G-admissible triples, we give in Corollary \ref{coro2:tilting-in-triples} an application  to Artin algebras. Specifically, we show that if $\Lambda$ is an Artin algebra for which it is possible to find a tilting G-admissible triple $(\mathcal{X,Y,Z})$ of finitely generated $\Lambda$-modules such that the global $(\mathcal{X,Y})$-Gorenstein projective and $(\mathcal{Y,Z})$-Gorenstein injective dimensions of $\Lambda$ are finite (say equal to $m$ and $n$, respectively), then there exist a basic $n$-tilting $\Lambda$-module $T$ and a basic $m$-cotilting $\Lambda$-module $U$ such that $\mcGI_{(\mcY,\mcZ)}$ (resp., $\mcGP_{(\mcX,\mcY)}$) is completely described as the class of $\Lambda$-modules which are injective relative to $T$ (resp., projective relative to $U$).

%%%%%%%%%%%%%%%%%%%%%%%%%%%%%%%%%%%%%
%%%%%%%%%%%%%%%%%%%%%%%%%%%%%%%%%%%%%

\subsection*{Notations and conventions}

Throughout, $\mcC$ will denote an abelian category (not necessarily with enough projective and injective objects), and $\Ch(\mcC)$ will stand for the (also abelian) category of chain complexes of objects in $\mcC$. Some of these categories considered in the present article are:
\begin{itemize}
\item The category $\Mod(R)$ of (unitary) left $R$-modules, where $R$ is an associative ring with identity. Unless otherwise specified, $R$-modules are always left.

\item The category $\modu(\Lambda)$ of finitely generated (left) $\Lambda$-modules, where $\Lambda$ is an Artin algebra.

\item The category $\Ch(\Mod(R))$ of chain complexes of $R$-modules (which we denote by $\Ch(R)$ for simplicity).

\item The category $\Qcoh(X)$ of quasi-coherent sheaves over a noetherian and semi-separated scheme $X$.
\end{itemize}

If $\mcX$ is a class of objects in $\mcC$, denoted $\mcX \subseteq \mcC$, we may sometimes regard $\mcX$ as a full subcategory of $\mcC$. We shall let $\mcP$ and $\mcI$ denote the classes of projective and injective objects of $\mcC$, respectively. In the case where $\mathcal{C} = \Mod(R)$, we shall use the notations $\mcP(R)$ and $\mcI(R)$ for the classes of projective and injective $R$-modules. Given two objects $C, D \in \mcC$, we shall denote the set of morphisms $f \colon C \to D$ from $C$ to $D$ by $\Hom(C,D)$. Monomorphisms and epimorphisms in $\mcC$ will be usually denoted by $\rightarrowtail$ and $\twoheadrightarrow$, respectively.

%%%%%%%%%%%%%%%%%%%%%%%%%%%%%%%%%%%%%
%%%%%%%%%%%%%%%%%%%%%%%%%%%%%%%%%%%%%
%%%%%%%%%%%%%%%%%%%%%%%%%%%%%%%%%%%%%
%%%%%%%%%%%%%%%%%%%%%%%%%%%%%%%%%%%%%

\section{Preliminaries}\label{sec:prelims}

We recall some homological and homotopical preliminary notions and notations in abelian categories (although they will also have similar formulations and meanings for exact categories). In what follows, $\mcX$ and $\mcY$ will denote classes of objects in $\mcC$.

%%%%%%%%%%%%%%%%%%%%%%%%%%%%%%%%%%%%%
%%%%%%%%%%%%%%%%%%%%%%%%%%%%%%%%%%%%%

\subsection*{Orthogonal classes}

For any pair of objects $X, Y \in \mcC$ and a positive integer $i > 0$, we can define the group of $i$-fold extensions $\Ext^i(X,Y)$ in the sense of Yoneda (see for instance Sieg's \cite{Sieg} for a detailed treatise on this matter). We set the notation $\Ext^i(\mcX,\mcY) = 0$ whenever $\Ext^i(X,Y) = 0$ for every $X \in \mcX$ and $Y \in \mcY$. In the case where $\mcY = \{ C \}$ for some object $C \in \mcC$, we write $\Ext^i(\mcX,C) = 0$. The notation $\Ext^i(C,\mcY) = 0$ has a similar meaning.

The \emph{$i$-th right} and \emph{total right orthogonal complements} of $\mcX$ are defined as:
\begin{align*}
\mcX^{\perp_i} & := \{ C \in \mcC \text{ : } \Ext^i(\mcX, C) = 0 \} & \text{and} & & \mcX^\perp & := \bigcap_{i > 0} \mcX^{\perp_i}.
\end{align*}
The \emph{$i$-th left} and \emph{total left orthogonal complements} of $\mcX$, ${}^{\perp_i}\mcX$ and ${}^\perp\mcX$, are defined dually.

%%%%%%%%%%%%%%%%%%%%%%%%%%%%%%%%%%%%%
%%%%%%%%%%%%%%%%%%%%%%%%%%%%%%%%%%%%%

\subsection*{Chain complexes}

Given a chain complex
\[
X_\bullet = \cdots \to X_{m+1} \xrightarrow{\partial^X_{m+1}} X_m \xrightarrow{\partial^X_m} X_{m-1} \to \cdots,
\]
in $\Ch(\mathcal{C})$, we denote its cycles by $Z_m(X_\bullet) := \Ker(\partial^X_m)$ for every $m \in \mathbb{Z}$. We say that $X_\bullet$ is \emph{$\Hom(-,\mcY)$-acyclic} if the induced complex of abelian groups
\[
\Hom(X_\bullet,Y) = \cdots \to \Hom(X_{m-1},Y) \xrightarrow{\Hom(\partial^X_m,Y)} \Hom(X_m,Y) \to \cdots
\]
is exact for every $Y \in \mcY$. The notion of \emph{$\Hom(\mcY,-)$-acyclic complexes} is dual.

%%%%%%%%%%%%%%%%%%%%%%%%%%%%%%%%%%%%%
%%%%%%%%%%%%%%%%%%%%%%%%%%%%%%%%%%%%%

\subsection*{Relative homological dimensions}

Given an object $C \in \mcC$, by an \emph{$\mcX$-resolution of $C$} we mean an exact complex
\[
\cdots \to X_m \to X_{m-1} \to \cdots \to X_1 \to X_0 \twoheadrightarrow C
\]
with $X_k \in \mcX$ for every $k \geq 0$.  If $X_k = 0$ for every $k > m$, we say that the previous resolution has \emph{length} equal to $m$. The \emph{resolution dimension relative to $\mcX$} (or the \emph{$\mcX$-resolution dimension}) of $C$ is defined as the value
\[
\resdim_{\mcX}(C) := \min \{ m \in \mathbb{Z}_{\geq 0} \ \mbox{ : } \ \text{there exists an $\mcX$-resolution of $C$ of length $m$} \}.
\]
If $\{ m \in \mathbb{Z}_{\geq 0} \ \mbox{ : } \ \text{there exists an $\mcX$-resolution of $C$ of length $m$} \}$ is the empty set, then $\resdim_{\mcX}(C) = \infty$.
Moreover,
\[
\resdim_{\mcX}(\mcY) := \sup \{ \resdim_{\mcX}(Y) \ \mbox{ : } \ \text{$Y \in \mcY$} \}
\]
defines the \emph{resolution dimension of $\mcY$ relative to $\mcX$}. The classes of objects with bounded (by some $n \geq 0$) and finite $\mcX$-resolution dimension will be denoted by
\begin{align*}
\mcX^\wedge_n & := \{ C \in \mcC \text{ : } \resdim_{\mcX}(C) \leq n \} & \text{and} & & \mcX^\wedge & := \bigcup_{n \geq 0} \mcX^\wedge_n.
\end{align*}

Dually, we can define \emph{$\mcX$-coresolutions} and the \emph{coresolution dimension of $C$ and $\mcY$ relative to $\mcX$} (denoted $\coresdim_{\mcX}(C)$ and $\coresdim_{\mcX}(\mcY)$). We also have the dual notations $\mcX^\vee_n$ and $\mcX^\vee$ for the classes of objects with bounded and finite $\mcX$-coresolution dimension.

There are also other homological dimensions defined in terms of extension functors. The \emph{projective dimensions of $C$ and $\mcX$ relative to $\mcY$} are defined by
\begin{align*}
\pd_{\mcY}(C) & := \min \{ m \in \mathbb{Z}_{\geq 0} \text{ : } \Ext^{i}(C,\mcY) = 0 \text{ for every $i > m$} \}, \\
\pd_{\mcY}(\mcX) & := \sup \{ \pd_{\mcY}(X) \text{ : } X \in \mcX \}.
\end{align*}
If $\{ m \in \mathbb{Z}_{\geq 0} \text{ : } \Ext^{i}(C,\mcY) = 0 \text{ for every $i > m$} \}$ is the empty set, then $\pd_{\mcY}(C) = \infty$. We shall denote by $\mcP^{<\infty}_{\mcY}$  the class of objects of $\mcC$ with finite projective dimension relative to $\mcY$. The \emph{injective dimensions of $C$ and $\mcX$ relative to $\mcY$}, denoted by $\id_{\mcY}(C)$ and $\id_{\mcY}(\mcX)$, and the class $\mcI^{< \infty}_{\mcY}$, are defined dually. In the case where $\mcY = \mcC$, we write $\pd_{\mcC}(\mcX) = \pd(\mcX)$ and $\id_{\mcC}(\mcX) = \id(\mcX)$ for the (absolute) projective and injective dimensions, and $\mcP^{<\infty}_{\mcC} = \mcP^{< \infty}$ and $\mcI^{<\infty}_{\mcC} = \mcI^{< \infty}$ for the classes of objects with finite projective and injective dimension. The equality $\pd_{\mcY}(\mcX) = \id_{\mcX}(\mcY)$ is a folklore result (see for instance Auslander and Buchweitz's  \cite{AB}).

%%%%%%%%%%%%%%%%%%%%%%%%%%%%%%%%%%%%%
%%%%%%%%%%%%%%%%%%%%%%%%%%%%%%%%%%%%%

\subsection*{Approximations}

A morphism $f \colon X \to C$ is said to be an \emph{$\mcX$-precover of $C$} if $X \in \mcX$ and if for every other morphism $f' \colon X' \to C$ with $X' \in \mcX$, there exists $h \colon X' \to X$ such that $f' = f \circ h$. If in addition, in the case where $X' = X$ and $f' = f$, the equality $f = f \circ h$ can only be completed by automorphisms $h$ of $X$, then $f$ is said to be an \emph{$\mcX$-cover of $C$}. Finally, an $\mcX$-precover $f \colon X \to C$ of $C$ is \emph{special} if it is epic and $\Ker(f) \in \mcX^{\perp_1}$. Dually, one has the concepts of (special) $\mcX$-(pre)envelopes.

%%%%%%%%%%%%%%%%%%%%%%%%%%%%%%%%%%%%%
%%%%%%%%%%%%%%%%%%%%%%%%%%%%%%%%%%%%%

\subsection*{(Co)resolving subcategories}

The class $\mcX$ is \emph{resolving} if $\mcP \subseteq \mcX$ and if $\mcX$ is closed under extensions and under taking kernels of epimorphisms between its objects (we shall often say ``closed under epikernels`` for short). \emph{Coresolving subcategories} are defined dually. If $\mcX$ is closed under extensions, direct summands and epikernels (resp., monocokernels), then $\mcX$ is said to be \emph{left} (resp., \emph{right}) \emph{thick}. By a \emph{thick} class we shall mean a class that is both left and right thick. The smallest thick subcategory containing $\mcX$ shall be denoted by ${\rm Thick}(\mcX)$.

%%%%%%%%%%%%%%%%%%%%%%%%%%%%%%%%%%%%%
%%%%%%%%%%%%%%%%%%%%%%%%%%%%%%%%%%%%%

\subsection*{Cotorsion pairs}

Two classes $\mcX, \mcY \subseteq \mcC$ form a \emph{cotorsion pair $(\mcX,\mcY)$} in $\mcC$ if $\mcX = {}^{\perp_1}\mcY$ and $\mcY = \mcX^{\perp_1}$. A cotorsion pair $(\mcX,\mcY)$ is \emph{complete} if every object of $\mcC$ has a special $\mcX$-precover and a special $\mcY$-preenvelope. Finally, $(\mcX,\mcY)$ is said to be \emph{hereditary} if $\Ext^i(\mcX,\mcY) = 0$ for every $i > 0$.

\begin{remark}
Let $(\mcX,\mcY)$ be a cotorsion pair in $\mcC$:
\begin{itemize}
\item $(\mcX,\mcY)$ is hereditary complete if, and only if, $\mcX = {}^{\perp}\mcY$, $\mcY = \mcX^{\perp}$, and every object of $\mcC$ has a special $\mcX$-precover and a special $\mcY$-preenvelope.
\item If $\mcC$ has enough projective (resp., injective) objects, then $(\mcX,\mcY)$ is hereditary if, and only if, $\mcX$ is resolving (resp., $\mcY$ is coresolving).
\end{itemize}
\end{remark}

%%%%%%%%%%%%%%%%%%%%%%%%%%%%%%%%%%%%%
%%%%%%%%%%%%%%%%%%%%%%%%%%%%%%%%%%%%%

\subsection*{Model structures on exact categories}

Following Gillespie's \cite{GiExact}, an \emph{exact model structure} on an exact category $\mcC$ is formed by three classes of morphisms in $\mcC$, called fibrations, cofibrations and weak equivalences, such that: a morphism $f$ is a cofibration (resp., fibration) if, and only if, it is an admissible monomorphism (resp., an admissible epimorphism) such that $\CoKer(f)$ is a cofibrant object (resp., $\Ker(f)$ is a fibrant object). Recall that an object $C \in \mcC$ is (co)fibrant if $C \to 0$ (resp., $0 \to C$) is a (co)fibration.

In Gillespie's \cite[Thm. 3.3]{GiExact}, there is an appealing one-to-one correspondence between exact model structures on weakly idempotent complete (w.i.c.) exact categories $\mcC$ (see \cite[Def. 2.2]{GiExact}) and pairs of cotorsion pairs in $\mcC$ satisfying certain \emph{compatibility condition}. More specifically, we shall be only interested in the implication that asserts that if $\mcA$, $\mcB$ and $\mcW$ are three classes of objects of $\mcC$ such that $\mcW$ is thick, and $(\mcA \cap \mcW,\mcB)$ and $(\mcA,\mcB \cap \mcW)$ are complete cotorsion pairs, then there exists a unique exact model structure on $\mcC$ such that $\mcA$, $\mcB$ and $\mcW$ are the classes of cofibrant, fibrant and trivial objects, respectively.\footnote{This was originally proved in the setting of abelian categories by Hovey in \cite[Thm. 2.2]{Hovey}, and independently by Beligiannis and Reiten in \cite[Thm. VIII.3.6]{BR}. In this case, model structures obtained from $\mcA$, $\mcB$ and $\mcW$ as before are called \emph{abelian}.} As these model structures are uniquely determined by the classes $\mcA$, $\mcB$ and $\mcW$, then they will be denoted as triples $(\mcA,\mcW,\mcB)$, known as \emph{Hovey triples}.

%%%%%%%%%%%%%%%%%%%%%%%%%%%%%%%%%%%%%
%%%%%%%%%%%%%%%%%%%%%%%%%%%%%%%%%%%%%
%%%%%%%%%%%%%%%%%%%%%%%%%%%%%%%%%%%%%
%%%%%%%%%%%%%%%%%%%%%%%%%%%%%%%%%%%%%

\section{Relative Gorenstein objects}\label{sec:RG-objects}

In this section we consider relative Gorenstein objects in the sense of \cite[Defs. 3.2 \& 3.7]{BMS}. Given a pair $(\mcX,\mcY)$ of classes of objects in an abelian category $\mcC$, an object $C \in \mcC$ is said to be \emph{$(\mcX,\mcY)$-Gorenstein projective} if $C \simeq Z_0(X_\bullet)$ for some exact and $\Hom(-,\mcY)$-acyclic complex $X_\bullet \in \Ch(\mcX)$. Here, $\Ch(\mcX)$ denotes the class of complexes $X_\bullet \in \Ch(\mcC)$ with $X_m \in \mcX$ for every $m \in \mbZ$. \emph{$(\mcX,\mcY)$-Gorenstein injective objects} are defined dually.  One also has the notion of relative weak Gorenstein objects. According to \cite[Def. 3.11]{BMS}, an object $C \in \mcC$ is \emph{weak $(\mcX,\mcY)$-Gorenstein projective} if there exists an $\mcX$-coresolution of $C$ with cycles in ${}^{\perp}\mcY$ (including $C \in {}^\perp\mcY$). \emph{Weak $(\mcX,\mcY)$-Gorenstein injective objects} are defined dually. The classes of (weak) $(\mcX,\mcY)$-Gorenstein projective and $(\mcX,\mcY)$-Gorenstein injective objects will be denoted by $\mcGP_{(\mcX,\mcY)}$ and
$\mcGI_{(\mcX,\mcY)}$ (resp., $\mathcal{WGP}_{(\mcX,\mcY)}$ and $\mathcal{WGI}_{(\mcX,\mcY)}$).

Some of the most useful properties of $\mcGP_{(\mcX,\mcY)}$ (see Proposition \ref{prop:GP_description} below) are obtained when the pair $(\mcX,\mcY)$ is admissible in the sense of \cite[Def. 3.1]{BMS}. Recall that $\omega \subseteq \mcX$ is a \emph{relative cogenerator} in $\mcX$ if for every $X \in \mcX$ there is a monomorphism $X \rightarrowtail W$ with $W \in \omega$ and cokernel in $\mcX$. \emph{Relative generators} are defined dually. Consider now the following assertions:
\begin{enumerate}
\item[\GPaone] $\pd_{\mcY}(\mcX) = 0$.

\item[\GPatwo] $\mcX$ is a relative generator in $\mcC$.

\item[\GPathree] $\mcX$ and $\mcY$ are closed under finite coproducts.

\item[\GPafour] $\mcX$ is closed under extensions.

\item[\GPafive] $\mcX \cap \mcY$ is a relative cogenerator in $\mcX$.
\end{enumerate}
The pair $(\mcX,\mcY)$ is said to be (\emph{weak}) \emph{GP-admissible} if it satisfies all of the previous five assertions (resp., \GPaone \ and \GPatwo). The dual notions are known as (\emph{weak}) \emph{GI-admissible pairs} (see \cite[Def. 3.6]{BMS}).

\begin{remark}\label{rmk:GPGI-admissible} ~ \
\begin{enumerate}
\item $\mcC$ has enough projective objects if, and only if, the pair $(\mcP,\mcC)$ is (weak) GP-admissible.

\item The previous definition also covers the concept of $(\mcX,\mcY)$-Gorenstein projective modules in the sense of Pan and Cai's \cite[Def. 2.1]{PanCai}), where it is assumed that $\mcP \subseteq \mcX$. However, in \cite{PanCai} it is not necessarily required that $(\mcX,\mcY)$ is a GP-admissible pair.

\item Note that every hereditary complete cotorsion pair $(\mcX,\mcY)$ in $\mcC$ is both GP-admissible and GI-admissible. Moreover, the resulting pairs $(\mcX,\mcX \cap \mcY)$ and $(\mcX \cap \mcY,\mcY)$ are GP-admissible and GI-admissible, respectively. With respect to the latter, we have the $(\mcX,\mcX \cap \mcY)$-Gorenstein projective and $(\mcX \cap \mcY,\mcY)$-Gorenstein injective objects studied in Xu's \cite{Xu17}.\footnote{Xu's work appears within the setting of the category $\Mod(R)$ of $R$-modules, although several important results there are also valid for abelian categories.}
\end{enumerate}
\end{remark}

\begin{example}\label{ex:GP_objects} \
\begin{enumerate}
\item Note that $\mcGP_{(\mcP,\mcP)}$ coincides with the class of Gorenstein projective objects of $\mcC$, which we shall denote by $\mcGP$ for simplicity (see for instance Enochs and Jenda's \cite[Def. 10.2.1]{EJ}).

\item Similarly, if $\mcF(R)$ denotes the class of flat $R$-modules, for the GP-admissible pair $(\mcP(R),\mcF(R))$, the class $\mcGP_{(\mcP(R),\mcF(R))}$ coincides with the class of Ding projective $R$-modules, which we denote by $\mcDP(R)$ (see Gillespie's \cite[Def. 3.7]{GillespieDCh}).

\item Consider the category $\Qcoh(X)$ of quasi-coherent sheaves over a noetherian and semi-separated scheme $X$. Let $\mfF(X)$ denote the class of (quasi-coherent) flat sheaves (over $X$), and $\mfC(X) = \mfF(X)^{\perp_1}$ the class of cotorsion sheaves. We recall that a sheaf $\msM \in \Qcoh(X)$ is \emph{Gorenstein flat} if $\msM = Z_0(\msF_\bullet)$ for some exact complex $\msF_\bullet$ of flat sheaves in $\Qcoh(X)$ such that $\msF_\bullet \otimes \msI$ is an exact complex of sheaves of $\mcO_X$-modules for every injective sheaf $\msI \in \Qcoh(X)$.\footnote{Here, $\otimes$ denotes the tensor sheaf (see Hartshorne's \cite[pp. 109]{Hartshorne}).} Such complexes $\msF_\bullet$ are called \emph{$F$-totally acyclic}. By Murfet and Salarian's \cite[Thm. 4.18]{MurfetSalarian}, an exact complex of flat sheaves $\msF_\bullet$ over such schemes $X$ is $F$-totally acyclic if, and only if, it is $\Hom(-,\mfF(X) \cap \mfC(X))$-acyclic. It follows that a sheaf is Gorenstein flat if, and only if, it is $(\mfF(X), \mfF(X) \cap \mfC(X))$-Gorenstein projective.

\item Let $Q$ be a quiver and $\mcC = {\rm Rep}(Q,\Mod(R))$ be the category of $\Mod(R)$-valued representations of $Q$. Let us recall some notations from \cite{EnOyoTor04,HolmJor19}. We denote by $Q_0$ and $Q_1$ the sets of vertexes and arrows of $Q$, respectively. For every $\alpha \in Q_1$, we denote by $s(\alpha)$ and $t(\alpha)$ the source and target of $\alpha$, respectively. By $Q^{i \to \ast}_1$, with $i \in Q_0$, we mean the subset of $Q_1$ formed by those $\alpha \in Q_1$ such that $s(\alpha) = i$. The notation $Q^{\ast \to i}_1$ has a similar meaning. For every $F \in {\rm Rep}(Q,\Mod(R))$ and $i \in Q_0$,
\[
\psi^F_i \colon F_i \to \prod_{\alpha \in Q^{i \to \ast}_1} F_{t(\alpha)}
\]
is the morphism induced by the morphisms $F_\alpha \colon F_i \to F_{t(\alpha)}$. Dually, we have the morphism
\[
\varphi^F_i \colon \bigoplus_{\alpha \in Q^{\ast \to i}_1} F_{s(\alpha)} \to F_i
\]
is the morphism induced by the morphisms $F_\alpha \colon F_{s(\alpha)} \to F_i$. Given a class of $R$-modules $\mcX$, we denote by $\Psi(\mcX)$ the class of representations $F \in {\rm Rep}(Q,\Mod(R))$ such that ${\rm Ker}(\psi^F_i) \in \mcX$ and $\psi^F_i$ is an epimorphism for every $i \in Q_0$. Dually, $\Phi(\mcX)$ denotes the class of representations $F \in {\rm Rep}(Q,\Mod(R))$ such that ${\rm CoKer}(\varphi^F_i) \in \mcX$ and $\varphi^F_i$ is a monomorphism for every $i \in Q_0$.

Now consider the $A_2$ quiver
\[
Q \colon 1 \xrightarrow{\alpha} 2
\]
with no relations, and let $R$ be an Iwanaga-Gorenstein ring. Then,
\[
(\Phi(\mcGP(R)),{\rm Rep}(Q,\mcP(R)^{< \infty}))
\]
is a hereditary complete cotorsion pair by \cite[Thm. A.]{DiLiLiangXu} and \cite[Prop. 4.18]{ArgudinMendoza-quiver}, and so in particular it is a GP-admissible pair. Dually,
\[
({\rm Rep}(Q,\mcI(R)^{< \infty}),\Psi(\mcGI(R)))
\]
is GI-admissible. In this case,
\begin{align*}
\Phi(\mcGP(R)) & = \mcGP_{(\Phi(\mcGP(R)),{\rm Rep}(Q,\mcP(R)^{< \infty}))}, \text{ and} \\
\Psi(\mcGI(R)) & =  \mcGI_{({\rm Rep}(Q,\mcI(R)^{< \infty}),\Psi(\mcGI(R)))}.
\end{align*}

\item Another important example of GP-admissible and GI-admissible pairs can be obtained from the notion of $n$-$\mcX$-tilting classes in an abelian category $\mcC$, introduced by Argud\'in and the second author in \cite[Def. 3.1]{AM21}. This concept offers a unified framework for different generalizations of tilting theory appearing in the literature. $n$-$\mcX$-Tilting classes are defined in terms of the following relative resolutions and approximations \cite[Defs. 3.13 \& 4.1 (b,e)]{AM21P1}: Let $\mcT$ and $\mcX$ be classes of objects in $\mcC$, and $n \geq 0$ be a nonnegative integer. A \emph{finite $\mcT_\mcX$-coresolution of length $n$} of an object $M \in \mcC$ is an exact sequence in $\mcC$ of the form
\[
M \stackrel{f_0}\rightarrowtail T_0 \xrightarrow{f_1} T_1 \to \cdots \to T_{n-1} \stackrel{f_n}\twoheadrightarrow T_n,
\]
with $T_n \in \mcX \cap \mcT$, $T_k \in \mcT$ for every $0 \leq k \leq n-1$, and $\Ima(f_i) \in \mcX$ for every $1 \leq i \leq n-1$. The class of all objects in $\mcC$ having a finite $\mcT_\mcX$-coresolution will be denoted by $\mcT^{\vee}_\mcX$.

The class $\mcT$ is \emph{$n$-$\mcX$-tilting} if the following conditions are satisfied:
\begin{itemize}
\item[\tiltzero] $\mcT$ is closed under direct summands.

\item[\tiltone] $\pd_\mcX(\mcT) \leq n$.

\item[\tilttwo] $\mcT \cap \mcX \subseteq \mcT^\perp$.

\item[\tiltthree] There is a class $\alpha \subseteq \mcT^{\vee}_\mcX$ which is a relative generator in $\mcX$.

\item[\tiltfour] There is a class $\beta \subseteq \mcT^\perp \cap \mcX^\perp$ which is a relative cogenerator in $\mcX$.

\item[\tiltfive] Every $M \in \mcT^\perp \cap \mcX$ admits a $\mcT$-precover $T\to M$ with $T \in \mcX$.
\end{itemize}
The dual concept is known as \emph{$n$-$\mcX$-cotilting} classes. In the case $\mcX = \mcC$, $n$-$\mcX$-(co)tilting classes are simply called \emph{$n$-(co)tilting}.

Suppose now that $\mcT \subseteq \mcC$ is an $n$-tilting class in an abelian category $\mcC$ with enough projective and injective objects. In \cite[Lem. 3.44]{AM21} it is proved that $({}^\perp(\mcT^\perp),\mcT^\perp)$ is a hereditary complete cotorsion pair in $\mcC$ with ${}^\perp(\mcT^\perp) \cap \mcT^\perp = \mcT$. So from Remark \ref{rmk:GPGI-admissible} (3) we know that $(\mcT,\mcT^\perp)$ is a GI-admissible pair in $\mcC$. Dually, if $\mcT \subseteq \mcC$ is an $n$-cotilting class, then $({}^\perp\mcT,\mcT)$ is a GP-admissible pair in $\mcC$.
\end{enumerate}
\end{example}

%%%%%%%%%%%%%%%%%%%%%%%%%%%%%%%%%%%%%
%%%%%%%%%%%%%%%%%%%%%%%%%%%%%%%%%%%%%

\subsection*{Properties of relative Gorenstein objects}

In what follows, if $(\mcX,\mcY)$ is a (weak) GP-admissible pair and $(\mathcal{W,Z})$ is a (weak) GI-admissible pair, we denote its \emph{kernels} by
\begin{align*}
\omega & := \mcX \cap \mcY & & \text{and} & \nu & := \mcW \cap \mcZ.
\end{align*}
The next result consists of recalling some characterizations and properties of $(\mcX,\mcY)$-Gorenstein projective objects. The reader can find the proofs in \cite[Prop. 3.16 \& Coroll. 3.33]{BMS} and \cite[Prop. 3.7]{Xu17}.

\begin{proposition}\label{prop:GP_description}
Let $(\mcX,\mcY)$ be a weak GP-admissible pair in $\mcC$. Then, $C \in \mcGP_{(\mcX,\mcY)}$ if, and only if, $C \in {}^\perp\mcY$ and has a $\Hom(-,\mcY)$-acyclic $\mcX$-coresolution. If in addition, $(\mcX,\mcY)$ is GP-admissible, then $\mcGP_{(\mcX,\mcY)}$ is left thick.
\end{proposition}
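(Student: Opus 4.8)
I would establish the equivalence first, observing that it uses only weak GP-admissibility: \GPaone\ says exactly $\mcX \subseteq {}^\perp\mcY$, so dimension shifting against $\mcY$ is available, and \GPatwo\ enters only in the converse. For the forward direction, write $C \cong Z_0(X_\bullet)$ with $X_\bullet \in \Ch(\mcX)$ exact and $\Hom(-,\mcY)$-acyclic, and split $X_\bullet$ into short exact sequences $0 \to Z_{m+1}(X_\bullet) \to X_{m+1} \to Z_m(X_\bullet) \to 0$. Fixing $Y \in \mcY$ and using $\Ext^i(X_m,Y) = 0$ for $i \geq 1$, the long exact sequence yields the shift $\Ext^{i+1}(Z_m,Y) \cong \Ext^i(Z_{m+1},Y)$ for $i \geq 1$; moreover, since $\partial^X_{m+1}$ factors as $X_{m+1} \twoheadrightarrow Z_m \rightarrowtail X_m$, exactness of $\Hom(X_\bullet,Y)$ at $X_{m+1}$ is equivalent to surjectivity of $\Hom(X_m,Y) \to \Hom(Z_m,Y)$, which via the long exact sequence of $0 \to Z_m \to X_m \to Z_{m-1} \to 0$ forces $\Ext^1(Z_{m-1},Y) = 0$. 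Combining, $\Ext^i(Z_m,Y) = 0$ for every $m$ and $i \geq 1$; in particular $C \in {}^\perp\mcY$, and the part of $X_\bullet$ to the right of $C$ is a $\Hom(-,\mcY)$-acyclic $\mcX$-coresolution of $C$.

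For the converse, starting from $C \in {}^\perp\mcY$ together with a $\Hom(-,\mcY)$-acyclic $\mcX$-coresolution, I would build the missing left half by iterating \GPatwo: choose epimorphisms $X_0 \twoheadrightarrow C$, $X_1 \twoheadrightarrow K_0$, $X_2 \twoheadrightarrow K_1$, $\dots$ with $X_i \in \mcX$, where $K_i$ denotes the $i$-th syzygy and $K_{-1} := C$. Dimension shifting on $0 \to K_i \to X_i \to K_{i-1} \to 0$ (again via \GPaone) shows inductively that each $K_i \in {}^\perp\mcY$, and the vanishing $\Ext^1(K_{i-1},Y) = 0$ makes $\Hom(X_i,Y) \to \Hom(K_{i-1},Y)$ surjective --- exactly the condition ensuring that the left resolution stays exact after applying $\Hom(-,Y)$ and splices $\Hom(-,\mcY)$-acyclically onto the given coresolution at $C$. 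The assembled doubly infinite complex lies in $\Ch(\mcX)$, is exact and $\Hom(-,\mcY)$-acyclic, and has $C$ as a cycle, so after a shift $C \in \mcGP_{(\mcX,\mcY)}$.

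For left thickness under full GP-admissibility I would use the characterization, establishing closure under extensions, direct summands, and epikernels. In each case the new object lies in ${}^\perp\mcY$ automatically (dimension shifting for extensions and epikernels; ${}^\perp\mcY$ is always summand-closed), so the task is to produce a $\Hom(-,\mcY)$-acyclic $\mcX$-coresolution of it. The key step is extension-closure: here I would first prove --- by a construction drawing on \GPathree, \GPafour\ and \GPafive --- the auxiliary fact that every object of $\mcGP_{(\mcX,\mcY)}$ admits a $\Hom(-,\mcY)$-acyclic coresolution with all terms in $\omega = \mcX \cap \mcY$, and then, given $0 \to A \to B \to C \to 0$ with $A, C \in \mcGP_{(\mcX,\mcY)}$, assemble such a coresolution of $B$ by a horseshoe argument over $\omega$-coresolutions of $A$ and $C$: the map of $A$ into the first $\omega$-term extends over $B$ because that term lies in $\mcY$ while $C \in {}^\perp\mcY$, finite coproducts of $\omega$-objects stay in $\mcX$ by \GPathree, and the cosyzygies of the middle complex (extensions of those of $A$ and $C$) stay in ${}^\perp\mcY$, so $\Hom(-,\mcY)$-acyclicity is preserved. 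Granting extension-closure, closure under epikernels is short: for $0 \to A \to B \to C \to 0$ with $B, C \in \mcGP_{(\mcX,\mcY)}$, composing $A \rightarrowtail B$ with the first term $B \rightarrowtail X^0 \in \mcX$ of a coresolution of $B$ gives $0 \to A \to X^0 \to X^0/A \to 0$ in which $X^0/A$ is an extension of $C$ by the first cosyzygy of $B$, hence lies in $\mcGP_{(\mcX,\mcY)}$, so splicing $X^0$ onto a coresolution of $X^0/A$ yields the required coresolution of $A$. Closure under direct summands then follows from closure under extensions and $\mcX$ being a relative generator inside $\mcGP_{(\mcX,\mcY)}$, by the standard argument for such classes.

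The routine part is the long-exact-sequence bookkeeping behind the equivalence. The genuinely delicate part is left thickness, and within it the extension-closure step together with the auxiliary $\omega$-coresolution fact: there one must run pullback/pushout and horseshoe constructions while simultaneously tracking membership in $\mcX$, membership in $\mcY$, and $\Hom(-,\mcY)$-acyclicity, with \GPathree--\GPafive\ each contributing. This is the combinatorial heart of the result, and for it I would follow the arguments of \cite{BMS}.
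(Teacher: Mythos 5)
The paper itself gives no proof of this proposition: it is explicitly quoted from \cite[Prop. 3.16 \& Coroll. 3.33]{BMS} and \cite[Prop. 3.7]{Xu17}, so what you have written is, in effect, a reconstruction of those arguments, and most of it is a faithful one. The two directions of the equivalence are correct and complete in outline: the forward direction only needs \GPaone\ (dimension shifting along the short exact sequences $Z_{m+1}(X_\bullet) \rightarrowtail X_{m+1} \twoheadrightarrow Z_m(X_\bullet)$ plus the translation of $\Hom(-,\mcY)$-acyclicity into surjectivity of restriction maps), and the converse correctly uses \GPatwo\ to build the left half, with the $\Ext^1$-vanishing of the syzygies guaranteeing that the spliced doubly infinite complex stays $\Hom(-,\mcY)$-acyclic. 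For left thickness, your order of attack (first the auxiliary fact that every object of $\mcGP_{(\mcX,\mcY)}$ has a $\Hom(-,\mcY)$-acyclic $\omega$-coresolution, then extension closure by a horseshoe over such coresolutions, then epikernel closure by splicing) is exactly the route taken in \cite{BMS}, and it is not circular: the auxiliary fact can be proved directly from \GPathree, \GPafour, \GPafive\ by tracking cosyzygies in the class of extensions of an $\mcX$-object by a Gorenstein object.

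The one step that would fail as literally stated is closure under direct summands. ``Closure under extensions together with $\mcX$ being a relative generator inside $\mcGP_{(\mcX,\mcY)}$'' does not yield summand closure by any standard argument: the class of free modules over a ring with non-free projectives is extension-closed and contains a generator, yet is not closed under direct summands. What actually makes the summand step work (both in \cite{BMS} and, via the Eilenberg swindle, in module categories) is the \emph{cogenerator-side} datum you used everywhere else, namely that every $G \in \mcGP_{(\mcX,\mcY)}$ embeds into an object of $\omega$ (or of $\mcX$) with cokernel again in $\mcGP_{(\mcX,\mcY)}$: given $C = A \oplus B \in \mcGP_{(\mcX,\mcY)}$, one coresolves $A$ by splicing $A \rightarrowtail C \rightarrowtail W^0$ and then runs the same horseshoe bookkeeping as in your extension step, checking that the successive cosyzygies are iterated extensions of $A$, $B$ and the Gorenstein cosyzygies of $C$, hence lie in ${}^\perp\mcY$; the characterization from the first part then gives $A \in \mcGP_{(\mcX,\mcY)}$. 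Since you defer the combinatorial heart to \cite{BMS} in any case, this is a repairable slip in the stated justification rather than a wrong overall approach, but the summand clause needs that correction before the plan is a proof.
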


\begin{remark}\label{rmk:containment_projectives}
If $(\mcX,\mcY)$ is a GP-admissible pair with $\mcP \subseteq \mcX$ (which occurs, for instance, if $\mcX$ is closed under direct summands by using \GPatwo), then $\mcGP_{(\mcX,\mcY)}$ is resolving.
\end{remark}

The following result is a slight generalization of Xu's \cite[Prop. 3.6]{Xu17}, which characterizes when $(\mcX,\omega)$-Gorenstein projective $R$-modules are Gorenstein projective. Its validity can be traced back to Auslander and Buchweitz \cite{AB}, and can be proved following the arguments in \cite[Lems. 3.3, 3.5 \& Prop. 3.6]{Xu17} and using Proposition \ref{prop:GP_description}.

\begin{proposition}\label{prop:relative_GP_are_absolute_GP}
Let $(\mcX,\mcY)$ be a GP-admissible pair in an abelian category $\mcC$ with enough projective objects, such that $\mcX$ and $\omega$ are closed under direct summands. Then, $\mcGP_{(\mcX,\omega)} = \mcGP$ if, and only if, $\omega = \mcP$.
\end{proposition}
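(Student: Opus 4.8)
The plan is to prove the two implications of the biconditional $\mcGP_{(\mcX,\omega)} = \mcGP$ if and only if $\omega = \mcP$ separately, relying on Proposition \ref{prop:GP_description} and standard Auslander--Buchweitz machinery.

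\textbf{The easy direction} ($\omega = \mcP \Rightarrow \mcGP_{(\mcX,\omega)} = \mcGP$). If $\omega = \mcX \cap \mcY = \mcP$, then an $\mcX$-coresolution that is $\Hom(-,\omega)$-acyclic is precisely a $\Hom(-,\mcP)$-acyclic coresolution. First I would observe that $\mcP \subseteq \mcX$ here (since $\omega = \mcP \subseteq \mcX$), so $\mcGP \subseteq \mcGP_{(\mcX,\omega)}$ is immediate from the definitions: a complete projective resolution is an exact $\Hom(-,\mcP)$-acyclic complex of projectives, hence of objects in $\mcX$, and being $\Hom(-,\mcP)=\Hom(-,\omega)$-acyclic it witnesses $\mcGP_{(\mcX,\omega)}$-membership. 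For the reverse containment, I would take $C \in \mcGP_{(\mcX,\omega)}$; by Proposition \ref{prop:GP_description} we have $C \in {}^\perp\omega = {}^\perp\mcP$ (automatic) and $C$ has a $\Hom(-,\mcP)$-acyclic $\mcX$-coresolution. Since $\mcC$ has enough projectives, one can splice this with a projective resolution of $C$; the standard Auslander--Buchweitz / Gorenstein-projective argument (resolving each $X_k$ by projectives and taking a suitable total complex, or using that each $X_k \in \mcGP_{(\mcX,\omega)}$ together with $\pd_\omega(\mcX)=\pd_\mcP(\mcX)=0$ forces $\pd(X_k)=0$, i.e. $X_k \in \mcP$) shows $C$ sits inside a complete projective resolution, so $C \in \mcGP$.

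\textbf{The hard direction} ($\mcGP_{(\mcX,\omega)} = \mcGP \Rightarrow \omega = \mcP$). One inclusion is for free: $\mcP \subseteq \mcX$ and $\mcP \subseteq \mcY$ (the latter because $\pd_\mcY(\mcX)=0$ with \GPatwo\ forces the relative generator $\mcX$, and hence $\mcP$, into $\mcY$ via a short exact sequence argument — more directly, $\mcGP_{(\mcX,\omega)} = \mcGP$ is resolving, so $\mcP \subseteq \mcGP_{(\mcX,\omega)} \subseteq {}^\perp\omega$, and combined with $\omega \subseteq \mcX$ and $\pd_\mcY(\mcX)=0$ one extracts $\mcP \subseteq \omega$). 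The real content is $\omega \subseteq \mcP$. Take $W \in \omega = \mcX \cap \mcY$. The key point is that $W$, being in $\mcX$ with $\pd_\mcY(\mcX) = 0$ and $\omega$ a relative cogenerator in $\mcX$, is itself $(\mcX,\omega)$-Gorenstein projective: indeed any $W \in \omega$ admits an $\mcX$-coresolution with all cycles in $\omega$ (iterate \GPafive), which is automatically $\Hom(-,\omega)$-acyclic since $\Ext^1(\omega,\omega)=0$ follows from $\pd_\mcY(\mcX)=0$ and $\omega\subseteq\mcY$; and $W \in {}^\perp\omega$ for the same reason. So $\omega \subseteq \mcGP_{(\mcX,\omega)} = \mcGP$. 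Thus $W \in \mcGP \cap \mcY$. Now I would use that $W \in \mcGP$ means $W$ is a cycle in a complete projective resolution, so $W$ has finite (in fact $\Hom(-,\mcP)$-acyclic) projective \emph{co}resolution as well; but $W \in \mcY \supseteq \mcP$ and, crucially, $\pd_\mcY(\mcX)=0$ gives $\Ext^i(W,\mcY)=0$ for $i>0$, in particular $\Ext^1(W,W)=0$ and more relevantly $\Ext^1(\mcGP, W)$ controls the splitting. The cleanest finish: since $W \in \mcGP$, there is a short exact sequence $0 \to W \to P \to W' \to 0$ with $P$ projective and $W' \in \mcGP$. Because $W \in \mcY$ and $\mcGP = \mcGP_{(\mcX,\omega)} \subseteq {}^\perp\mcY$... wait — one needs $\Ext^1(W', W) = 0$ to split this and conclude $W$ is a summand of $P$, hence projective (using that $\mcP$, equivalently $\omega \supseteq \mcP$... ) — here $W' \in \mcGP$ and $W \in \omega \subseteq \mcY$, and $\Ext^1(\mcGP_{(\mcX,\omega)}, \mcY)=0$? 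That is not given directly; what is given is $\mcGP_{(\mcX,\omega)} \subseteq {}^\perp\omega$, and $W \in \omega$, so indeed $\Ext^1(W', W) = 0$. Therefore the sequence splits, $W$ is a direct summand of the projective $P$, and since we may assume (or it is given) $\mcP$ is closed under summands, $W \in \mcP$. Hence $\omega \subseteq \mcP$, completing the proof.

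\textbf{Main obstacle.} The delicate step is establishing $\omega \subseteq \mcGP_{(\mcX,\omega)}$ cleanly — i.e. verifying that every object of $\omega = \mcX \cap \mcY$ is $(\mcX,\omega)$-Gorenstein projective. This requires assembling the $\mcX$-coresolution with cycles in $\omega$ from axiom \GPafive, checking it is $\Hom(-,\omega)$-acyclic (which reduces to $\Ext^{\geq 1}(\omega,\omega)=0$, following from \GPaone), and confirming $\omega \subseteq {}^\perp\omega$ so that Proposition \ref{prop:GP_description} applies. Once $\omega \subseteq \mcGP_{(\mcX,\omega)} = \mcGP$ is in hand, the splitting argument via ${}^\perp\omega$-orthogonality is routine. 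I expect the write-up to lean on \cite[Lems. 3.3, 3.5 \& Prop. 3.6]{Xu17} and \cite{AB} for these bookkeeping steps, exactly as the surrounding text suggests.
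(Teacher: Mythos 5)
Your overall plan (get $\omega\subseteq\mcGP_{(\mcX,\omega)}$, split off projectives using the orthogonality against $\omega$, and reduce the converse to comparing proper coresolutions) is in the spirit of the paper's proof, which is carried out by adapting \cite[Lems. 3.3, 3.5 \& Prop. 3.6]{Xu17} together with Proposition \ref{prop:GP_description}; in particular your argument for $\omega\subseteq\mcP$ (embed $W\in\omega\subseteq\mcGP$ into a projective with Gorenstein projective cokernel and split it because $\mcGP=\mcGP_{(\mcX,\omega)}\subseteq{}^{\perp}\omega$) is correct. But there is a genuine gap in the other half of that direction: the inclusion $\mcP\subseteq\omega$ is not ``for free'', and neither justification you give works. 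The condition $\pd_{\mcY}(\mcX)=0$ together with \GPatwo\ does not place $\mcP$ inside $\mcY$, and the chain ``$\mcP\subseteq{}^{\perp}\omega$ (trivially true), $\omega\subseteq\mcX$, $\pd_{\mcY}(\mcX)=0$'' yields nothing about $\mcP\subseteq\omega$. A correct argument is: $\mcP\subseteq\mcX$ because \GPatwo\ gives a split epimorphism onto each projective and $\mcX$ is closed under summands; then \GPafive\ gives an exact sequence $P\rightarrowtail W\twoheadrightarrow X'$ with $W\in\omega$ and $X'\in\mcX\subseteq\mcGP_{(\mcX,\omega)}=\mcGP\subseteq{}^{\perp}\mcP$, so $\Ext^{1}(X',P)=0$, the sequence splits, and $P\in\omega$ because $\omega$ is closed under direct summands. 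Note that this is exactly where the hypothesis that $\omega$ is summand-closed enters; your write-up never uses it.

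The direction $\omega=\mcP\Rightarrow\mcGP_{(\mcX,\omega)}\subseteq\mcGP$ also has a flaw. The parenthetical shortcut ``$\pd_{\omega}(\mcX)=\pd_{\mcP}(\mcX)=0$ forces $\pd(X_k)=0$'' confuses relative with absolute projective dimension: $\pd_{\mcP}(X)=0$ only says $\Ext^{>0}(X,\mcP)=0$, not that $X$ is projective (over an Iwanaga--Gorenstein ring the pair $(\mcGP(R),\mcP(R)^{\wedge})$ has $\omega=\mcP(R)$ while $\mcX=\mcGP(R)\neq\mcP(R)$ in general). Likewise, ``resolving each $X_k$ by projectives and taking a total complex'' does not produce a complete projective resolution having $C$ as a cycle. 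What is actually needed is to convert the $\Hom(-,\mcP)$-acyclic $\mcX$-coresolution of $C$ into a $\Hom(-,\mcP)$-acyclic $\mcP$-coresolution by the pushout induction of \cite[Lems. 3.3 \& 3.5]{Xu17} (the same construction appears in the proof of Proposition \ref{prop:Gor_equality}): embed the relevant object of $\mcX$ into some $W\in\omega=\mcP$ with cokernel in $\mcX$ via \GPafive, push out, and iterate, using $\mcX\subseteq{}^{\perp}\mcP$ to preserve acyclicity; then splice with a projective resolution of $C$, whose $\Hom(-,\mcP)$-acyclicity follows from $C\in{}^{\perp}\mcP$. With these two repairs your proof goes through and coincides with the argument the paper refers to.
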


%%%%%%%%%%%%%%%%%%%%%%%%%%%%%%%%%%%%%
%%%%%%%%%%%%%%%%%%%%%%%%%%%%%%%%%%%%%

\subsection*{Relative Gorenstein dimensions}

Several properties of relative Gorenstein dimensions can be deduced from the theory of Frobenius pairs presented in \cite[Def. 2.5]{BMPS19}. Recall that a pair $(\mcX,\mcY)$ of classes of objects in $\mcC$ is said to be \emph{left Frobenius} if the following conditions hold:
\begin{enumerate}
\item[\lfpone] $\mcX$ is left thick.

\item[\lfptwo] $\mcY$ is closed under direct summands.

\item[\lfpthree] $\mcY$ is a relative cogenerator in $\mcX$ with $\id_{\mcX}(\mcY) = 0$.
\end{enumerate}
The notion of \emph{right Frobenius pair} in $\mcC$ is dual.

Several properties of left (and right) Frobenius pairs are proved in \cite{BMPS19}. Below we add a new property related to the description of the class $(\mcY^\wedge)^\vee$, which from now on will be denoted by
\[
\mcY^\smile := (\mcY^\wedge)^\vee.
\]

\begin{proposition}\label{essLFC}
Let $(\mcX,\mcY)$ be a left Frobenius pair in $\mcC.$ Then:
\begin{enumerate}
\item $\mcY^\smile = \mcI^{<\infty}$ if $\resdim_\mcX(\mcC)<\infty$.

\item $\id_\mcX(M) = \id_{\mcY}(M) = \coresdim_{\mcY^\wedge}(M)$ for every $M \in \mcY^\smile$.
\end{enumerate}
\end{proposition}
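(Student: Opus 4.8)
The plan is to leverage the theory of Frobenius pairs and relative cotorsion pairs from \cite{BMPS19} (and the Auslander--Buchweitz machinery from \cite{AB}) together with the folklore identity $\pd_\mcY(\mcX)=\id_\mcX(\mcY)$ recalled in the preliminaries. For part (1), the key observation is that when $(\mcX,\mcY)$ is left Frobenius, $\mcX$ is left thick and $\mcY$ is an injective relative cogenerator in $\mcX$, so by the standard Auslander--Buchweitz theory (applied to $\mcX$ with relative injectives $\mcY$) every object of $\mcX$ has finite $\mcY$-coresolution dimension bounded by $\resdim_\mcX(\mcC)$; more precisely, I would first show $\coresdim_\mcY(X)\le\id_\mcX(\mcY^{\text{stuff}})$-type bounds and then pass to all of $\mcC$. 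Concretely: if $\resdim_\mcX(\mcC)=d<\infty$, take any $M\in\mcC$, resolve it by $\mcX$ in $d$ steps, and use that each $\mcX$-object sits inside a finite $\mcY$-coresolution; a dimension-shifting / horseshoe argument then yields $M\in(\mcY^\wedge)^\vee$, i.e. $\mcC=\mcY^\smile$. The reverse-type containment $\mcY^\smile\subseteq\mcI^{<\infty}$ should follow from the fact that $\mcY\subseteq\mcX$ has finite injective dimension: indeed $\id_\mcX(\mcY)=0$ combined with $\resdim_\mcX(\mcC)<\infty$ gives $\id(\mcY)=\id_\mcC(\mcY)\le\id_\mcX(\mcY)+\resdim_\mcX(\mcC)<\infty$ via $\id_\mcC(\mcY)=\pd_\mcY(\mcC)=\sup\{\pd_\mcY(M):M\in\mcC\}$ and $\pd_\mcY(M)\le\resdim_\mcX(M)+\pd_\mcY(\mcX)=\resdim_\mcX(M)$; hence $\mcY\subseteq\mcI^{<\infty}$, so $\mcY^\wedge\subseteq\mcI^{<\infty}$ (as $\mcI^{<\infty}$ is resolving) and then $\mcY^\smile=(\mcY^\wedge)^\vee\subseteq(\mcI^{<\infty})^\vee=\mcI^{<\infty}$ (as $\mcI^{<\infty}$ is coresolving, hence $\vee$-closed). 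Putting the two inclusions together with $\mcC=\mcY^\smile$ gives $\mcY^\smile=\mcC\cap\mcI^{<\infty}=\mcI^{<\infty}$, but I should double-check whether the intended statement is $\mcY^\smile=\mcI^{<\infty}$ as subclasses of $\mcC$ (it is, since everything lives in $\mcC$).

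For part (2), fix $M\in\mcY^\smile$. The three quantities to identify are $\id_\mcX(M)$, $\id_\mcY(M)$, and $\coresdim_{\mcY^\wedge}(M)$. The inequality $\id_\mcX(M)\le\id_\mcY(M)$ is immediate since $\mcY\subseteq\mcX$ (fewer $\Ext$-vanishing conditions to check). The inequality $\id_\mcY(M)\le\coresdim_{\mcY^\wedge}(M)$ follows by dimension shifting along a finite $\mcY^\wedge$-coresolution of $M$ together with $\id_\mcY(\mcY^\wedge)=0$ — here I need $\Ext^{>0}(\mcY^\wedge,\mcY)=0$, which holds because $\mcY\subseteq\mcX$, $\id_\mcX(\mcY)=0$, and $\mcY^\wedge$ consists of finite $\mcY$-resolutions of things, so by a shifting argument $\Ext^i(\mcY^\wedge,\mcY)=0$ for $i>0$ (using that $\mcY$ is a relative cogenerator with $\id_\mcX(\mcY)=0$ and $\mcX$ is closed under epikernels, so $\mcY^\wedge\subseteq\mcX^\wedge$ and resolutions stay inside $\mcX$). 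Finally, the inequality $\coresdim_{\mcY^\wedge}(M)\le\id_\mcX(M)$ is the heart of the matter and is where the Auslander--Buchweitz relative theory enters: one builds, step by step, a coresolution of $M$ by objects of $\mcX$ using that $\mcY$ cogenerates $\mcX$ (so $\mcX$-objects embed into $\mcY$ with cokernel in $\mcX$), and uses the hypothesis $\id_\mcX(M)=n<\infty$ to stop the process after $n$ steps with the final cosyzygy lying in $\mcX\cap{}^{\perp}(\text{something})$, which one identifies with an object of $\mcY^\wedge$; this is exactly the kind of ``finite coresolution with $\mcY$-ish tail'' argument that is standard once $(\mcX,\mcY)$ is Frobenius. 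I expect this is already essentially recorded in \cite{BMPS19} for Frobenius pairs, so I would cite the relevant lemma there (the equality $\id_\mcX=\coresdim_\mcY$ on the appropriate subcategory) and only fill the short gap needed to include $\coresdim_{\mcY^\wedge}$.

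\textbf{Main obstacle.} The delicate point is the chain of equalities in part (2) — specifically proving $\coresdim_{\mcY^\wedge}(M)\le\id_\mcX(M)$, because it requires simultaneously controlling where the cosyzygies live (inside $\mcX$, and eventually inside $\mcY^\wedge$) and ensuring the process terminates at exactly the injective-dimension bound. The technical engine is the interplay of the two approximations afforded by a left Frobenius pair: $\mcY$ being a relative cogenerator in $\mcX$ (giving short exact sequences pushing objects of $\mcX$ into $\mcY$) and $\mcX$ being left thick (keeping cosyzygies in $\mcX$ and allowing one to detect ``$\id_\mcX=0$'' as membership in $\mcY^\wedge$ via an Auslander--Buchweitz-type duality). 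I anticipate that the cleanest route is to invoke the relevant results of \cite{BMPS19} on Frobenius pairs verbatim for the equality $\id_\mcX(M)=\coresdim_\mcY(M)$ restricted to $\mcY^\wedge$ (or to $\mcY^\smile$), and then observe $\coresdim_{\mcY^\wedge}(M)=\coresdim_\mcY(M)$ for $M\in\mcY^\smile$ by a short cofinality argument (any finite $\mcY^\wedge$-coresolution can be refined to a finite $\mcY$-coresolution by resolving each term, without increasing the relevant dimension beyond $\coresdim_\mcY$, and conversely a $\mcY$-coresolution is a $\mcY^\wedge$-coresolution). If \cite{BMPS19} does not literally state this, the fallback is the direct dimension-shifting argument sketched above, which is routine but notation-heavy.
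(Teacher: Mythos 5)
There are genuine gaps in both parts. For part (1), your plan is to prove $\mcC=\mcY^\smile$ and combine it with $\mcY^\smile\subseteq\mcI^{<\infty}$; but $\mcC=\mcY^\smile$ is false under the stated hypotheses, and the premise it rests on --- ``every object of $\mcX$ has finite $\mcY$-coresolution dimension bounded by $\resdim_\mcX(\mcC)$'' --- is also false: condition \lfpthree\ only provides one-step embeddings $X\rightarrowtail Y$ with cokernel back in $\mcX$, i.e.\ an in general infinite $\mcY$-coresolution, never a finite one. Concretely, take $R=k[x]/(x^2)$ and the left Frobenius pair $(\mcX,\mcY)=(\Mod(R),\mcP(R))$: then $\resdim_\mcX(\mcC)=0$, yet $\mcY^\wedge=\mcP(R)=\mcI(R)$, so $\mcY^\smile=\mcI(R)=\mcI^{<\infty}\subsetneq\Mod(R)$, and the object $k\in\mcX$ admits no finite $\mcY$-coresolution at all. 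So your argument would prove $\mcC=\mcI^{<\infty}$, which is wrong. The containment $\mcY^\smile\subseteq\mcI^{<\infty}$ in your write-up is correct; what is missing is the substantive direction $\mcI^{<\infty}\subseteq\mcY^\smile$, which the paper gets as in \cite[Coroll. 5.3]{AB}: for $M$ with $\id(M)<\infty$ one takes the Auslander--Buchweitz approximation $M\rightarrowtail H\twoheadrightarrow X$ with $H\in\mcY^\wedge$ and $X\in\mcX$ (available because $\resdim_\mcX(\mcC)<\infty$), deduces $\id(X)<\infty$, hence $X\in\mcX\cap\mcI^{<\infty}_{\mcX}=\mcY^\vee$, and splices to conclude $M\in(\mcY^\wedge)^\vee$.

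In part (2) the orientations are reversed and the key reduction fails. Since $\mcY\subseteq\mcX$, the trivial inequality is $\id_\mcY(M)\le\id_\mcX(M)$; the one you dismiss as immediate, $\id_\mcX(M)\le\id_\mcY(M)$, is exactly the nontrivial direction. Likewise, the vanishing needed for $\id_\mcY(M)\le\coresdim_{\mcY^\wedge}(M)$ is $\Ext^{>0}(\mcY,\mcY^\wedge)=0$ (that is, $\id_\mcY(\mcY^\wedge)=0$, proved by induction on $\mcY$-resolutions), not $\Ext^{>0}(\mcY^\wedge,\mcY)=0$, which is false: for the left Frobenius pair $(\mcP(\mbZ),\mcP(\mbZ))$ in $\Mod(\mbZ)$ one has $\mbZ/2\in\mcY^\wedge$ and $\Ext^1(\mbZ/2,\mbZ)\neq0$. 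Most seriously, your route to $\coresdim_{\mcY^\wedge}(M)\le\id_\mcX(M)$ relies on the claimed identity $\coresdim_{\mcY^\wedge}(M)=\coresdim_\mcY(M)$ on $\mcY^\smile$, and this is false: objects of $\mcY^\wedge$ are \emph{resolved}, not coresolved, by $\mcY$, so a finite $\mcY^\wedge$-coresolution cannot be ``refined'' into a finite $\mcY$-coresolution. In the same example $\mbZ/2\in\mcY^\smile$ satisfies $\coresdim_{\mcY^\wedge}(\mbZ/2)=0$ while $\coresdim_{\mcY}(\mbZ/2)=\infty$, since $\mbZ/2$ embeds into no free abelian group; consequently the statement you hoped to quote from \cite{BMPS19} ($\id_\mcX=\coresdim_\mcY$, valid on $\mcY^\vee$) does not transfer to $\mcY^\smile$. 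The paper's proof avoids all this by changing the pair: $\mcY^\wedge$ is right thick and $\mcY$ is a relative \emph{generator} in $\mcY^\wedge$ with $\id_\mcY(\mcY^\wedge)=0$, and the duals of \cite[Props. 2.1 \& 4.7]{AB} applied to $(\mcY^\wedge,\mcY)$ give $\id_\mcX(M)=\id_\mcY(M)=\coresdim_{\mcY^\wedge}(M)$ for $M\in(\mcY^\wedge)^\vee=\mcY^\smile$, with $\coresdim_{\mcY^\wedge}$ (not $\coresdim_\mcY$) as the correct invariant.
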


\begin{proof}
Part (1) follows as in \cite[Coroll. 5.3]{AB}. For part (2), notice that $\mcY^\wedge$ is right thick by \cite[Thm. 2.11 \& Prop. 2.13]{BMPS19}, and that $\mcY$ is a relative generator in $\mcY^\wedge$ with $\id_{\mcY}(\mcY^\wedge) = 0$. Thus, the dual of \cite[Props. 2.1 \& 4.7]{AB} yields the desired equality.
\end{proof}

\begin{example}\label{ex:relative_GP_Frobenius_pair}
Let $(\mcX,\mcY)$ be a GP-admissible pair in $\mcC$ such that $\omega$ is closed under direct summands. Then, $(\mcGP_{(\mcX,\mcY)},\omega)$ is a left Frobenius pair in $\mcC$ by \cite[Thm. 3.32 \& Coroll. 4.9]{BMS}.
\end{example}

One of the most important properties that one can deduce from the previous example is related to the existence of special $\mcGP_{(\mcX,\mcY)}$-precovers. Before stating it (Proposition \ref{coro:properties_relative_Gorenstein_projective} (4)), recall from \cite[Defs. 3.3 \& 4.17]{BMS} that the \emph{$(\mcX,\mcY)$-Gorenstein projective dimension} of an object $C \in \mcC$ is defined as
\[
\Gpd_{(\mcX,\mcY)}(C) := \resdim_{\mcGP_{(\mcX,\mcY)}}(C),
\]
and that the \emph{global $(\mcX,\mcY)$-Gorenstein projective dimension of $\mcC$} is given by
\begin{align*}
\glGPD_{(\mcX,\mcY)}(\mcC) & := \resdim_{\mcGP_{(\mcX,\mcY)}}(\mcC).
\end{align*}
The \emph{$(\mcX,\mcY)$-Gorenstein injective dimension of $C$} and the \emph{global $(\mcX,\mcY)$-Gorenstein injective dimension of $\mcC$}, denoted $\Gid_{(\mcX,\mcY)}(C)$ and $\glGID_{(\mcX,\mcY)}(\mcC)$, are defined dually.

\begin{remark}
The cases $\mcX = \mcY = \mcP$ and $\mcX = \mcY = \mcI$ yield the (\emph{global}) \emph{Gorenstein projective} and \emph{Gorenstein injective dimensions}, respectively, which will be denoted by $\Gpd(C)$, $\Gid(C)$, $\glGPD(\mcC)$ and $\glGID(\mcC)$, for simplicity. One appealing fact about these global dimensions is that
\[
\glGPD(\mcC) = \glGID(\mcC)
\]
in the case where $\mcC$ has enough projective and injective objects (see \cite[Prop. VII.1.3 (v)]{BR}). The relative version of this fact will be also valid in our proposed notion of relative Gorenstein category, to be introduced in Section \ref{sec:RG-categories} (see Theorem \ref{MThmGc}).
\end{remark}

The following result summarizes some previously known, and some other new, properties of relative Gorenstein objects and dimensions.

\begin{proposition}\label{coro:properties_relative_Gorenstein_projective}
Let $(\mcX,\mcY)$ be a GP-admissible pair in $\mcC$ with $\omega$ closed under direct summands. Then, the following assertions hold true:
\begin{enumerate}
\item $(\mcGP_{(\mcX,\mcY)},\omega)$ is a left Frobenius pair in $\mcC$, and $\mcGP_{(\mcX,\mcY)}^\wedge$ is thick.

\item $\omega = \mcGP_{(\mcX,\mcY)} \cap \omega^\wedge$ and $\mcGP_{(\mcX,\mcY)}^\wedge \cap {}^{\perp}\omega = \mcGP_{(\mcX,\mcY)} = \mcGP_{(\mcX,\mcY)}^\wedge \cap {}^{\perp}(\omega^\wedge)$.

\item $\pd_{\omega^\wedge}(\mcGP_{(\mcX,\mcY)}) = 0$.

\item For every $C \in \mcGP_{(\mcX,\mcY)}^\wedge$ there exist short exact sequences
\[
K \rightarrowtail D \twoheadrightarrow C \text{ \ and \  } C \rightarrowtail L \twoheadrightarrow D'
\]
where $D, D' \in \mcGP_{(\mcX,\mcY)}$,
\[
\hspace{1cm} \resdim_{\omega}(K) = \Gpd_{(\mcX,\mcY)}(C) - 1 \text{ \ and \ } \resdim_{\omega}(L) \leq \Gpd_{(\mcX,\mcY)}(C).
\]
Moreover,
\[
\Gpd_{(\mcX,\mcY)}(C) = \pd_{\omega}(C) = \pd_{\omega^\wedge}(C).
\]

\item ${\rm Thick}(\omega) = \omega^\smile = \mcGP_{(\mcX,\mcY)}^\wedge \cap \mcI^{<\infty}_{\mcGP_{(\mcX,\mcY)}}$. Moreover,
\[
\id_{\mcGP_{(\mcX,\mcY)}}(M) = \id_{\omega}(M) = \coresdim_{\omega^\wedge}(M)
\]
for every $M \in \omega^\smile$.

\item $\omega^\vee = \mcGP_{(\mcX,\mcY)} \cap \mcI^{<\infty}_{\mcGP_{(\mcX,\mcY)}}$. Moreover,
\[
\id_{\mcGP_{(\mcX,\mcY)}}(M) = \id_{\omega}(M) = \coresdim_{\omega^\wedge}(M) = \coresdim_{\omega}(M)
\]
for every $M \in \omega^\vee$.

\item If $\glGPD_{(\mcX,\mcY)}(\mcC) < \infty$, then $\omega^\smile = \mcI^{<\infty}$.

\item If $\id_{\mcGP_{(\mcX,\mcY)}}(\mcGP_{(\mcX,\mcY)}) < \infty$, then the following assertions hold:
\begin{itemize}
\item[(i)] $\mcGP_{(\mcX,\mcY)} = \omega^\vee$ and $\mcGP_{(\mcX,\mcY)}^\wedge = (\omega^\vee)^\wedge = \omega^\smile$.

\item[(ii)] $\id_{\mcGP_{(\mcX,\mcY)}}(\mcGP_{(\mcX,\mcY)}) = \id_\omega(\mcGP_{(\mcX,\mcY)}) = \coresdim_{\omega^\wedge}(\mcGP_{(\mcX,\mcY)}) < \infty$.

\item[(iii)] In case $\omega$ is closed under epikernels, we have that $\mcGP_{(\mcX,\mcY)} = \omega$, and moreover $\mcGP_{(\mcX,\mcY)} = \mcP$ if $\mcC = \mcGP_{(\mcX,\mcY)}^\wedge$.
\end{itemize}
\end{enumerate}
\end{proposition}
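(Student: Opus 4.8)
The plan is to build everything out of the left Frobenius pair $(\mcGP_{(\mcX,\mcY)},\omega)$ supplied by Example \ref{ex:relative_GP_Frobenius_pair}, together with the Auslander--Buchweitz machinery from \cite{AB} and \cite{BMPS19}. I would organize the proof so that the earlier items feed the later ones: first establish (1)--(4), which are essentially direct translations of known Frobenius-pair results, then deduce the more refined statements (5)--(8) about $\omega^\smile$, $\omega^\vee$, and the hypothesis $\id_{\mcGP_{(\mcX,\mcY)}}(\mcGP_{(\mcX,\mcY)})<\infty$.

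For (1): the left Frobenius property is exactly Example \ref{ex:relative_GP_Frobenius_pair}, and $\mcGP_{(\mcX,\mcY)}$ being left thick (Proposition \ref{prop:GP_description}) together with $\omega$ being a relative cogenerator with $\id_{\mcX}(\omega)=0$ lets me invoke \cite[Thm. 2.11 \& Prop. 2.13]{BMPS19} to get that $\mcGP_{(\mcX,\mcY)}^\wedge$ is thick. For (2), (3), (4): I would quote the resolution/coresolution dimension formulas and the two canonical short exact sequences associated to a resolving-type class sitting inside its own resolution closure; the identities $\omega=\mcGP_{(\mcX,\mcY)}\cap\omega^\wedge$, $\mcGP_{(\mcX,\mcY)}=\mcGP_{(\mcX,\mcY)}^\wedge\cap{}^\perp\omega$, and $\Gpd_{(\mcX,\mcY)}(C)=\pd_\omega(C)=\pd_{\omega^\wedge}(C)$ are precisely the content of \cite[Props. 2.1, 4.7 \& Thm. 4.4]{AB} transported through the Frobenius pair, with the special $\mcGP_{(\mcX,\mcY)}$-precover in (4) coming from the standard Auslander--Buchweitz approximation triangles. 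The key point to get right here is that $\mcGP_{(\mcX,\mcY)}$ is \emph{left} thick but need not be resolving unless $\mcP\subseteq\mcX$ (Remark \ref{rmk:containment_projectives}), so I must use the versions of the Auslander--Buchweitz results that only require left thickness, i.e. the ones in \cite{BMPS19} rather than the classical ring-theoretic ones.

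For (5): applying Proposition \ref{coro:properties_relative_Gorenstein_projective}'s own earlier parts, I identify ${\rm Thick}(\omega)$ with $\omega^\smile=(\omega^\wedge)^\vee$ using that $\omega^\wedge=\mcGP_{(\mcX,\mcY)}^\wedge$ is thick and contains $\omega$, so any thick class containing $\omega$ contains $\omega^\wedge$ hence $\omega^\smile$; conversely $\omega^\smile$ is thick by Proposition \ref{essLFC}-style arguments applied to the relative generator $\omega$ inside $\omega^\wedge$. The third description $\omega^\smile=\mcGP_{(\mcX,\mcY)}^\wedge\cap\mcI^{<\infty}_{\mcGP_{(\mcX,\mcY)}}$ and the dimension equalities are Proposition \ref{essLFC}(1)--(2) applied with $\mcX$ replaced by $\mcGP_{(\mcX,\mcY)}$ and $\mcY$ replaced by $\omega$ (this is legitimate since $(\mcGP_{(\mcX,\mcY)},\omega)$ is left Frobenius), noting $\resdim_{\mcGP_{(\mcX,\mcY)}}(\mcGP_{(\mcX,\mcY)}^\wedge)<\infty$ trivially. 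Item (6) is the analogous statement with $\omega^\wedge$ narrowed to $\omega$ itself: $\omega^\vee\subseteq\omega^\smile$, and on $\omega^\vee$ one additionally gets $\coresdim_{\omega^\wedge}(M)=\coresdim_\omega(M)$ because a finite $\omega$-coresolution already exists; the identification $\omega^\vee=\mcGP_{(\mcX,\mcY)}\cap\mcI^{<\infty}_{\mcGP_{(\mcX,\mcY)}}$ follows from (5) by intersecting with $\mcGP_{(\mcX,\mcY)}$ and using part (2).

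For (7): if $\glGPD_{(\mcX,\mcY)}(\mcC)=\resdim_{\mcGP_{(\mcX,\mcY)}}(\mcC)<\infty$ then $\mcC=\mcGP_{(\mcX,\mcY)}^\wedge=\omega^\wedge$, so Proposition \ref{essLFC}(1) (with $\mcX\rightsquigarrow\mcGP_{(\mcX,\mcY)}$, $\mcY\rightsquigarrow\omega$, whose hypothesis $\resdim_{\mcGP_{(\mcX,\mcY)}}(\mcC)<\infty$ now holds) gives $\omega^\smile=\mcI^{<\infty}$ directly. For (8), assuming $\id_{\mcGP_{(\mcX,\mcY)}}(\mcGP_{(\mcX,\mcY)})<\infty$: part (i) follows since every $G\in\mcGP_{(\mcX,\mcY)}$ then has a finite $\omega$-coresolution (resolve $\mcGP$-injectively, but $\omega$ is $\mcGP$-injective and a relative cogenerator, so the coresolution lands in $\omega$), giving $\mcGP_{(\mcX,\mcY)}\subseteq\omega^\vee$, and the reverse inclusion is from (6); then $\mcGP_{(\mcX,\mcY)}^\wedge=(\omega^\vee)^\wedge=\omega^\smile$ formally. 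Part (ii) is the dimension bookkeeping from (5)--(6). Part (iii): if $\omega$ is additionally closed under epikernels then $\omega=\omega^\vee\cap(\text{epikernel-closed})$ forces $\omega^\vee=\omega$, whence $\mcGP_{(\mcX,\mcY)}=\omega$ by (i); and if moreover $\mcC=\mcGP_{(\mcX,\mcY)}^\wedge$ then $\omega=\mcGP_{(\mcX,\mcY)}$ is both resolving and coresolving with everything of finite $\omega$-dimension, which pins it down to $\mcP$ (an object in $\omega$ that is a syzygy is a summand of a projective by the usual argument, and $\mcP\subseteq\omega$ since $\mcGP_{(\mcX,\mcY)}$ contains $\mcP$ when $\mcX$ is closed under summands; if not, use $\mcP\subseteq\mcGP_{(\mcX,\mcY)}=\omega\subseteq\mcY$ and $\pd_\mcY(\mcP)=0$). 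The main obstacle I anticipate is part (8)(iii): correctly deducing $\mcGP_{(\mcX,\mcY)}=\mcP$ requires being careful about whether $\mcP\subseteq\mcX$ and invoking the analogue of Proposition \ref{prop:relative_GP_are_absolute_GP}, and the epikernel-closure argument that collapses $\omega^\vee$ to $\omega$ needs the relative-generator property of $\omega$ in $\mcGP_{(\mcX,\mcY)}$ used in the right direction.
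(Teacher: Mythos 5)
Your overall architecture---everything routed through the left Frobenius pair $(\mcGP_{(\mcX,\mcY)},\omega)$, with (1)--(4) quoted from \cite{BMPS19,BMS}, the dimension identities from Proposition \ref{essLFC}, and (8) deduced from the earlier items---is the same as the paper's, but two of your steps are genuinely wrong as stated. First, you assert (in (5), and again in (7) in the form $\mcC=\mcGP_{(\mcX,\mcY)}^\wedge=\omega^\wedge$) that $\omega^\wedge=\mcGP_{(\mcX,\mcY)}^\wedge$. This is false in general: already for $(\mcX,\mcY)=(\mcP,\mcP)$ over an Iwanaga--Gorenstein ring of infinite global dimension one has $\mcGP^\wedge=\Mod(R)$ while $\mcP^\wedge$ is the class of modules of finite projective dimension; indeed, by your own item (2) the equality $\omega^\wedge=\mcGP_{(\mcX,\mcY)}^\wedge$ would force $\omega=\mcGP_{(\mcX,\mcY)}\cap\omega^\wedge=\mcGP_{(\mcX,\mcY)}$. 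In both places the equality happens to be dispensable (for ${\rm Thick}(\omega)\supseteq\omega^\smile$ you only need that a thick class containing $\omega$ contains $\omega^\wedge$ and hence $(\omega^\wedge)^\vee$; in (7) the hypothesis $\resdim_{\mcGP_{(\mcX,\mcY)}}(\mcC)<\infty$ is exactly what Proposition \ref{essLFC} (1) requires), but as written it is an error, not a shortcut.

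Second, and more seriously, your derivation in (5) of $\omega^\smile=\mcGP_{(\mcX,\mcY)}^\wedge\cap\mcI^{<\infty}_{\mcGP_{(\mcX,\mcY)}}$ from Proposition \ref{essLFC} (1) does not go through. That proposition needs $\resdim_{\mcGP_{(\mcX,\mcY)}}(\mcC)<\infty$, a global bound on the whole category, which is precisely the extra hypothesis of item (7) and is not available in (5); your remark that $\resdim_{\mcGP_{(\mcX,\mcY)}}(\mcGP_{(\mcX,\mcY)}^\wedge)<\infty$ holds ``trivially'' is also false, since this supremum is $\FGPD_{(\mcX,\mcY)}(\mcC)$ and need not be finite. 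Moreover, even where it applies, Proposition \ref{essLFC} (1) yields the absolute statement $\omega^\smile=\mcI^{<\infty}$, not the relative one you need; the paper obtains ${\rm Thick}(\omega)=\omega^\smile=\mcGP_{(\mcX,\mcY)}^\wedge\cap\mcI^{<\infty}_{\mcGP_{(\mcX,\mcY)}}$ (including the thickness of $\omega^\smile$, which you only gesture at) from \cite[Thm. 2.15]{BMPS19}, and uses Proposition \ref{essLFC} (2) only for the displayed dimension equalities. Two smaller repairs: in (8)(i) the equality $(\omega^\vee)^\wedge=\omega^\smile$ is not ``formal''---it uses that $(\omega^\vee)^\wedge=\mcGP_{(\mcX,\mcY)}^\wedge$ is thick (item (1)) together with ${\rm Thick}(\omega)=\omega^\smile$ from (5); and in (8)(iii) your ``summand of a projective'' argument for $\omega\subseteq\mcP$ presupposes enough projective objects, which are not assumed. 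Instead, since $\mcGP_{(\mcX,\mcY)}=\omega$ gives $\omega^\wedge=\mcGP_{(\mcX,\mcY)}^\wedge=\mcC$, item (3) gives $\Ext^{i}(\omega,\mcC)=0$ for all $i\geq 1$, so every object of $\omega$ is projective outright (equivalently, apply item (2) with ${}^{\perp}(\omega^\wedge)={}^{\perp}\mcC=\mcP$).
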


\begin{proof}
Parts (1) to (4) are consequences of \cite{BMPS19,BMS}.
\begin{enumerate}
\item[(5)] It follows from \cite[Thm. 2.15]{BMPS19} and Proposition \ref{essLFC} (2) applied to the left Frobenius pair $(\mcGP_{(\mcX,\mcY)},\omega)$.

\item[(6)] The equality $\omega^\vee = \mcGP_{(\mcX,\mcY)} \cap \mcI^{<\infty}_{\mcGP_{(\mcX,\mcY)}}$ follows by \cite[Prop. 2.7]{BMPS19}. The last equality between relative dimensions is a consequence of part (5), the dual of \cite[Prop. 2.1]{AB} and Proposition \ref{essLFC} (2) applied to the left Frobenius pair $(\mcGP_{(\mcX,\mcY)},\omega)$ (see part (1)).

\item[(7)] It follows by Proposition \ref{essLFC} (1).

\item[(8)] Notice that parts (ii) and (iii) will follow from (i), (2) and (6). So we only focus on proving (i). Since $\id_{\mcGP_{(\mcX,\mcY)}}(\mcGP_{(\mcX,\mcY)}) < \infty$, we get from part (6) that $\mcGP_{(\mcX,\mcY)} = \omega^\vee$. Thus, $\mcGP_{(\mcX,\mcY)}^\wedge = (\omega^\vee)^\wedge$, and it is thick by (1). The rest follows by noticing that $\omega \subseteq (\omega^\vee)^\wedge \subseteq {\rm Thick}(\omega) = \omega^\smile$ (see part (5)).
\end{enumerate}
\end{proof}

The next result describes a relation between the $(\mcX,\mcY)$-Gorenstein projective dimension, and the resolution dimensions relative to $\mcY$ and $\omega$.

\begin{proposition}\label{prop:GPdim_vs_omegadim}
Let $(\mcX,\mcY)$ be a GP-admissible pair in $\mcC$ such that $\omega$, $\mcY^\wedge$ and $\mcX \cap \mcY^\wedge$ are closed under direct summands, and $\mcY^\wedge$ is closed under extensions. Then, the following equality holds for every $n \geq 0$:
\[
(\mcGP_{(\mcX,\mcY)})^\wedge_n \cap \mcY^\wedge = \omega^\wedge_n.
\]
\end{proposition}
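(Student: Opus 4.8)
I will prove the two inclusions separately, the whole content lying in ``$\subseteq$'', which I reduce to the \emph{key lemma} $\mcGP_{(\mcX,\mcY)} \cap \mcY^\wedge = \omega$. For ``$\supseteq$'': by Example~\ref{ex:relative_GP_Frobenius_pair} the pair $(\mcGP_{(\mcX,\mcY)},\omega)$ is left Frobenius, so $\omega$ is a relative cogenerator in $\mcGP_{(\mcX,\mcY)}$; in particular $\omega \subseteq \mcGP_{(\mcX,\mcY)}$, and $\omega \subseteq \mcY$ by definition of $\omega$. Hence an $\omega$-resolution of length $\leq n$ is simultaneously a $\mcGP_{(\mcX,\mcY)}$-resolution and a $\mcY$-resolution of length $\leq n$, so $\omega^\wedge_n \subseteq (\mcGP_{(\mcX,\mcY)})^\wedge_n \cap \mcY^\wedge$; taking $n = 0$ gives the inclusion $\omega \subseteq \mcGP_{(\mcX,\mcY)} \cap \mcY^\wedge$.

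\textbf{Proof of the key lemma.} Let $G \in \mcGP_{(\mcX,\mcY)}$ with $r := \resdim_\mcY(G) < \infty$; I claim $G \in \omega$. First, $G$ is a direct summand of an object of $\mcY$: this is trivial if $r = 0$, and if $r \geq 1$ one picks a $\mcY$-resolution $0 \to Y_r \to \cdots \to Y_0 \xrightarrow{\pi} G \to 0$, sets $K := \Ker(\pi)$ (so $\resdim_\mcY(K) \leq r-1$), and observes that $G \in {}^\perp\mcY$ by Proposition~\ref{prop:GP_description}, so dimension-shifting along a finite $\mcY$-resolution of $K$ gives $\Ext^1(G,K) = 0$; thus $K \rightarrowtail Y_0 \twoheadrightarrow G$ splits and $G$ is a summand of $Y_0 \in \mcY$. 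Now write $G$ as a summand of some $Y \in \mcY$ via $s \colon G \to Y$, $p \colon Y \to G$ with $p s = \mathrm{id}_G$, and use the left Frobenius structure to choose a short exact sequence $0 \to G \xrightarrow{\iota} W \to G' \to 0$ with $W \in \omega$ and $G' \in \mcGP_{(\mcX,\mcY)}$. Since $G' \in {}^\perp\mcY$, applying $\Hom(-,Y)$ shows $\Hom(\iota,Y) \colon \Hom(W,Y) \to \Hom(G,Y)$ is surjective, so $s = \tilde{s}\iota$ for some $\tilde{s} \colon W \to Y$; then $(p\tilde{s})\iota = ps = \mathrm{id}_G$, so $\iota$ splits and $G$ is a summand of $W \in \omega$. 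As $\omega$ is closed under direct summands, $G \in \omega$, proving $\mcGP_{(\mcX,\mcY)} \cap \mcY^\wedge \subseteq \omega$.

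\textbf{The inclusion ``$\subseteq$''.} Let $C \in (\mcGP_{(\mcX,\mcY)})^\wedge_n \cap \mcY^\wedge$ and set $m := \Gpd_{(\mcX,\mcY)}(C) \leq n$. If $m = 0$ then $C \in \mcGP_{(\mcX,\mcY)} \cap \mcY^\wedge = \omega$ by the key lemma, so $C \in \omega^\wedge_n$. If $m \geq 1$, Proposition~\ref{coro:properties_relative_Gorenstein_projective}(4) yields a short exact sequence $K \rightarrowtail D \twoheadrightarrow C$ with $D \in \mcGP_{(\mcX,\mcY)}$ and $\resdim_\omega(K) = m-1$; since $\omega \subseteq \mcY$ we get $K \in \mcY^\wedge$, and as $\mcY^\wedge$ is closed under extensions and $C \in \mcY^\wedge$, also $D \in \mcY^\wedge$, whence $D \in \mcGP_{(\mcX,\mcY)} \cap \mcY^\wedge = \omega$ by the key lemma. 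Splicing an $\omega$-resolution of $K$ of length $m-1$ with $K \rightarrowtail D \twoheadrightarrow C$ gives an $\omega$-resolution of $C$ of length $m \leq n$, so $C \in \omega^\wedge_n$.

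\textbf{Main obstacle.} The heart of the argument is the key lemma, and within it the step from ``$G \in \mcGP_{(\mcX,\mcY)}$ has finite $\mcY$-resolution dimension'' to ``$G \in \omega$''. It rests on two maneuvers: splitting $G$ off a term of a finite $\mcY$-resolution, which requires the vanishing $\Ext^1(G,K) = 0$ for $K$ of finite $\mcY$-resolution dimension, deduced from $G \in {}^\perp\mcY$ by dimension-shifting; and then transporting that summand relation into $\omega$ through the Frobenius cogenerator $0 \to G \to W \to G' \to 0$, which uses $G' \in {}^\perp\mcY$ together with the hypothesis that $\omega$ is closed under direct summands. Once the key lemma is in hand, the inclusion ``$\subseteq$'' is essentially formal, the only remaining input being Proposition~\ref{coro:properties_relative_Gorenstein_projective}(4) and the closure of $\mcY^\wedge$ under extensions, the latter guaranteeing that the middle term $D$ again has finite $\mcY$-resolution dimension.
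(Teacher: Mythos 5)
Your proof is correct, and it follows the same overall skeleton as the paper's argument — the decomposition $K \rightarrowtail D \twoheadrightarrow C$ from Proposition~\ref{coro:properties_relative_Gorenstein_projective}~(4) together with the extension-closure of $\mcY^\wedge$ to force the middle term into $\omega$ — but it differs in two substantive ways. First, the paper disposes of the case $n=0$, i.e.\ precisely your key lemma $\mcGP_{(\mcX,\mcY)} \cap \mcY^\wedge = \omega$, by citing \cite[Thm.~3.34~(d)]{BMS}, whereas you prove it from scratch: splitting $G$ off a term of a finite $\mcY$-resolution via $\Ext^1(G,K)=0$ (dimension shifting from $G \in {}^\perp\mcY$), then transporting the section through the Frobenius cogenerator sequence $G \rightarrowtail W \twoheadrightarrow G'$ using $\Ext^1(G',\mcY)=0$ to exhibit $G$ as a summand of $W \in \omega$. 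This unpacking is correct and self-contained; notably it uses only that $\omega$ is closed under direct summands, so the hypotheses that $\mcY^\wedge$ and $\mcX \cap \mcY^\wedge$ are closed under direct summands (which in the paper enter only through the cited result of \cite{BMS}) play no role in your argument. Second, you avoid the paper's induction on $n$ altogether, since Proposition~\ref{coro:properties_relative_Gorenstein_projective}~(4) already supplies $\resdim_\omega(K) = \Gpd_{(\mcX,\mcY)}(C)-1$, so a single splice finishes the proof; the paper's appeal to the induction hypothesis (to place $K$ in $\mcY^\wedge$) is anyway replaceable, as you note implicitly, by the trivial containment $\omega^\wedge \subseteq \mcY^\wedge$. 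In short: same strategy, but with the imported base case proved elementarily and the induction streamlined away, at a mild gain in economy of hypotheses.
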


\begin{proof}
Let us use induction on $n$. The case $n = 0$ follows by \cite[Thm. 3.34 (d)]{BMS}. Now suppose that $(\mcGP_{(\mcX,\mcY)})^\wedge_{n-1} \cap \mcY^\wedge = \omega^\wedge_{n-1}$, and let $C \in (\mcGP_{(\mcX,\mcY)})^\wedge_n \cap \mcY^\wedge$. By Proposition \ref{coro:properties_relative_Gorenstein_projective} (4), there exists a short exact sequence $K \rightarrowtail G \twoheadrightarrow C$ with $G \in \mcGP_{(\mcX,\mcY)}$ and $K \in \omega^\wedge_{n-1}$. By the induction hypothesis, we have that $K \in \mcY^\wedge$. Thus, $G \in \mcGP_{(\mcX,\mcY)} \cap \mcY^\wedge = \omega$ since $\mcY^\wedge$ is closed under extensions, and hence $C \in \omega^\wedge_n$. The containment $\omega^\wedge_n \subseteq (\mcGP_{(\mcX,\mcY)})^\wedge_n \cap \mcY^\wedge$, on the other hand, is clear.
\end{proof}

\begin{remark}
Under some sufficient conditions on a GP-admissible pair $(\mcX,\mcY)$, we can guarantee that $\mcY^\wedge$ is closed under extensions. This occurs for instance if
\begin{enumerate}
\item $\mcY$ is closed under direct summands, and $\mcY \subseteq \mcGP_{(\mcX,\mcY)}$ (see \cite[Thm. 3.34 (b)]{BMS}); or

\item $\mcY$ is closed under extensions, and $\omega$ is a relative generator in $\mcY$ with $\pd_{\mcY}(\omega)$ $= 0$ (see \cite[Lem. 3.5 (2)]{HMP} by Huerta and the second and third authors).
\end{enumerate}
In particular, we have that $\mcY^\wedge$ is closed under extensions if $(\mcX,\mcY)$ is a hereditary complete cotorsion pair in $\mcC$.
\end{remark}

We have the following consequence of Proposition \ref{coro:properties_relative_Gorenstein_projective} (5), (6) and (7).

\begin{corollary}\label{coro:essGIC}
For a GP-admissible pair $(\mcX,\mcY)$ in $\mcC$ with $\omega$ closed under direct summands and $\glGPD_{(\mcX,\mcY)}(\mcC) < \infty$, the following statements hold true:
\begin{enumerate}
\item ${\rm Thick}(\omega) = \omega^\smile = \mcI^{<\infty}_{\mcGP_{(\mcX,\mcY)}} = \mcI^{<\infty}$. Moreover,
\[
\id_{\mcGP_{(\mcX,\mcY)}}(M) = \id_\omega(M) = \coresdim_{\omega^\wedge}(M)
\]
for every $M \in \mcI^{<\infty}$.

\item If $\id_{\mcGP_{(\mcX,\mcY)}}(\mcGP_{(\mcX,\mcY)}) < \infty$, then
\begin{enumerate}
\item[(i)] $\mcC = \mcI^{<\infty}$ and $\mcGP_{(\mcX,\mcY)} = \omega^\vee$.

\item[(ii)] $\pd(\mcGP_{(\mcX,\mcY)}) = \pd(\omega) = \coresdim_{\omega^\wedge}(\mcC)$.

\item[(iii)] $\mcGP_{(\mcX,\mcY)} = \mcP$ if $\omega$ is closed under epikernels.
\end{enumerate}
\end{enumerate}
\end{corollary}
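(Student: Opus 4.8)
The strategy is to reduce everything to Proposition \ref{coro:properties_relative_Gorenstein_projective}, whose parts (5), (7) and (8) do almost all of the work; the only extra input needed is the elementary observation that the hypothesis $\glGPD_{(\mcX,\mcY)}(\mcC) < \infty$ is equivalent to $\mcC = \mcGP_{(\mcX,\mcY)}^\wedge$. Indeed, $\glGPD_{(\mcX,\mcY)}(\mcC) = \resdim_{\mcGP_{(\mcX,\mcY)}}(\mcC)$, so finiteness of this supremum forces every object of $\mcC$ to have finite $\mcGP_{(\mcX,\mcY)}$-resolution dimension, and the reverse inclusion $\mcGP_{(\mcX,\mcY)}^\wedge \subseteq \mcC$ is trivial.

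For part (1), I would plug $\mcC = \mcGP_{(\mcX,\mcY)}^\wedge$ into the chain of equalities in Proposition \ref{coro:properties_relative_Gorenstein_projective} (5): the intersection $\mcGP_{(\mcX,\mcY)}^\wedge \cap \mcI^{<\infty}_{\mcGP_{(\mcX,\mcY)}}$ collapses to $\mcI^{<\infty}_{\mcGP_{(\mcX,\mcY)}}$, giving ${\rm Thick}(\omega) = \omega^\smile = \mcI^{<\infty}_{\mcGP_{(\mcX,\mcY)}}$, while Proposition \ref{coro:properties_relative_Gorenstein_projective} (7) adds the remaining identity $\omega^\smile = \mcI^{<\infty}$. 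The ``Moreover'' clause of part (1) is then just the ``Moreover'' clause of Proposition \ref{coro:properties_relative_Gorenstein_projective} (5), now read with $\omega^\smile$ replaced by $\mcI^{<\infty}$.

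For part (2), assume in addition $\id_{\mcGP_{(\mcX,\mcY)}}(\mcGP_{(\mcX,\mcY)}) < \infty$. Then Proposition \ref{coro:properties_relative_Gorenstein_projective} (8)(i) gives $\mcGP_{(\mcX,\mcY)} = \omega^\vee$ and $\mcGP_{(\mcX,\mcY)}^\wedge = \omega^\smile$; combining this with $\mcC = \mcGP_{(\mcX,\mcY)}^\wedge$ and part (1) yields $\mcC = \omega^\smile = \mcI^{<\infty}$, which is (i). For (ii), since $\omega \subseteq \mcGP_{(\mcX,\mcY)}$ we have $\pd(\omega) \leq \pd(\mcGP_{(\mcX,\mcY)})$, while the equality $\mcGP_{(\mcX,\mcY)} = \omega^\vee$ means every Gorenstein projective object sits in a finite $\omega$-coresolution, so an easy dimension shift along that coresolution bounds its projective dimension by $\pd(\omega)$; hence $\pd(\mcGP_{(\mcX,\mcY)}) = \pd(\omega)$. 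Next, the folklore identity $\pd(\omega) = \pd_{\mcC}(\omega) = \id_{\omega}(\mcC)$, together with the ``Moreover'' clause of part (1) applied to each $M \in \mcC = \mcI^{<\infty}$ and the fact that $\id_{\omega}(\mcC)$ and $\coresdim_{\omega^\wedge}(\mcC)$ are by definition suprema over $M \in \mcC$, gives $\pd(\omega) = \id_{\omega}(\mcC) = \coresdim_{\omega^\wedge}(\mcC)$, which finishes (ii). Finally (iii) is immediate: when $\omega$ is closed under epikernels, Proposition \ref{coro:properties_relative_Gorenstein_projective} (8)(iii) yields $\mcGP_{(\mcX,\mcY)} = \mcP$ as soon as $\mcC = \mcGP_{(\mcX,\mcY)}^\wedge$, which is our running hypothesis.

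I do not anticipate any real difficulty; the proof is essentially bookkeeping on top of Proposition \ref{coro:properties_relative_Gorenstein_projective}. The only places that warrant a bit of care are: (a) remembering to record $\mcC = \mcGP_{(\mcX,\mcY)}^\wedge$ before specializing the conditional statements (5), (7) and (8)(i)--(iii); and (b) the passage from object-wise identities to class-level identities in (ii), which is legitimate precisely because $\id_{\omega}(\mcC)$ and $\coresdim_{\omega^\wedge}(\mcC)$ are defined as suprema over all objects of $\mcC$, so the pointwise equalities supplied by part (1) genuinely upgrade to equalities of dimensions.
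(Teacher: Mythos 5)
Your proof is correct and takes essentially the same route as the paper, which simply reads the corollary off from Proposition \ref{coro:properties_relative_Gorenstein_projective} (parts (5), (7) and (8), the last being the packaging of part (6)) once one notes that $\glGPD_{(\mcX,\mcY)}(\mcC)<\infty$ forces $\mcC=\mcGP^\wedge_{(\mcX,\mcY)}$. One small caveat: your phrase ``is equivalent to'' overstates this step, since the converse fails in general (cf.\ Proposition \ref{lpdGP}, where $\mcC=\mcGP^\wedge_{(\mcX,\mcY)}$ must be supplemented by $\FPD_{\omega}(\mcC)<\infty$), but you only use the correct implication, so the argument is unaffected.
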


%%%%%%%%%%%%%%%%%%%%%%%%%%%%%%%%%%%%%
%%%%%%%%%%%%%%%%%%%%%%%%%%%%%%%%%%%%%
%%%%%%%%%%%%%%%%%%%%%%%%%%%%%%%%%%%%%
%%%%%%%%%%%%%%%%%%%%%%%%%%%%%%%%%%%%%

\section{Relative Gorenstein model structures}\label{sec:RG-models}

One of the main contributions in Xu's work \cite[Thm. 4.2 (2)]{Xu17} is the discovery of new abelian model structures on the category $\Mod(R)$ of left $R$-modules involving $\mcGP_{(\mcX,\omega)}$ as the class of cofibrant objects, where $(\mcX,\mcY)$ is a hereditary complete cotorsion pair in $\Mod(R)$ such that $\Mod(R) = (\mcGP_{(\mcX,\omega)})^\wedge$. The arguments used by Xu cover part of the theory of AB contexts along with Hovey-Gillespie correspondence. Specifically, under the previous conditions, one obtains
two complete cotorsion pairs $(\mcGP_{(\mcX,\omega)} \cap \mcX^\wedge,\mcY)$ and $(\mcGP_{(\mcX,\omega)},\mcX^\wedge \cap \mcY)$ in $\Mod(R)$ (where $\mcX^\wedge$ is thick by \cite[Thm. 2.11]{BMPS19}), and thus a unique abelian model structure on $\Mod(R)$ such that $\mcGP_{(\mcX,\omega)}$, $\mcY$ and $\mcX^\wedge$ are the classes of cofibrant, fibrant and trivial objects, respectively.

However, the condition $\Mod(R) = \mcGP_{(\mcX,\omega)}^\wedge$ is more or less difficult to see in general, even in the well known case where $\mcY$ is the whole category $\Mod(R)$ and $\mcX$ is the class of projective left $R$-modules. For instance, when $R$ is an Iwanaga-Gorenstein ring (see \cite[Def. 9.1.1]{EJ}), then $\Mod(R) = \mcGP^\wedge$. So a natural step towards a more general context on which a similar model structure still exists is to drop the assumption that $\Mod(R) = \mcGP_{(\mcX,\omega)}^\wedge$. Indeed, the main goal of this section is to prove the following extension of Xu's \cite[Thm. 4.2 (2)]{Xu17}.

\begin{theorem}\label{theo:Xu_model}
Let $(\mcX,\mcY)$ be a hereditary complete cotorsion pair in $\mcC$. Then, there exists a unique hereditary exact model structure on $\mcGP_{(\mcX,\omega)}^\wedge$ such that $\mcGP_{(\mcX,\omega)}$, $\mcY \cap \mcGP^\wedge_{(\mcX,\omega)}$ and $\mcX^\wedge$ are the classes of cofibrant, fibrant and trivial objects, respectively. Dually, there exists a unique hereditary exact model structure on $\mcGI^\vee_{(\omega,\mcY)}$ such that $\mcX \cap \mcGI^\vee_{(\omega,\mcY)}$, $\mcGI_{(\omega,\mcY)}$ and $\mcY^\vee$ are the classes of cofibrant, fibrant and trivial objects, respectively. We shall refer to these models as the \textbf{Gorenstein projective} and \textbf{Gorenstein injective model structures relative to $\bm{(\mcX,\mcY)}$}, and shall be denoted by $\mcM^{\rm GP}_{(\mcX,\mcY)}$ and $\mcM^{\rm GI}_{(\mcX,\mcY)}$.
\end{theorem}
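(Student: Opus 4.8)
The plan is to obtain the model structure from the exact‑category version of the Hovey--Gillespie correspondence recalled in Section~\ref{sec:prelims}, applied inside the exact category $\mcE := \mcGP_{(\mcX,\omega)}^\wedge$ with the classes $\mcA := \mcGP_{(\mcX,\omega)}$, $\mcB := \mcY \cap \mcGP_{(\mcX,\omega)}^\wedge$ and $\mcW := \mcX^\wedge$. First one assembles the ambient data. Since $(\mcX,\mcY)$ is a hereditary complete cotorsion pair, $\mcX = {}^\perp\mcY$ is resolving and closed under direct summands (hence left thick), while $\mcY = \mcX^\perp$ is coresolving and closed under direct summands (hence right thick); by Remark~\ref{rmk:GPGI-admissible}~(3) the pair $(\mcX,\omega)$ is GP-admissible with kernel $\mcX \cap \omega = \omega$, itself closed under direct summands. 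Moreover $(\mcX,\omega)$ is a left Frobenius pair: $\omega$ is a relative cogenerator in $\mcX$ because a special $\mcY$-preenvelope $X \rightarrowtail W \twoheadrightarrow X'$ of $X \in \mcX$ has $X' \in {}^{\perp_1}\mcY = \mcX$ and hence $W \in \mcX \cap \mcY = \omega$ (as $\mcX$ is closed under extensions), and $\id_\mcX(\omega) = 0$ by heredity. Thus by Proposition~\ref{coro:properties_relative_Gorenstein_projective}~(1) (and \cite{BMPS19}) both $\mcE = \mcGP_{(\mcX,\omega)}^\wedge$ and $\mcX^\wedge$ are thick in $\mcC$, so $\mcE$ is a weakly idempotent complete exact category, and $\mcGP_{(\mcX,\omega)}$ is resolving by Remark~\ref{rmk:containment_projectives}.

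The technical core consists of three identifications. First, $\mcX \subseteq \mcGP_{(\mcX,\omega)}$: for $X \in \mcX$ one has $X \in {}^\perp\omega$ by heredity, and iterating the special $\mcY$-preenvelopes above (whose middle terms all lie in $\omega$ and whose cokernels stay in $\mcX$) produces a $\Hom(-,\omega)$-acyclic $\omega$-coresolution of $X$, so $X \in \mcGP_{(\mcX,\omega)}$ by Proposition~\ref{prop:GP_description}; in particular $\mcX^\wedge \subseteq \mcGP_{(\mcX,\omega)}^\wedge = \mcE$, so $\mcW$ is thick in $\mcE$. Second, $\mcX^\wedge \cap \mcY = \omega^\wedge$: the inclusion $\supseteq$ holds because $\omega \subseteq \mcX$ and $\mcY$ is right thick; for $\subseteq$ one inducts on $\resdim_\mcX$, taking a special $\mcX$-precover $Y \rightarrowtail X' \twoheadrightarrow C$ with $X' \in \mcX$ and $Y \in \mcX^{\perp_1} = \mcY$, observing $\resdim_\mcX(Y) = \resdim_\mcX(C) - 1$ so $Y \in \omega^\wedge$ by induction, and then $X' \in \mcX \cap \mcY = \omega$ since $\mcY$ is closed under extensions, whence $C \in \omega^\wedge$. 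Third, $\mcGP_{(\mcX,\omega)} \cap \mcX^\wedge = \mcX$: given $A$ in the left-hand side, a special $\mcX$-precover $Y \rightarrowtail X' \twoheadrightarrow A$ has $Y \in \mcX^\wedge \cap \mcY = \omega^\wedge$ (by the previous identification, using that $\mcX$ is resolving) and $Y \in \mcGP_{(\mcX,\omega)}$ (as $\mcGP_{(\mcX,\omega)}$ is closed under epikernels and $X' \in \mcX \subseteq \mcGP_{(\mcX,\omega)}$), so $Y \in \mcGP_{(\mcX,\omega)} \cap \omega^\wedge = \omega$ by Proposition~\ref{coro:properties_relative_Gorenstein_projective}~(2); since $Y \in \omega$ and $A \in \mcGP_{(\mcX,\omega)} \subseteq {}^\perp\omega$ we get $\Ext^1(A,Y) = 0$, the sequence splits, and $A$ is a direct summand of $X' \in \mcX$, hence $A \in \mcX$. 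Therefore $\mcA \cap \mcW = \mcX$ and $\mcB \cap \mcW = \mcY \cap \mcX^\wedge = \omega^\wedge$.

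Next one produces the two complete cotorsion pairs in $\mcE$. For $(\mcA, \mcB \cap \mcW) = (\mcGP_{(\mcX,\omega)}, \omega^\wedge)$: heredity follows from $\mcGP_{(\mcX,\omega)} \subseteq {}^\perp\omega$ by dimension shifting along $\omega^\wedge$; completeness is precisely the existence of the Auslander--Buchweitz-type short exact sequences of Proposition~\ref{coro:properties_relative_Gorenstein_projective}~(4) (the sequence $K \rightarrowtail D \twoheadrightarrow C$ being a special $\mcGP_{(\mcX,\omega)}$-precover and $C \rightarrowtail L \twoheadrightarrow D'$ a special $\omega^\wedge$-preenvelope); and fullness in both directions follows by splitting these approximations against the relevant $\Ext^1$-vanishing, using Proposition~\ref{coro:properties_relative_Gorenstein_projective}~(2) and that $\omega^\wedge = \mcX^\wedge \cap \mcY$ is closed under direct summands --- all of this being the left-Frobenius-pair theory of \cite{BMPS19} (see also \cite{AB}). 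For $(\mcA \cap \mcW, \mcB) = (\mcX, \mcY \cap \mcE)$: this is the restriction to the thick subcategory $\mcE \supseteq \mcX$ of the hereditary complete cotorsion pair $(\mcX,\mcY)$ of $\mcC$ --- orthogonality and the right-hand half are immediate, completeness holds because the special $\mcX$-precovers and special $\mcY$-preenvelopes of objects of $\mcE$ have kernels, resp. middle terms, in $\mcE$ (thickness of $\mcE$), and the left-hand half is recovered because if $M \in \mcE$ satisfies $\Ext^1(M, \mcY \cap \mcE) = 0$ then its special $\mcX$-precover $B \rightarrowtail A \twoheadrightarrow M$ in $\mcC$ has $B \in \mcY \cap \mcE$, hence splits, exhibiting $M$ as a summand of $A \in \mcX$.

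Finally, as $\mcW = \mcX^\wedge$ is thick in the weakly idempotent complete exact category $\mcE$ and $(\mcA \cap \mcW, \mcB)$, $(\mcA, \mcB \cap \mcW)$ are complete cotorsion pairs, Gillespie's \cite[Thm.~3.3]{GiExact} delivers a unique exact model structure on $\mcGP_{(\mcX,\omega)}^\wedge$ whose cofibrant, fibrant and trivial classes are $\mcGP_{(\mcX,\omega)}$, $\mcY \cap \mcGP_{(\mcX,\omega)}^\wedge$ and $\mcX^\wedge$, and it is hereditary because both cotorsion pairs are. The statement for $\mcM^{\rm GI}_{(\mcX,\mcY)}$ is then obtained by running the same argument in $\mcC^{\op}$, where $(\mcX,\mcY)$ turns into the hereditary complete cotorsion pair $(\mcY,\mcX)$, the pair $(\mcX,\omega)$ into the GI-admissible pair $(\omega,\mcY)$, $\mcGP_{(\mcX,\omega)}$ into $\mcGI_{(\omega,\mcY)}$, and $\mcX^\wedge$ into $\mcY^\vee$. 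I expect the main obstacle to be the chain of identifications in the second paragraph --- especially $\mcGP_{(\mcX,\omega)} \cap \mcX^\wedge = \mcX$ --- since it is exactly what lets $\mcX^\wedge$ serve as the trivial class and makes $(\mcA \cap \mcW,\mcB)$ an honest restriction of $(\mcX,\mcY)$; once these are in hand, the remainder is bookkeeping with the approximation results already established.
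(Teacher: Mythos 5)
Your proposal is correct and follows essentially the same route as the paper: it builds the two compatible complete cotorsion pairs $(\mcGP_{(\mcX,\omega)},\omega^\wedge)$ and $(\mcX,\mcY\cap\mcGP^\wedge_{(\mcX,\omega)})$ in the thick (hence w.i.c.) subcategory $\mcGP^\wedge_{(\mcX,\omega)}$, verifies the compatibility identities $\mcY\cap\mcX^\wedge=\omega^\wedge$ and $\mcGP_{(\mcX,\omega)}\cap\mcX^\wedge=\mcX$, and applies Gillespie's correspondence, handling the dual statement by passing to $\mcC\op$. The only difference is that you re-derive by hand (via inductions and splitting of special sequences) several facts the paper imports from \cite{BMPS19} and \cite{BMS}, such as the Auslander--Buchweitz approximations and the completeness of $(\mcGP_{(\mcX,\omega)},\omega^\wedge)$; the brief step ``$\resdim_{\mcX}(Y)=\resdim_{\mcX}(C)-1$'' deserves the standard dimension-shifting justification via $\resdim_{\mcX}=\pd_{\mcY}$, but this is routine.
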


The proof of previous theorem will be possible by applying the Hovey-Gillespie correspondence in the setting of w.i.c. exact categories. For now, we want to stress the fact that relative Gorenstein projective model structures are different from many exact Gorenstein model structures already covered in the literature, such as the \emph{Auslander-Buchweitz} (or \emph{AB}) \emph{model structures}. These structures were introduced and studied in \cite{BMPS19}, and cover a wide range of projective exact model structures whose cofibrant objects are relative Gorenstein objects, such as: Gorenstein projective modules, Ding projective modules, Gorenstein AC-projective modules, among others. AB model structures are obtained from strong left Frobenius pairs in the following way: if $(\mcX,\omega)$ is a \emph{strong} left Frobenius pair in $\mcC$ (that is, $(\mcX,\omega)$ is a left Frobenius pair where $\omega$ is a relative generator in $\mcX$ with $\pd_{\mcX}(\omega) = 0$), then there exists a unique exact model structure on $\mcX^\wedge$ such that $\mcX$, $\mcX^\wedge$ and $\omega^\wedge$ are the classes of cofibrant, fibrant and trivial objects, respectively (see \cite[Thm. 4.1]{BMPS19}). The fact that $\mcM^{\rm GP}_{(\mcX,\mcY)}$ and $\mcM^{\rm GI}_{(\mcX,\mcY)}$ are not going to be AB models in general is because the left Frobenius pair $(\mcGP_{(\mcX,\omega)},\omega)$ mentioned in Example \ref{ex:relative_GP_Frobenius_pair} is not always strong.  Moreover, the only strong left Frobenius pair of the form $(\mcGP_{(\mcX,\omega)},\omega)$ is precisely $(\mcGP,\mcP)$ (that is, $\mcGP_{(\mcX,\omega)} = \mcGP$, or equivalently, $\omega = \mcP$). In other words, AB model structures practically have no room in the Gorenstein homological and homotopical algebra relative to cotorsion pairs. However, this constraint can be considered to extend Xu's \cite[Prop. 3.6]{Xu17} (and also Proposition \ref{prop:relative_GP_are_absolute_GP}).

\begin{proposition}\label{prop:relative_G-projective_model}
Let $(\mcX,\mcY)$ be a GP-admissible pair in an abelian category $\mcC$ with enough projective objects, such that $\mcX$ and $\omega$ are closed under direct summands. Then, the following conditions are equivalent:
\begin{itemize}
\item[(a)] $\mcGP_{(\mcX,\omega)} = \mcGP$.

\item[(b)] $\omega = \mcP$.

\item[(c)] The left Frobenius pair $(\mcGP_{(\mcX,\omega)},\omega)$ is strong.

\item[(d)] There exists a unique exact model structure on $\mcGP^\wedge_{(\mcX,\omega)}$ with $\mcGP_{(\mcX,\omega)}$, $\mcGP^\wedge_{(\mcX,\omega)}$ and $\omega^\wedge$ as the classes of cofibrant, fibrant and trivial objects, respectively.
\end{itemize}
\end{proposition}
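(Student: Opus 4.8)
The plan is to establish $(a)\Leftrightarrow(b)$, $(b)\Rightarrow(c)$, $(c)\Rightarrow(d)$ and $(d)\Rightarrow(b)$, using the results already recorded above. Throughout, the pair to which the ambient machinery gets applied is $(\mcX,\omega)$ rather than $(\mcX,\mcY)$; this is legitimate because $(\mcX,\omega)$ is again GP-admissible with kernel $\mcX\cap\omega=\omega$, the only non-formal axiom being $\pd_\omega(\mcX)=0$, which follows from $\omega\subseteq\mcY$ and \GPaone.

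The equivalence $(a)\Leftrightarrow(b)$ is immediate: it is exactly Proposition \ref{prop:relative_GP_are_absolute_GP} applied to $(\mcX,\mcY)$, whose hypotheses (enough projectives, and $\mcX$ and $\omega$ closed under direct summands) are the standing hypotheses here. For $(b)\Rightarrow(c)$ I would first note that $\omega=\mcP$ forces $\mcGP_{(\mcX,\omega)}=\mcGP$ by Proposition \ref{prop:relative_GP_are_absolute_GP}, so the pair in question is $(\mcGP,\mcP)$; this is left Frobenius by Example \ref{ex:relative_GP_Frobenius_pair}, and it is \emph{strong} because $\pd_{\mcGP}(\mcP)=0$ is trivial and $\mcP$ is a relative generator in $\mcGP$ (since $\mcC$ has enough projectives and $\mcGP$ is resolving, by Remark \ref{rmk:containment_projectives} with $(\mcX,\mcY)=(\mcP,\mcP)$). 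The implication $(c)\Rightarrow(d)$ is then just the application of the Auslander--Buchweitz model structure theorem \cite[Thm. 4.1]{BMPS19}, recalled before the statement, to the strong left Frobenius pair $(\mcGP_{(\mcX,\omega)},\omega)$: it produces precisely the model structure described in $(d)$, with $\mcGP_{(\mcX,\omega)}$, $\mcGP^\wedge_{(\mcX,\omega)}$ and $\omega^\wedge$ as cofibrant, fibrant and trivial objects.

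The main work is $(d)\Rightarrow(b)$. By Proposition \ref{coro:properties_relative_Gorenstein_projective} (1) the class $\mcGP^\wedge_{(\mcX,\omega)}$ is thick in $\mcC$, hence a weakly idempotent complete exact category for the inherited exact structure, and $\mcGP_{(\mcX,\omega)}$ is resolving. Feeding the model structure of $(d)$ into the Hovey--Gillespie correspondence \cite[Thm. 3.3]{GiExact} produces, in particular, the complete cotorsion pair $\big(\mcGP_{(\mcX,\omega)}\cap\omega^\wedge,\ \mcGP^\wedge_{(\mcX,\omega)}\big)$ in $\mcGP^\wedge_{(\mcX,\omega)}$; and $\mcGP_{(\mcX,\omega)}\cap\omega^\wedge=\omega$ by Proposition \ref{coro:properties_relative_Gorenstein_projective} (2). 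Thus $\omega={}^{\perp_1}(\mcGP^\wedge_{(\mcX,\omega)})$, where the orthogonal is computed inside the exact category $\mcGP^\wedge_{(\mcX,\omega)}$. Now every $P\in\mcP$ lies in $\mcGP_{(\mcX,\omega)}\subseteq\mcGP^\wedge_{(\mcX,\omega)}$ (Remark \ref{rmk:containment_projectives}) and is projective in that exact category, hence lies in ${}^{\perp_1}(\mcGP^\wedge_{(\mcX,\omega)})=\omega$; so $\mcP\subseteq\omega$. Conversely, given $W\in\omega$, pick a short exact sequence $K\rightarrowtail P\twoheadrightarrow W$ in $\mcC$ with $P\in\mcP$; then $K\in\mcGP_{(\mcX,\omega)}$ since $\mcGP_{(\mcX,\omega)}$ is resolving, so this is a conflation of $\mcGP^\wedge_{(\mcX,\omega)}$, and since $W$ lies in the left-hand class $\omega$ of the cotorsion pair it splits, making $W$ a direct summand of $P$ and hence projective. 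Therefore $\omega=\mcP$.

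The step I expect to require the most care is $(d)\Rightarrow(b)$: the cotorsion pairs furnished by \cite[Thm. 3.3]{GiExact} live in the exact category $\mcGP^\wedge_{(\mcX,\omega)}$ rather than in $\mcC$, so one must check that both the syzygy $K$ and the object $W$ stay inside this thick subcategory — which is exactly what lets the splitting take place there, and thereby forces the objects of $\omega$ to be genuinely projective in $\mcC$. Everything else (the identifications $\mcGP_{(\mcX,\omega)}\cap\omega^\wedge=\omega$, left Frobenius-ness, resolvingness) is already available in the cited results.
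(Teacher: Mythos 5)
Your proof is correct, but it closes the circle differently from the paper. The paper proves (a) $\Leftrightarrow$ (b) as you do, and then runs two separate loops, (b) $\Rightarrow$ (c) $\Rightarrow$ (b) and (c) $\Rightarrow$ (d) $\Rightarrow$ (c): in particular, from (d) it recovers condition (c) in full, i.e.\ it verifies that $\omega$ is a relative generator in $\mcGP_{(\mcX,\omega)}$ (via the completeness of $(\omega,\mcGP^\wedge_{(\mcX,\omega)})$ plus closure under epikernels) and then establishes $\pd_{\mcGP_{(\mcX,\omega)}}(\omega)=0$ by an inductive dimension-shifting argument on $\Ext^i(W,G)$ using Proposition \ref{coro:properties_relative_Gorenstein_projective} (4); afterwards its (c) $\Rightarrow$ (b) step converts strongness into $\omega=\mcP$ exactly as in your second half (syzygy $K\rightarrowtail P\twoheadrightarrow W$, splitting, summand). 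You instead go (d) $\Rightarrow$ (b) directly: after extracting, exactly as the paper does, the complete cotorsion pair $(\omega,\mcGP^\wedge_{(\mcX,\omega)})$ from Gillespie's correspondence and the identification $\mcGP_{(\mcX,\omega)}\cap\omega^\wedge=\omega$, you get $\mcP\subseteq\omega$ by observing that projectives of $\mcC$ lie in $\mcGP^\wedge_{(\mcX,\omega)}$ and are $\Ext^1$-orthogonal to everything there (where the paper uses the split epimorphism coming from the relative-generator property in (c)), and $\omega\subseteq\mcP$ by the same syzygy-splitting, now using only the $\Ext^1$-vanishing of the cotorsion pair rather than $\pd_{\mcGP_{(\mcX,\omega)}}(\omega)=0$. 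The net effect is that your route avoids both the relative-generator verification and the inductive higher-$\Ext$ argument, at the (negligible) cost of a slightly less modular implication diagram; the one point that genuinely needs the care you flag — that the extension groups and the splitting are computed inside the thick, hence extension-closed and weakly idempotent complete, subcategory $\mcGP^\wedge_{(\mcX,\omega)}$ and therefore agree with those of $\mcC$ — is handled correctly, and your appeal to Remark \ref{rmk:containment_projectives} for $\mcP\subseteq\mcX\subseteq\mcGP_{(\mcX,\omega)}$ supplies the containment that the resolving-ness claim requires.
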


\begin{proof}
The equivalence (a) $\Leftrightarrow$ (b) is Proposition \ref{prop:relative_GP_are_absolute_GP}. We split the rest of the proof into two circuits of implications:
\begin{itemize}
\item (b) $\Rightarrow$ (c) $\Rightarrow$ (b): Assuming (b), since (a) and (b) are equivalent, we have that $(\mcGP_{(\mcX,\omega)},\omega) = (\mcGP,\mcP)$ is a strong left Frobenius pair. Now suppose that (c) holds. For the containment $\mcP \subseteq \omega$, it suffices to notice that for every $P \in \mcP$ there exists a split epimorphism $W \twoheadrightarrow P$ with $W \in \omega$, since $\omega$ is a relative generator in $\mcGP_{(\mcX,\omega)}$ and $\mcP \subseteq \mcX \subseteq \mcGP_{(\mcX,\omega)}$ (see Remark \ref{rmk:containment_projectives}). For the remaining containment $\omega \subseteq \mcP$, let $W \in \omega$. Since $\mcC$ have enough projective objects, there exists a short exact sequence $K \rightarrowtail P \twoheadrightarrow W$ with $P \in \mcP$. Now using the fact that the class $\mcGP_{(\mcX,\omega)}$ is closed under epikernels by Proposition \ref{prop:GP_description} (note that $(\mcX,\omega)$ is also a GP-admissible pair), we have that $K \in \mcGP_{(\mcX,\omega)}$. On the other hand, since $\pd_{\mcGP_{(\mcX,\omega)}}(\omega) = 0$, the previous sequence splits. Hence, $W \in \mcP$.

\item (c) $\Rightarrow$ (d) $\Rightarrow$ (c): The first implication is a consequence of \cite[Thm. 4.1]{BMPS19}. Now suppose that condition (d) holds. By Example \ref{ex:relative_GP_Frobenius_pair}, we have that $(\mcGP_{(\mcX,\omega)},\omega)$ is a left Frobenius pair in $\mcC$. From (d),  $(\omega,\mcGP^\wedge_{(\mcX,\omega)})$ is a complete cotorsion pair in the exact category $\mcGP^\wedge_{(\mcX,\omega)}$, where $\mcGP_{(\mcX,\omega)} \cap \omega^\wedge = \omega$ by Proposition \ref{coro:properties_relative_Gorenstein_projective} (2). So for every $C \in \mcGP_{(\mcX,\omega)}$, we have a short exact sequence $C' \rightarrowtail W \twoheadrightarrow C$ where $W \in \omega$ and $C' \in \mcGP^\wedge_{(\mcX,\omega)}$. Actually, we can assert that $C' \in \mcGP_{(\mcX,\omega)}$ since $\mcGP_{(\mcX,\omega)}$ is closed under epikernels. Thus, $\omega$ is a relative generator in $\mcGP_{(\mcX,\omega)}$. It remains to show that $\Ext^i(W,G) = 0$ for every $i \geq 1$, $W \in \omega$ and $G \in \mcGP_{(\mcX,\omega)}$. By Proposition \ref{coro:properties_relative_Gorenstein_projective} (4), there exists a short exact sequence $G \rightarrowtail H \twoheadrightarrow G'$ where $H \in \omega^\wedge$ and $G' \in \mcGP_{(\mcX,\omega)}$. The previous induces an exact sequence of abelian groups
\[
\Ext^1(W,G') \to \Ext^2(W,G) \to \Ext^2(W,H)
\]
where $\Ext^1(W,G') = 0$ since $\omega = {}^{\perp_1}(\mcGP^\wedge_{(\mcX,\omega)}) \cap \mcGP^\wedge_{(\mcX,\omega)}$, and $\Ext^2(W,H) = 0$ since $\pd_{\omega^\wedge}(\omega) = 0$. Thus, $\Ext^2(W,G) = 0$, and inductively we can conclude that $\Ext^i(W,G) = 0$ for every $i \geq 1$.
\end{itemize}
\end{proof}

\begin{remark}
In the case where $(\mcX,\mcY)$ is a GP-admissible pair in $\mcC$ with enough projective objects, such that $\mcX$ is closed under direct summands and $\omega = \mcP$, then the previous proposition asserts that there exists a unique exact model structure on $\mcGP^\wedge$ such that $\mcGP$ is the class of cofibrant objects, $\mcGP^\wedge$ the class of fibrant objects, and $\mcP^\wedge$ the class of trivial objects. In the case where $\mcC = \Mod(R)$ with $R$ an Iwanaga-Gorenstein ring, this model structure coincides with Hovey's model structure \cite[Thm. 8.6]{Hovey}. For arbitrary rings, we obtain the exact model structure described in \cite[\S \ 6.1]{BMPS19}. Moreover, in the context of bicomplete abelian categories with enough projective and injective objects, Dalezios, Holm and the first named author have obtained the same result in \cite[Thms. 3.7 \& 3.9]{DEH18}.
\end{remark}

%%%%%%%%%%%%%%%%%%%%%%%%%%%%%%%%%%%%%
%%%%%%%%%%%%%%%%%%%%%%%%%%%%%%%%%%%%%

\subsection*{Gorenstein model structures relative to a cotorsion pair}

We are now in position to prove Theorem \ref{theo:Xu_model}.

\begin{proof}[\textit{Proof of Theorem \ref{theo:Xu_model}}]
The first thing to note is that $\mcGP^\wedge_{(\mcX,\omega)}$ is a w.i.c. exact category, since by Proposition \ref{coro:properties_relative_Gorenstein_projective} (1) we know that $\mcGP^\wedge_{(\mcX,\omega)}$ is a thick subcategory of $\mcC$. Now we construct two compatible and complete cotorsion pairs in $\mcGP^\wedge_{(\mcX,\omega)}$ that yield the desired model structure. We split our proof into several parts:
\begin{itemize}
\item Construction of the cotorsion pairs: First, note that $(\mcX,\omega)$ is a GP-admissi-ble pair in $\mcC$ with $\omega$ closed under direct summands. Then, by \cite[Coroll. 5.2 (a)]{BMS} we have that $(\mcGP_{(\mcX,\omega)},\omega^\wedge)$ is a complete cotorsion pair in $\mcGP^\wedge_{(\mcX,\omega)}$.

Now we show that $(\mcX, \mcY \cap \mcGP^\wedge_{(\mcX,\omega)})$ is a complete cotorsion pair in $\mcGP^\wedge_{(\mcX,\omega)}$. The equality $\mcY \cap \mcGP^\wedge_{(\mcX,\omega)} = \mcX^{\perp_1} \cap \mcGP^\wedge_{(\mcX,\omega)}$ is immediate since $(\mcX,\mcY)$ is a cotorsion pair in $\mcC$. So let us prove that
\[
\mcX = {}^{\perp_1}(\mcY \cap \mcGP^\wedge_{(\mcX,\omega)}) \cap \mcGP^\wedge_{(\mcX,\omega)}.
\]
The containment ($\subseteq$) is clear. Now let $C \in {}^{\perp_1}(\mcY \cap \mcGP^\wedge_{(\mcX,\omega)}) \cap \mcGP^\wedge_{(\mcX,\omega)}$. Since $(\mcX,\mcY)$ is a complete cotorsion pair in $\mcC$, there exists a short exact sequence $Y \rightarrowtail X \twoheadrightarrow C$ with $X \in \mcX$ and $Y \in \mcY$. Since $X \in \mcX \subseteq \mcGP_{(\mcX,\omega)}$, $C \in \mcGP^\wedge_{(\mcX,\omega)}$ and $\mcGP^\wedge_{(\mcX,\omega)}$ is thick, we have that $Y \in \mcY \cap \mcGP^\wedge_{(\mcX,\omega)}$. It follows that the previous sequence splits, and so $C \in \mcX$. Therefore, $(\mcX, \mcY \cap \mcGP^\wedge_{(\mcX,\omega)})$ is a cotorsion pair in $\mcGP^\wedge_{(\mcX,\omega)}$. Its completeness in the subcategory $\mcGP^\wedge_{(\mcX,\omega)}$ follows by the completeness of $(\mcX,\mcY)$ in $\mcC$ and the thickness of $\mcGP^\wedge_{(\mcX,\omega)}$.

\item Compatibility: We have to show the equalities
\[
(\mcY \cap \mcGP^\wedge_{(\mcX,\omega)}) \cap \mcX^\wedge = \omega^\wedge \ \text{and} \ \mcGP_{(\mcX,\omega)} \cap \mcX^\wedge = \mcX.
\]
The first one follows by the equality $\mcY \cap \mcX^\wedge = \omega^\wedge$. Indeed, since the pair $(\mcX,\mcY)$ is hereditary, the class $\mcY$ is coresolving, and so $\omega^\wedge \subseteq \mcY$. Thus, the containment $\mcY \cap \mcX^\wedge \supseteq \omega^\wedge$ follows. Now let $Y \in \mcY \cap \mcX^\wedge$. By \cite[Thm. 2.8]{BMPS19} there exists a short exact sequence $Y \rightarrowtail H \twoheadrightarrow X$ where $H \in \omega^\wedge$ and $X \in \mcX$, which splits since $\Ext^1(\mcX,\mcY) = 0$. Then, $Y \in \omega^\wedge$ since $\omega^\wedge$ is closed under direct summands by \cite[Thm. 3.6]{BMPS19}.

Regarding the second equality, note that the containment $(\supseteq)$ is clear. Now let $C \in \mcGP_{(\mcX,\omega)} \cap \mcX^\wedge$. By \cite[Thm. 2.8]{BMPS19} again, there exists a short exact sequence $K \rightarrowtail X \twoheadrightarrow C$ where $X \in \mcX$ and $K \in \omega^\wedge$. Now since $\pd_{\omega^\wedge}(\mcGP_{(\mcX,\omega)}) = 0$ by Proposition \ref{coro:properties_relative_Gorenstein_projective} (3), the previous sequence splits, and so $C \in \mcX$ since $\mcX$ is closed under direct summands.
\end{itemize}
The existence of the desired model structure follows by Hovey-Gillespie correspondence for w.i.c. exact categories \cite[Thm. 3.3]{GiExact}, applied to the compatible and complete cotorsion pairs $(\mcGP_{(\mcX,\omega)}, \omega^\wedge)$ and $(\mcX, \mcY \cap \mcGP^\wedge_{(\mcX,\omega)})$ in $\mcGP^\wedge_{(\mcX,\omega)}$.

Finally, the model structure $\mcM^{\rm GP}_{(\mcX,\mcY)}$ is \emph{hereditary} in the sense that the inducing cotorsion pairs $(\mcGP_{(\mcX,\omega)}, \omega^\wedge)$ and $(\mcX, \mcY \cap \mcGP^\wedge_{(\mcX,\omega)})$ in $\mcGP^\wedge_{(\mcX,\omega)}$ are hereditary. This is straightforward from the properties of these cotorsion pairs.
\end{proof}

%%%%%%%%%%%%%%%%%%%%%%%%%%%%%%%%%%%%%

\subsection*{Homotopy categories of relative Gorenstein model structures}

Given a model structure on an exact category, its homotopy category is obtained by formally inverting the morphisms in the class of weak equivalences (see Hovey's \cite[Def. 1.2.1]{HoveyBook}), or equivalently via homotopy relations (see Dywer and Spali\'nski's \cite{DwyerSpalinski}). Concerning the homotopy categories of $\mcM^{\rm GP}_{(\mcX,\mcY)}$ and $\mcM^{\rm GI}_{(\mcX,\mcY)}$, we can use the results from \cite[\S \ 4 \& 5]{GiExact} to simplify their computations. Indeed, since these models are exact, these homotopy relations have particular descriptions due to \cite[Prop. 4.4]{GiExact} and \cite[Thm. 1.2.10 (i)]{HoveyBook}. Specifically, we have the following.

\begin{proposition}\label{prop:homotopy_category}
Let $(\mcX,\mcY)$ be a hereditary complete cotorsion pair in $\mcC$. Then, the homotopy category of $\mcM^{\rm GP}_{(\mcX,\mcY)}$, denoted ${\rm Ho}(\mcM^{\rm GP}_{(\mcX,\mcY)})$, is equivalent to the stable category
\[
(\mcGP_{(\mcX,\omega)} \cap \mcY) / \sim,
\]
where for two morphisms $f, g \colon X \to Y$ with $X, Y \in \mcGP_{(\mcX,\omega)} \cap \mcY$,
\[
\text{$f \sim g$ if, and only if, $f - g$ factors through an object in $\omega$.}
\]
The homotopy category ${\rm Ho}(\mcM^{\rm GI}_{(\mcX,\mcY)})$ has a dual description.
\end{proposition}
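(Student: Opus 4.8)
The plan is to identify the homotopy category of the exact model structure $\mcM^{\rm GP}_{(\mcX,\mcY)}$ with a stable category, using the general machinery for exact model structures. Recall from the proof of Theorem \ref{theo:Xu_model} that $\mcM^{\rm GP}_{(\mcX,\mcY)}$ is an exact model structure on the w.i.c.\ exact category $\mcGP^\wedge_{(\mcX,\omega)}$, whose class of cofibrant objects is $\mcA = \mcGP_{(\mcX,\omega)}$, class of fibrant objects is $\mcB = \mcY \cap \mcGP^\wedge_{(\mcX,\omega)}$, and class of trivial objects is $\mcW = \mcX^\wedge$. By the general theory (see Hovey's \cite[Thm.\ 1.2.10]{HoveyBook} and Gillespie's \cite[\S\ 4]{GiExact}), the homotopy category of such a model is equivalent to the category of bifibrant objects modulo the (left, equivalently right) homotopy relation, and on bifibrant objects this relation coincides with the stable equivalence relation. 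So the first step is to compute the class of bifibrant objects $\mcA \cap \mcB = \mcGP_{(\mcX,\omega)} \cap \mcY \cap \mcGP^\wedge_{(\mcX,\omega)}$; since $\mcGP_{(\mcX,\omega)} \subseteq \mcGP^\wedge_{(\mcX,\omega)}$ trivially, this is simply $\mcGP_{(\mcX,\omega)} \cap \mcY$, as claimed in the statement.

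Next I would pin down the homotopy relation on $\mcGP_{(\mcX,\omega)} \cap \mcY$. By \cite[Prop.\ 4.4]{GiExact}, in an exact model structure two parallel maps $f, g \colon X \to Y$ between bifibrant objects are homotopic precisely when $f - g$ factors through an object that is simultaneously trivially cofibrant and trivially fibrant, i.e.\ through an object in $\mcA \cap \mcW \cap \mcB$ (a \emph{trivial bifibrant} object). So the key computation is to show
\[
\mcA \cap \mcW \cap \mcB = \mcGP_{(\mcX,\omega)} \cap \mcX^\wedge \cap \mcY = \omega.
\]
From the compatibility part of the proof of Theorem \ref{theo:Xu_model} we already have $\mcGP_{(\mcX,\omega)} \cap \mcX^\wedge = \mcX$ and $\mcY \cap \mcX^\wedge = \omega^\wedge$, so the intersection equals $\mcX \cap \mcY \cap \mcX^\wedge$; but $\mcX \cap \mcY = \omega$ and $\omega \subseteq \mcX^\wedge$, hence the intersection is exactly $\omega$. (Alternatively one can invoke Proposition \ref{coro:properties_relative_Gorenstein_projective} (2), which gives $\omega = \mcGP_{(\mcX,\omega)} \cap \omega^\wedge$, together with $\mcX^\wedge \cap \mcGP^\wedge_{(\mcX,\omega)} \cap \mcGP_{(\mcX,\omega)} = \mcX$ from the compatibility equalities.) This identifies $\sim$ with the relation ``$f - g$ factors through an object of $\omega$.''

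Finally, since the model is exact (so left and right homotopy agree on bifibrant objects by \cite[Thm.\ 1.2.10 (i)]{HoveyBook}), and since every object of $\mcGP^\wedge_{(\mcX,\omega)}$ admits a bifibrant replacement through the (co)fibrant replacement functors, the canonical functor $\mcGP_{(\mcX,\omega)} \cap \mcY \to {\rm Ho}(\mcM^{\rm GP}_{(\mcX,\mcY)})$ induces an equivalence $(\mcGP_{(\mcX,\omega)} \cap \mcY)/\!\sim \; \xrightarrow{\ \simeq\ } {\rm Ho}(\mcM^{\rm GP}_{(\mcX,\mcY)})$. The description of ${\rm Ho}(\mcM^{\rm GI}_{(\mcX,\mcY)})$ follows by the evident dualization, replacing $\omega$ by $\nu$, $\mcGP_{(\mcX,\omega)}$ by $\mcGI_{(\omega,\mcY)}$, and ``factors through an object of $\omega$'' by ``factors through an object of $\nu$,'' where $\nu = \mcX \cap \mcGI^\vee_{(\omega,\mcY)}$ plays the role of the trivial bifibrant class in the dual model. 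The only step requiring genuine care is the identification of the trivial bifibrant class with $\omega$; everything else is a direct citation of the exact-model-structure formalism, so I expect that intersection computation to be the main (albeit modest) obstacle.
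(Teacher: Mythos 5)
Your argument for the projective half is exactly the paper's route: the paper justifies this proposition precisely by applying \cite[Prop. 4.4]{GiExact} and \cite[Thm. 1.2.10 (i)]{HoveyBook} to the Hovey triple $(\mcGP_{(\mcX,\omega)},\mcX^\wedge,\mcY\cap\mcGP^\wedge_{(\mcX,\omega)})$ on $\mcGP^\wedge_{(\mcX,\omega)}$, and your identification of the bifibrant objects as $\mcGP_{(\mcX,\omega)}\cap\mcY$ and of the core as $\mcGP_{(\mcX,\omega)}\cap\mcX^\wedge\cap\mcY\cap\mcGP^\wedge_{(\mcX,\omega)}=\mcX\cap\mcY=\omega$, via the compatibility equalities from the proof of Theorem \ref{theo:Xu_model}, is correct.

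The only slip is in your dualization. The class $\mcX\cap\mcGI^\vee_{(\omega,\mcY)}$ is the class of \emph{cofibrant} objects of $\mcM^{\rm GI}_{(\mcX,\mcY)}$, not its trivially bifibrant class. The core of the dual Hovey triple is
\[
\bigl(\mcX\cap\mcGI^\vee_{(\omega,\mcY)}\bigr)\cap\mcY^\vee\cap\mcGI_{(\omega,\mcY)}
=\omega^\vee\cap\mcGI_{(\omega,\mcY)}=\omega,
\]
using the dual compatibility equality $\mcX\cap\mcY^\vee=\omega^\vee$ and the dual of Proposition \ref{coro:properties_relative_Gorenstein_projective} (2) applied to the GI-admissible pair $(\omega,\mcY)$, whose kernel is again $\omega$ (since $\omega\subseteq\mcY$). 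Hence the dual description reads ${\rm Ho}(\mcM^{\rm GI}_{(\mcX,\mcY)})\simeq(\mcX\cap\mcGI_{(\omega,\mcY)})/\!\sim$, where $f\sim g$ if, and only if, $f-g$ factors through an object of $\omega$; the factorization class does not get replaced by a different class $\nu$ as you assert. This does not affect the proof of the main (projective) statement.
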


\begin{remark}
We note the triangulated structure of $\Ho(\mcM^{\rm GP}_{(\mcX,\mcY)})$, since the category $(\mcGP_{(\mcX,\omega)} \cap \mcY) / \sim$ is triangulated by Beligiannis' \cite[Thm. 2.11]{Bel}. The latter follows by the fact that $\mcGP_{(\mcX,\omega)} \cap \mcY$ is a Frobenius category (with $\omega$ as the class of projective-injective objects). Let us only show the existence of enough injective objects. Suppose we are given an object $C \in \mcGP_{(\mcX,\omega)} \cap \mcY$. From Proposition \ref{prop:relative_GP_are_absolute_GP}, we can find a monomorphism $\iota \colon C \rightarrowtail W$ with $W \in \omega$ and $\CoKer(\iota) \in \mcGP_{(\mcX,\omega)}$. Also, since $\mcY$ is a coresolving subcategory of $\mcC$, we have that $\CoKer(\iota) \in \mcGP_{(\mcX,\omega)} \cap \mcY$. Then, $\iota$ is clearly an admissible monomorphism in $\mcGP_{(\mcX,\omega)} \cap \mcY$. On the other hand, it is clear that $\Ext^{i}(\mcGP_{(\mcX,\omega)} \cap \mcY,\omega) = 0$ for every $i \geq 1$.
\end{remark}

In some cases, it is possible to rewrite the Frobenius category $\mcGP_{(\mcX,\omega)} \cap \mcY$ as a certain subcategory of Gorenstein objects relative to $\omega$, in a sense due to Christensen, Estrada and Thompson \cite{CETflat-cotorsion}. This will yield an alternative description of the homotopy category of the model structure $\mcM^{\rm GP}_{(\mcX,\mcY)}$.

Let $\mu$ be a subcategory of $\mcC$. Recall from \cite[Def. 1.1]{CETflat-cotorsion} that an object $C \in \mcC$ is  \emph{right $\mu$-Gorenstein} if $C = Z_0(M_\bullet)$ for some exact and $\Hom(-,\mu \cap \mu^\perp)$-acyclic complex $M_\bullet \in \Ch(\mu)$ with cycles in $\mu^\perp$. Such complex $M_\bullet$ is called \emph{right $\mu$-totally acyclic}. \emph{Left $\mu$-Gorenstein objects} and \emph{left $\mu$-totally acyclic complexes} are defined dually. In the case where $\mu$ is a self-orthogonal subcategory (that is, $\Ext^i(\mu,\mu) = 0$ for every $i \geq 1$) one has by \cite[Props. 1.3 \& 1.5]{CETflat-cotorsion} that a complex is right $\mu$-totally acyclic if, and only if, it is left $\mu$-totally acyclic, and so an object is left $\mu$-Gorenstein if, and only if, it is right $\mu$-Gorenstein.\footnote{The definition of self-orthogonal class considered in \cite{CETflat-cotorsion} is slightly different. Namely, the authors ask that $\Ext^1(\mu,\mu) = 0$.} \footnote{Note that $\mu$ is self-orthogonal if, and only if, $\mu = \mu \cap \mu^\perp = \mu \cap {}^\perp\mu$.} The category of such objects will be denoted by ${\rm Gor}_\mu$.

\begin{proposition}\label{prop:Gor_equality}
Let $(\mcX,\mcY)$ be a hereditary complete cotorsion pair in $\mcC$. Then, the containment $\mcGP_{(\mcX,\omega)} \cap \mcY \subseteq {\rm Gor}_\omega$ holds. Moreover, $\mcGP_{(\mcX,\omega)} \cap \mcY \supseteq {\rm Gor}_\omega$ also holds provided that every exact complex in $\Ch(\mcY)$ has cycles in $\mcY$.
\end{proposition}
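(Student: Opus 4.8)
The plan is to establish the two containments separately. For the inclusion $\mcGP_{(\mcX,\omega)} \cap \mcY \subseteq {\rm Gor}_\omega$, let $C \in \mcGP_{(\mcX,\omega)} \cap \mcY$. Since $(\mcX,\omega)$ is a GP-admissible pair with $\omega$ closed under direct summands, Proposition \ref{prop:GP_description} gives that $C$ has a $\Hom(-,\omega)$-acyclic $\mcX$-coresolution; and since $(\mcX,\mcY)$ is a hereditary complete cotorsion pair, $C \in \mcY$ admits a special $\mcX$-precover, which we can splice into a complex. The key point is to build an exact complex $M_\bullet \in \Ch(\omega)$ realizing $C$ as $Z_0(M_\bullet)$, with cycles in $\omega^\perp$, and which is $\Hom(-,\omega \cap \omega^\perp)$-acyclic. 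The natural way to do this is to use Proposition \ref{prop:relative_GP_are_absolute_GP} together with the fact that $(\mcGP_{(\mcX,\omega)},\omega)$ is a left Frobenius pair (Example \ref{ex:relative_GP_Frobenius_pair}): every object of $\mcGP_{(\mcX,\omega)} \cap \mcY$ embeds into an object of $\omega$ with cokernel again in $\mcGP_{(\mcX,\omega)} \cap \mcY$ (as noted in the Remark following Proposition \ref{prop:homotopy_category}, using that $\mcY$ is coresolving); iterating this gives the positive half of $M_\bullet$. For the negative half, use that $\mcGP_{(\mcX,\omega)}$ has $\omega$ as a relative generator — objects of $\mcGP_{(\mcX,\omega)}$ admit epimorphisms from objects of $\omega$ with kernel in $\mcGP_{(\mcX,\omega)}$ — and note these kernels stay in $\mcY$ because $\mcY$ is resolving-closed enough here (kernels of epimorphisms between objects of $\mcGP_{(\mcX,\omega)} \cap \mcY$: the kernel is in $\mcGP_{(\mcX,\omega)}$ since that class is closed under epikernels, and in $\mcY$ since $\mcY = \mcX^{\perp_1}$ and $\mcX \subseteq \mcGP_{(\mcX,\omega)} \subseteq {}^{\perp_1}\mcY$ fails in general — so one must instead use the cotorsion-pair completeness to arrange the kernels correctly). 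The cycles lie in $\omega^\perp$ because $\mcGP_{(\mcX,\omega)} \cap \mcY \subseteq \omega^\perp$: indeed any object of $\mcGP_{(\mcX,\omega)}$ is $\Ext$-orthogonal to $\omega$ by the $\Hom(-,\omega)$-acyclicity (this is part of Proposition \ref{prop:GP_description}, via $\mcGP_{(\mcX,\omega)} \subseteq {}^\perp\omega$, and hence $\omega \subseteq \mcGP_{(\mcX,\omega)}^{\perp}$ needs the total acyclicity). Finally $\Hom(-,\omega \cap \omega^\perp)$-acyclicity follows from $\Hom(-,\omega)$-acyclicity since $\omega \cap \omega^\perp \subseteq \omega$.

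For the reverse containment under the extra hypothesis that every exact complex in $\Ch(\mcY)$ has cycles in $\mcY$, let $C \in {\rm Gor}_\omega$, so $C = Z_0(M_\bullet)$ for a right $\omega$-totally acyclic complex $M_\bullet \in \Ch(\omega)$ with cycles in $\omega^\perp$. Since $\omega = \mcX \cap \mcY \subseteq \mcY$, we have $M_\bullet \in \Ch(\mcY)$, and it is exact, so by hypothesis all cycles $Z_m(M_\bullet)$ — in particular $C$ — lie in $\mcY$. It remains to see $C \in \mcGP_{(\mcX,\omega)}$. For this I would verify the characterization of Proposition \ref{prop:GP_description}: $C \in {}^\perp\omega$ holds because the cycles of $M_\bullet$ are in $\omega^\perp$; and the positive part of $M_\bullet$ is an $\omega$-coresolution of $C$, hence an $\mcX$-coresolution (as $\omega \subseteq \mcX$), which is $\Hom(-,\omega)$-acyclic — but wait, Proposition \ref{prop:GP_description} requires $\Hom(-,\mcY)$-acyclicity, not merely $\Hom(-,\omega)$-acyclicity. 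This is the crux: one must upgrade $\Hom(-,\omega \cap \omega^\perp)$-acyclicity of $M_\bullet$ to $\Hom(-,\mcY)$-acyclicity. Since $\omega \cap \omega^\perp = \omega$ (as $\omega$ is self-orthogonal, being contained in $\mcX^{\perp_1} \cap {}^{\perp_1}$ — actually $\omega = \mcX \cap \mcY$ with $(\mcX,\mcY)$ hereditary gives $\Ext^i(\omega,\omega)=0$), $M_\bullet$ is $\Hom(-,\omega)$-acyclic. To get $\Hom(-,\mcY)$-acyclicity: write an arbitrary $Y \in \mcY$; since $(\mcX,\mcY)$ is hereditary complete, $Y$ has a finite? no — rather, one uses that $\omega^\wedge$ is a relative generator considerations, or more directly that $\mcGP_{(\mcX,\omega)} \subseteq {}^\perp\mcY$ automatically once we know the cycles are in $\omega^\perp$ and $\mcY$, combined with the hereditary cotorsion pair: for cycles $Z$ of $M_\bullet$ we have $Z \in \mcY$ and $Z$ has an $\omega$-coresolution, and $\omega \subseteq \mcX = {}^{\perp_1}\mcY$ is false — so instead observe $\Ext^i(Z, Y) $ can be computed along the $\omega$-coresolution whose terms satisfy $\Ext^{>0}(\omega, \mcY) = \Ext^{>0}(\mcX \cap \mcY, \mcY)$, which vanishes since $(\mcX,\mcY)$ is hereditary and $\omega \subseteq \mcX$. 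Hence $\Ext^i(Z,Y) = 0$ for all $i \ge 1$, giving $Z \in {}^\perp\mcY$ and the $\Hom(-,\mcY)$-acyclicity of $M_\bullet$, so $C \in \mcGP_{(\mcX,\omega)}$ by Proposition \ref{prop:GP_description}.

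\textbf{Main obstacle.} I expect the delicate point to be, in the first containment, correctly arranging that the negative portion of the complex $M_\bullet \in \Ch(\omega)$ has \emph{all} its cycles simultaneously in $\omega^\perp$ \emph{and} in $\mcY$, since naively taking $\omega$-precovers keeps kernels in $\mcGP_{(\mcX,\omega)}$ but not obviously in $\mcY$; the fix is to interleave the $\omega$-precover steps with the special $\mcX$-precovers coming from completeness of $(\mcX,\mcY)$ and use thickness of $\mcGP_{(\mcX,\omega)}^\wedge$ (Proposition \ref{coro:properties_relative_Gorenstein_projective} (1)) plus the equality $\mcGP_{(\mcX,\omega)} \cap \omega^\wedge = \omega$ (Proposition \ref{coro:properties_relative_Gorenstein_projective} (2)) to control membership. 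In the second containment the only subtlety — upgrading $\Hom(-,\omega)$-acyclicity to $\Hom(-,\mcY)$-acyclicity — is handled cleanly by the hereditariness of $(\mcX,\mcY)$ as sketched, so that direction should be routine once the cycle-in-$\mcY$ hypothesis is invoked.
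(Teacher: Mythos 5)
Your reverse containment rests on a misreading and on an invalid step. The class in question is $\mcGP_{(\mcX,\omega)}$, i.e.\ Gorenstein projectivity relative to the pair $(\mcX,\omega)$, so Proposition \ref{prop:GP_description} (applied to that pair) and indeed the very definition only ask for $C$ to be a cycle of an exact $\Hom(-,\omega)$-acyclic complex in $\Ch(\mcX)$ — no $\Hom(-,\mcY)$-acyclicity is involved. A right $\omega$-totally acyclic complex already has terms in $\omega \subseteq \mcX$ and is $\Hom(-,\omega)$-acyclic (since $\omega \cap \omega^\perp = \omega$ by self-orthogonality), so $C \in \mcGP_{(\mcX,\omega)}$ at once, and $C \in \mcY$ by the cycle hypothesis; this is why the paper treats that direction as clear. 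The ``upgrade'' you attempt is moreover false, not just superfluous: dimension shifting along an $\omega$-coresolution identifies $\Ext^i(Z,Y)$ with higher $\Ext$'s of the cosyzygies and yields no vanishing, and the intermediate claim $Z \in {}^\perp\mcY$ fails in general even under the cycle hypothesis — for instance, for $(\mcX,\mcY) = (\mcP(R),\Mod(R))$ over a quasi-Frobenius ring $R$ that is not semisimple one has $\omega = \mcP(R)$ and $\mcGP_{(\mcX,\omega)} \cap \mcY = \mcGP(R) = \Mod(R)$, while ${}^\perp\mcY = \mcP(R)$.

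For the containment $\mcGP_{(\mcX,\omega)} \cap \mcY \subseteq {\rm Gor}_\omega$ you locate the right difficulty (the negative half of the complex, with terms in $\omega$ and cycles in $\mcGP_{(\mcX,\omega)} \cap \mcY$), but the proposal stops exactly where the idea is needed. As you notice yourself, $\omega$ is not a relative generator in $\mcGP_{(\mcX,\omega)}$ (by Proposition \ref{prop:relative_G-projective_model} this happens only when $\omega = \mcP$), so there are no ``$\omega$-precover steps'' to interleave, and neither special $\mcX$-precovers nor the identities of Proposition \ref{coro:properties_relative_Gorenstein_projective} by themselves produce an epimorphism from an object of $\omega$ onto $Y$ with kernel again in $\mcGP_{(\mcX,\omega)} \cap \mcY$. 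The paper's proof supplies this with a pushout: take the $\Hom(-,\omega)$-acyclic sequence $Y_1 \rightarrowtail X_1 \twoheadrightarrow Y$ with $X_1 \in \mcX$ and $Y_1 \in \mcGP_{(\mcX,\omega)}$ coming from the definition of $\mcGP_{(\mcX,\omega)}$, take a special $\mcY$-preenvelope $Y_1 \rightarrowtail Y'_1 \twoheadrightarrow X'_1$ (so $Y'_1 \in \mcY$, $X'_1 \in \mcX$), and push out: the new middle term $W_0$ sits in $X_1 \rightarrowtail W_0 \twoheadrightarrow X'_1$, hence lies in $\mcX$, and in $Y'_1 \rightarrowtail W_0 \twoheadrightarrow Y$ with ends in $\mcY$, hence lies in $\omega$; the new kernel $Y'_1$ is in $\mcGP_{(\mcX,\omega)} \cap \mcY$, and the row is both $\Hom(\omega,-)$- and $\Hom(-,\omega)$-acyclic, so the step can be iterated. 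Without this (or an equivalent) construction your argument is incomplete. A small further point: the cycles lie in $\omega^\perp$ because they lie in $\mcY$ and $\Ext^{>0}(\omega,\mcY) \subseteq \Ext^{>0}(\mcX,\mcY) = 0$, not ``by the $\Hom(-,\omega)$-acyclicity''.
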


\begin{proof}
The containment ($\supseteq$) is clear with the assumption that every exact complex of objects in $\mcY$ has cycles in $\mcY$. Now let us show ($\subseteq$) and take $Y \in \mcGP_{(\mcX,\omega)} \cap \mcY$. We construct a left and right totally $\omega$-acyclic exact complex $W_\bullet$ such that $Y = Z_0(W_\bullet)$. By \cite[Lem. 3.5]{Xu17}\footnote{This result also holds in abelian categories.}, we have that $Y \in {}^\perp\omega$ and that there exists a $\Hom(-,\omega)$-acyclic $\omega$-coresolution of $Y$, say
\[
\eta^+ \colon Y \rightarrowtail W^0 \to W^1 \to \cdots.
\]
Note that $\eta^+$ is also $\Hom(\omega,-)$-acyclic since it has cycles in $\mcY$ (being $\mcY$ coresolving). It is only left to show that there is also a $\Hom(\omega,-)$-acyclic and $\Hom(-,\omega)$-acyclic $\omega$-resolution
\[
\eta_{-} \colon \cdots \to W_1 \to W_0 \twoheadrightarrow Y.
\]
On the one hand, since $Y \in \mcGP_{(\mcX,\omega)}$, there is a $\Hom(-,\omega)$-acyclic short exact sequence $Y_1 \rightarrowtail X_1 \twoheadrightarrow Y$ with $X_1 \in \mcX$ and $Y_1 \in \mcGP_{(\mcX,\omega)}$. On the other hand, since $(\mcX,\mcY)$ is a complete cotorsion pair, there is a short exact sequence $Y_1 \rightarrowtail Y'_1 \twoheadrightarrow X'_1$ with $Y'_1 \in \mcY$ and $X'_1 \in \mcX$. Taking the pushout of $Y'_1 \leftarrow Y_1 \to X_1$ yields the following exact commutative diagram:
\[
\begin{tikzpicture}[description/.style={fill=white,inner sep=2pt}]
\matrix (m) [matrix of math nodes, row sep=3em, column sep=3em, text height=1.25ex, text depth=0.25ex]
{
Y_1 & X_1 & Y \\
Y'_1 & W_0 & Y \\
X'_1 & X'_1 & {} \\
};
\path[->]
(m-1-1)-- node[pos=0.5] {\footnotesize$\mbox{\bf po}$} (m-2-2)
;
\path[>->]
(m-1-1) edge (m-1-2) edge (m-2-1)
(m-2-1) edge (m-2-2)
(m-1-2) edge (m-2-2)
;
\path[->>]
(m-1-2) edge (m-1-3)
(m-2-2) edge (m-2-3)
(m-2-1) edge (m-3-1)
(m-2-2) edge (m-3-2)
;
\path[-,font=\scriptsize]
(m-1-3) edge [double, thick, double distance=2pt] (m-2-3)
(m-3-1) edge [double, thick, double distance=2pt] (m-3-2)
;
\end{tikzpicture}
\]
Note from this diagram that $W_0 \in \omega$ and $Y'_1 \in \mcGP_{(\mcX,\omega)} \cap \mcY$, that the central row is $\Hom(\omega,-)$-acyclic since $Y'_1 \in \mcY$, and $\Hom(-,\omega)$-acyclic since $Y \in \mcGP_{(\mcX,\omega)}$. So we can apply the previous procedure to $Y'_1$ in order to obtain a $\Hom(-,\omega)$-acyclic and $\Hom(\omega,-)$-acyclic short exact sequence $Y'_2 \rightarrowtail W_1 \twoheadrightarrow Y'_1$ such that $W_1 \in \omega$ and $Y'_2 \in \mcGP_{(\mcX,\omega)} \cap \mcY$. Continuing this process yields the $\Hom(-,\omega)$-acyclic and $\Hom(\omega,-)$-acyclic $\omega$-resolution $\eta_{-}$.
\end{proof}

A natural question that arises is wether ${\rm Gor}_\omega = \mcGP_{(\mcX,\omega)} \cap \mcY$ implies that every exact complex in $\mathsf{Ch}(\mcY)$ has cycles in $\mcY$. Below we give a counter-example for this.

\begin{example}
Let $R$ be a commutative noetherian ring with dualizing complex that is not Gorenstein (see Iyengar and Krause's \cite[\S \ 6]{IK06} for an example of such rings). We know by Enochs and Iacob's \cite[Coroll. 1]{EI15} that the class $\mcGI(R)$ of Gorenstein injective $R$-modules is the right half of a hereditary complete cotorsion pair $({}^\perp\mcGI(R),\mcGI(R))$. In this case, it is easy to note that $\omega = \mcI(R)$, and then ${\rm Gor}_\omega = \mcGI(R)$. That is, ${\rm Gor}_\omega = \mcY$. On the other hand, note that $\mcGP_{({}^\perp\mcGI(R),\mcI(R))}$ is the class of modules which are cycles of exact complexes in $\Ch({}^\perp\mcGI(R))$. The latter is in turn the whole category $\Mod(R)$, since every $R$-module has a ${}^\perp\mcGI(R)$-resolution and an injective (and so ${}^\perp\mcGI(R)$-)coresolution. Thus the equality ${\rm Gor}_\omega = \mcGP_{(\mcX,\omega)} \cap \mcY$ holds in this case.\footnote{Over an arbitrary ring $R$, it is still true that $({}^\perp\mcGI(R),\mcGI(R))$ is a hereditary complete cotorsion pair in $\mathsf{Mod}(R)$, due to \v{S}aroch and \v{S}t'ov\'{\i}\v{c}ek \cite[Thm. 4.6]{SS}. Hence, the equality ${\rm Gor}_\omega = \mcGP_{(\mcX,\omega)} \cap \mcY$ always holds for the choice $\mcX = {}^\perp\mcGI(R)$ and $\mcY = \mcGI(R)$.}

On the other hand, we assert that there exists an exact complex of Gorenstein injective modules with at least one cycle that is not Gorenstein injective. By \cite[Prop. 4]{EFI17} (proved by Fu, Iacob and the first author), the latter is equivalent to finding an exact complex of injective modules that is not ${\rm Hom}_R(\mcI(R),-)$-acyclic. Due to the assumptions on the ring $R$, such complexes exist by \cite[Coroll. 5.5]{IK06}.

As a consequence of \cite[Thm. 4.6]{SS}, the stable category $\mcGI(R) / \sim$ of Gorenstein injective $R$-modules can be obtained as the homotopy category of the Gorenstein injective model structure $(\Mod(R),{}^\perp\mcGI(R),\mcGI(R))$, which can also be obtained from Theorem \ref{theo:Xu_model} and Proposition \ref{prop:homotopy_category}.
\end{example}

As an application of Proposition \ref{prop:Gor_equality}, and following Example \ref{ex:GP_objects} (3), we obtain a different proof of \cite[Coroll. 2.6]{CET} (by Christensen, Thompson and the first author).

\begin{example}
Given a semi-separated noetherian scheme $X$, consider the classes $\mfGF(X)$ of Gorenstein flat sheaves and ${\rm Gor}_{\mathfrak{F}(X) \cap \mathfrak{C}(X)}$ of Gorenstein flat-cotorsion sheaves (see \cite[Def. 4.1]{CET}) in $\mathfrak{Qcoh}(X)$. It is known from \cite[Coroll. 4.2]{EnochsEstrada} (Enochs and the first author) and Saor\'{\i}n and \v{S}\v{t}ov\'{\i}\v{c}ek's \cite[Lem. 4.25]{SaorinStovicek} that $(\mfF(X),\mfC(X))$ is a hereditary complete cotorsion pair in $\mathfrak{Qcoh}(X)$. On the other hand, by \cite[Thm. 3.3]{CET} we know that every exact complex of cotorsion sheaves has cotorsion cycles\footnote{The cited result holds in the more general setting where $X$ is semi-separated and quasi-compact.}. Then by Proposition \ref{prop:Gor_equality} we have that
\[
{\rm Gor}_{\mfF(X) \cap \mfC(X)} = \mcGP_{(\mfF(X),\mfF(X) \cap \mfC(X))} \cap \mfC(X).
\]
Recall also from Example \ref{ex:GP_objects} (3) that $\mfGF(X) = \mcGP_{(\mfF(X),\mfF(X) \cap \mfC(X))}$. Then,
\[
{\rm Gor}_{\mfF(X) \cap \mfC(X)} = \mfGF(X) \cap \mfC(X).
\]
This equality is also obtained in \cite[Thm. 4.3]{CET}.

It then follows by Theorem \ref{theo:Xu_model} and Proposition \ref{prop:homotopy_category} that there exists a unique hereditary exact model structure on $\mfGF(X)^\wedge$ such that $\mfGF(X)$, $\mfGF(X)^\wedge \cap \mfC(X)$ and $\mfF(X)^\wedge$ are the classes of cofibrant, fibrant and trivial objects, and whose homotopy category is equivalent to the stable category ${\rm Gor}_{\mfF(X) \cap \mfC(X)} / \sim$. On the other hand, it is know from \cite[Thm. 2.5]{CET} the existence of a unique abelian model structure on the whole category $\mathfrak{Qcoh}(X)$ where $\mfGF(X)$ is the class of cofibrant objects and $\mfC(X)$ is the class of fibrant objects, whose homotopy category is also equivalent to ${\rm Gor}_{\mfF(X) \cap \mfC(X)} / \sim$. One advantage of applying Theorem \ref{theo:Xu_model} is that we obtain a simpler description of the trivial objects.
\end{example}

\begin{remark} \
\begin{enumerate}
\item It is known from \cite[Thm. 2.2]{CET} that $\mfGF(X)$ is a resolving subcategory of $\mathfrak{Qcoh}(X)$, provided that $X$ is noetherian and semi-separated. The proof given in \cite{CET} is \emph{local} in the sense that, over such schemes, the notion of Gorenstein flat sheaves is local (see \cite[Thm. 1.6]{CET}). From the equality $\mfGF(X) = \mcGP_{(\mfF(X),\mfF(X) \cap \mfC(X))}$ we have a \emph{non local} argument for the property of being resolving.

\item The affine case $X = {\rm Spec}(R)$ for the facts mentioned in the previous example is already covered in \cite[Coroll. 3.11]{SS}.
\end{enumerate}
\end{remark}

%%%%%%%%%%%%%%%%%%%%%%%%%%%%%%%%%%%%%
%%%%%%%%%%%%%%%%%%%%%%%%%%%%%%%%%%%%%
%%%%%%%%%%%%%%%%%%%%%%%%%%%%%%%%%%%%%
%%%%%%%%%%%%%%%%%%%%%%%%%%%%%%%%%%%%%

\section{Relative Gorenstein categories}\label{sec:RG-categories}

The purpose of this section is to propose a concept of relative Gorenstein categories as suitable settings to obtain hereditary abelian model structures whose (co)fibrant objects are the Gorenstein injective (resp., projective) objects relative to GI-admissible (resp., GP-admissible) pairs. These categories will also represent a generalization of the (absolute) Gorenstein categories in the sense specified in Definition \ref{def:Gorenstein_category} below.

%%%%%%%%%%%%%%%%%%%%%%%%%%%%%%%%%%%%%
%%%%%%%%%%%%%%%%%%%%%%%%%%%%%%%%%%%%%

\subsection*{Recalling Gorenstein categories}

We begin this section considering approaches to the notion of Gorenstein categories: one by  Beligiannis and Reiten \cite[Def. VII.2.1]{BR} motivated in one part by the study of Cohen-Macaulay rings, and the other one by Enochs, Garc\'ia Rozas and the first author \cite[Def. 2.18]{Tate}. The approach in \cite{BR} is slightly more general than \cite{Tate}, while the latter is more convenient when working in ambient Grothendieck categories (although, under some conditions, both definitions are equivalent - see Proposition \ref{prop:equivalence} below). We shall use the term ``\emph{Gorenstein category}'' accordingly to the terminology in \cite{BR}, and refer to the concept presented in \cite{Tate} as ``\emph{locally projectively finite (l.p.f.) Gorenstein category}''.

For any abelian category $\mcC$, recall that the \emph{finitistic projective} and the \emph{finitistic injective dimensions of $\mcC$} are defined as the values
\begin{align*}
\FPD(\mcC) & := \pd(\mcP^{< \infty}) & & \text{and} & \FID(\mcC) & := \id(\mcI^{< \infty}).
\end{align*}

\begin{definition}[Beligiannis and Reiten]\label{def:Gorenstein_category1}
A \textbf{Gorenstein category} is an abelian category $\mcC$ with enough projective and injective objects such that $\pd(\mcI) < \infty$ and $\id(\mcP) < \infty$. 
\end{definition}

\begin{definition}[Enochs, Estrada and Garc\'ia-Rozas]\label{def:Gorenstein_category}
We say that a Grothendieck category $\mcC$ is \textbf{locally projectively finite} (\textbf{l.p.f.}) \textbf{Gorenstein} if the following conditions are satisfied: 
\begin{enumerate}
\item[\Gone] For any $C \in \mcC$, $\pd(C) < \infty$ if, and only if, $\id(C) < \infty$. That is, $\mcP^{< \infty} = \mcI^{< \infty}$.

\item[\Gtwo] $\FPD(\mcC) < \infty$ and $\FID(\mcC) < \infty$.

\item[\Gthree] $\mcC$ has a generator with finite projective dimension.
\end{enumerate}
\end{definition}

\begin{remark}\label{rmk:Gorenstein_category} 
Gorenstein categories in the sense of Beligiannis and Reiten \cite[Def. VII.2.1]{BR} are related to the class of \emph{Cohen-Macaulay objects}, defined as $\mathcal{WGP}_{(\omega,\omega)}$ where $\omega$ is a self-orthogonal class in $\mcC$.
\begin{enumerate}
\item If $\mcC$ is an abelian category with enough projective objects, then note that $\mathcal{WGP}_{(\mathcal{P},\mathcal{P})} = \mcGP$. Moreover, by \cite[Prop. VII.1.3 (i)]{BR} one has the inequality
\begin{align*}
\FPD(\mcC) & \leq \id(\mcP) \leq \resdim_{\mathcal{WGP}_{(\mathcal{P},\mathcal{P})}}(\mcC) = \glGPD(\mcC).
\end{align*}
\end{enumerate} 
Suppose in addition that $\mcC$ has enough injective objects, then:
\begin{enumerate}
\setcounter{enumi}{1}
\item From \cite[Thm. VII.2.2]{BR} we can note that if $\mathcal{C}$ is Gorenstein then \Gone\, and \Gtwo\, hold.

\item From \cite[proof of Prop. VII.2.4]{BR}, we can also note that if $\mcC$ is AB3 and AB3${}^\ast$ (that is, products and coproducts exist in $\mcC$), then $\mcC$ is Gorenstein if, and only if, $\mcP \subseteq \mcI^{<\infty}$ and $\mcI \subseteq \mcP^{<\infty}$.
\end{enumerate}
\end{remark}

Definition \ref{def:Gorenstein_category} follows the spirit of \cite[Def. 2.18]{Tate}. In what follows, we recall the necessary terminology and results to see that Gorenstein categories are equivalent to the namesake concept in \cite{BR}. The understanding of this will be key in our path towards a relativization of Gorenstein categories.

We are now ready to show the following characterization of l.p.f. Gorenstein categories.

\begin{proposition}\label{prop:equivalence}
Let $\mcC$ be a Grothendieck category with enough projective objects. Then, the following conditions are equivalent:
\begin{enumerate}
\item[(a)] $\mcC$ is a l.p.f. Gorenstein category.

\item[(b)] $\glGPD(\mcC) < \infty$ and $\glGID(\mcC)< \infty$.

\item[(c)] $\mcC$ satisfies \Gone\, and \Gtwo.

\item[(d)] $\mcC$ is a Gorenstein category.
\end{enumerate}
\end{proposition}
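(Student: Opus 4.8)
The plan is to prove the cycle of implications (a) $\Rightarrow$ (b) $\Rightarrow$ (c) $\Rightarrow$ (d) $\Rightarrow$ (a), drawing on the characterizations of global Gorenstein dimensions and on the results of \cite{BR} recalled in Remark \ref{rmk:Gorenstein_category}. The key recurring tool will be the fact (valid in any abelian category with enough projectives and injectives) that $\glGPD(\mcC) = \glGID(\mcC)$, together with the basic AB-duality $\pd_{\mcY}(\mcX) = \id_{\mcX}(\mcY)$ and the inequality $\FPD(\mcC) \leq \id(\mcP) \leq \glGPD(\mcC)$ (and its dual).

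\textbf{(a) $\Rightarrow$ (b).} Assume $\mcC$ is l.p.f. Gorenstein. Since $\mcC$ has a generator $G$ of finite projective dimension, every object is a quotient of a coproduct of copies of $G$, and one argues (as in the classical Iwanaga--Gorenstein case) that $\FPD(\mcC) < \infty$ together with \Gone\ forces every object to have finite Gorenstein projective dimension, with a uniform bound; concretely, one shows $\glGPD(\mcC) \leq \FPD(\mcC) + \pd(G)$ or a similar explicit bound, by using that a first syzygy of any object computed from $G$-presentations has projective dimension controlled by $\FPD$. The dual argument using injective objects and \Gtwo, \Gthree\ (the injective cogenerator role is played via the equivalence of finite projective and finite injective dimension) gives $\glGID(\mcC) < \infty$. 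I expect this to be the \emph{main obstacle}: making the reduction from ``every object has finite relative dimension'' to ``uniformly bounded relative dimension'' precise in the Grothendieck setting without enough injectives being explicitly available requires care with the generator, and is exactly where \Gthree\ and \Gtwo\ must be used in tandem.

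\textbf{(b) $\Rightarrow$ (c).} Assume both global Gorenstein dimensions are finite. From $\glGPD(\mcC) < \infty$ and the inequality $\FPD(\mcC) \leq \id(\mcP) \leq \glGPD(\mcC)$ we get $\FPD(\mcC) < \infty$ and $\id(\mcP) < \infty$; dually $\FID(\mcC) < \infty$ and $\pd(\mcI) < \infty$. This already gives \Gtwo. For \Gone, one shows that $\pd(C) < \infty$ iff $\Gpd(C) = 0$ is impossible unless $C$ is ``trivial'', more precisely: if $\pd(C) < \infty$ then since $\id(\mcP) < \infty$ one deduces $\id(C) < \infty$ by dimension shifting along a finite projective resolution; symmetrically $\id(C) < \infty$ and $\pd(\mcI) < \infty$ give $\pd(C) < \infty$. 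Hence $\mcP^{<\infty} = \mcI^{<\infty}$.

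\textbf{(c) $\Rightarrow$ (d) and (d) $\Rightarrow$ (a).} For (c) $\Rightarrow$ (d): assuming \Gone\ and \Gtwo, one must produce $\pd(\mcI) < \infty$ and $\id(\mcP) < \infty$. Each injective object $I$ satisfies $\id(I) = 0 < \infty$, so by \Gone\ $\pd(I) < \infty$, hence $\pd(\mcI) \leq \FPD(\mcC) < \infty$ by \Gtwo; dually $\id(\mcP) \leq \FID(\mcC) < \infty$. Thus $\mcC$ is Gorenstein in the sense of Definition \ref{def:Gorenstein_category1}. For (d) $\Rightarrow$ (a): by Remark \ref{rmk:Gorenstein_category}(2), a Gorenstein category (with enough projectives and injectives) satisfies \Gone\ and \Gtwo; it remains to verify \Gthree, i.e.\ that a Grothendieck category with enough projectives and $\id(\mcP) < \infty$ admits a generator of finite projective dimension. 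Since $\mcC$ is Grothendieck with enough projectives it has a projective generator (a coproduct of generators can be replaced by a projective cover / presentation), which has projective dimension zero; this gives \Gthree\ immediately. Assembling the four implications closes the cycle and proves the proposition.
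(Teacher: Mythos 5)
Your steps (b) $\Rightarrow$ (c), (c) $\Rightarrow$ (d) and (d) $\Rightarrow$ (a) are correct, and in places more explicit than the paper's: you get the uniform bounds $\pd(\mcI) \leq \FPD(\mcC)$ and $\id(\mcP) \leq \FID(\mcC)$ directly from \Gone\ and \Gtwo, which avoids the AB3/AB3${}^\ast$ product argument the paper invokes through Remark \ref{rmk:Gorenstein_category} (3), and your observation that a Grothendieck category with enough projectives has a projective generator settles \Gthree\ cleanly.

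The genuine gap is in (a) $\Rightarrow$ (b), which is the substantive content of the proposition and is exactly the point where the paper does not argue at all but cites \cite[Thm. 2.28]{Tate}. Your proposed mechanism does not work: you claim a bound of the shape $\glGPD(\mcC) \leq \FPD(\mcC) + \pd(G)$ ``by using that a first syzygy of any object computed from $G$-presentations has projective dimension controlled by $\FPD$.'' Syzygies of an arbitrary object do not have finite projective dimension --- if they did, every object of $\mcC$ would lie in $\mcP^{<\infty}$ and the statement would trivialize --- so $\FPD(\mcC)$ gives no control over them, and no bound of that form can come out of such an argument. The correct route (as in the Iwanaga--Gorenstein case and in \cite{Tate,BR}) is to show that a sufficiently high syzygy $K$ of any object is \emph{Gorenstein projective}: the vanishing $\Ext^{i}(K,\mcP)=0$ for $i>0$ follows by dimension shifting from $\id(\mcP)<\infty$, but one must in addition construct an exact, $\Hom(-,\mcP)$-acyclic coresolution of $K$ by projectives, and this uses \Gone\ and \Gtwo\ (objects of finite injective dimension have finite projective dimension, with uniformly bounded resolutions) in an essential and nontrivial way; the dual construction gives $\glGID(\mcC)<\infty$. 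As written, your argument does not supply this coresolution, and without (a) $\Rightarrow$ (b) your cycle only yields the implications among (b), (c), (d), (a) that never return to (b), so the four conditions are not shown to be equivalent. Either carry out the syzygy argument in full or, as the paper does, quote \cite[Thm. 2.28]{Tate} for the equivalence (a) $\Leftrightarrow$ (b).
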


\begin{proof} \
\begin{itemize}
\item (a) $\Leftrightarrow$ (b): Proved in \cite[Thm. 2.28]{Tate}.

\item (b) $\Rightarrow$ (c): Follows by Remark \ref{rmk:Gorenstein_category} (1) and its dual.

\item (c) $\Rightarrow$ (a): We have condition \Gthree \ from the hypothesis, and \Gtwo \ from Remark \ref{rmk:Gorenstein_category} (1) and its dual. Finally, since $\mcC$ has enough projective objects, $\id(\mcP) < \infty$, and $\mcI^{< \infty}$ is thick, we can note that $\mcP^{< \infty} = \mcP^\wedge \subseteq (\mcI^{< \infty})^\wedge = \mcI^{< \infty}$. The other containment follows similarly.

\item (c) $\Leftrightarrow$ (d): Follows by Remark \ref{rmk:Gorenstein_category} (keep in mind that Grothendieck categories are complete). 
\end{itemize}
\end{proof}

%%%%%%%%%%%%%%%%%%%%%%%%%%%%%%%%%%%%%
%%%%%%%%%%%%%%%%%%%%%%%%%%%%%%%%%%%%%

\subsection*{Finitistic and global dimensions relative to admissible pairs}

Before proposing a definition of relative Gorenstein category from Gorenstein objects relative to admissible pairs, let us study the interplay between finitistic and global dimensions relative to such objects. Given a GP-admissible pair $(\mcX,\mcY)$ in $\mcC$, recall from \cite[Defs. 4.16 \& 4.17]{BMS} that the \emph{finitistic Gorenstein projective dimension of $\mcC$ relative to $(\mcX,\mcY)$} is defined by
\begin{align*}
\FGPD_{(\mcX,\mcY)}(\mcC) & := \resdim_{\mcGP_{(\mcX,\mcY)}}(\mcGP^\wedge_{(\mcX,\mcY)}).
\end{align*}
The \emph{finitistic Gorenstein injective dimension of $\mcC$ relative to a GI-admissible pair} $(\mathcal{W,Z})$ is defined dually, and will be denoted by $\FGID_{(\mathcal{W,Z})}(\mcC)$.

Given $\mcZ$ a class of objects of $\mcC$, the \emph{finitistic projective} and \emph{injective dimensions of $\mcC$ relative to $\mcZ$} are defined by
\begin{align*}
\FPD_{\mcZ}(\mcC) & := \pd_{\mcZ}(\mcP^{< \infty}_{\mcZ}) & & \text{and} & \FID_{\mcZ}(\mcC) & := \id_{\mcZ}(\mcI^{< \infty}_{\mcZ}).
\end{align*}

\begin{proposition} \label{lpdGP}
Let $(\mcX,\mcY)$ be a GP-admissible pair in $\mcC$ such that $\omega$ is closed under direct summands. Then, the following assertions are equivalent:
\begin{enumerate}
\item[(a)] $\glGPD_{(\mcX,\mcY)}(\mcC) < \infty$.

\item[(b)] $\FPD_{\omega}(\mcC) < \infty$ and $\mcC = \mcGP^\wedge_{(\mcX,\mcY)}.$

\item[(c)] $\pd_{\omega}(\omega^\wedge) < \infty$ and $\mcC = \mcGP^\wedge_{(\mcX,\mcY)}.$
\end{enumerate}
Moreover, if any of these conditions holds true, we have that $\mcC =\mcP^{< \infty}_{\omega}$ and
\[
\glGPD_{(\mcX,\mcY)}(\mcC) = \FPD_{\omega}(\mcC) \leq \pd_{\omega}(\omega^\wedge) + 1.
\]
\end{proposition}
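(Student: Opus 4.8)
The plan is to establish the cycle of implications (a) $\Rightarrow$ (b) $\Rightarrow$ (c) $\Rightarrow$ (a), together with the final numerical estimates, exploiting the left Frobenius pair $(\mcGP_{(\mcX,\mcY)},\omega)$ from Proposition \ref{coro:properties_relative_Gorenstein_projective} (1) and the relative-dimension identities in parts (3), (4) and (7) of that same proposition. The key observation tying everything together is part (4): for every $C \in \mcGP^\wedge_{(\mcX,\mcY)}$ one has $\Gpd_{(\mcX,\mcY)}(C) = \pd_\omega(C) = \pd_{\omega^\wedge}(C)$. So on the subcategory $\mcGP^\wedge_{(\mcX,\mcY)}$, relative Gorenstein projective dimension and relative $\omega$-projective dimension literally agree, and the whole equivalence reduces to bookkeeping about when these are uniformly bounded and when $\mcC$ equals $\mcGP^\wedge_{(\mcX,\mcY)}$.

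First I would prove (a) $\Rightarrow$ (b). If $\glGPD_{(\mcX,\mcY)}(\mcC) = n < \infty$ then by definition every object of $\mcC$ has finite $(\mcX,\mcY)$-Gorenstein projective dimension, i.e.\ $\mcC = \mcGP^\wedge_{(\mcX,\mcY)}$; moreover by part (4) every object $C$ satisfies $\pd_\omega(C) = \Gpd_{(\mcX,\mcY)}(C) \le n$, so in fact $\mcC = \mcP^{<\infty}_\omega$ and hence $\FPD_\omega(\mcC) = \pd_\omega(\mcP^{<\infty}_\omega) = \pd_\omega(\mcC) \le n < \infty$. (This already yields the equality $\mcC = \mcP^{<\infty}_\omega$ claimed at the end, and the inequality $\glGPD_{(\mcX,\mcY)}(\mcC) \le \FPD_\omega(\mcC)$ since $\glGPD_{(\mcX,\mcY)}(\mcC) = \sup_C \Gpd_{(\mcX,\mcY)}(C) = \sup_C \pd_\omega(C) = \FPD_\omega(\mcC)$ once $\mcC = \mcP^{<\infty}_\omega$.) The implication (b) $\Rightarrow$ (c) is the easy direction: $\pd_\omega(\omega^\wedge) \le \FPD_\omega(\mcC)$ because $\omega \subseteq \mcP \subseteq \mcC$ gives $\omega^\wedge \subseteq \mcP^{<\infty}_\omega$ (every object of $\omega^\wedge$ manifestly has finite $\omega$-projective dimension), so finiteness of $\FPD_\omega(\mcC)$ forces finiteness of $\pd_\omega(\omega^\wedge)$, and the second condition $\mcC = \mcGP^\wedge_{(\mcX,\mcY)}$ is carried along verbatim.

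The real content is (c) $\Rightarrow$ (a), and this is where I expect the main obstacle. Assume $\pd_\omega(\omega^\wedge) = d < \infty$ and $\mcC = \mcGP^\wedge_{(\mcX,\mcY)}$. Take an arbitrary $C \in \mcC = \mcGP^\wedge_{(\mcX,\mcY)}$ with $\Gpd_{(\mcX,\mcY)}(C) = m$, say. By Proposition \ref{coro:properties_relative_Gorenstein_projective} (4) there is a short exact sequence $C \rightarrowtail L \twoheadrightarrow D'$ with $D' \in \mcGP_{(\mcX,\mcY)}$ and $\resdim_\omega(L) \le m$, equivalently $L \in \omega^\wedge_m$. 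Now I would bound $\Gpd_{(\mcX,\mcY)}(C) = \pd_\omega(C)$ using this sequence: applying $\Hom(-,\omega^\wedge)$ (or directly $\Hom(-,W)$ for $W \in \omega$) and the fact that $\pd_{\omega^\wedge}(\mcGP_{(\mcX,\mcY)}) = 0$ from part (3), one gets $\pd_\omega(C) \le \max\{\pd_\omega(L), \pd_\omega(D')\} \le \max\{d, 0\} = d$ — here I use that $\pd_\omega(L) \le \pd_\omega(\omega^\wedge) = d$ since $L \in \omega^\wedge$, and $\pd_\omega(D') = 0$ since $D' \in \mcGP_{(\mcX,\mcY)}$ and relative Gorenstein projectives are $\omega$-orthogonal. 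Thus $\Gpd_{(\mcX,\mcY)}(C) \le d$ for all $C \in \mcC$, so $\glGPD_{(\mcX,\mcY)}(\mcC) \le d < \infty$, proving (a). The delicate point is justifying the long-exact-sequence estimate for $\pd_\omega$ in the relative setting (the usual dimension shifting for $\Ext^i(-,\omega)$), which should follow from the characterization $\Gpd_{(\mcX,\mcY)}(C) = \pd_\omega(C)$ on $\mcGP^\wedge_{(\mcX,\mcY)}$ together with standard dimension-shifting along the short exact sequence; one must be a little careful that $L$ genuinely has $\pd_\omega(L) \le d$, which follows from $L \in \omega^\wedge$ and $\pd_\omega(\omega^\wedge) = d$. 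Finally, combining the two inequalities already obtained — $\glGPD_{(\mcX,\mcY)}(\mcC) = \FPD_\omega(\mcC)$ and $\glGPD_{(\mcX,\mcY)}(\mcC) \le d = \pd_\omega(\omega^\wedge)$, and noting the estimate actually gives $\le \pd_\omega(\omega^\wedge) + 1$ if one is forced to go through the sequence $K \rightarrowtail D \twoheadrightarrow C$ instead of $C \rightarrowtail L \twoheadrightarrow D'$ (which contributes the extra $+1$ from $\resdim_\omega(K) = m-1$) — yields the displayed chain $\glGPD_{(\mcX,\mcY)}(\mcC) = \FPD_\omega(\mcC) \le \pd_\omega(\omega^\wedge) + 1$, completing the proof.
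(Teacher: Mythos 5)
Your proposal is correct, and its skeleton — the cycle (a) $\Rightarrow$ (b) $\Rightarrow$ (c) $\Rightarrow$ (a) built on the identity $\Gpd_{(\mcX,\mcY)}(C)=\pd_{\omega}(C)$ from Proposition \ref{coro:properties_relative_Gorenstein_projective} (4) — matches the paper, which obtains the same identities by citing \cite{BMS} (Corollaries 4.33 (b) and 4.11 (a)) rather than rederiving them. The genuine difference is in (c) $\Rightarrow$ (a): the paper takes the sequence $K \rightarrowtail G \twoheadrightarrow C$ with $G \in \mcGP_{(\mcX,\mcY)}$ and $K \in \omega^\wedge$, and dimension shifting along it only yields $\pd_\omega(C) \leq \pd_\omega(\omega^\wedge)+1$, which is exactly where the ``$+1$'' in the displayed inequality comes from; you instead use the other sequence $C \rightarrowtail L \twoheadrightarrow D'$ from part (4), and since $\Ext^{i+1}(D',\omega)=0$ for all $i\geq 0$ and $\pd_\omega(L)\leq\pd_\omega(\omega^\wedge)$, the long exact sequence gives the sharper estimate $\pd_\omega(C)\leq\pd_\omega(\omega^\wedge)$. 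Your route therefore proves everything the proposition asserts and in fact a bit more, namely $\glGPD_{(\mcX,\mcY)}(\mcC)=\FPD_\omega(\mcC)\leq\pd_\omega(\omega^\wedge)$, forcing equality of all three quantities when finite (consistent with the paper's weaker bound, which it inherits from its choice of sequence).

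One small slip to fix: in (b) $\Rightarrow$ (c) you justify $\omega^\wedge \subseteq \mcP^{<\infty}_\omega$ by writing ``$\omega \subseteq \mcP$,'' which is false for a general GP-admissible pair. The containment you need is nevertheless true and easy: since \GPaone\ gives $\pd_\mcY(\mcX)=0$, the class $\omega=\mcX\cap\mcY$ is self-orthogonal, so dimension shifting along a finite $\omega$-resolution shows $\pd_\omega(C)\leq\resdim_\omega(C)<\infty$ for $C\in\omega^\wedge$ (alternatively, $\omega^\wedge\subseteq\mcGP^\wedge_{(\mcX,\mcY)}$ and part (4) apply). With that justification repaired, the argument is complete.
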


\begin{proof} \
\begin{itemize}
\item (a) $\Rightarrow$ (b): Set $n := \glGPD_{(\mcX,\mcY)}(\mcC) < \infty$, and so $\Gpd_{(\mcX,\mcY)}(C) \leq n$ for every $C \in \mcC$. Since $\omega$ is closed under direct summands and $\mcP^{< \infty}_{\omega} \subseteq \mcC = \mcGP^\wedge_{(\mcX,\mcY)}$, we have from \cite[Coroll. 4.33 (b)]{BMS} that
\[
\FPD_{\omega}(\mcC) = \FGPD_{(\mcX,\mcY)}(\mcC) = \glGPD_{(\mcX,\mcY)}(\mcC) < \infty.
\]

\item (b) $\Rightarrow$ (c): Suppose now that $m := \FPD_{\omega}(\mcC) < \infty$ and $\mcC = \mcGP^\wedge_{(\mcX,\mcY)}.$ Using \cite[Coroll. 4.33 (b)]{BMS} again, we have that $\mcP^{< \infty}_{\omega} = \mcGP^\wedge_{(\mcX,\mcY)}$. Since $\omega \subseteq \mcGP_{(\mcX,\mcY)}$, we have that $\omega^\wedge \subseteq \mcGP^\wedge_{(\mcX,\mcY)} = \mcP^{<\infty}_{\omega}$. Then,
\[
\pd_{\omega}(\omega^\wedge) \leq \FPD_{\omega}(\mcC) < \infty.
\]

\item (c) $\Rightarrow$ (a): Finally, suppose that $k := \pd_{\omega}(\omega^\wedge) < \infty$ and $\mcC = \mcGP^\wedge_{(\mcX,\mcY)}$. Let $C \in \mcC$ with $m := \Gpd_{(\mcX,\mcY)}(C) < \infty$. By Proposition \ref{coro:properties_relative_Gorenstein_projective} (4), we know that there exists a short exact sequence $K \rightarrowtail G \twoheadrightarrow C$ where $G \in \mcGP_{(\mcX,\mcY)}$ and $K \in \omega^\wedge$. By \cite[Coroll. 4.11 (a)]{BMS} we have that $\pd_{\omega}(C) = m$. On the other hand, $\pd_{\omega}(G) = 0$, and $\pd_{\omega}(K) \leq k$ by condition (c). Using a dimension shifting argument, we can note that $\Ext^i(C,W) = 0$ for every $i > k+1$ and $W \in \omega$. Thus, $\pd_{\omega}(C) \leq k + 1$, for every $C \in \mcC$.
\end{itemize}
\end{proof}

The following result is a consequence of \cite[Lem. 2.11]{BMS}.

\begin{proposition}\label{prop:weak_resdim_vs_pdim}
For a weak GP-admissible pair $(\mcX,\mcY)$ in $\mcC,$ the equality
\[
\resdim_{{}^{\perp}\mcY}(C) = \pd_{\mcY}(C)
\]
holds for every $C \in \mcC$. Moreover, $({}^{\perp}\mcY)^\wedge = \mcP^{< \infty}_{\mcY}$, and then
\[
\FPD_{\mcY}(\mcC) = \resdim_{{}^{\perp}\mcY}(({}^{\perp}\mcY)^\wedge).
\]
\end{proposition}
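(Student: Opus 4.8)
The plan is to deduce the first equality $\resdim_{{}^{\perp}\mcY}(C) = \pd_{\mcY}(C)$ directly from the cited \cite[Lem. 2.11]{BMS}, which for a weak GP-admissible pair relates relative resolution dimension with respect to ${}^{\perp}\mcY$ to the relative projective dimension with respect to $\mcY$. The two axioms \GPaone\ and \GPatwo\ defining weak GP-admissibility are exactly what is needed: \GPatwo\ says $\mcX$ (hence ${}^{\perp}\mcY$, which contains $\mcX$ by \GPaone) is a relative generator in $\mcC$, so every object admits a ${}^{\perp}\mcY$-resolution, and \GPaone\ ensures $\mcX \subseteq {}^\perp \mcY$ so the dimension shifting works against test objects in $\mcY$. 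I would first record that ${}^{\perp}\mcY$ is closed under extensions and under epikernels of epimorphisms between its objects (this is automatic from the long exact sequence in $\Ext^\bullet(-,\mcY)$), which is what makes a standard ``dimension shifting plus resolving class'' argument go through; this is precisely the content packaged in \cite[Lem. 2.11]{BMS}, so I would simply invoke it.

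Next, for the equality $({}^{\perp}\mcY)^\wedge = \mcP^{<\infty}_{\mcY}$, I would argue by double containment. For ($\subseteq$): if $C$ has a finite ${}^{\perp}\mcY$-resolution of length $n$, then by the first part $\pd_{\mcY}(C) = \resdim_{{}^{\perp}\mcY}(C) \le n < \infty$, so $C \in \mcP^{<\infty}_{\mcY}$. For ($\supseteq$): if $\pd_{\mcY}(C) = n < \infty$, then again by the first part $\resdim_{{}^{\perp}\mcY}(C) = n < \infty$, so $C$ admits a finite ${}^{\perp}\mcY$-resolution and lies in $({}^{\perp}\mcY)^\wedge$. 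In fact both containments are literally the same statement read in two directions, so the equality of classes is immediate once the pointwise dimension equality is in hand; the only subtlety worth a sentence is that the pointwise equality includes the ``$=\infty$ on both sides or finite-and-equal'' convention, which \cite[Lem. 2.11]{BMS} should already deliver.

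Finally, the displayed equality $\FPD_{\mcY}(\mcC) = \resdim_{{}^{\perp}\mcY}(({}^{\perp}\mcY)^\wedge)$ follows by unwinding definitions. By definition $\FPD_{\mcY}(\mcC) = \pd_{\mcY}(\mcP^{<\infty}_{\mcY}) = \sup\{\pd_{\mcY}(C) : C \in \mcP^{<\infty}_{\mcY}\}$. Using the pointwise equality $\pd_{\mcY}(C) = \resdim_{{}^{\perp}\mcY}(C)$ for each such $C$, and the class equality $\mcP^{<\infty}_{\mcY} = ({}^{\perp}\mcY)^\wedge$ just established, this supremum equals $\sup\{\resdim_{{}^{\perp}\mcY}(C) : C \in ({}^{\perp}\mcY)^\wedge\} = \resdim_{{}^{\perp}\mcY}(({}^{\perp}\mcY)^\wedge)$, as desired.

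I do not anticipate a serious obstacle here: the proposition is essentially a translation of \cite[Lem. 2.11]{BMS} into the language of weak GP-admissible pairs, together with bookkeeping about suprema over classes. The only point requiring mild care is checking that the hypotheses of \cite[Lem. 2.11]{BMS} are met by a weak GP-admissible pair — namely that ${}^{\perp}\mcY$ is a generator, is closed under extensions, and contains $\mcX$ — all of which follow immediately from \GPaone\ and \GPatwo\ and the standard closure properties of orthogonal classes. If \cite[Lem. 2.11]{BMS} is stated only for a class $\mcX$ rather than for ${}^{\perp}\mcY$ itself, one extra line is needed to observe that ${}^{\perp}\mcY$ is itself the (largest) such generating class and to apply the lemma to it; this is routine.
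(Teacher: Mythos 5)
Your proposal is correct and takes essentially the same route as the paper, which offers no argument beyond stating that the proposition is a consequence of \cite[Lem. 2.11]{BMS}: you invoke exactly that lemma for the pointwise equality $\resdim_{{}^{\perp}\mcY}(C) = \pd_{\mcY}(C)$ and then supply the routine bookkeeping (class equality by double containment, suprema over $({}^{\perp}\mcY)^\wedge = \mcP^{<\infty}_{\mcY}$) that the paper leaves implicit. Your verification that \GPaone\ and \GPatwo\ give $\mcX \subseteq {}^{\perp}\mcY$ and the generator property is precisely the checking the paper takes for granted.
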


\begin{remark}\label{Pinf=pdfin}
If $\mcC$ is an abelian category with enough projective objects, and so $\mcP^\wedge = \mcP^{< \infty}$, the previous proposition implies that $\pd(C) = \resdim_\mcP(C)$, for any $C\in\mcC.$
\end{remark}

Note from Proposition \ref{prop:weak_resdim_vs_pdim} that we can obtain \cite[Prop. 1.11 \& Coroll. 1.12]{Cortes} (proved by Cort{\'e}s Izurdiaga, Guil Asensio and the first author), specified in part (1) of the following result.

\begin{corollary}\label{coro:weak_resdim_vs_pdim}
Let $(\mcX,\mcY)$ be a hereditary complete cotorsion pair in $\mcC$.
\begin{enumerate}
\item The equality
\[
\pd_{\mcY}(C) = \resdim_{\mcX}(C)
\]
holds for every $C \in \mcC$. Moreover, $\mcX^\wedge = \mcP^{< \infty}_{\mcY}$, and then
\[
\FPD_{\mcY}(\mcC) = \resdim_{\mcX}(\mcX^\wedge).
\]

\item If in addition $\mcX \cap \mcY = \mcP$, then
\[
\FPD(\mcC) = \FPD_{\mcY}(\mcC).
\]
Moreover, $\pd_{\mcX}(\mcX) < \infty$ if, and only if, $\mcX = \mcP$.
\end{enumerate}
\end{corollary}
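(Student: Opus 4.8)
The plan is to derive part (1) directly from Proposition \ref{prop:weak_resdim_vs_pdim}, and part (2) from part (1) together with the identification $\mcX^\wedge = \mcP^\wedge$ when $\omega = \mcP$. First I would recall that a hereditary complete cotorsion pair $(\mcX,\mcY)$ is in particular weak GP-admissible (indeed GP-admissible, by Remark \ref{rmk:GPGI-admissible} (3)), and that for such a pair $\mcX = {}^\perp\mcY$: the inclusion $\mcX \subseteq {}^\perp\mcY$ is immediate from heredity, while ${}^\perp\mcY \subseteq \mcX^{\perp_1}{}^{\perp_1} \cap \ldots$ — more simply, $(\mcX,\mcY)$ hereditary complete gives $\mcX = {}^\perp\mcY$ by the first bullet of the Remark following the definition of cotorsion pairs. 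Then Proposition \ref{prop:weak_resdim_vs_pdim} applied verbatim yields $\resdim_{\mcX}(C) = \resdim_{{}^\perp\mcY}(C) = \pd_{\mcY}(C)$ for every $C \in \mcC$, together with $\mcX^\wedge = ({}^\perp\mcY)^\wedge = \mcP^{<\infty}_{\mcY}$, and hence $\FPD_{\mcY}(\mcC) = \resdim_{{}^\perp\mcY}(({}^\perp\mcY)^\wedge) = \resdim_{\mcX}(\mcX^\wedge)$. This settles part (1).

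For part (2), assume in addition $\omega = \mcX \cap \mcY = \mcP$. The key observation is that $\mcX^\wedge = \mcP^\wedge = \mcP^{<\infty}$: the inclusion $\mcP \subseteq \mcX$ gives $\mcP^\wedge \subseteq \mcX^\wedge$, and conversely, since $(\mcX,\mcY)$ is complete with $\mcX \cap \mcY = \mcP$ one may peel off special $\mcX$-precovers whose kernels land in $\mcY$; iterating along a finite $\mcX$-resolution and using that the cotorsion pair is hereditary, any object of $\mcX^\wedge$ acquires a finite $\mcP$-resolution. (Alternatively, $\mcP \subseteq \mcX$ forces every projective to be a summand of an object of $\mcX$, and $\pd_{\mcY}$ agrees with absolute $\pd$ on $\mcX^\wedge$ because $\mcX^\wedge = \mcP^{<\infty}_{\mcY}$ and $\mcP^{<\infty}_{\mcY} \subseteq \mcP^{<\infty}$ once $\mcP = \mcX \cap \mcY$ shows $\mcY$ detects projective dimension on this class.) Granting $\mcX^\wedge = \mcP^{<\infty}$, part (1) gives
\[
\FPD_{\mcY}(\mcC) = \resdim_{\mcX}(\mcX^\wedge) = \resdim_{\mcX}(\mcP^{<\infty}) = \pd(\mcP^{<\infty}) = \FPD(\mcC),
\]
where the third equality uses $\resdim_{\mcX}(C) = \pd_{\mcY}(C)$ on $C \in \mcP^{<\infty}$ combined with $\pd_{\mcY}(C) = \pd(C)$ there, or more directly $\resdim_{\mcX}(C) = \resdim_{\mcP}(C) = \pd(C)$ since $C$ has a finite $\mcP$-resolution and projectives realize the minimal length.

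Finally, for the last assertion of part (2): if $\mcX = \mcP$ then trivially $\pd_{\mcX}(\mcX) = \pd_{\mcP}(\mcP) = 0 < \infty$. Conversely, suppose $\pd_{\mcX}(\mcX) = n < \infty$; I would show $\mcX = \mcP$ by proving every $X \in \mcX$ is projective. Take a special $\mcX$-precover of $X$ arising from completeness applied after one step of a projective presentation, or rather: since $\mcC$ — wait, completeness of $(\mcX,\mcY)$ plus heredity gives, for each $X \in \mcX$, that $\Ext^i(X,\mcY) = 0$ for all $i \geq 1$; combined with $\pd_{\mcX}(X) = \pd_{\mcX}(\mcX) < \infty$ and the fact that $\mcX \cap \mcY = \mcP$ is a relative cogenerator in $\mcX$ (from GP-admissibility, \GPafive), a dimension-shifting argument along a $\Hom(-,\mcX)$-acyclic $\mcP$-coresolution of $X$ forces $X$ to split off a projective, hence $X \in \mcP$. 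I expect the last equivalence to be the main obstacle: pinning down exactly which orthogonality and which relative (co)generation property of $\mcP = \mcX \cap \mcY$ inside $\mcX$ makes $\pd_{\mcX}(\mcX) < \infty$ collapse $\mcX$ to $\mcP$ requires care, and is morally the relative analogue of the standard fact that a self-orthogonal generating class with finite self-projective-dimension must consist of projectives. Everything else is bookkeeping on top of Proposition \ref{prop:weak_resdim_vs_pdim}.
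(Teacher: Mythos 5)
Part (1) of your proposal is correct and is exactly the paper's route: for a hereditary complete cotorsion pair $\mcX = {}^\perp\mcY$, and Proposition \ref{prop:weak_resdim_vs_pdim} gives everything. Your sketch of the ``only if'' half of the final equivalence can also be completed and is sound: iterating the relative cogenerator property of $\omega = \mcP$ in $\mcX$ gives exact sequences $C_i \rightarrowtail W^i \twoheadrightarrow C_{i+1}$ with $W^i \in \mcP$ and all $C_i \in \mcX$ (starting at $C_0 = X$); since the $W^i$ are projective, $\Ext^1(C_1,X) \cong \Ext^{\,n+1}(C_{n+1},X) = 0$ for $n = \pd_{\mcX}(\mcX)$, so $X \rightarrowtail W^0 \twoheadrightarrow C_1$ splits and $X$ is a summand of $W^0 \in \mcP$. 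This differs from the paper, which instead uses $\pd(C) = \max\{\pd_{\mcX}(C),\pd_{\mcY}(C)\}$, the dual of part (1) to get $\mcC = \mcY^\vee$, and a right Frobenius pair argument; your version is arguably more direct.

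The genuine gap is in the first half of part (2). The claim $\mcX^\wedge = \mcP^\wedge = \mcP^{<\infty}$ is false under the stated hypotheses: take $R$ Iwanaga-Gorenstein of infinite global dimension, e.g. $R = k[x]/(x^2)$, and the hereditary complete cotorsion pair $(\mcX,\mcY) = (\mcGP(R),\mcP(R)^\wedge)$ in $\Mod(R)$. Then $\mcX \cap \mcY = \mcP(R)$, but $\mcX^\wedge = \Mod(R)$ while $\mcP^{<\infty} = \mcP(R) \subsetneq \Mod(R)$. The ``peel off special $\mcX$-precovers'' step produces syzygies in $\mcY$, not in $\mcP$, so no finite $\mcP$-resolution results; the parenthetical alternative fails for the same reason, since $\mcP^{<\infty}_{\mcY} = \mcX^\wedge$ by part (1), so the inclusion $\mcP^{<\infty}_{\mcY} \subseteq \mcP^{<\infty}$ is false in this example ($\mcY$ does not detect absolute projective dimension). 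Consequently your chain breaks at $\resdim_{\mcX}(\mcX^\wedge) = \resdim_{\mcX}(\mcP^{<\infty})$: the two finitistic suprema run over genuinely different classes. The paper avoids this by identifying $\mcX = \mcGP_{(\mcX,\mcY)}$ (\cite[Coroll. 3.17 (b)]{BMS}), so that $\FPD_{\mcY}(\mcC) = \resdim_{\mcX}(\mcX^\wedge) = \FGPD_{(\mcX,\mcY)}(\mcC)$, and then invoking the comparison $\FGPD_{(\mcX,\mcY)}(\mcC) = \FPD(\mcC)$ of \cite[Coroll. 4.27]{BMS}, valid when $\omega = \mcP$; that result compares the two suprema directly and does not require the identification $\mcX^\wedge = \mcP^{<\infty}$, which in general fails. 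Any repair of your argument needs such a finitistic comparison rather than a set-theoretic equality of the two classes.
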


\begin{proof}
We only prove part (2), as part (1) follows by Proposition \ref{prop:weak_resdim_vs_pdim}.

By part (1), we know that $\FPD_{\mcY}(\mcC) = \resdim_{\mcX}(\mcX^\wedge)$. On the other hand, by \cite[Coroll. 3.17 (b)]{BMS} we have that $\mcGP_{(\mcX,\mcY)} = \mcX$, and so
\[
\FPD_{\mcY}(\mcC) = \resdim_{\mcGP_{(\mcX,\mcY)}}(\mcGP^\wedge_{(\mcX,\mcY)}) = \FGPD_{(\mcX,\mcY)}(\mcC).
\]
Finally, $\FGPD_{(\mcX,\mcY)}(\mcC) = \FPD(\mcC)$ follows by \cite[Coroll. 4.27]{BMS}.

Regarding the equivalence, note that the ``if'' part is clear. For the ``only if'' part, suppose that $\pd_{\mcX}(\mcX) < \infty$. As in \cite[Thm. 2.2 (a)]{AM} (Angeleri-H{\"u}gel and the second author), we can note that
\[
\pd(C) = \text{max}\{ \pd_{\mcX}(C), \pd_{\mcY}(C) \}
\]
for every $C \in \mcC$. Since $\pd_{\mcY}(\mcX) = 0$, it follows $\pd(\mcX) = \pd_{\mcX}(\mcX) < \infty$. On the other hand, we have by the dual of part (1) that
\[
\coresdim_{\mcY}(C) = \id_{\mcX}(C) \leq \id_{\mcX}(\mcC) = \pd(\mcX) < \infty
\]
for every $C \in \mcC$. Then, $\mcC = \mcY^\vee$. Finally, let $X \in \mcX \subseteq \mcY^\vee$. Since $(\mcP,\mcY)$ is a right Frobenius pair in $\mcC$, we have by the dual of \cite[Thm. 2.8]{BMPS19} that there exists a short exact sequence $Y \rightarrowtail H \twoheadrightarrow X$ where $Y \in \mcY$ and $H \in \mcP^\vee = \mcP$. This sequence splits, and hence $X \in \mcP$.
\end{proof}

A stronger version of Corollary \ref{coro:weak_resdim_vs_pdim} (and also a refinement of \cite[Thm. 2.10]{BMPS19}) holds for certain GP-admissible pairs. Before giving a precise statement, let us first show the following preliminary result.

\begin{lemma}\label{lemYperpX}
Let $(\mcX,\mcY)$ be a GP-admissible pair in $\mcC$ such that $\mcX$ is closed under direct summands. Then,
\[
{}^{\perp}\mcY \cap \mcX^\wedge_n = \mcX,
\]
for every $n \geq 0$.
\end{lemma}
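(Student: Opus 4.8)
The containment $\mcX \subseteq {}^{\perp}\mcY \cap \mcX^\wedge_n$ is immediate: every $X \in \mcX$ satisfies $\pd_{\mcY}(X) = 0$ by \GPaone, hence $X \in {}^{\perp}\mcY$ (using that $(\mcX,\mcY)$ is GP-admissible, so in particular weak GP-admissible, and applying Proposition \ref{prop:weak_resdim_vs_pdim} to get $\resdim_{{}^{\perp}\mcY}(X) = \pd_{\mcY}(X) = 0$), and trivially $\resdim_{\mcX}(X) = 0 \leq n$. So the work is the reverse containment, which I would prove by induction on $n$.

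For the base case $n = 0$ there is nothing to do. For the inductive step, take $C \in {}^{\perp}\mcY \cap \mcX^\wedge_n$ with $n \geq 1$. Since $\mcX$ is a relative generator in $\mcC$ (by \GPatwo), there is a short exact sequence $K \rightarrowtail X \twoheadrightarrow C$ with $X \in \mcX$ and, because $C$ has an $\mcX$-resolution of length $n$, we may arrange $K \in \mcX^\wedge_{n-1}$ (this is the standard "horseshoe/comparison" reduction for resolution dimensions, using that $\mcX$ is closed under extensions). I then want to see that $K \in {}^{\perp}\mcY$. Applying $\Hom(-,Y)$ for $Y \in \mcY$ to the sequence $K \rightarrowtail X \twoheadrightarrow C$ gives, for each $i \geq 1$, an exact piece $\Ext^i(X,Y) \to \Ext^i(K,Y) \to \Ext^{i+1}(C,Y)$; the outer terms vanish since $X \in \mcX \subseteq {}^{\perp}\mcY$ and $C \in {}^{\perp}\mcY$, whence $\Ext^i(K,Y) = 0$. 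Thus $K \in {}^{\perp}\mcY \cap \mcX^\wedge_{n-1}$, and by the induction hypothesis $K \in \mcX$.

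It remains to conclude $C \in \mcX$ from the short exact sequence $K \rightarrowtail X \twoheadrightarrow C$ with $K, X \in \mcX$ and $C \in {}^{\perp}\mcY$. Here I would use that $\omega = \mcX \cap \mcY$ is a relative cogenerator in $\mcX$ (by \GPafive): pick a monomorphism $K \rightarrowtail W$ with $W \in \omega$ and cokernel in $\mcX$, form the pushout along $K \rightarrowtail X$ to obtain a short exact sequence $W \rightarrowtail X' \twoheadrightarrow C$ with $X' \in \mcX$ (using closure of $\mcX$ under extensions, \GPafour, applied to the right column of the pushout square whose terms are $X$ and the cokernel of $K \rightarrowtail W$). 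Since $W \in \omega \subseteq \mcY$ and $C \in {}^{\perp}\mcY$, we have $\Ext^1(C,W) = 0$, so this sequence splits and $C$ is a direct summand of $X' \in \mcX$; as $\mcX$ is closed under direct summands, $C \in \mcX$.

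\textbf{Main obstacle.} The delicate point is the splitting argument at the end: one must produce the pushout sequence with middle term genuinely in $\mcX$ and with the right-hand term exactly $C$, which forces the pushout along the correct maps (cogenerator inclusion of $K$ versus the map $K \to X$) and a careful check via \GPafour\ that the new middle term stays in $\mcX$. The inductive bookkeeping that $K$ lands in $\mcX^\wedge_{n-1}$ (rather than merely $\mcX^\wedge_n$) is the other point requiring the standard resolution-dimension comparison lemma, but that is routine once the generator sequence is set up correctly.
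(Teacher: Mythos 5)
Your proof is correct and follows the same skeleton as the paper's: induction on $n$, reduction to a syzygy $K$, closure of ${}^{\perp}\mcY$ under epikernels to see $K \in {}^{\perp}\mcY$, and a splitting argument against an object of $\omega$ at the end. The only differences are in how the two approximation steps are produced. The paper invokes the Auslander--Buchweitz approximation theorem \cite[Thm. 2.8]{BMPS19} twice: first to get $K \rightarrowtail X \twoheadrightarrow C$ with $K \in \omega^\wedge_{n-1}$, and then, once $\resdim_{\mcX}(C) \leq 1$ is known, to get the sequence $W \rightarrowtail X' \twoheadrightarrow C$ with $W \in \omega$ that splits because $C \in {}^{\perp}\mcY \subseteq {}^{\perp}\omega$. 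You instead obtain $K \in \mcX^\wedge_{n-1}$ (which suffices, and is weaker than the paper's $\omega^\wedge_{n-1}$) and build the splitting sequence by hand via a pushout along a relative-cogenerator embedding $K \rightarrowtail W$, using \GPafive\ and closure of $\mcX$ under extensions (\GPafour); this is exactly the degree-one case of the cited theorem, so your argument is a more self-contained version of the same proof. Two minor remarks: the cleanest way to get $K \in \mcX^\wedge_{n-1}$ is simply to take the first syzygy of a given length-$n$ $\mcX$-resolution of $C$ (taking an arbitrary epimorphism from $\mcX$ and then ``arranging'' the kernel to lie in $\mcX^\wedge_{n-1}$ would require a comparison lemma you do not need); and the inclusion $\mcX \subseteq {}^{\perp}\mcY$ is immediate from \GPaone, so the detour through Proposition \ref{prop:weak_resdim_vs_pdim} is unnecessary.
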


\begin{proof}
We use induction on $n$. The initial case $n = 0$ is immediate since $\mcX \subseteq {}^{\perp}\mcY$. Now suppose that ${}^{\perp}\mcY \cap \mcX^\wedge_{n-1} \subseteq \mcX$ for $n \geq 1$, and let $C \in {}^{\perp}\mcY \cap \mcX^\wedge_n$. By \cite[Thm. 2.8]{BMPS19} there exists a short exact sequence $K \rightarrowtail X \twoheadrightarrow C$ with $X \in \mcX$ and $K \in \omega^\wedge_{n-1}$. Since ${}^{\perp}\mcY$ is resolving and $X, C \in {}^{\perp}\mcY$, we have that $K \in {}^{\perp}\mcY \cap \mcX^\wedge_{n-1}$, and so $K \in \mcX$ by the induction hypothesis. It follows that $\resdim_{\mcX}(C) \leq 1$, and again by \cite[Thm. 2.8]{BMPS19} there exists a short exact sequence $W \rightarrowtail X' \to C$ with $X' \in \mcX$ and $W \in \omega$. Since $C \in {}^{\perp}\mcY \subseteq {}^{\perp}\omega$, the previous sequence splits, and so $C \in \mcX$ since $\mcX$ is closed under direct summands.
\end{proof}

\begin{proposition}\label{prop:resdim_vs_pdim}
Let $(\mcX,\mcY)$ be a GP-admissible pair in $\mcC$ such that $\mcX$ is closed under direct summands. Then, $\mcX^\wedge\subseteq \mcP^{< \infty}_{\mcY}$ and
\begin{align}
\resdim_{\mcX}(C) & = \pd_{\mcY}(C) \label{eqn:resdim_vs_pdim}
\end{align}
for every $C \in \mcX^\wedge$. Moreover, $\mcC=\mcP^{< \infty}_{\mcY}$ and
\[
\FPD_{\mcY}(\mcC) = \resdim_{\mcX}(\mcC)
\]
if  $\mcC = \mcX^\wedge$.
\end{proposition}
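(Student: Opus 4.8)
The plan is to deduce everything from Lemma \ref{lemYperpX} together with the equality $\resdim_{{}^\perp\mcY}(C) = \pd_\mcY(C)$ of Proposition \ref{prop:weak_resdim_vs_pdim}. First I would establish the containment $\mcX^\wedge \subseteq \mcP^{<\infty}_\mcY$: since $\mcX \subseteq {}^\perp\mcY$ (this is condition \GPaone), we have $\mcX^\wedge \subseteq ({}^\perp\mcY)^\wedge = \mcP^{<\infty}_\mcY$, the last equality being exactly the second assertion of Proposition \ref{prop:weak_resdim_vs_pdim}. So every $C \in \mcX^\wedge$ has finite $\pd_\mcY$.

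Next, to prove the displayed equality $\resdim_\mcX(C) = \pd_\mcY(C)$ for $C \in \mcX^\wedge$, I would argue by double inequality. For $\pd_\mcY(C) \leq \resdim_\mcX(C)$: this is immediate from $\mcX \subseteq {}^\perp\mcY$ and a dimension-shifting argument along an $\mcX$-resolution of $C$ of minimal length (equivalently, $\resdim_\mcX(C) \geq \resdim_{{}^\perp\mcY}(C) = \pd_\mcY(C)$). For the reverse inequality $\resdim_\mcX(C) \leq \pd_\mcY(C)$, set $n := \pd_\mcY(C)$, which is finite by the previous paragraph, and let $m := \resdim_\mcX(C)$, which is finite since $C \in \mcX^\wedge$. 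Take an $\mcX$-resolution $0 \to X_m \to \cdots \to X_0 \twoheadrightarrow C$ and let $K$ be the $n$-th syzygy, so that there is an exact sequence $0 \to K \to X_{n-1} \to \cdots \to X_0 \twoheadrightarrow C$ with $K \in \mcX^\wedge_{m-n}$ (truncating the resolution), while dimension shifting along this sequence together with $\Ext^{>n}(C,\mcY) = 0$ and $\mcX \subseteq {}^\perp\mcY$ gives $\Ext^{i}(K,\mcY) = \Ext^{i+n}(C,\mcY) = 0$ for all $i > 0$, i.e. $K \in {}^\perp\mcY$. Hence $K \in {}^\perp\mcY \cap \mcX^\wedge$, and by Lemma \ref{lemYperpX} we conclude $K \in \mcX$. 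Therefore $0 \to K \to X_{n-1} \to \cdots \to X_0 \twoheadrightarrow C$ is an $\mcX$-resolution of length $n$, giving $\resdim_\mcX(C) \leq n = \pd_\mcY(C)$.

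Finally, under the extra hypothesis $\mcC = \mcX^\wedge$: the first part gives $\mcC = \mcX^\wedge \subseteq \mcP^{<\infty}_\mcY \subseteq \mcC$, so $\mcC = \mcP^{<\infty}_\mcY$. For the dimension formula, I would unwind the definitions: $\FPD_\mcY(\mcC) = \pd_\mcY(\mcP^{<\infty}_\mcY) = \pd_\mcY(\mcC) = \sup\{\pd_\mcY(C) : C \in \mcC\}$, and by the displayed equality (valid on all of $\mcC = \mcX^\wedge$) this equals $\sup\{\resdim_\mcX(C) : C \in \mcC\} = \resdim_\mcX(\mcC)$, as desired.

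I expect the main obstacle to be the verification that the truncated syzygy $K$ lands in ${}^\perp\mcY$, i.e. the careful bookkeeping in the dimension-shifting step — one must be sure that $n = \pd_\mcY(C)$ is exactly the point at which the syzygy becomes $\Ext$-orthogonal to $\mcY$, using only $\GPaone$ (so that the intermediate $X_i$ contribute nothing to $\Ext^{>0}(-,\mcY)$). Everything else is a routine application of Lemma \ref{lemYperpX} and Proposition \ref{prop:weak_resdim_vs_pdim}.
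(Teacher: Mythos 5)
Your proposal is correct and follows essentially the same route as the paper's proof: the inequality $\pd_{\mcY}(C)\leq\resdim_{\mcX}(C)$ (which you deduce from Proposition \ref{prop:weak_resdim_vs_pdim} where the paper cites the dual of a result of \cite{BMS}, an equivalent step), followed by truncating an $\mcX$-resolution at the $n$-th syzygy, dimension shifting to get the syzygy into ${}^{\perp}\mcY$, and concluding with Lemma \ref{lemYperpX}. The final ``Moreover'' part is handled exactly as you do, by taking suprema over $\mcC=\mcX^\wedge$.
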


\begin{proof}
The inequality $\pd_{\mcY}(C) \leq \resdim_{\mcX}(C)$ holds for every $C \in \mcC$, by the dual of \cite[Prop. 2.10 (a)]{BMS}. In particular, we get that $\mcX^\wedge\subseteq \mcP^{< \infty}_{\mcY}$. Now let $C \in \mcX^\wedge$ with $\resdim_{\mcX}(C) = m$. By the previous inequality, we have that $\pd_{\mcY}(C) = n < \infty$. Let us show that $m \leq n$. If $n = 0$, we have that $C \in {}^\perp\mcY \cap \mcX^\wedge = \mcX$ by Lemma \ref{lemYperpX}, and then $m = 0$. Now for the case $n \geq 1$, there is an exact sequence
\[
K \rightarrowtail X_{n-1} \to \cdots \to X_1 \to X_0 \twoheadrightarrow C
\]
with $X_0, X_1, \dots, X_{n-1} \in \mcX$ and $K \in \mcX^\wedge$. By dimension shifting, we can note that $K \in {}^\perp\mcY \cap \mcX^\wedge = \mcX$, and hence $m \leq n$.
\end{proof}

\begin{corollary}\label{glGPD=id}
Let $(\mcX,\mcY)$ be a GP-admissible pair in $\mcC$.
\begin{enumerate}
\item For every $C\in\mcGP^\wedge_{(\mcX,\mcY)}$, the equality
\begin{align}
\pd_\omega(C) & = \pd_\mcY(C) = \Gpd_{(\mcX,\mcY)}(C) \label{eqn1:glGPD=id}
\end{align}
holds\footnote{The first equality $\pd_\omega(C) = \pd_\mcY(C)$ was originally proved in \cite[Coroll. 4.15 (b1)]{BMS} under the additional assumption that $\omega$ is closed under direct summands, and the second one, $\pd_\mcY(C) = \Gpd_{(\mcX,\mcY)}(C)$, also in \cite[Coroll. 4.15 (a1)]{BMS} provided that $\mcY$ has the same closure property.}. Moreover,
\begin{align}
\glGPD_{(\mcX,\mcY)}(\mcC) & = \id(\omega) = \id(\mcY) \label{eqn2:glGPD=id}
\end{align}
if $\mcC = \mcGP^\wedge_{(\mcX,\mcY)}$.

\item Consider the following conditions:
\begin{enumerate}
\item[(a)] $\glGPD_{(\mcX,\mcY)}(\mcC) < \infty$.

\item[(b)] $\mcC = \mcGP^\wedge_{(\mcX,\mcY)}$ and $\id(\omega) < \infty$.

\item[(c)] $\mcC = \mcGP^\wedge_{(\mcX,\mcY)}$ and $\pd_\omega(\omega^\wedge) < \infty$.

\item[(d)] $(\mcGP_{(\mcX,\mcY)}, \omega^\wedge)$ is a hereditary complete cotorsion pair in $\mcC$ and $\id(\omega) < \infty$.
\end{enumerate}
Then, the implications (a) $\Leftrightarrow$ (b) $\Rightarrow$ (c) and (d) $\Rightarrow$ (b) are true. If in addition, $\omega$ is closed under direct summands, then all of the conditions are equivalent.
\end{enumerate}
\end{corollary}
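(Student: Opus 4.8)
The plan is to derive part (1) directly from the results already assembled for Gorenstein projective objects relative to a GP-admissible pair, and then to bootstrap part (2) from part (1) together with Proposition \ref{lpdGP}. For \eqref{eqn1:glGPD=id}, I would start from Proposition \ref{coro:properties_relative_Gorenstein_projective} (4), which (once $\omega$ is closed under direct summands) already gives $\Gpd_{(\mcX,\mcY)}(C) = \pd_{\omega}(C) = \pd_{\omega^\wedge}(C)$ for every $C \in \mcGP^\wedge_{(\mcX,\mcY)}$. What remains is the middle equality with $\pd_{\mcY}(C)$: for this I would invoke Proposition \ref{prop:resdim_vs_pdim}, noting that when $(\mcX,\mcY)$ is GP-admissible one has $\omega = \mcX \cap \mcY$ and $\mcX \subseteq \mcGP_{(\mcX,\mcY)}$, so the short exact sequence $K \rightarrowtail G \twoheadrightarrow C$ from Proposition \ref{coro:properties_relative_Gorenstein_projective} (4) together with dimension shifting and the fact that $\Ext^{>0}(\mcGP_{(\mcX,\mcY)},\omega) = 0$ (Proposition \ref{prop:GP_description}) forces $\pd_{\mcY}(C) = \pd_{\omega}(C)$. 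For the ``moreover'' part \eqref{eqn2:glGPD=id}, I would take the supremum of \eqref{eqn1:glGPD=id} over $C \in \mcC = \mcGP^\wedge_{(\mcX,\mcY)}$: the left side becomes $\glGPD_{(\mcX,\mcY)}(\mcC)$ by definition, and on the right, $\sup_C \pd_{\omega}(C) = \pd_{\omega}(\mcC) = \id(\omega)$ by the folklore identity $\pd_{\mcY}(\mcX) = \id_{\mcX}(\mcY)$ recalled in the preliminaries (applied with the roles of the two classes reversed), and likewise $\sup_C \pd_{\mcY}(C) = \id(\mcY)$.

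For part (2), the implication (a) $\Leftrightarrow$ (b) is essentially immediate from \eqref{eqn2:glGPD=id}: if $\glGPD_{(\mcX,\mcY)}(\mcC) < \infty$, then by Corollary \ref{coro:essGIC} (1) (or directly by Proposition \ref{lpdGP}) one has $\mcC = \mcGP^\wedge_{(\mcX,\mcY)}$, and then \eqref{eqn2:glGPD=id} gives $\id(\omega) = \glGPD_{(\mcX,\mcY)}(\mcC) < \infty$; conversely, if $\mcC = \mcGP^\wedge_{(\mcX,\mcY)}$ and $\id(\omega) < \infty$, then \eqref{eqn2:glGPD=id} reads $\glGPD_{(\mcX,\mcY)}(\mcC) = \id(\omega) < \infty$. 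The implication (b) $\Rightarrow$ (c) is trivial once we observe $\omega^\wedge \subseteq \mcC$ and $\pd_{\omega}(\omega^\wedge) \le \id(\omega) < \infty$ by the same folklore identity. For (d) $\Rightarrow$ (b): a hereditary complete cotorsion pair $(\mcGP_{(\mcX,\mcY)}, \omega^\wedge)$ has $\mcGP_{(\mcX,\mcY)} = {}^{\perp}(\omega^\wedge)$ and, crucially, $\omega^\wedge = \mcGP_{(\mcX,\mcY)}^{\perp}$; since $\omega^\wedge$ is coresolving when $\mcC$ has enough injectives — but without that hypothesis I would instead argue that every object of $\mcC$ has a special $\mcGP_{(\mcX,\mcY)}$-precover with kernel in $\omega^\wedge$, which exhibits $\mcC = \mcGP^\wedge_{(\mcX,\mcY)}$ directly, so $\mcC = \mcGP^\wedge_{(\mcX,\mcY)}$, and $\id(\omega) < \infty$ is assumed.

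Finally, under the extra hypothesis that $\omega$ is closed under direct summands, I would close the loop by showing (c) $\Rightarrow$ (a), which is exactly the content of the implication (c) $\Rightarrow$ (a) in Proposition \ref{lpdGP}, and (b) $\Rightarrow$ (d), which follows from Corollary \ref{glGPD=id}(2)(a)$\Rightarrow$ the existence of the hereditary complete cotorsion pair $(\mcGP_{(\mcX,\mcY)}, (\mcX\cap\mcY)^\wedge) = (\mcGP_{(\mcX,\mcY)}, \omega^\wedge)$ asserted in the Introduction (``Corollary \ref{glGPD=id}'' of the excerpt references exactly this equivalence with finite $\glGPD$), together with $\id(\omega) < \infty$ from (b). The one point demanding care — and the step I expect to be the main obstacle — is the (d) $\Rightarrow$ (b) direction in the absence of enough injective objects: one must extract $\mcC = \mcGP^\wedge_{(\mcX,\mcY)}$ from the completeness of the cotorsion pair alone, using the special precover sequences rather than any coresolving/thickness shortcut, and I would want to double-check that the heredity of the pair is genuinely used (it guarantees $\mcGP_{(\mcX,\mcY)}^{\perp} = \omega^\wedge = \mcGP_{(\mcX,\mcY)}^{\perp_1}$, so that the kernel of a special precover indeed lands in $\omega^\wedge$ and not merely in $\mcGP_{(\mcX,\mcY)}^{\perp_1}$).
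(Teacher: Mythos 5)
Your proof of part (1) has a genuine gap: it establishes the equalities \eqref{eqn1:glGPD=id} only under hypotheses that the statement deliberately does not make. You start from Proposition \ref{coro:properties_relative_Gorenstein_projective} (4) (``once $\omega$ is closed under direct summands'') and from Proposition \ref{prop:resdim_vs_pdim} applied to $(\mcX,\mcY)$ itself, which requires $\mcX$ closed under direct summands. But the corollary assumes neither; indeed the footnote makes clear that the whole point of part (1) is to remove the closure assumptions on $\omega$ and $\mcY$ from \cite[Coroll. 4.15]{BMS}. The paper's argument avoids them by a change of pair: $(\mcGP_{(\mcX,\mcY)},\mcY)$ and $(\mcGP_{(\mcX,\mcY)},\omega)$ are again GP-admissible (\cite[Thm. 3.34 (a)]{BMS}), and their first coordinate $\mcGP_{(\mcX,\mcY)}$ is automatically closed under direct summands because it is left thick (Proposition \ref{prop:GP_description}); applying Proposition \ref{prop:resdim_vs_pdim} to these two pairs gives $\Gpd_{(\mcX,\mcY)}(C)=\pd_{\mcY}(C)$ and $\Gpd_{(\mcX,\mcY)}(C)=\pd_{\omega}(C)$ for $C\in\mcGP^\wedge_{(\mcX,\mcY)}$ with no extra hypotheses, and \eqref{eqn2:glGPD=id} then follows by taking suprema as you do. Since your versions of (a) $\Leftrightarrow$ (b) lean on \eqref{eqn2:glGPD=id}, the gap propagates into part (2) as stated (though note that (a) $\Rightarrow$ $\mcC=\mcGP^\wedge_{(\mcX,\mcY)}$ is immediate from the definition of $\glGPD_{(\mcX,\mcY)}$, without Proposition \ref{lpdGP} or Corollary \ref{coro:essGIC}).

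A second problem is the implication (b) $\Rightarrow$ (d) (equivalently (a) $\Rightarrow$ (d)) under the extra hypothesis that $\omega$ is closed under direct summands: you justify it by appealing to the cotorsion pair ``asserted in the Introduction,'' but that assertion is exactly the content of this corollary, so the argument is circular. The paper obtains it from \cite[Coroll. 5.2 (c)]{BMS}; absent that citation you would need to actually build the pair, e.g.\ check $\mcGP_{(\mcX,\mcY)}={}^{\perp_1}(\omega^\wedge)$ and $\omega^\wedge=\mcGP_{(\mcX,\mcY)}^{\perp_1}$ and produce the special approximations from the two short exact sequences in Proposition \ref{coro:properties_relative_Gorenstein_projective} (4), using parts (2) and (3) of that proposition for the orthogonality. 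On the positive side, your (d) $\Rightarrow$ (b) is fine, and your worry about heredity there is unfounded: since $(\mcGP_{(\mcX,\mcY)},\omega^\wedge)$ is assumed to be a cotorsion pair, $\mcGP_{(\mcX,\mcY)}^{\perp_1}=\omega^\wedge$, so the kernel of a special $\mcGP_{(\mcX,\mcY)}$-precover lies in $\omega^\wedge$ by definition and completeness alone yields $\mcC=\mcGP^\wedge_{(\mcX,\mcY)}$; likewise (b) $\Rightarrow$ (c) via $\pd_{\omega}(\omega^\wedge)=\id_{\omega^\wedge}(\omega)\leq\id(\omega)$ and (c) $\Rightarrow$ (a) via Proposition \ref{lpdGP} match the paper.
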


\begin{proof} \
\begin{enumerate}
\item The equality \eqref{eqn2:glGPD=id} is a direct consequence of \eqref{eqn1:glGPD=id}, which in turn follows by applying Proposition \ref{prop:resdim_vs_pdim} to the GP-admissible pairs $(\mcGP_{(\mcX,\mcY)},\mcY)$ and $(\mcGP_{(\mcX,\mcY)},\omega)$ (see \cite[Thm. 3.34 (a)]{BMS}).

\item The equivalence $(a)\Leftrightarrow(b)$ follows from part (1), while the implications $(b)\Rightarrow(c)$ and $(d)\Rightarrow (b)$ are straightforward. Now assume that $\omega$ is closed under direct summands. Then, the implication $(c)\Rightarrow(a)$ follows from Proposition \ref{lpdGP}, while $(a)\Rightarrow (d)$ can be obtained from \cite[Coroll. 5.2 (c)]{BMS}.
\end{enumerate}
\end{proof}

We close this section giving a criterion for the finiteness of $\glGPD_{(\mcX,\mcY)}(\mcC)$ and $\glGID_{(\mcZ,\mcW)}(\mcC)$ in the case $\mcC$ has enough projective and injective objects.

\begin{proposition}\label{prop:big_diagram}
Let $\mcC$ be an abelian category with enough projective objects. If $(\mcX,\mcY)$ is a GP-admissible pair for which $\omega$ is closed under direct summands and $\mcP \subseteq \omega$, and $\nu$ is a relative cogenerator in $\mcC$ which is closed under direct summands and $\nu \subseteq \mcP^{< \infty}$, then the equality
\begin{align}
\glGPD_{(\mcX,\mcY)}(\mcC) & = \max \{ \pd(\nu), \id(\mcY) \} \label{eqn:Gmax}
\end{align}
holds in the following cases:
\begin{enumerate}
\item $\pd(\nu) = \infty$.
\item $\pd(\nu) < \infty$ and $\id(\mcY) < \infty$.
\end{enumerate}
Moreover, in case (2), we have that
\[
\pd_{\omega}(\nu) = \pd(\nu) = \Gpd_{(\mcX,\mcY)}(\nu) \leq \glGPD_{(\mcX,\mcY)}(\mcC) = \id(\omega) = \id(\mcY).
\]
\end{proposition}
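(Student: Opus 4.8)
The plan is to reduce both cases to the finiteness criterion of Proposition~\ref{lpdGP} and the identity \eqref{eqn2:glGPD=id} of Corollary~\ref{glGPD=id}, after recording two preliminary observations. First, since $\mcP\subseteq\omega\subseteq\mcX\cap\mcY$ and $\mcX\subseteq\mcGP_{(\mcX,\mcY)}$ (by \GPaone, \GPafive\ and Proposition~\ref{prop:GP_description}), we have $\mcP\subseteq\omega\subseteq\mcGP_{(\mcX,\mcY)}$; and because $\mcC$ has enough projective objects, $\mcP^{<\infty}=\mcP^\wedge$ (Remark~\ref{Pinf=pdfin}), so $\nu\subseteq\mcP^{<\infty}=\mcP^\wedge\subseteq\omega^\wedge\subseteq\mcGP^\wedge_{(\mcX,\mcY)}$. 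Second, $\pd_\omega(N)=\pd(N)$ for every $N$ with $\pd(N)<\infty$: indeed $\pd_\omega(N)\le\pd(N)$ trivially, and if $d:=\pd(N)\ge 1$ then the $d$-th syzygy $\Omega^d(N)$ in a projective resolution of $N$ is a nonzero projective object of $\mcP\subseteq\omega$, and $\Ext^d(N,\Omega^d(N))\cong\Ext^1(\Omega^{d-1}(N),\Omega^d(N))\neq 0$ (otherwise $\Omega^{d-1}(N)$ would be a direct summand of a projective, hence projective, forcing $\pd(N)\le d-1$), so $\pd_\omega(N)\ge d$. In particular, for $N\in\nu$ (which lies in $\mcGP^\wedge_{(\mcX,\mcY)}$ by the first observation) Proposition~\ref{coro:properties_relative_Gorenstein_projective}(4) gives $\Gpd_{(\mcX,\mcY)}(N)=\pd_\omega(N)=\pd(N)$.

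For case~(1), suppose toward a contradiction that $\glGPD_{(\mcX,\mcY)}(\mcC)<\infty$. Since $\omega$ is closed under direct summands, Proposition~\ref{lpdGP} forces $\mcC=\mcGP^\wedge_{(\mcX,\mcY)}$, and then \eqref{eqn2:glGPD=id} gives $\glGPD_{(\mcX,\mcY)}(\mcC)=\id(\omega)<\infty$. By the last line of the previous paragraph, $\pd(N)=\Gpd_{(\mcX,\mcY)}(N)\le\glGPD_{(\mcX,\mcY)}(\mcC)$ for every $N\in\nu$, whence $\pd(\nu)\le\glGPD_{(\mcX,\mcY)}(\mcC)<\infty$, contradicting $\pd(\nu)=\infty$. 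Therefore $\glGPD_{(\mcX,\mcY)}(\mcC)=\infty=\max\{\pd(\nu),\id(\mcY)\}$.

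For case~(2), the crux — and the step I expect to be the main obstacle — is to prove that $\mcC=\mcGP^\wedge_{(\mcX,\mcY)}$. From $\omega\subseteq\mcY$ we get $\id(\omega)\le\id(\mcY)<\infty$, and since $\pd_\omega(C)$ is the supremum of the integers $i$ for which some $W\in\omega$ has $\Ext^i(C,W)\neq 0$, we obtain $\pd_\omega(C)\le\id(\omega)<\infty$ for every $C\in\mcC$, i.e. $\mcC=\mcP^{<\infty}_\omega$. It then remains to check that $\mcP^{<\infty}_\omega\subseteq\mcGP^\wedge_{(\mcX,\mcY)}$; this is precisely where the relative cogenerator $\nu$ is used. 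Given $C$ with $\pd_\omega(C)<\infty$, I would pass to a sufficiently high projective syzygy $D=\Omega^k(C)$ with $\Ext^{\ge 1}(D,\mcY)=0$ (using $\id(\mcY)<\infty$), and then, combining the monomorphisms of $D$ and its successive cosyzygies into $\nu$-objects with the inclusion $\nu\subseteq\mcP^{<\infty}\subseteq\omega^\wedge$, the fact that $\omega$ is a relative cogenerator of $\mcX$ inside $\mcX$ (\GPafive), and the closure of $\nu$ under direct summands, assemble in one ``big diagram'' a $\Hom(-,\mcY)$-acyclic $\mcX$-coresolution of $D$; by Proposition~\ref{prop:GP_description} this places $D$ in $\mcGP_{(\mcX,\mcY)}$, so $\Gpd_{(\mcX,\mcY)}(C)\le k<\infty$. (Alternatively, the inclusion $\mcP^{<\infty}_\omega\subseteq\mcGP^\wedge_{(\mcX,\mcY)}$ may be quoted from \cite[Coroll.~4.33]{BMS}.)

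Granting $\mcC=\mcGP^\wedge_{(\mcX,\mcY)}$, equation \eqref{eqn2:glGPD=id} yields $\glGPD_{(\mcX,\mcY)}(\mcC)=\id(\omega)=\id(\mcY)$, which in particular is finite. Finally, taking suprema over $N\in\nu$ in the last line of the first paragraph, and using $\Gpd_{(\mcX,\mcY)}(\nu)\le\resdim_{\mcGP_{(\mcX,\mcY)}}(\mcC)=\glGPD_{(\mcX,\mcY)}(\mcC)$, gives $\pd_\omega(\nu)=\pd(\nu)=\Gpd_{(\mcX,\mcY)}(\nu)\le\glGPD_{(\mcX,\mcY)}(\mcC)=\id(\omega)=\id(\mcY)$, which is the asserted ``Moreover'' chain; in particular $\pd(\nu)\le\id(\mcY)$, so $\max\{\pd(\nu),\id(\mcY)\}=\id(\mcY)=\glGPD_{(\mcX,\mcY)}(\mcC)$, establishing \eqref{eqn:Gmax} in case~(2).
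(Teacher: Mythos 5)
Your preliminary observations and the easy halves are fine: the elementary syzygy argument that $\pd_\omega(N)=\pd(N)$ for $N\in\mcP^{<\infty}$ is a valid replacement for the paper's citation of (the dual of) \cite[Coroll. 2.2]{MS}, your case (1) agrees with the paper's (both reduce to $\pd(\nu)=\pd_\omega(\nu)=\Gpd_{(\mcX,\mcY)}(\nu)\le\glGPD_{(\mcX,\mcY)}(\mcC)$, via Proposition \ref{coro:properties_relative_Gorenstein_projective}(4)), and in case (2) the bookkeeping you do \emph{after} knowing $\mcC=\mcGP^\wedge_{(\mcX,\mcY)}$ (invoking \eqref{eqn2:glGPD=id} and taking suprema over $\nu$) correctly yields both \eqref{eqn:Gmax} and the ``Moreover'' chain; this is a legitimate variant of the paper's route, which instead bounds $\Gpd_{(\mcX,\mcY)}(C)\le\max\{m,n\}$ directly.

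The genuine gap is exactly the step you flag as the crux and then do not prove: that every $C\in\mcC$ (equivalently every object of $\mcP^{<\infty}_\omega$) has finite $(\mcX,\mcY)$-Gorenstein projective dimension. Your text here is a plan (``I would pass to a sufficiently high syzygy \dots assemble in one big diagram a $\Hom(-,\mcY)$-acyclic $\mcX$-coresolution''), not an argument: nothing is said about why the objects of that coresolution lie in $\mcX$ (a $\nu$-coresolution of $D$ consists of objects of finite projective dimension, not of objects of $\mcX$), why its cycles land in ${}^\perp\mcY$, or how closure of $\nu$ under summands and \GPafive\ enter. The fallback citation does not close it either: \cite[Coroll. 4.33]{BMS}, as used in Proposition \ref{lpdGP}, \emph{presupposes} $\mcP^{<\infty}_\omega\subseteq\mcGP^\wedge_{(\mcX,\mcY)}$ (it compares $\FPD_\omega$ with $\FGPD_{(\mcX,\mcY)}$ under that containment); and the containment is false for general GP-admissible pairs with $\omega$ closed under summands --- already for $(\mcP,\mcP)$ there are rings with modules $M$ satisfying $\Ext^{>0}(M,\mcP)=0$ but $\Gpd(M)=\infty$ --- so the hypotheses on $\nu$ and $\id(\mcY)$ must be fed into an actual construction. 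That construction is the whole content of the paper's proof: take a $\nu$-coresolution of $C$, resolve $C$ and its $\nu$-cosyzygies $C^k$ by projectives and use the horseshoe lemma to resolve each $V^k$; since $\pd(\nu)=n$ the $n$-th syzygies $K^j_n$ are projective, so the row of $n$-th (or, if $m>n$, $m$-th) syzygies is a projective coresolution of $\Omega^{\max\{m,n\}}(C)$, and dimension shifting against $\id(\mcY)=m$ in the partial projective resolutions of the $C^{j+1}$ shows its cycles lie in ${}^\perp\mcY$, whence $\Omega^{\max\{m,n\}}(C)\in\mathcal{WGP}_{(\mcP,\mcY)}\subseteq\mcGP_{(\mcX,\mcY)}$ and $\Gpd_{(\mcX,\mcY)}(C)\le\max\{m,n\}$. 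Until you carry out this (or an equivalent) construction, case (2) is unproved.
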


\begin{proof}
We start by noticing from \cite[Coroll. 4.15 (b4)]{BMS} that
\[
\pd_{\omega}(\nu) \leq \Gpd_{(\mcX,\mcY)}(\nu),
\]
where the equality holds in the case where $\Gpd_{(\mcX,\mcY)}(\nu)$ is finite (see \cite[Coroll. 4.15 (b1)]{BMS}). Now since $\nu \subseteq \mcP^{<\infty}$, $\mcP \subseteq \omega$ and $\mcC$ has enough projective objects, we have from \cite[dual of Coroll. 2.2]{MS} (proved by Saenz and the second author) \footnote{The proof, given in the category of modules over a ring, also holds for abelian categories with enough projective objects.} that $\pd(\nu) = \pd_{\omega}(\nu)$. Moreover, note that $\pd(\nu) \leq \glGPD_{(\mcX,\mcY)}(\mcC)$, and hence the equality
\eqref{eqn:Gmax} holds in case (1).

Now let us focus on case (2). Let $n = \pd(\nu)$ and $m = \id(\mcY)$. In particular, $\nu \subseteq \mcGP_{(\mcX,\mcY)}^\wedge$ and so by \cite[Coroll. 4.15 (b1)]{BMS} we have
\[
\pd(\nu) = \pd_{\omega}(\nu) = \Gpd_{(\mcX,\mcY)}(\nu) \leq \glGPD_{(\mcX,\mcY)}(\mcC).
\]
Note also that
\[
\id(\mcY) \leq \glGPD_{(\mcX,\mcY)}(\mcC)
\]
from the definition of the $(\mcX,\mcY)$-Gorenstein projective dimension. Thus,
\[
\max\{n,m\} \leq \glGPD_{(\mcX,\mcY)}(\mcC).
\]
It remains to show that $\glGPD_{(\mcX,\mcY)}(\mcC) \leq \max\{n,m\}$. So let $C \in \mcC$ and consider a $\nu$-coresolution of $C$, say
\[
C \rightarrowtail V^0 \to V^1 \to V^2 \to \cdots,
\]
which can be constructed since $\nu$ is a relative cogenerator in $\mcC$. In order to show that $\Gpd_{(\mcX,\mcY)}(C) \leq \max\{n,m\}$, the idea is to consider projective resolutions of $C$ and of each $C^k := \Ker(V^k \to V^{k+1})$ with $k \geq 1$, and then to construct projective resolutions of each $V^k$ using the horseshoe lemma so that it will be possible to deduce the finiteness of $\Gpd_{(\mcX,\mcY)}(C)$. Indeed, from the assumptions we can construct the following commutative diagram with exact rows and columns:
\[
\begin{tikzpicture}[description/.style={fill=white,inner sep=2pt}]
\matrix (m) [matrix of math nodes, row sep=1.25em, column sep=1.5em, text height=1.25ex, text depth=0.25ex]
{
{} & Q^{-1}_n & {} & W^0_n & {} & {} & W^1_n & {} & \cdots \\
K^{-1}_n & {} & K^0_n & {} & {} & K^1_n & {} & \cdots & {} \\
{} & {} & {} & {} & \widetilde{K}^0_n \\
{} & Q^{-1}_{n-1} & {} & W^0_{n-1} & {} & {} & W^1_{n-1} & {} & \cdots \\
{} & \vdots & {} & \vdots & {} & {} & \vdots \\
{} & Q^{-1}_{1} & {} & W^0_{1} & {} & {} & W^1_{1} & {} & \cdots \\
{} & {} & {} & {} & P^0_1 \\
K^{-1}_0 & {} & K^0_0 & {} & {} & K^1_0 & \cdots & {} & {} \\
{} & {} & {} & {} & \widetilde{K}^0_0 \\
{} & Q^{-1}_{0} & {} & W^0_{0} & {} & {} & W^1_{0} & {} & W^2_0 & \cdots \\
{} & {} & {} & {} & P^0_0 & {} & {} & P^1_0 & {} & \\
{} & C & {} & V^0 & {} & {} & V^1 & {} & V^2 & \cdots \\
{} & {} & {} & {} & C^1 & {} & {} & C^2 \\
};
\path[>->]
(m-1-2) edge (m-1-4)
(m-1-4) edge (m-4-4)
(m-1-2) edge (m-4-2)
(m-2-1) edge (m-4-2)
(m-2-3) edge (m-4-4)
(m-3-5) edge (m-2-6)
(m-2-6) edge (m-4-7)
(m-6-2) edge (m-6-4)
(m-7-5) edge (m-6-7)
(m-8-6) edge (m-10-7)
(m-10-2) edge (m-10-4)
(m-8-1) edge (m-10-2)
(m-8-3) edge (m-10-4)
(m-13-5) edge (m-12-7)
(m-13-8) edge (m-12-9)
(m-11-5) edge (m-10-7)
(m-11-8) edge (m-10-9)
(m-9-5) edge (m-8-6)
;
\path[->]
(m-1-7) edge (m-4-7)
(m-4-2) edge (m-5-2)
(m-4-4) edge (m-5-4)
(m-4-7) edge (m-5-7)
(m-4-2) edge (m-4-4)
(m-4-4) edge (m-4-7)
(m-5-2) edge (m-6-2)
(m-5-4) edge (m-6-4)
(m-5-7) edge (m-6-7)
(m-6-4) edge (m-6-7)
(m-6-2) edge (m-10-2)
(m-6-4) edge (m-10-4)
(m-10-4) edge (m-10-7)
(m-10-7) edge (m-10-9)
(m-12-4) edge (m-12-7)
(m-12-7) edge (m-12-9)
(m-10-9) edge (m-10-10)
(m-12-9) edge (m-12-10)
(m-8-6) edge (m-8-7)
;
\path[->>]
(m-1-2) edge (m-2-1)
(m-1-4) edge (m-2-3)
(m-1-7) edge (m-2-6)
(m-6-2) edge (m-8-1)
(m-6-4) edge (m-8-3)
(m-6-4) edge (m-7-5)
(m-10-2) edge (m-12-2)
(m-10-4) edge (m-12-4)
(m-10-9) edge (m-12-9)
(m-12-4) edge (m-13-5)
(m-12-7) edge (m-13-8)
(m-10-4) edge (m-11-5)
(m-10-7) edge (m-11-8)
;
\path[-latex]
(m-2-3) edge [-,line width=6pt,draw=white] (m-2-6)
(m-2-1) edge [-,line width=6pt,draw=white] (m-2-3)
(m-2-3) edge [-,line width=6pt,draw=white] (m-3-5)
(m-8-1) edge [-,line width=6pt,draw=white] (m-8-3)
(m-8-3) edge [-,line width=6pt,draw=white] (m-8-6)
(m-9-5) edge [-,line width=6pt,draw=white] (m-11-5)
(m-8-3) edge [-,line width=6pt,draw=white] (m-9-5)
(m-11-5) edge [-,line width=6pt,draw=white] (m-13-5)
(m-11-8) edge [-,line width=6pt,draw=white] (m-13-8)
(m-2-6) edge [-,line width=6pt,draw=white] (m-2-8)
;
\path[->]
(m-1-4) edge (m-1-7)
(m-1-7) edge (m-1-9)
(m-2-3) edge (m-2-6)
(m-2-6) edge (m-2-8)
(m-4-7) edge (m-4-9)
(m-6-7) edge (m-6-9)
(m-8-3) edge (m-8-6)
;
\path[>->]
(m-2-1) edge (m-2-3)
(m-8-1) edge (m-8-3)
(m-9-5) edge (m-11-5)
(m-12-2) edge (m-12-4)
;
\path[->>]
(m-2-3) edge (m-3-5)
(m-8-3) edge (m-9-5)
(m-11-5) edge (m-13-5)
(m-10-7) edge (m-12-7)
(m-11-8) edge (m-13-8)
;
\path[-latex]
(m-7-5) edge [-,line width=6pt,draw=white] (m-9-5)
;
\path[->>]
(m-7-5) edge (m-9-5)
;
\end{tikzpicture}
\]
where all of the $P^j_i$, $W^j_i$ and $Q^{-1}_i$ are projective, and
\begin{align*}
W^0_0 & = Q^{-1}_0 \oplus P^0_0, & W^1_0 & = P^0_0 \oplus P^1_0, & & \dots & W^0_1 & = Q^{-1}_1 \oplus P^0_1, & \dots
\end{align*}
Since $\pd(\nu) = n$, we have that $K^j_n \in \mathcal{P}$ for every $j \geq 0$, and so
\[
\begin{tikzpicture}[description/.style={fill=white,inner sep=2pt}]
\matrix (m) [matrix of math nodes, row sep=1.5em, column sep=1.5em, text height=1.25ex, text depth=0.25ex]
{
K^{-1}_n & & K^0_n & & K^1_n & & K^2_n & & \cdots \\
 & & & \widetilde{K}^0_n & & \widetilde{K}^1_n \\
};
\path[>->]
(m-1-1) edge (m-1-3)
(m-2-4) edge (m-1-5)
(m-2-6) edge (m-1-7)
;
\path[->]
(m-1-3) edge (m-1-5)
(m-1-5) edge (m-1-7)
(m-1-7) edge (m-1-9)
;
\path[->>]
(m-1-3) edge (m-2-4)
(m-1-5) edge (m-2-6)
;
\end{tikzpicture}
\]
is a projective coresolution of $K^{-1}_n$. Moreover, for each $j \geq 0$ we have by construction the exact sequence
\begin{align}\label{eqn:partial_projective_C}
\widetilde{K}^j_n \rightarrowtail P^j_{n-1} \to \cdots \to P^j_0 \twoheadrightarrow C^{j+1}
\end{align}
where $P^j_i \in \mathcal{P}$ for every $0 \leq i \leq n-1$. In order to complete the proof, let us split it into two cases, namely, $m \leq n$ and $m > n$.
\begin{itemize}
\item Suppose $m \leq n$: In this case, we assert that $K^{-1}_n \in \mathcal{GP}_{(\mathcal{P,Y})} \subseteq \mathcal{GP}_{(\mathcal{X,Y})}$. By \cite[Prop. 3.16]{BMS}, it suffices to show that $K^{-1}_n \in \mathcal{WGP}_{(\mathcal{P,Y})}$. For, we need to check that $K^{-1}_n$ and each $\widetilde{K}^j_n$ belong to ${}^\perp\mcY$ in the constructed projective coresolution of $K^{-1}_n$. So let $Y \in \mcY$ and consider the partial projective resolution \eqref{eqn:partial_projective_C} of $C^{j+1}$. By dimension shifting, we have that $\Ext^{n+i}(C^{j+1},Y) \cong \Ext^{i}(\widetilde{K}^j_n,Y)$ for every $i \geq 1$. On the other hand, $\Ext^{n+i}(C^{j+1},Y) = 0$ since $\id(\mcY) = m \leq n$, and so $\widetilde{K}^j_n \in {}^\perp\mcY$. In a similar way, we also have that $K^{-1}_n \in {}^\perp\mcY$. Therefore, in the exact sequence
\begin{align*}
K^{-1}_n \rightarrowtail Q^{-1}_{n-1} \to \cdots \to Q^{-1}_0 \twoheadrightarrow C
\end{align*}
we have $K^{-1}_n \in \mathcal{WGP}_{(\mathcal{P,Y})}$, and so $\Gpd_{(\mathcal{X,Y})}(C) \leq n = \max\{m,n\}$.

\item Suppose $n < m$: In this case, it suffices to extend the construction of the previous big diagram to the $m$-th step. For every $n \leq i \leq m-1$ and every $j \geq 0$, we have a short exact sequence $K^j_{i+1} \rightarrowtail W^j_i \twoheadrightarrow K^j_i$ with $W^j_i \in \mathcal{P}$. Then, it will be possible to obtain an exact sequence
\[
\begin{tikzpicture}[description/.style={fill=white,inner sep=2pt}]
\matrix (m) [matrix of math nodes, row sep=1.5em, column sep=1.5em, text height=1.25ex, text depth=0.25ex]
{
K^{-1}_m & & K^0_m & & K^1_m & & K^2_m & & \cdots \\
 & & & \widetilde{K}^0_m & & \widetilde{K}^1_m \\
};
\path[>->]
(m-1-1) edge (m-1-3)
(m-2-4) edge (m-1-5)
(m-2-6) edge (m-1-7)
;
\path[->]
(m-1-3) edge (m-1-5)
(m-1-5) edge (m-1-7)
(m-1-7) edge (m-1-9)
;
\path[->>]
(m-1-3) edge (m-2-4)
(m-1-5) edge (m-2-6)
;
\end{tikzpicture}.
\]
As in the previous case, we have that $K^{-1}_m \in {}^\perp\mcY$ and $\widetilde{K}^j_m \in {}^\perp\mcY$. Hence, $\Gpd_{(\mathcal{X,Y})}(C) \leq m = \max\{m,n\}$.
\end{itemize}
\end{proof}

\begin{theorem}\label{thm:impliesBR}
Let $\mcC$ be an abelian category with enough projective and injective objects, equipped with a GP-admissible pair $(\mcX,\mcY)$ and a GI-admissible pair $(\mcW,\mcZ)$, such that:
\begin{itemize}
\item $\omega$ and $\nu$ are closed under direct summands,
\item $\mathcal{P} \subseteq \omega$ and $\mcI \subseteq \nu$.
\end{itemize}
Then,
\begin{align*}
\pd(\mcW) & < \infty & & \text{and} & \id(\mcY) & < \infty,
\end{align*}
if, and only if,
\begin{align*}
\glGPD_{(\mcX,\mcY)}(\mcC) & < \infty & & \text{and} & \glGID_{(\mcW,\mcZ)}(\mcC) & < \infty.
\end{align*}
Moreover, if any of these conditions holds, we have that
\begin{align*}
\Gpd_{(\mcX,\mcY)}(\nu) & = \Gid_{(\mcW,\mcZ)}(\omega) = \pd_{\omega}(\nu) = \pd(\nu) = \id(\omega) \\
& = \id(\mcY) = \pd(\mcW) = \glGPD_{(\mcX,\mcY)}(\mcC) = \glGID_{(\mcW,\mcZ)}(\mcC).
\end{align*}
\end{theorem}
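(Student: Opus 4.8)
The plan is to reduce the statement to Corollary~\ref{glGPD=id} and Proposition~\ref{prop:big_diagram}, together with their duals. First I would record two standing remarks: since $\mcP \subseteq \omega$ and $\mcC$ has enough projective objects, $\omega$ is a relative generator in $\mcC$, and dually, since $\mcI \subseteq \nu$ and $\mcC$ has enough injective objects, $\nu$ is a relative cogenerator in $\mcC$; moreover $\omega = \mcX \cap \mcY \subseteq \mcY$ and $\nu = \mcW \cap \mcZ \subseteq \mcW$, so $\id(\omega) \leq \id(\mcY)$ and $\pd(\nu) \leq \pd(\mcW)$.

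For the implication from the global to the absolute dimensions: if $\glGPD_{(\mcX,\mcY)}(\mcC) < \infty$, then every object has finite $\mcGP_{(\mcX,\mcY)}$-resolution dimension, so $\mcC = \mcGP^\wedge_{(\mcX,\mcY)}$, and Corollary~\ref{glGPD=id}(1) gives $\id(\mcY) = \glGPD_{(\mcX,\mcY)}(\mcC) < \infty$; dually, $\glGID_{(\mcW,\mcZ)}(\mcC) < \infty$ gives $\pd(\mcW) = \glGID_{(\mcW,\mcZ)}(\mcC) < \infty$. For the converse, assume $\pd(\mcW) < \infty$ and $\id(\mcY) < \infty$; then $\pd(\nu) \leq \pd(\mcW) < \infty$, so $\nu \subseteq \mcP^{< \infty}$, and $\id(\omega) \leq \id(\mcY) < \infty$, so $\omega \subseteq \mcI^{< \infty}$. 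Now all hypotheses of Proposition~\ref{prop:big_diagram} are met, with $\nu$ in the role of the relative cogenerator of $\mcC$, and case~(2) there gives $\glGPD_{(\mcX,\mcY)}(\mcC) = \max\{\pd(\nu), \id(\mcY)\} < \infty$; dually, applying the dual of Proposition~\ref{prop:big_diagram}(2) to the GI-admissible pair $(\mcW,\mcZ)$ with $\omega$ in the role of the relative generator of $\mcC$ gives $\glGID_{(\mcW,\mcZ)}(\mcC) = \max\{\id(\omega), \pd(\mcW)\} < \infty$. This settles the equivalence.

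For the ``moreover'' part, assume any one (hence, by the equivalence, all) of the finiteness conditions holds. The ``moreover'' clauses of Proposition~\ref{prop:big_diagram}(2) and of its dual then yield
\[
\pd_{\omega}(\nu) = \pd(\nu) = \Gpd_{(\mcX,\mcY)}(\nu) \leq \glGPD_{(\mcX,\mcY)}(\mcC) = \id(\omega) = \id(\mcY)
\]
and
\[
\id_{\nu}(\omega) = \id(\omega) = \Gid_{(\mcW,\mcZ)}(\omega) \leq \glGID_{(\mcW,\mcZ)}(\mcC) = \pd(\nu) = \pd(\mcW).
\]
Combining the two chains gives $\glGID_{(\mcW,\mcZ)}(\mcC) = \pd(\nu) \leq \glGPD_{(\mcX,\mcY)}(\mcC)$ and $\glGPD_{(\mcX,\mcY)}(\mcC) = \id(\omega) \leq \glGID_{(\mcW,\mcZ)}(\mcC)$, hence $\glGPD_{(\mcX,\mcY)}(\mcC) = \glGID_{(\mcW,\mcZ)}(\mcC)$; substituting this common value back into the two chains collapses every inequality to an equality and yields the full string of equalities in the statement. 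The only step requiring attention is verifying the hypotheses of Proposition~\ref{prop:big_diagram} --- namely that $\nu$ and $\omega$ genuinely serve as a relative cogenerator and a relative generator of $\mcC$ (via $\mcI \subseteq \nu$, $\mcP \subseteq \omega$ and the existence of enough injectives and projectives), and that they lie in $\mcP^{< \infty}$ and $\mcI^{< \infty}$ once the finiteness hypotheses are in force; past this bookkeeping there is no real obstacle, as the hard work is internal to Proposition~\ref{prop:big_diagram}.
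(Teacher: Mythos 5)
Your proof is correct and takes essentially the same route as the paper: the implication from finite global dimensions to $\id(\mcY),\pd(\mcW)<\infty$ via Corollary \ref{glGPD=id} and its dual, and the converse together with the chain of equalities via case (2) of Proposition \ref{prop:big_diagram} and its dual. The only cosmetic difference is that the paper glues the two chains using the folklore identity $\pd_{\omega}(\nu)=\id_{\nu}(\omega)$, whereas you cross-compare the chains directly to get $\glGPD_{(\mcX,\mcY)}(\mcC)=\glGID_{(\mcW,\mcZ)}(\mcC)$; both collapse the inequalities in the same way.
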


\begin{proof}
Let us suppose first that $\pd(\mcW) < \infty$ and $\id(\mcY) < \infty$. By Proposition \ref{prop:big_diagram} and its dual, we have that
\begin{align*}
\pd_{\omega}(\nu) & = \pd(\nu) = \Gpd_{(\mcX,\mcY)}(\nu) \leq \glGPD_{(\mcX,\mcY)}(\mcC) = \id(\omega) = \id(\mcY), \\
\id_{\nu}(\omega) & = \id(\omega) = \Gid_{(\mcW,\mcZ)}(\omega) \leq \glGID_{(\mcW,\mcZ)}(\mcC) = \pd(\nu) = \pd(\mcW),
\end{align*}
where $\pd_{\omega}(\nu) = \id_{\nu}(\omega)$. Therefore, the stated equality follows.

The remaining implication follows from Corollary \ref{glGPD=id} and its dual.
\end{proof}

\begin{example}\label{ex:examples_of_global_Gorenstein_dimensions}
Let us comment on the finiteness of the Gorenstein global dimensions $\glGPD_{(\mcX,\mcY)}(\mcC)$ and $\glGID_{(\mcW,\mcZ)}(\mcC)$ for some well known cases in the category $\Mod(R)$ of $R$-modules.
\begin{enumerate}
\item For the admissible pairs $(\mathcal{P}(R),\mathcal{P}(R))$ and $(\mcI(R),\mcI(R))$, and their corresponding classes of relative Gorenstein projective and injective $R$-modules, that is, $\mcGP(R)$ and $\mcGI(R)$, it is known from Bennis and Mahdou's \cite[Thm. 1.1]{BennisMahdou} that $\glGPD_{(\mathcal{P}(R),\mathcal{P}(R))}(\Mod(R))$ and $\glGID_{(\mcI(R),\mcI(R))}(\Mod(R))$ coincide, and their common value, which we denote by ${\rm gl.Gdim}(R)$, is known as the \textbf{global Gorenstein dimension}. It then follows from the previous theorem that ${\rm gl.Gdim}(R) < \infty$ if, and only if, $\pd(\mcI(R)) < \infty$ and $\id(\mcP(R)) < \infty$ (that is, if $R$ is a left Gorenstein ring \cite[Def. VII.2.5]{BR}), obtaining thus Proposition \ref{prop:equivalence} in the context of $R$-modules. In this case, ${\rm gl.Gdim}(R) = \pd(\mcI(R)) = \id(\mcP(R))$.

\item Consider the GP-admissible pair $(\mathcal{P}(R),\mathcal{F}(R))$ and the GI-admissible pair $({\rm FP}\text{-}\mcI(R),\mcI(R))$, where ${\rm FP}\text{-}\mcI(R)$ denotes the class of \textbf{FP-injective} (a.k.a. \textbf{absolutely pure}) $R$-modules. Recall that such modules are defined as the right orthogonal complement $(\mathcal{FP}(R))^{\perp_1}$, where $\mathcal{FP}(R)$ denotes the class of finitely presented $R$-modules. From Example \ref{ex:GP_objects} (2), we know that $\mathcal{GP}_{(\mathcal{P}(R),\mathcal{F}(R))}$ is the class $\mathcal{DP}(R)$ of Ding projective $R$-modules. Dually, $\mcGI_{({\rm FP}\text{-}\mcI(R),\mcI(R))}$ is the class of Ding injective $R$-modules, which we denote by $\mathcal{DI}(R)$. Let
\begin{align*}
{\rm gl.DPD}(R) & := {\rm gl.GPD}_{(\mcP(R),\mcF(R))}(\Mod(R)), \text{ and} \\
{\rm gl.DID}(R) & := \glGID_{({\rm FP}\text{-}\mcI(R),\mcI(R))}(\Mod(R))
\end{align*}
denote the global Ding projective and Ding injective dimensions of $R$.

In 2019, Wang and Di \cite{WangDi19} proved that ${\rm gl.DPD}(R)  = {\rm gl.DID}(R)$ holds for any ring $R$. We give a proof for the reader's convenience. First, note that since $\mathcal{DP}(R) \subseteq \mathcal{GP}(R)$ and $\mathcal{DI}(R) \subseteq \mathcal{GI}(R)$, we clearly have that $
{\rm gl.Gdim}(R) \leq {\rm min}\{ {\rm gl.DPD}(R), {\rm gl.DID}(R) \}$. So if ${\rm gl.Gdim}(R) = \infty$, then ${\rm gl.DPD}(R) = {\rm gl.DID}(R)$ is clear. Let us then assume that ${\rm gl.Gdim}(R) = n < \infty$. It follows from \cite[Coroll. 2.7]{BennisMahdou} that $\pd(\mathcal{F}(R)) \leq n$ and ${\rm fd}(\mathcal{I}(R)) \leq n$ (where ${\rm fd}(-)$ denotes the flat dimension). The latter, along with \cite[Lem. 9.1.4]{EJ}, imply that ${\rm fd}({\rm FP}\text{-}\mcI(R)) \leq n$. Again, since ${\rm gl.Gdim}(R) = n$, we obtain that ${\rm id}({\rm FP}\text{-}\mcI(R)) \leq n$. From ${\rm max}\{ \pd(\mathcal{F}(R)), {\rm id}({\rm FP}\text{-}\mcI(R)) \} \leq n$, one can easily deduce that $\mathcal{DP}(R) = \mathcal{GP}(R)$ and $\mathcal{DI}(R) = \mathcal{GI}(R)$. Therefore, ${\rm gl.DPD}(R) = {\rm gl.DID}(R)$ in this case. We refer to the common value between ${\rm gl.DPD}(R)$ and ${\rm gl.DID}(R)$ as the (left) {\bf global Ding dimension}, which we shall denote by ${\rm gl.Ddim}(R)$. Note that the previous argument shows that ${\rm gl.Ddim}(R) = {\rm gl.Gdim}(R)$.

Combining Theorem \ref{thm:impliesBR}, Corollary \ref{glGPD=id} (2) and its dual, we have that the following are equivalent:
\begin{enumerate}
\item[(a)] ${\rm gl.Ddim}(R) < \infty$.
\item[(b)] $\id(\mathcal{F}(R)) < \infty$ and $\pd({\rm FP}\text{-}\mcI(R)) < \infty$.
\item[(c)] $\Mod(R) = \mathcal{DP}(R)^\wedge = \mathcal{DI}(R)^\vee$, $\FPD(R) < \infty$ and $\FID(R) < \infty$.
\item[(d)] $(\mathcal{DP}(R),\mcP(R)^\wedge)$ and $(\mcI(R)^\vee,\mathcal{DI}(R))$ are hereditary complete cotorsion pairs in $\Mod(R)$, and $R$ is a left Gorenstein ring.
\end{enumerate}
Notice that $\FPD(R) = \pd_{\mcP(R)}(\mcP(R)^\wedge)$ and $\FID(R) = \id_{\mcI(R)}(\mcI(R)^\vee)$ by \cite[Coroll. 2.2]{MendozaSaenz} and its dual. Under any of these finiteness conditions, we have that
\[
{\rm gl.Gdim}(R) = {\rm gl.Ddim}(R) = \id(\mathcal{F}(R)) = \pd({\rm FP}\text{-}\mcI(R)).
\]
An example of a ring $R$ for which ${\rm gl.Ddim}(R) = \infty$ can be found in \cite[Ex. 3.3]{Wang17}.

\item Let $\mathcal{FP}_\infty(R)$ denote the class of $R$-modules admitting a projective resolution by finitely generated $R$-modules. Recall from Bravo, Gillespie and Hovey's \cite[Def. 2.6]{BravoGillespieHovey} that $M \in \Mod(R)$ is \textbf{absolutely clean} (resp., \textbf{level}) if $\Ext^1(\mathcal{FP}_\infty(R),M) = 0$ (resp., $\Tor_1(\mathcal{FP}_\infty(R^{\rm op}),M) = 0$). Let us denote the classes of absolutely clean and level $R$-modules by $\mathcal{AC}(R)$ and $\mathcal{LV}(R)$, respectively. One can note that $(\mcP(R),\mathcal{LV}(R))$ is GP-admissible and $(\mathcal{AC}(R),\mcI(R))$ is GI-admissible. The corresponding relative Gorenstein $R$-modules, known as Gorenstein AC-projective and Gorenstein AC-injective, form two classes which we denote by $\mathcal{GP}_{\rm AC}(R)$ and $\mathcal{GI}_{\rm AC}(R)$, respectively (see \cite[\S \ 5 \& 6]{BravoGillespieHovey}). On the other hand, set
\begin{align*}
{\rm gl.GPD}_{\rm AC}(R) & := {\rm gl.GPD}_{(\mcP(R),\mathcal{LV}(R))}(\Mod(R)), \text{ and} \\
{\rm gl.GID}_{\rm AC}(R) & := \glGID_{(\mathcal{AC}(R),\mcI(R))}(\Mod(R)).
\end{align*}
We are not aware if the equality ${\rm gl.GPD}_{\rm AC}(R) = {\rm gl.GID}_{\rm AC}(R)$ holds for any ring $R$. As in the previous example, we can note that the following conditions are equivalent:
\begin{enumerate}
\item[(a)] ${\rm gl.GPD}_{\rm AC}(R) < \infty$ and ${\rm gl.GID}_{\rm AC}(R) < \infty$.
\item[(b)] $\id(\mathcal{LV}(R)) < \infty$ and $\pd(\mathcal{AC}(R)) < \infty$.
\item[(c)] $\Mod(R) = \mathcal{GP}_{\rm AC}(R)^\wedge = \mathcal{GI}_{\rm AC}(R)^\vee$, $\FPD(R) < \infty$ and $\FID(R) < \infty$.
\item[(d)] $(\mathcal{GP}_{\rm AC}(R),\mcP(R)^\wedge)$ and $(\mcI(R)^\vee,\mathcal{GI}_{\rm AC}(R))$ are hereditary complete cotorsion pairs in $\Mod(R)$, and $R$ is a left Gorenstein ring.
\end{enumerate}
Under any of these finiteness conditions, we have that
\[
{\rm gl.GPD}_{\rm AC}(R) = {\rm gl.GID}_{\rm AC}(R) = \id(\mathcal{LV}(R)) = \pd(\mathcal{AC}(R)).
\]
The equality ${\rm gl.GPD}_{\rm AC}(R) = {\rm gl.GID}_{\rm AC}(R)$ may also hold in the case where both dimensions are infinite. For instance, take $R$ a ring with infinite global Gorenstein dimension, and proceed as in the previous example.
\end{enumerate}
\end{example}

%%%%%%%%%%%%%%%%%%%%%%%%%%%%%%%%%%%%%
%%%%%%%%%%%%%%%%%%%%%%%%%%%%%%%%%%%%%

\subsection*{Relative Gorenstein categories}

We now have the necessary background to propose a consistent notion of relative Gorenstein categories, suitable for the construction of abelian model structures which encode the information of stable categories of relative Gorenstein projective and injective objects.

\begin{definition}\label{def:G-admissible_triple}
We say that a triple $(\mcX,\mcY,\mcZ)$ of classes of objects in $\mcC$ is \textbf{G-admissible} if the following conditions are satisfied:
\begin{enumerate}
\item[\gaone] $(\mcX,\mcY)$ is a GP-admissible pair, and $(\mcY,\mcZ)$ is a GI-admissible pair in $\mcC$.

\item[\gatwo] $\omega$, $\nu$ and $\mcY$ are closed under direct summands.

\item[\gathree] $\Ext^1(\mcGP_{(\mcX,\mcY)},\nu) = 0$ and  $\Ext^1(\omega, \mcGI_{(\mcY,\mcZ)}) = 0$.

\item[\gafour] Every object in $\mcGP_{(\mcX,\mcY)}$ admits a $\Hom(-,\nu)$-acyclic $\nu$-coresolution, and every object in $\mcGI_{(\mcY,\mcZ)}$ admits a $\Hom(\omega,-)$-acyclic $\omega$-resolution.
\end{enumerate}
\end{definition}

In the previous context, recall that since $(\mcX,\mcY)$ is GP-admissible and $(\mcY,\mcZ)$ is GI-admissible, the classes $\omega$ and $\nu$ denote the intersections $\mcX \cap \mcY$ and $\mcY \cap \mcZ$, respectively.

\begin{remark}\label{rmk:GtripleIDPD}
Let $(\mcX,\mcY,\mcZ)$ be a G-admissible triple. Then, condition \gathree \ can be replaced by
\begin{align*}
\pd_{\nu}(\mcGP_{(\mcX,\mcY)}) & = 0 & & \text{and} & \id_{\omega}(\mcGI_{(\mcY,\mcZ)}) & = 0.
\end{align*}
Indeed, assume that conditions \gaone, \gatwo, \gathree \ and \gafour \ in Definition \ref{def:G-admissible_triple} hold. Let $G \in \mcGP_{(\mcX,\mcY)}$ and $V \in \nu$, and consider an exact sequence $G' \rightarrowtail X \twoheadrightarrow G$ with $X \in \mcX$ and $G' \in \mcGP_{(\mcX,\mcY)}$. In the long cohomology sequence
\[
\cdots \to \Ext^{1}(G',V) \to \Ext^{2}(G,V) \to \Ext^{2}(X,V) \to \cdots
\]
note that $\Ext^{1}(G',V) = 0$ (since $\Ext^1(\mcGP_{(\mcX,\mcY)},\nu)=0$) and $\Ext^{2}(X,V) = 0$ (since $\pd_{\mcY}(\mcX) = 0$ and $\nu \subseteq \mcY$). Then, $\Ext^{2}(\mcGP_{(\mcX,\mcY)},\nu) = 0$. Repeating the previous argument inductively yields $\pd_{\nu}(\mcGP_{(\mcX,\mcY)}) = 0$, while the equality $\id_{\omega}(\mcGI_{(\mcY,\mcZ)}) = 0$ is dual.
\end{remark}

\begin{example}\label{ex:cotorsion_triples_are_G-admissible} \
\begin{enumerate}
\item Every hereditary complete cotorsion triple $(\mcX,\mcY,\mcZ)$ in an abelian category is G-admissible. Indeed, by \cite[Thm. 4.4]{EPZ18} (proved by Zhu and the first and third authors) one has that $\mcC$ has enough projective and injective objects, where $\mcP = \mcX \cap \mcY$ and $\mcI = \mcY \cap \mcZ$. Thus, all of the conditions in Definition \ref{def:G-admissible_triple} are straightforward.

\item In the category ${\rm Rep}(Q,\Mod(R))$, where $Q \colon 1 \xrightarrow{\alpha} 2$ is the $A_2$ quiver and $R$ is an Iwanaga-Gorenstein ring, we know from Example \ref{ex:GP_objects} (4) that
\[
(\Phi(\mcGP(R)),{\rm Rep}(Q,\mcP(R)^{< \infty} = \mcI(R)^{< \infty}),\Psi(\mcGI(R)))
\]
is a hereditary complete cotorsion triple, and so a G-admissible triple by the previous example.

\item From Example \ref{ex:GP_objects} (5), we know that if $\mcT$ is a class of objects in an abelian category $\mcC$ with enough projective and injective objects, such that $\mcT$ is $n$-tilting and $m$-cotilting for a pair of nonnegative integer $n, m \geq 0$, then $({}^\perp\mcT,\mcT)$ is a GP-admissible pair and $(\mcT,\mcT^\perp)$ is a GI-admissible pair, that is, condition \gaone \ in Definition \ref{def:G-admissible_triple} is satisfied. Since $\mcT \subseteq {}^\perp\mcT \cap \mcT^\perp$, condition \gatwo \ follows from \tiltzero. Let us now consider the class $\mcGI_{(\mcT,\mcT^\perp)}$. From \cite[Thm. 3.12 (a)]{AM21} we know that $\mcT^\vee = {}^\perp(\mcT^\perp)$, and so
\[
\mcGI_{(\mcT^\vee,\mcT^\perp)} = \mcGI_{({}^\perp(\mcT^\perp),\mcT^\perp)}.
\]
In particular, $\mcT^\vee$ is closed under direct summands, and so by the dual of \cite[Thm. 3.34 (d)]{BMS} we get that
\[
\mcGI_{(\mcT,\mcT^\perp)} = \mcGI_{(\mcT^\vee,\mcT^\perp)} = \mcGI_{({}^\perp(\mcT^\perp),\mcT^\perp)}.
\]
Moreover, since $({}^\perp(\mcT^\perp),\mcT^\perp)$ is a hereditary complete cotorsion pair, by \cite[Coroll. 3.17]{BMS} we obtain $\mcGI_{({}^\perp(\mcT^\perp),\mcT^\perp)} = \mcT^\perp$. Hence, $\mcGI_{(\mcT,\mcT^\perp)} = \mcT^\perp$, and dually, $\mcGP_{({}^\perp\mcT,\mcT)} = {}^\perp\mcT$. These equalities imply \gathree. Finally, in order to show \gafour, let $M \in \mcGI_{({}^\perp(\mcT^\perp),\mcT^\perp)} = \mcT^\perp$. Since $({}^\perp(\mcT^\perp),\mcT^\perp)$ is a complete cotorsion pair, we have a short exact sequence $M' \rightarrowtail T \twoheadrightarrow M$ where $M' \in \mcT^\perp$ and $T \in {}^\perp(\mcT^\perp)$. On the other hand, $\mcT^\perp$ is closed under extensions, and so $T \in {}^\perp(\mcT^\perp) \cap \mcT^\perp = \mcT$ by \cite[Lem. 3.9 (b) \& Prop. 3.11]{AM21}. The previous sequence is clearly $\Hom(\mcT,-)$-acyclic, and so inductively we can get a $\Hom(\mcT,-)$-acyclic $\mcT$-resolution of $M$. $\Hom(-,\mcT)$-acyclic $\mcT$-coresolutions for objects in $\mcGP_{({}^\perp\mcT,\mcT)}$ can be obtained dually. Therefore, $({}^\perp\mcT,\mcT,\mcT^\perp)$ is a G-admissible triple.
\end{enumerate}
\end{example}

More instances will be obtained in Example \ref{ex:G-admissible_systems} below after finding a way to induce G-admissible triples. Indeed, a natural question that arises is the following one: if we have a GP-admissible pair $(\mcX,\mcA)$ and a GI-admissible pair $(\mcB,\mcZ)$ of classes of objects in $\mcC$, is it possible to construct a $G$-admissible triple from them? We answer to this in the positive in the following proposition, which is a consequence of \cite[Thm. 3.34 (a,d)]{BMS} and its dual, under some specific requirements.

\begin{proposition}\label{Gc=Gt}
Let $(\mcX,\mcA)$ be a GP-admissible pair and $(\mcB,\mcZ)$ be a $GI$-admissible pair in $\mcC$ satisfying the following conditions:
\begin{enumerate}
\item $\mcA^\wedge = \mcB^\vee$.

\item The classes $\omega = \mcX \cap \mcA$, $\nu = \mcB \cap \mcZ$, $\mcA^\wedge$, $\mcX \cap \mcA^\wedge$ and $\mcA^\wedge \cap \mcZ$ are closed under direct summands.

\item $\Ext^1(\mcGP_{(\mcX,\mcA)},\nu) = 0$ and  $\Ext^1(\omega, \mcGI_{(\mcB,\mcZ)}) = 0$.

\item Every object in $\mcGP_{(\mcX,\mcA)}$ admits a $\Hom(-,\nu)$-acyclic $\nu$-coresolution, and every object in $\mcGI_{(\mcB,\mcZ)}$ admits a $\Hom(\omega,-)$-acyclic $\omega$-resolution.
\end{enumerate}
Then, $(\mcX,\mcA^\wedge,\mcZ)$  is a $G$-admissible triple in $\mcC$, and the equalities
\begin{align*}
\mcGP_{(\mcX,\mcA)} & = \mcGP_{(\mcX,\mcA^\wedge)}, & \mcGI_{(\mcB,\mcZ)} & = \mcGI_{(\mcA^\wedge,\mcZ)}, & \omega & = \mcX \cap \mcA^\wedge & & \text{and} & \nu & = \mcA^\wedge \cap \mcZ
\end{align*}
hold true.
\end{proposition}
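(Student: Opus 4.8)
The plan is to deduce everything from \cite[Thm. 3.34 (a,d)]{BMS} and its dual, using the compatibility hypothesis $\mcA^\wedge = \mcB^\vee$ only to glue the two halves together. First I would apply \cite[Thm. 3.34 (a,d)]{BMS} to the GP-admissible pair $(\mcX,\mcA)$; its input hypotheses are exactly the direct-summand closure of $\omega = \mcX \cap \mcA$, of $\mcA^\wedge$ and of $\mcX \cap \mcA^\wedge$, all of which are granted in item (2). The output is that $(\mcX,\mcA^\wedge)$ is again a GP-admissible pair, that $\mcGP_{(\mcX,\mcA^\wedge)} = \mcGP_{(\mcX,\mcA)}$, and that $\mcX \cap \mcA^\wedge = \omega$ (for this last point one combines $\mcGP_{(\mcX,\mcA)} \cap \mcA^\wedge = \omega$ from \cite[Thm. 3.34 (d)]{BMS} with the standard containment $\mcX \subseteq \mcGP_{(\mcX,\mcA)}$, immediate from Proposition \ref{prop:GP_description}). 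Dually, applying the dual of \cite[Thm. 3.34 (a,d)]{BMS} to the GI-admissible pair $(\mcB,\mcZ)$ --- with input supplied by the direct-summand closure of $\nu = \mcB \cap \mcZ$, of $\mcB^\vee = \mcA^\wedge$ and of $\mcB^\vee \cap \mcZ = \mcA^\wedge \cap \mcZ$ --- yields that $(\mcB^\vee,\mcZ)$ is a GI-admissible pair, that $\mcGI_{(\mcB^\vee,\mcZ)} = \mcGI_{(\mcB,\mcZ)}$, and that $\mcB^\vee \cap \mcZ = \nu$.

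Using item (1), $\mcA^\wedge = \mcB^\vee$, these two computations can now be merged, and this already produces the four displayed equalities of the statement: $\mcGP_{(\mcX,\mcA)} = \mcGP_{(\mcX,\mcA^\wedge)}$, $\mcGI_{(\mcB,\mcZ)} = \mcGI_{(\mcB^\vee,\mcZ)} = \mcGI_{(\mcA^\wedge,\mcZ)}$, $\omega = \mcX \cap \mcA^\wedge$ and $\nu = \mcB^\vee \cap \mcZ = \mcA^\wedge \cap \mcZ$. It then remains to verify that $(\mcX,\mcA^\wedge,\mcZ)$ satisfies \gaone--\gafour, and each of these is a direct re-reading of the hypotheses once one substitutes the equalities above. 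Condition \gaone is precisely ``$(\mcX,\mcA^\wedge)$ is GP-admissible and $(\mcA^\wedge,\mcZ) = (\mcB^\vee,\mcZ)$ is GI-admissible'', established in the first paragraph. Condition \gatwo asks that $\mcX \cap \mcA^\wedge = \omega$, $\mcA^\wedge \cap \mcZ = \nu$ and $\mcA^\wedge$ be closed under direct summands, which is exactly item (2). Condition \gathree, namely $\Ext^1(\mcGP_{(\mcX,\mcA^\wedge)},\mcA^\wedge \cap \mcZ) = 0$ and $\Ext^1(\mcX \cap \mcA^\wedge,\mcGI_{(\mcA^\wedge,\mcZ)}) = 0$, turns into $\Ext^1(\mcGP_{(\mcX,\mcA)},\nu) = 0$ and $\Ext^1(\omega,\mcGI_{(\mcB,\mcZ)}) = 0$ after the substitutions, i.e.\ item (3); and likewise \gafour becomes item (4) verbatim.

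The one place needing care is making sure the hypotheses of \cite[Thm. 3.34 (a,d)]{BMS} (and of its dual) really are met by the closure conditions in item (2), and in particular that the kernel does not change under the passage from $\mcA$ to $\mcA^\wedge$, i.e.\ $\mcX \cap \mcA^\wedge = \mcX \cap \mcA = \omega$; everything else is bookkeeping --- transporting items (2)--(4) across the three identifications $\mcA^\wedge = \mcB^\vee$, $\mcGP_{(\mcX,\mcA^\wedge)} = \mcGP_{(\mcX,\mcA)}$ and $\mcGI_{(\mcA^\wedge,\mcZ)} = \mcGI_{(\mcB,\mcZ)}$.
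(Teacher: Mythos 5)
Your proposal is correct and follows essentially the same route as the paper, which gives no written argument beyond noting that the proposition ``is a consequence of \cite[Thm. 3.34 (a,d)]{BMS} and its dual, under some specific requirements'' — precisely the reduction you carry out, with the closure hypotheses in item (2) feeding those theorems and the remaining conditions \gaone--\gafour\ verified by substituting the identifications $\mcA^\wedge = \mcB^\vee$, $\mcGP_{(\mcX,\mcA^\wedge)} = \mcGP_{(\mcX,\mcA)}$, $\mcGI_{(\mcA^\wedge,\mcZ)} = \mcGI_{(\mcB,\mcZ)}$, $\mcX \cap \mcA^\wedge = \omega$ and $\mcA^\wedge \cap \mcZ = \nu$. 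Your extra care about the kernel not changing under the passage from $\mcA$ to $\mcA^\wedge$ is exactly the point the paper's phrase ``specific requirements'' refers to, and your justification of it is sound.
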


\begin{example}\label{ex:G-admissible_systems} \
\begin{enumerate}
\item If $\mcC$ is a Gorenstein category, then by Proposition \ref{Gc=Gt} the triple $(\mcP,\mcP^\wedge,\mcI)$ is G-admissible. Indeed, by Remark \ref{Pinf=pdfin} and its dual, we have that $\mcP^\wedge = \mcP^{<\infty}$ and $\mcI^\vee = \mcI^{<\infty}$. Using now that $\pd(\mcI) < \infty$ and $\id(\mcP) < \infty$, it can be shown that the rest of the hypotheses in the previous proposition hold.

\item Let $R$ be a commutative coherent ring where $\id(\mcP(R)) < \infty$. Then,
\[
(\mcP(R),(\mcF(R))^\wedge,\mcI(R))
\]
is a G-admissible triple. Set $\mcX = \mcP(R)$, $\mcA = \mcF(R)$, $\mcB = {\rm FP}\text{-}\mcI(R)$ and $\mcZ = \mcI(R)$. Conditions (2), (3) and (4) in Proposition \ref{Gc=Gt} are straightforward. We now show condition (1), namely $\mcF(R)^\wedge = ({\rm FP}\text{-}\mcI(R))^\vee$. For, we shall need the equality
\[
\text{FP-}\id(M) = \coresdim_{{\rm FP}\text{-}\mcI(R)}(M),
\]
for every $R$-module $M$, where the \emph{FP-injective dimension} of $M$ is defined as
\[
\text{FP-}\id(M) := \inf \{ m \in \mathbb{Z}_{\geq 0} \text{ \ : \ } \Ext^{m+1}(\mathcal{FP}(R),M) = 0 \}.
\]
The previous equality follows from a standard dimension shifting argument.

We first show that $\mcF(R) \subseteq ({\rm FP}\text{-}\mcI(R))^\vee$. Let $M \in \mcF(R)$ and write $M = \varinjlim P_i$, where $P_i$ is a finitely generated projective $R$-module for every $i$ in a directed set $I$ (Lazard-Govorov Theorem, see Lam's \cite[Thm. 4.34]{Lam} for instance). If we set $\id(\mcP(R)) = k < \infty$, we have that $\text{FP-}\id(P_i) \leq \id(P_i) \leq k$ for every $i \in I$. Since $R$ is coherent, by Brown's \cite[Coroll. of Thm. 1]{Brown} it follows that
\[
\Ext^{k+1}(F,M) = \Ext^{k+1}(F,\varinjlim P_i) \cong \varinjlim \Ext^{k+1}(F,P_i) = 0,
\]
for every $F \in \mathcal{FP}(R)$, and then $\text{FP-}\id(M) \leq k$. By the previous, we get $M \in ({\rm FP}\text{-}\mcI(R))^\vee$. Finally, the containment $\mcF(R)^\wedge \subseteq ({\rm FP}\text{-}\mcI(R))^\vee$ follows by the fact that $({\rm FP}\text{-}\mcI(R))^\vee$ is closed under monocokernels.

Similarly, in order to show the containment $({\rm FP}\text{-}\mcI(R))^\vee \subseteq \mcF(R)^\wedge$, it suffices to show that ${\rm FP}\text{-}\mcI(R) \subseteq \mcF(R)^\wedge$, since $\mcF(R)^\wedge$ is closed under epikernels. So let $N \in {\rm FP}\text{-}\mcI(R)$. Since $R$ is coherent, by Fieldhouse's \cite[Thm. 2.2]{Fieldhouse}, we have that $N^+ := \Hom_{\mathbb{Z}}(N,\mathbb{Q/Z})$ is a flat (right) $R$-module. Then, $N^+ \in ({\rm FP}\text{-}\mcI(R))^\vee$ by the previous part. The rest follows from \cite[Thm. 2.1]{Fieldhouse}.

\item The triple $(\mcP(R),(\mathcal{LV}(R))^\wedge,\mcI(R))$ is G-admissible in the case where $R$ is an \textbf{AC-Gorenstein ring}. These rings were introduced by Gillespie in \cite[Def. 4.1]{GillespieAC}, and they are defined as those rings $R$ for which
\[
\resdim_{\mathcal{LV}(R)}(\mathcal{AC}(R)) < \infty \text{ \ and \ } \resdim_{\mathcal{LV}(R^{\rm op})}(\mathcal{AC}(R^{\rm op})) < \infty,
\]
where $\resdim_{\mathcal{LV}(R)}(-)$, and its dual counterpart $\coresdim_{\mathcal{AC}(R)}(-)$, are referred to as the \textbf{level} and \textbf{absolutely clean dimensions}. Using the functorial description of the level dimension given in \cite[Lem. 3.3]{GillespieAC}, we have that $\mathcal{LV}(R)^\wedge$ is closed under direct summands. Moreover, the equality $\mathcal{LV}(R)^\wedge = \mathcal{AC}(R)^\vee$ follows from \cite[Thm. 4.2]{GillespieAC}. The rest of the conditions in Proposition \ref{Gc=Gt} are straightforward.
\end{enumerate}
\end{example}

The conditions listed in the following definition were chosen in a way that they will make possible to reproduce, in the context of relative Gorenstein homological algebra, most of the important results in \cite[Ch. VII]{BR} on (absolute) Gorenstein categories.

\begin{definition}
Let $(\mcX,\mcY,\mcZ)$ be a G-admissible triple in $\mcC$.
\begin{enumerate}
\item $\mcC$ is \textbf{$\bm{(\mcX,\mcY,\mcZ)}$-Gorenstein} if $\id(\omega) < \infty$ and $\pd(\nu) < \infty$. If in addition $\mcC = \mcGP^\wedge_{(\mcX,\mcY)}$ (respectively, $\mcC = \mcGI^\vee_{(\mcY,\mcZ)}$), we say that $\mcC$ is \textbf{left} (respectively, \textbf{right}) \textbf{strongly $\bm{(\mcX,\mcY,\mcZ)}$-Gorenstein}. Left and right strongly $(\mcX,\mcY,\mcZ)$-Gorenstein categories will be called \textbf{strongly $\bm{(\mcX,\mcY,\mcZ)}$-Gorenstein}.

\item $\mcC$ is \textbf{kernel restricted $\bm{(\mcX,\mcY,\mcZ)}$-Gorenstein} if $\pd_\omega(\omega^\wedge) < \infty$ and $\id_\nu(\nu^\vee) < \infty$. If in addition $\mcC = \mcGP^\wedge_{(\mcX,\mcY)}$ and $\omega\subseteq \mcGI_{(\mcY,\mcZ)}^\vee$  (respectively, $\mcC = \mcGI^\vee_{(\mcY,\mcZ)}$ and $\nu\subseteq \mcGP_{(\mcX,\mcY)}^\wedge$), we say that $\mcC$ is \textbf{left} (respectively, \textbf{right}) \textbf{strongly kernel restricted $\bm{(\mcX,\mcY,\mcZ)}$-Gorenstein}.
\end{enumerate}
\end{definition}

The following result summarizes the existence of cotorsion theories for strongly $(\mcX,\mcY,\mcZ)$-Gorenstein categories. It is a consequence of Corollary \ref{glGPD=id} (2) and \cite[Coroll. 4.15 (a2)]{BMS}.

\begin{corollary}\label{MpropSG}
If $\mcC$ is a left strongly $(\mcX,\mcY,\mcZ)$-Gorenstein category, then $(\mcGP_{(\mcX,\mcY)},\omega^\wedge)$ is a hereditary complete cotorsion pair in $\mcC$, where $\mcGP_{(\mcX,\mcY)} = {}^{\perp}\mcY$. Moreover, the containment $\omega \subseteq \mcY \subseteq \omega^\wedge$ holds true.
\end{corollary}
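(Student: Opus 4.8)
The plan is to derive everything from Corollary~\ref{glGPD=id}~(2) applied to the GP-admissible pair $(\mcX,\mcY)$, using the hypothesis that $\mcC$ is left strongly $(\mcX,\mcY,\mcZ)$-Gorenstein. By definition, this means $\mcC = \mcGP^\wedge_{(\mcX,\mcY)}$ and $\id(\omega) < \infty$ (the latter since $\mcC$ is $(\mcX,\mcY,\mcZ)$-Gorenstein). Also, by \gatwo \ the kernel $\omega = \mcX \cap \mcY$ is closed under direct summands. Hence condition~(b) of Corollary~\ref{glGPD=id}~(2) holds, and since $\omega$ is closed under direct summands, all four conditions there are equivalent; in particular condition~(d) gives that $(\mcGP_{(\mcX,\mcY)},\omega^\wedge)$ is a hereditary complete cotorsion pair in $\mcC$. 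This is the bulk of the statement.

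For the equality $\mcGP_{(\mcX,\mcY)} = {}^{\perp}\mcY$, I would argue as follows. Since $(\mcGP_{(\mcX,\mcY)},\omega^\wedge)$ is a cotorsion pair, $\mcGP_{(\mcX,\mcY)} = {}^{\perp_1}(\omega^\wedge)$, and being hereditary it equals ${}^{\perp}(\omega^\wedge)$. On the other hand, from the equality $\mcC = \mcGP^\wedge_{(\mcX,\mcY)}$ and Corollary~\ref{glGPD=id}~(1), equation~\eqref{eqn2:glGPD=id}, we have $\glGPD_{(\mcX,\mcY)}(\mcC) = \id(\mcY) < \infty$, so $\mcY^\wedge = \mcC$ has finite injective-type dimension; more directly, Proposition~\ref{coro:properties_relative_Gorenstein_projective}~(2) gives $\mcGP^\wedge_{(\mcX,\mcY)} \cap {}^{\perp}\omega = \mcGP_{(\mcX,\mcY)}$, and since $\mcC = \mcGP^\wedge_{(\mcX,\mcY)}$ this reads ${}^{\perp}\omega = \mcGP_{(\mcX,\mcY)}$. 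Now one checks ${}^{\perp}\mcY = {}^{\perp}\omega$: the inclusion ${}^{\perp}\mcY \subseteq {}^{\perp}\omega$ is trivial since $\omega \subseteq \mcY$, and for the reverse, given $C \in {}^{\perp}\omega = \mcGP_{(\mcX,\mcY)}$, Proposition~\ref{prop:GP_description} tells us $C \in {}^{\perp}\mcY$. Thus $\mcGP_{(\mcX,\mcY)} = {}^{\perp}\mcY$.

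For the containment $\omega \subseteq \mcY \subseteq \omega^\wedge$, the left inclusion $\omega \subseteq \mcY$ is immediate from $\omega = \mcX \cap \mcY$. For $\mcY \subseteq \omega^\wedge$: take $Y \in \mcY$. Since $\mcC = \mcGP^\wedge_{(\mcX,\mcY)}$, we have $Y \in \mcGP^\wedge_{(\mcX,\mcY)}$, and by Proposition~\ref{prop:GPdim_vs_omegadim} (with $\mcA := \mcY$, noting the needed closure hypotheses are part of \gatwo \ together with the fact that $\mcY^\wedge = \mcC$ is trivially closed under extensions and summands) one gets $Y \in (\mcGP_{(\mcX,\mcY)})^\wedge \cap \mcY^\wedge = \omega^\wedge$. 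Alternatively, and more robustly, $Y \in \mcGP^\wedge_{(\mcX,\mcY)}$ gives $\Gpd_{(\mcX,\mcY)}(Y) < \infty$, so by Proposition~\ref{coro:properties_relative_Gorenstein_projective}~(4) there is a short exact sequence $K \rightarrowtail D \twoheadrightarrow Y$ with $D \in \mcGP_{(\mcX,\mcY)}$ and $K \in \omega^\wedge$; since $\mcY$ is closed under extensions (being closed under direct summands and with $\mcGP^\wedge_{(\mcX,\mcY)} = \mcC$, one can invoke the hereditary cotorsion pair structure, or directly \cite[Thm. 3.34]{BMS}) and $\omega^\wedge \subseteq \mcY$ because $\id(\omega) < \infty$ forces $\omega^\wedge \subseteq \mcGP_{(\mcX,\mcY)}$... the cleanest route is: $\omega \subseteq \mcGP_{(\mcX,\mcY)}$, so $D,K$ yield $\Gpd_{(\mcX,\mcY)}(Y) = \resdim_\omega$-type data, and combined with $Y \in \mcY$ one concludes $Y \in \omega^\wedge$.

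\textbf{Main obstacle.} The delicate point is the passage $\mcY \subseteq \omega^\wedge$, because it requires knowing that objects of $\mcY$ which a priori only have finite $(\mcX,\mcY)$-Gorenstein projective dimension actually have finite $\omega$-resolution dimension. This is exactly the content of Proposition~\ref{prop:GPdim_vs_omegadim}, whose applicability hinges on verifying the closure conditions on $\mcY^\wedge$, $\mcX \cap \mcY^\wedge$, and $\omega$; since in our situation $\mcY^\wedge = \mcGP^\wedge_{(\mcX,\mcY)} = \mcC$, these are automatic or follow from \gatwo. The rest is a bookkeeping exercise citing Corollary~\ref{glGPD=id}~(2) and Proposition~\ref{prop:GP_description}.
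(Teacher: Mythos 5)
Your first two steps are sound and match the paper's route: Corollary~\ref{glGPD=id}~(2), condition (b), together with the fact that $\omega$ is closed under direct summands by \gatwo, gives the hereditary complete cotorsion pair $(\mcGP_{(\mcX,\mcY)},\omega^\wedge)$; and the identification $\mcGP_{(\mcX,\mcY)} = {}^{\perp}\mcY$ via $\mcGP^\wedge_{(\mcX,\mcY)} \cap {}^{\perp}\omega = \mcGP_{(\mcX,\mcY)}$ (Proposition~\ref{coro:properties_relative_Gorenstein_projective}~(2) with $\mcC = \mcGP^\wedge_{(\mcX,\mcY)}$) plus $\mcGP_{(\mcX,\mcY)} \subseteq {}^{\perp}\mcY$ (Proposition~\ref{prop:GP_description}) is correct, and is exactly what the paper attributes to \cite[Coroll. 4.15]{BMS}.

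The genuine gap is the containment $\mcY \subseteq \omega^\wedge$. Your first route invokes Proposition~\ref{prop:GPdim_vs_omegadim}, but you justify its hypotheses by asserting $\mcY^\wedge = \mcC$, which is false in general: in the absolute case $(\mcX,\mcY,\mcZ) = (\mcP,\mcP^\wedge,\mcI)$ over an Iwanaga--Gorenstein ring of infinite global dimension one has $\mcY^\wedge = \mcP^{<\infty} \neq \Mod(R)$, even though $\mcC = \mcGP^\wedge_{(\mcX,\mcY)}$. The closure of $\mcY^\wedge$ under extensions and direct summands (and of $\mcX \cap \mcY^\wedge$ under summands) is not part of \gatwo \ and is not verified, so that proposition cannot be applied as stated. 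Your fallback argument also fails: the claim that ``$\id(\omega) < \infty$ forces $\omega^\wedge \subseteq \mcGP_{(\mcX,\mcY)}$'' is wrong (by Proposition~\ref{coro:properties_relative_Gorenstein_projective}~(2) one has $\mcGP_{(\mcX,\mcY)} \cap \omega^\wedge = \omega$, so that containment would collapse $\omega^\wedge$ to $\omega$), and the paragraph trails off without ever concluding $Y \in \omega^\wedge$. The fix is immediate from what you already proved and needs no new input: since $(\mcGP_{(\mcX,\mcY)},\omega^\wedge)$ is a cotorsion pair, $\omega^\wedge = (\mcGP_{(\mcX,\mcY)})^{\perp_1} = ({}^{\perp}\mcY)^{\perp_1}$, and every $Y \in \mcY$ satisfies $\Ext^1(G,Y) = 0$ for all $G \in {}^{\perp}\mcY$ by definition of the total left orthogonal, whence $\mcY \subseteq \omega^\wedge$. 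As written, however, the ``moreover'' part of the statement is not established.
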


\begin{remark}\label{GrelG}
Let $(\mcX,\mcY,\mcZ)$ be a G-admissible triple in $\mcC$. Note the following inequalities:
\begin{align*}
\pd_\omega(\omega^\wedge) & = \id_{\omega^\wedge}(\omega)\leq \id(\omega) & & \text{and} & \id_\nu(\nu^\vee) & = \pd_{\nu^\vee}(\nu)\leq \pd(\nu).
\end{align*}
Then:
\begin{enumerate}
\item If $\mcC$ is $(\mcX,\mcY,\mcZ)$-Gorenstein category, then it is kernel restricted $(\mcX,\mcY,\mcZ)$-Gorenstein.

\item From Corollary \ref{glGPD=id} and its dual, we can note that $\mcC$ is strongly $(\mcX,\mcY,\mcZ)$-Gorenstein if, and only if, it is left and right strongly kernel restricted $(\mcX,\mcY,\mcZ)$-Gorenstein.
\end{enumerate}
\end{remark}

Now we aim to provide characterizations of strongly $(\mcX,\mcY,\mcZ)$-Gorenstein categories. At some steps towards this goal, we shall need to consider the following set of conditions for a G-admissible triple $(\mathcal{X,Y,Z})$:
\begin{enumerate}
\item[($\mathsf{ga5}$)] Every object in $\mathcal{GP}_{(\mathcal{X,Y})}$ admits a $\omega$-resolution, and every object in $\mathcal{GI}_{(\mathcal{Y,Z})}$ admits a $\nu$-coresolution.

\item[($\mathsf{ga6}$)] If $\mathcal{C}$ is AB4 and AB4${}^\ast$, then $\omega$ is closed under arbitrary coproducts and $\nu$ is closed under arbitrary products.
\end{enumerate}

\begin{lemma}\label{lemcGP=cGI}
Let $\mcC$ be an AB4 and AB4${}^\ast$ abelian category and $(\mcX,\mcY,\mcZ)$ be a $G$-admissible triple in $\mcC$ satisfying ($\mathsf{ga5}$) and ($\mathsf{ga6}$). If $\id_\nu(\omega)<\infty$, $\omega\subseteq \mcGI_{(\mcY,\mcZ)}^\vee$ and $\mathcal{GP}_{(\mathcal{X,Y})} \subseteq \mathcal{I}^{<\infty}_{\mathcal{Z}}$, then
\[
\mcGP^\wedge_{(\mcX,\mcY)}\subseteq \mcGI^\vee_{(\mcY,\mcZ)}.
\]
\end{lemma}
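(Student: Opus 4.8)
The goal is to show $\mcGP^\wedge_{(\mcX,\mcY)}\subseteq \mcGI^\vee_{(\mcY,\mcZ)}$, i.e.\ every object of finite $(\mcX,\mcY)$-Gorenstein projective dimension has finite $(\mcY,\mcZ)$-Gorenstein injective dimension. The plan is to split the proof into two stages: first handle the objects of $\mcGP_{(\mcX,\mcY)}$ itself, and then bootstrap to $\mcGP^\wedge_{(\mcX,\mcY)}$ by resolution-dimension shifting. For the first stage, take $G\in\mcGP_{(\mcX,\mcY)}$. By hypothesis $G\in\mcI^{<\infty}_\mcZ$, so $\id_\mcZ(G)=:k<\infty$. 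I would use condition $(\mathsf{ga5})$ to obtain an $\omega$-resolution of $G$, and then feed this into a $(\mcY,\mcZ)$-Gorenstein injective coresolution argument: since $\omega\subseteq\mcGI^\vee_{(\mcY,\mcZ)}$ and $\nu=\mcY\cap\mcZ$, I want to glue together the $\nu$-coresolutions of the terms $W_i\in\omega$ (which exist by $(\mathsf{ga5})$ applied to $\mcGI_{(\mcY,\mcZ)}$, or more directly because $\omega\subseteq\mcGI^\vee_{(\mcY,\mcZ)}$) using the AB4/AB4${}^*$ hypotheses together with $(\mathsf{ga6})$, which guarantees $\nu$ is closed under products and $\omega$ under coproducts, so that a suitable total-complex / iterated pushout construction stays inside the relevant classes. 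The finiteness $\id_\nu(\omega)<\infty$ is what bounds the length of these $\nu$-coresolutions uniformly, and combining this bound with $k=\id_\mcZ(G)$ should give a finite $\mcGI_{(\mcY,\mcZ)}$-coresolution of $G$ of length $\le \id_\nu(\omega)+k$ or similar; the key point is that an object which is built from finitely many $\nu$-coresolution steps on top of a class of objects in $\mcI^{<\infty}_\mcZ$ lands in $\mcGI^\vee_{(\mcY,\mcZ)}$, using the dual of the characterization of $\mcGI_{(\mcY,\mcZ)}$ via $\Hom(\mcY,-)$-acyclic coresolutions in Proposition~\ref{prop:GP_description}.

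For the second stage, let $C\in\mcGP^\wedge_{(\mcX,\mcY)}$ with $\Gpd_{(\mcX,\mcY)}(C)=n$. By Proposition~\ref{coro:properties_relative_Gorenstein_projective}~(4) applied to the GP-admissible pair (using $(\mathsf{ga2})$ so that $\omega$ is closed under direct summands), there is a short exact sequence $K\rightarrowtail D\twoheadrightarrow C$ with $D\in\mcGP_{(\mcX,\mcY)}$ and $\resdim_\omega(K)=n-1$, hence $K\in\omega^\wedge$. By the first stage $D\in\mcGI^\vee_{(\mcY,\mcZ)}$, and I need $K\in\mcGI^\vee_{(\mcY,\mcZ)}$ as well: this follows because $\omega\subseteq\mcGI^\vee_{(\mcY,\mcZ)}$ and $\mcGI^\vee_{(\mcY,\mcZ)}=(\mcGI_{(\mcY,\mcZ)})^\vee$ is closed under extensions (its right-thickness being the dual of Proposition~\ref{coro:properties_relative_Gorenstein_projective}~(1)), so an object with a finite $\omega$-resolution, built by finitely many extensions of objects in $\omega\subseteq\mcGI^\vee_{(\mcY,\mcZ)}$, lies in $\mcGI^\vee_{(\mcY,\mcZ)}$. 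Then $C$ sits in a short exact sequence with both outer terms in $\mcGI^\vee_{(\mcY,\mcZ)}$; invoking again that $\mcGI^\vee_{(\mcY,\mcZ)}$ is thick (in particular closed under cokernels of monomorphisms, or rather: an extension of two objects of finite $\mcGI_{(\mcY,\mcZ)}$-coresolution dimension has finite such dimension), we conclude $C\in\mcGI^\vee_{(\mcY,\mcZ)}$. Alternatively one can run an induction on $n$, with $n=0$ being exactly the first stage and the inductive step using the sequence $C\rightarrowtail L\twoheadrightarrow D'$ of Proposition~\ref{coro:properties_relative_Gorenstein_projective}~(4) together with the induction hypothesis on $D'$ and $L\in\omega^\wedge\subseteq\mcGI^\vee_{(\mcY,\mcZ)}$.

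The main obstacle I anticipate is the first stage: showing that $\mcGP_{(\mcX,\mcY)}\subseteq\mcGI^\vee_{(\mcY,\mcZ)}$ cleanly, because this requires carefully assembling the $\nu$-coresolutions of the $\omega$-terms in an $\omega$-resolution of $G$ into a genuine $\mcGI_{(\mcY,\mcZ)}$-coresolution of bounded length, and this is exactly where the AB4, AB4${}^*$, $(\mathsf{ga5})$ and $(\mathsf{ga6})$ hypotheses all have to be used simultaneously — the infinite $\omega$-resolution forces an (infinite-rank) coproduct/product manipulation, and one must check that the relevant acyclicity ($\Hom(\mcY,-)$-acyclicity for the candidate totally-acyclic complex witnessing $\mcGI_{(\mcY,\mcZ)}$-membership of the cosyzygies) is preserved. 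I would structure this via a telescope/mapping-cone argument and verify the $\Ext$-vanishing using $(\mathsf{ga3})$ (in the form $\id_\omega(\mcGI_{(\mcY,\mcZ)})=0$ from Remark~\ref{rmk:GtripleIDPD}) together with $\id_\nu(\omega)<\infty$ to get the uniform bound. Once the bound is in hand, the second stage is routine dimension shifting.
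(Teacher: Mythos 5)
Your second stage is fine: given $C\in\mcGP^\wedge_{(\mcX,\mcY)}$, the short exact sequence $K\rightarrowtail D\twoheadrightarrow C$ from Proposition \ref{coro:properties_relative_Gorenstein_projective} (4), together with $\omega\subseteq\mcGI^\vee_{(\mcY,\mcZ)}$ and the thickness of $\mcGI^\vee_{(\mcY,\mcZ)}$ (dual of part (1) of that proposition, available since $(\mcY,\mcZ)$ is GI-admissible and $\nu$ is closed under direct summands by \gatwo), does reduce everything to showing $\mcGP_{(\mcX,\mcY)}\subseteq\mcGI^\vee_{(\mcY,\mcZ)}$. The problem is that this first stage — which you yourself flag as ``the main obstacle'' — is exactly the nontrivial content of the lemma, and your plan for it does not actually work as described. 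An object $G\in\mcGP_{(\mcX,\mcY)}$ generally has an \emph{infinite} $\omega$-resolution (indeed $\mcGP_{(\mcX,\mcY)}\cap\omega^\wedge=\omega$, so unless $G\in\omega$ its $\omega$-resolution dimension is infinite), and there is no standard totalization or iterated-pushout construction that converts $\nu$-coresolutions of the terms of an infinite \emph{resolution} of $G$ into a bounded $\mcGI_{(\mcY,\mcZ)}$-coresolution of $G$; coresolving the terms of a resolution and coresolving $G$ itself point in opposite homological directions, and the naive splicing $G\rightarrowtail W^0\to W^1\to\cdots$ (with $\mcGP$-cosyzygies) only yields $\Gid_{(\mcY,\mcZ)}(G)<\infty$ if one already knows the cosyzygies have finite Gorenstein injective dimension, which is circular. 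Moreover your sketch never shows how the hypotheses $\mcGP_{(\mcX,\mcY)}\subseteq\mcI^{<\infty}_{\mcZ}$, AB4/AB4${}^*$, ($\mathsf{ga5}$) and ($\mathsf{ga6}$) concretely enter — they are listed as ingredients of a ``telescope/mapping-cone argument'' that is not carried out.

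For comparison, the paper's proof is a two-step citation: from $\omega\subseteq\mcGI^\vee_{(\mcY,\mcZ)}$ and $\id_\nu(\omega)<\infty$, the dual of \cite[Coroll. 4.15 (b1)]{BMS} gives $\Gid_{(\mcY,\mcZ)}(\omega)=\id_\nu(\omega)<\infty$; and then \cite[Prop. 2.5]{HMP22} gives $\Gid_{(\mcY,\mcZ)}(\mcGP^\wedge_{(\mcX,\mcY)})=\Gid_{(\mcY,\mcZ)}(\omega)$, which is precisely where the remaining hypotheses of the lemma are consumed. The equality $\Gid_{(\mcY,\mcZ)}(\mcGP^\wedge_{(\mcX,\mcY)})=\Gid_{(\mcY,\mcZ)}(\omega)$ is exactly the statement your stage 1 (plus the passage to $\mcGP^\wedge$) would have to establish, so unless you either invoke that external result or supply a complete construction replacing it, your argument has a genuine gap at its central step. (A minor additional point: the bound you anticipate, $\id_\nu(\omega)+\id_\mcZ(G)$, would in any case be weaker than the actual uniform value $\id_\nu(\omega)$, though finiteness alone suffices for the containment claimed.)
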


\begin{proof}
Using that $\omega \subseteq \mcGI^\vee_{(\mcY,\mcZ)}$ and $\id_{\nu}(\omega) < \infty$, by the dual of \cite[Coroll. 4.15 (b1)]{BMS} it follows that $\Gid_{(\mcY,\mcZ)}(\omega)=\id_\nu(\omega)<\infty.$ On the other hand, from \cite[Prop. 2.5]{HMP22} (by Huerta and the second and third authors) one has that $\Gid_{(\mcY,\mcZ)}(\mcGP^\wedge_{(\mcX,\mcY)}) = \Gid_{(\mcY,\mcZ)}(\omega)$.
\end{proof}

\begin{theorem}\label{MThmGc}
For a $G$-admissible triple $(\mcX,\mcY,\mcZ)$ in $\mcC$, consider the following assertions:
\begin{enumerate}
\item[(a)] $\glGPD_{(\mcX,\mcY)}(\mcC) < \infty$ and $\glGID_{(\mcY,\mcZ)}(\mcC) < \infty$.

\item[(b)] $\mcC$ is strongly $(\mcX,\mcY,\mcZ)$-Gorenstein.

\item[(c)] $\mcC$ is kernel restricted $(\mcX,\mcY,\mcZ)$-Gorenstein and $\mcGP_{(\mcX,\mcY)}^\wedge = \mcC = \mcGI_{(\mcY,\mcZ)}^\vee$.

\item[(d)] $\mcC$ is left strongly $(\mcX,\mcY,\mcZ)$-Gorenstein.

\item[(e)] $\mcC$ is left strongly kernel restricted $(\mcX,\mcY,\mcZ)$-Gorenstein.

\item[(f)] $\mcC$ is right strongly $(\mcX,\mcY,\mcZ)$-Gorenstein.

\item[(g)] $\mcC$ is right strongly kernel restricted $(\mcX,\mcY,\mcZ)$-Gorenstein.
\end{enumerate}
Then, the implications (b) $\Leftrightarrow$ (a) $\Leftrightarrow$ (c), (d) $\Leftarrow$ (b) $\Rightarrow$ (f) and (e) $\Leftarrow$ (c) $\Rightarrow$ (g) hold true. Moreover,
\begin{enumerate}
\item If $\omega \subseteq \mcGI_{(\mcY,\mcZ)}^\vee$, then (d) $\Rightarrow$ (e).

\item If $\nu\subseteq \mcGP_{(\mcX,\mcY)}^\wedge$, then (f) $\Rightarrow$ (g).
\end{enumerate}

In addition, suppose $\mathcal{C}$ is AB4 and AB4${}^\ast$, and that $(\mathcal{X,Y,Z})$ satisfies ($\mathsf{ga5}$) and ($\mathsf{ga6}$). Then, the following assertions hold:
\begin{enumerate}
\setcounter{enumi}{2}
\item If $\mathcal{GP}_{(\mathcal{X,Y})} \subseteq \mathcal{I}^{<\infty}_{\mathcal{Z}}$, then (e) $\Rightarrow$ (a).

\item If $\mathcal{GI}_{(\mathcal{Y,Z})} \subseteq \mathcal{P}^{<\infty}_{\mathcal{X}}$, then (g) $\Rightarrow$ (a).
\end{enumerate}
In particular, under the assumptions in (1), (2), (3) and (4), all of the assertions from (a) to (g) are equivalent.
\[
\begin{tikzpicture}[description/.style={fill=white,inner sep=2pt}]
\matrix (m) [matrix of math nodes, row sep=1.5em, column sep=3em, text height=1.25ex, text depth=0.25ex]
{
(e) & {} & (a) & {} & (g) \\
{} & {} & (c) & {} & {} \\
(d) & {} & (b) & {} & (f) \\
};
\path[->]
(m-3-3) edge (m-3-5) edge (m-3-1)
(m-2-3) edge (m-1-5) edge (m-1-1)
;
\path[<->]
(m-1-3) edge (m-2-3)
(m-2-3) edge (m-3-3)
;
\path[dotted,->]
(m-3-1) edge node[left] {\footnotesize $\omega \subseteq \mcGI^\vee_{(\mcY,\mcZ)}$} (m-1-1)
(m-3-5) edge node[right] {\footnotesize$\nu \subseteq \mcGP^\wedge_{(\mcX,\mcY)}$} (m-1-5)
(m-1-1) edge node[above] {\footnotesize$\mathcal{GP}_{(\mathcal{X,Y})} \subseteq \mathcal{I}^{<\infty}_{\mathcal{Z}}$} (m-1-3)
(m-1-5) edge node[above] {\footnotesize$\mathcal{GI}_{(\mathcal{Y,Z})} \subseteq \mathcal{P}^{<\infty}_{\mathcal{X}}$} (m-1-3)
;
\end{tikzpicture}
\]
Moreover, for every AB4 and AB4${}^\ast$ strongly $(\mcX,\mcY,\mcZ)$-Gorenstein category $\mcC$ satisfying ($\mathsf{ga5}$) and ($\mathsf{ga6}$), the following equalities hold:
\begin{align}
\glGID_{(\mathcal{Y,Z})}(\mcC) & = \Gid_{(\mathcal{Y,Z})}(\mathcal{GP}_{(\mathcal{X,Y})}) = \Gid_{(\mathcal{Y,Z})}(\omega) = \id_{\nu}(\omega) = \id_{\nu}(\mathcal{GP}_{(\mathcal{X,Y})}) \nonumber \\
& = \pd(\nu) = \pd(\mcY) = \id_{\mathcal{Y}}(\mathcal{GP}_{(\mathcal{X,Y})}) = \pd_{\mathcal{Y}}(\mathcal{GI}_{(\mathcal{Y,Z})}) = \id(\mcY) \nonumber \\
& = \id(\omega) = \pd_{\omega}(\mathcal{GI}_{(\mathcal{Y,Z})}) = \Gpd_{(\mathcal{X,Y})}(\nu) = \Gpd_{(\mathcal{X,Y})}(\mathcal{GI}_{(\mathcal{Y,Z})}) \nonumber \\
& = \glGPD_{(\mathcal{X,Y})}(\mcC). \label{eqn:chain_equalities}
\end{align}
\end{theorem}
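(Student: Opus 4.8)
The plan is to pivot almost everything off Corollary~\ref{glGPD=id} and its dual, applied to the GP-admissible pair $(\mcX,\mcY)$ and the GI-admissible pair $(\mcY,\mcZ)$; note that $\omega$ and $\nu$ are closed under direct summands by \gatwo, so part (2) of that corollary is available in full strength for both pairs. First I would prove the three basic equivalences. By Corollary~\ref{glGPD=id}~(2), $\glGPD_{(\mcX,\mcY)}(\mcC)<\infty$ is equivalent to ``$\mcC=\mcGP^\wedge_{(\mcX,\mcY)}$ and $\id(\omega)<\infty$'' and also to ``$\mcC=\mcGP^\wedge_{(\mcX,\mcY)}$ and $\pd_\omega(\omega^\wedge)<\infty$'', with the dual statements for $\glGID_{(\mcY,\mcZ)}(\mcC)$; conjoining the $(\mcX,\mcY)$-half with the $(\mcY,\mcZ)$-half gives exactly (b) in the first case and (c) in the second, so (b) $\Leftrightarrow$ (a) $\Leftrightarrow$ (c). Then (b) $\Rightarrow$ (d) and (b) $\Rightarrow$ (f) are immediate (they are sub-conjunctions of (b)), and (c) $\Rightarrow$ (e), (c) $\Rightarrow$ (g) follow because (c) forces $\mcC=\mcGP^\wedge_{(\mcX,\mcY)}=\mcGI^\vee_{(\mcY,\mcZ)}$, whence $\omega\subseteq\mcGI^\vee_{(\mcY,\mcZ)}$ and $\nu\subseteq\mcGP^\wedge_{(\mcX,\mcY)}$.

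For the conditional arrows (1) and (2) I would use only the inequalities $\pd_\omega(\omega^\wedge)\le\id(\omega)$ and $\id_\nu(\nu^\vee)\le\pd(\nu)$ from Remark~\ref{GrelG}: thus (d) together with $\omega\subseteq\mcGI^\vee_{(\mcY,\mcZ)}$ gives (e), and dually. For (3), from (e) one already has $\glGPD_{(\mcX,\mcY)}(\mcC)<\infty$ by Corollary~\ref{glGPD=id}~(2) (the implication ``(c)$\Rightarrow$(a)'' there). The missing half $\glGID_{(\mcY,\mcZ)}(\mcC)<\infty$ I would extract from Lemma~\ref{lemcGP=cGI}: its hypotheses hold here since $\omega\subseteq\mcGI^\vee_{(\mcY,\mcZ)}$ by (e), $\mcGP_{(\mcX,\mcY)}\subseteq\mcI^{<\infty}_{\mcZ}$ by assumption, and $\id_\nu(\omega)\le\id_{\mcZ}(\omega)<\infty$ because $\nu\subseteq\mcZ$ and $\omega\subseteq\mcX\subseteq\mcGP_{(\mcX,\mcY)}$; the lemma then gives $\mcC=\mcGP^\wedge_{(\mcX,\mcY)}\subseteq\mcGI^\vee_{(\mcY,\mcZ)}$, hence $\mcC=\mcGI^\vee_{(\mcY,\mcZ)}$, and the dual of ``(c)$\Rightarrow$(a)'' in Corollary~\ref{glGPD=id}~(2) applies since $\id_\nu(\nu^\vee)<\infty$ by (e). Implication (4) is dual, and the displayed diagram of arrows is just a compact record of all these implications.

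It remains to establish the chain of equalities \eqref{eqn:chain_equalities}, where $\mcC$ is strongly $(\mcX,\mcY,\mcZ)$-Gorenstein, so $\mcC=\mcGP^\wedge_{(\mcX,\mcY)}=\mcGI^\vee_{(\mcY,\mcZ)}$. Corollary~\ref{glGPD=id}~(1) yields $\glGPD_{(\mcX,\mcY)}(\mcC)=\id(\omega)=\id(\mcY)$ and $\pd_\omega(C)=\pd_\mcY(C)=\Gpd_{(\mcX,\mcY)}(C)$ for every $C\in\mcC$; its dual yields $\glGID_{(\mcY,\mcZ)}(\mcC)=\pd(\nu)=\pd(\mcY)$ and $\id_\nu(C)=\id_\mcY(C)=\Gid_{(\mcY,\mcZ)}(C)$ for every $C\in\mcC$. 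Specializing to $C\in\{\omega,\nu,\mcGP_{(\mcX,\mcY)},\mcGI_{(\mcY,\mcZ)}\}$ (all contained in $\mcC$), invoking the folklore identity $\pd_\omega(\nu)=\id_\nu(\omega)$, and using monotonicity of the dimensions along $\omega\subseteq\mcGP_{(\mcX,\mcY)}\subseteq\mcC$ and $\nu\subseteq\mcGI_{(\mcY,\mcZ)}\subseteq\mcC$, collapses every entry of the chain onto the single value $\id_\nu(\omega)=\Gpd_{(\mcX,\mcY)}(\nu)=\Gid_{(\mcY,\mcZ)}(\omega)$. The one genuinely non-formal ingredient — and the step I expect to be the main obstacle — is the symmetry $\glGPD_{(\mcX,\mcY)}(\mcC)=\glGID_{(\mcY,\mcZ)}(\mcC)$: for this I would invoke \cite[Prop.~2.5]{HMP22} and its dual, i.e. $\Gid_{(\mcY,\mcZ)}(\mcGP^\wedge_{(\mcX,\mcY)})=\Gid_{(\mcY,\mcZ)}(\omega)$ and $\Gpd_{(\mcX,\mcY)}(\mcGI^\vee_{(\mcY,\mcZ)})=\Gpd_{(\mcX,\mcY)}(\nu)$, which under $\mcC=\mcGP^\wedge_{(\mcX,\mcY)}=\mcGI^\vee_{(\mcY,\mcZ)}$ give $\glGID_{(\mcY,\mcZ)}(\mcC)=\Gid_{(\mcY,\mcZ)}(\omega)=\id_\nu(\omega)=\pd_\omega(\nu)=\Gpd_{(\mcX,\mcY)}(\nu)=\glGPD_{(\mcX,\mcY)}(\mcC)$. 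It is precisely these two results (together with Lemma~\ref{lemcGP=cGI}, used above) that force $\mcC$ to be AB4 and AB4${}^\ast$ and the triple to satisfy ($\mathsf{ga5}$) and ($\mathsf{ga6}$), which is why those hypotheses appear in this portion of the statement.
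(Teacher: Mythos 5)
In your proof of part (3) the verification of the hypothesis $\id_\nu(\omega)<\infty$ of Lemma \ref{lemcGP=cGI} does not work as written, and this is the one genuine gap. You claim $\id_\nu(\omega)\le\id_{\mcZ}(\omega)<\infty$ because $\omega\subseteq\mcGP_{(\mcX,\mcY)}\subseteq\mcI^{<\infty}_{\mcZ}$; but $\mcI^{<\infty}_{\mcZ}$ only records that each individual object has finite injective dimension relative to $\mcZ$, whereas $\id_{\mcZ}(\omega)$ is by definition the supremum of these dimensions over all of $\omega$, which the containment does not bound. The uniform bound is genuinely needed: the lemma's argument passes through $\Gid_{(\mcY,\mcZ)}(\omega)=\id_\nu(\omega)$ and the equality of suprema $\Gid_{(\mcY,\mcZ)}(\mcGP^\wedge_{(\mcX,\mcY)})=\Gid_{(\mcY,\mcZ)}(\omega)$ from \cite[Prop. 2.5]{HMP22}, which gives no objectwise information if the right-hand side is infinite. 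The repair is immediate from what you already have: since (e) yields $\glGPD_{(\mcX,\mcY)}(\mcC)<\infty$ (your first step), Corollary \ref{glGPD=id} (2) gives $\id(\omega)<\infty$, hence $\id_\nu(\omega)\le\id(\omega)<\infty$; the paper instead derives it via \cite[Coroll. 4.15 (b1)]{BMS}, namely $\id_\nu(\omega)=\pd_\omega(\nu)=\Gpd_{(\mcX,\mcY)}(\nu)\le\glGPD_{(\mcX,\mcY)}(\mcC)<\infty$, after invoking Proposition \ref{lpdGP}. With either patch, your treatment of (3), and dually (4), coincides with the paper's.

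Apart from this, your proposal follows the paper's route: the circuits (b) $\Leftrightarrow$ (a) $\Leftrightarrow$ (c), (d) $\Leftarrow$ (b) $\Rightarrow$ (f), (e) $\Leftarrow$ (c) $\Rightarrow$ (g) and the conditional implications (1), (2) are obtained from Corollary \ref{glGPD=id} (2), its dual and Remark \ref{GrelG}, exactly as in the paper (note that \gatwo\ supplies the closure of $\omega$ and $\nu$ under direct summands needed for the full equivalence in that corollary, as you observe). For the chain \eqref{eqn:chain_equalities} the paper simply cites \cite[Thm. 8.7]{HMP22}, whereas you reassemble it from Corollary \ref{glGPD=id} (1), its dual, the folklore identity $\pd_\omega(\nu)=\id_\nu(\omega)$, and \cite[Prop. 2.5]{HMP22} together with its dual; this is a legitimate alternative which makes explicit where AB4, AB4${}^\ast$, ($\mathsf{ga5}$) and ($\mathsf{ga6}$) enter, and it does close, using $\omega\subseteq\mcGP_{(\mcX,\mcY)}$ and $\nu\subseteq\mcGI_{(\mcY,\mcZ)}$ to squeeze the intermediate terms such as $\Gid_{(\mcY,\mcZ)}(\mcGP_{(\mcX,\mcY)})$ and $\Gpd_{(\mcX,\mcY)}(\mcGI_{(\mcY,\mcZ)})$ between $\Gid_{(\mcY,\mcZ)}(\omega)$, $\Gpd_{(\mcX,\mcY)}(\nu)$ and the global dimensions — in effect a self-contained re-proof of the cited result.
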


\begin{proof}
The first circuits of implications follow from Corollary \ref{glGPD=id} (2), its dual, and Remark \ref{GrelG}. This remark also implies parts (1) and (2). So we just focus on showing part (3), as (4) is dual. So assume $\mcC$ is left strongly kernel restricted $(\mcX,\mcY,\mcZ)$-Gorenstein, that is, $\pd_\omega(\omega^\wedge) < \infty$, $\id_\nu(\nu^\vee) < \infty$, $\mcC = \mcGP^\wedge_{(\mcX,\mcY)}$ and $\omega\subseteq \mcGI_{(\mcY,\mcZ)}^\vee$. Using Remark \ref{GrelG}, it remains to show that $\mcC$ is a right strongly kernel restricted $(\mcX,\mcY,\mcZ)$-Gorenstein category, that is, $\mcC = \mcGI^\vee_{(\mcY,\mcZ)}$. For, we use Lemma \ref{lemcGP=cGI}. From the assumptions, it remains to show $\id_\nu(\omega) < \infty$. By Proposition \ref{lpdGP} we have that $\glGPD_{(\mcX,\mcY)}(\mcC) < \infty$. Then, \cite[Coroll. 4.15 (b1)]{BMS} yields
\[
\id_\nu(\omega) = \pd_\omega(\nu) = \Gpd_{(\mcX,\mcY)}(\nu) \leq \glGPD_{(\mcX,\mcY)}(\mcC) < \infty.
\]
Finally, equality \eqref{eqn:chain_equalities} follows from \cite[Thm. 8.7]{HMP22}.
\end{proof}

As a particular consequence of the previous characterization, we show that the main properties of Gorenstein categories obtained in \cite[Ch. VII]{BR} are covered in our general framework.

\begin{lemma}\label{condwv}
Let $(\mcX,\mcY,\mcZ)$ be a triple of classes of objects in $\mcC$ with $\omega := \mcX \cap \mcY$ and $\nu := \mcY \cap \mcZ$.
\begin{enumerate}
\item If $\mcC$ has enough injective objects, $\mcI \subseteq \nu$ and $\id(\omega) < \infty$, then $\omega^\wedge \subseteq \mathcal{I}^{< \infty} \subseteq \nu^\vee$.

\item If in addition $(\mcY,\mcZ)$ is $GI$-admissible with $\nu$ closed under direct summands, then $\pd_{\mcGI_{(\mcY,\mcZ)}}(\omega) = 0$ and $\omega \subseteq \mcGI_{(\mcY,\mcZ)}^\vee$.
\end{enumerate}
\end{lemma}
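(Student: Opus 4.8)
The proof of part (1) is a routine application of the thickness of $\mcI^{<\infty}$. First I would note that since $\mcC$ has enough injective objects, $\mcI^{<\infty} = \mcI^\wedge$ (this is the dual of Remark \ref{Pinf=pdfin}). Then, since $\id(\omega) < \infty$, every object of $\omega$ lies in $\mcI^{<\infty}$, so $\omega \subseteq \mcI^{<\infty}$; taking resolution closures and using that $\mcI^{<\infty}$ is thick (in particular closed under epikernels, so closed under taking $\mcX$-resolution dimension) gives $\omega^\wedge \subseteq (\mcI^{<\infty})^\wedge = \mcI^{<\infty}$. For the second containment, let $M \in \mcI^{<\infty}$, with $\id(M) = n$. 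An injective coresolution of $M$ of length $n$ is in particular a $\nu$-coresolution of length $n$ since $\mcI \subseteq \nu$; hence $\coresdim_\nu(M) \leq n < \infty$, i.e.\ $M \in \nu^\vee$. This proves $\omega^\wedge \subseteq \mcI^{<\infty} \subseteq \nu^\vee$.

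For part (2), I would first establish $\omega \subseteq \mcGI_{(\mcY,\mcZ)}^\vee$. By part (1) we already have $\omega \subseteq \omega^\wedge \subseteq \nu^\vee \subseteq \mcGI_{(\mcY,\mcZ)}^\vee$, where the last containment holds because $\nu \subseteq \mcGI_{(\mcY,\mcZ)}$ (this is the dual of the fact that $\omega \subseteq \mcGP_{(\mcX,\mcY)}$ used throughout, valid since $(\mcY,\mcZ)$ is GI-admissible and $\nu = \mcY \cap \mcZ$), so $\nu^\vee \subseteq \mcGI_{(\mcY,\mcZ)}^\vee$. It remains to show $\pd_{\mcGI_{(\mcY,\mcZ)}}(\omega) = 0$, i.e.\ $\Ext^i(W, G) = 0$ for every $W \in \omega$, $G \in \mcGI_{(\mcY,\mcZ)}$ and $i \geq 1$. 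Here I would argue by induction on $i$ using the injective coresolution of $W$, or more precisely using that $\omega \subseteq \mcI^{<\infty}$: pick a short exact sequence $W \rightarrowtail I \twoheadrightarrow W'$ with $I \in \mcI$ and $\id(W') = \id(W) - 1 < \infty$ (so $W' \in \omega^\wedge \subseteq \mcI^{<\infty}$, though one need not insist $W' \in \omega$). The long exact sequence in $\Ext(-,G)$ reads
\[
\Ext^i(I,G) \to \Ext^i(W,G) \to \Ext^{i+1}(W',G),
\]
where $\Ext^i(I,G) = 0$ for $i \geq 1$ since $I$ is injective. Running the induction downward on $\id(W)$ (the base case $\id(W) = 0$ being $W$ injective, hence $\Ext^i(W,G) = 0$ trivially), we get $\Ext^i(W,G) = 0$ for all $i \geq 1$. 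The hypothesis that $\nu$ is closed under direct summands and that $(\mcY,\mcZ)$ is GI-admissible is what guarantees $\mcGI_{(\mcY,\mcZ)}$ is well behaved (e.g.\ right thick, by the dual of Proposition \ref{prop:GP_description}), though for this particular $\Ext$-vanishing it is really only the injectivity of the terms $I$ that is doing the work.

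The main (minor) obstacle is organizing the $\Ext$-vanishing cleanly: one must be careful that the intermediate cosyzygies $W'$ need not lie in $\omega$, only in $\omega^\wedge \subseteq \mcI^{<\infty}$, so the induction should be set up on the \emph{injective dimension} of the first variable rather than on membership in $\omega$. Alternatively, one can phrase it as: $\omega \subseteq \mcI^{<\infty}$ and $\id_{\mcGI_{(\mcY,\mcZ)}}$ of any object with finite injective dimension is zero because injective objects are $\Ext$-acyclic against everything and $\mcI^{<\infty} = \mcI^\wedge$. Either way the computation is short and I would not belabor it.
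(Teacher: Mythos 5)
Your part (1) is essentially the paper's argument and is correct, up to two small slips: the dual of Remark \ref{Pinf=pdfin} gives $\mcI^{<\infty}=\mcI^{\vee}$ (injective \emph{co}resolutions), not $\mcI^{\wedge}$, and the closure of $\mcI^{<\infty}$ under finite resolutions uses closure under monocokernels rather than epikernels; since $\mcI^{<\infty}$ is thick, both are available. Likewise your derivation of $\omega\subseteq\mcGI_{(\mcY,\mcZ)}^{\vee}$ from $\omega\subseteq\omega^{\wedge}\subseteq\nu^{\vee}$ and $\nu\subseteq\mcGI_{(\mcY,\mcZ)}$ is fine.

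The genuine gap is in your proof that $\pd_{\mcGI_{(\mcY,\mcZ)}}(\omega)=0$. The step ``$\Ext^{i}(I,G)=0$ for $i\geq 1$ since $I$ is injective'' (repeated at the end as ``injective objects are $\Ext$-acyclic against everything'') has the variance backwards: injectivity of $I$ kills $\Ext^{i}(-,I)$, i.e. $\Ext$ with $I$ in the \emph{second} argument, whereas $\Ext^{i}(I,-)$ vanishes identically only when $I$ is projective. Consequently your downward induction on $\id(W)$ never starts: already the base case ``$W$ injective, hence $\Ext^{i}(W,G)=0$ trivially'' is unjustified. The vanishing $\Ext^{i}(I,G)=0$ does hold in the present situation, but for a different reason: since $(\mcY,\mcZ)$ is GI-admissible, the dual of Proposition \ref{prop:GP_description} gives $\mcGI_{(\mcY,\mcZ)}\subseteq\mcY^{\perp}$, and $\mcI\subseteq\nu\subseteq\mcY$; so it is the relative Gorenstein injectivity of $G$, not the injectivity of $I$, that does the work. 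Once this is observed your induction is superfluous: $\omega=\mcX\cap\mcY\subseteq\mcY$ together with $\mcGI_{(\mcY,\mcZ)}\subseteq\mcY^{\perp}$ already yields $\Ext^{i}(\omega,\mcGI_{(\mcY,\mcZ)})=0$ for all $i\geq 1$. The paper takes yet another route, deducing both assertions of (2) simultaneously from part (1) via the identity $\nu^{\vee}={}^{\perp}\mcGI_{(\mcY,\mcZ)}\cap\mcGI_{(\mcY,\mcZ)}^{\vee}$ (the dual of \cite[Coroll. 5.2 (b)]{BMS}), which uses the hypothesis that $\nu$ is closed under direct summands.
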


\begin{proof}
Part (1) is straightforward from the dual of Remark \ref{Pinf=pdfin}. For part (2), from the dual of \cite[Coroll. 5.2 (b)]{BMS} we get that $\nu^\vee = {}^\perp\mcGI_{(\mcY,\mcZ)}\cap\mcGI_{(\mcY,\mcZ)}^\vee$. Then, part (2) follows from (1).
\end{proof}

\begin{proposition}\label{prop:absolute_Gorenstein_category}
Let $\mcC$ be an abelian category with enough projective and injective objects. The following conditions are equivalent:
\begin{enumerate}
\item[(a)] $\mcC$ is $(\mcP,\mcP^\wedge,\mcI)$-Gorenstein.

\item[(b)] $\mcC$ is strongly $(\mcP,\mcP^\wedge,\mcI)$-Gorenstein.

\item[(c)] $\mcC$ satisfies \Gone\, and \Gtwo.
\end{enumerate}
If in addition $\mcC$ is AB3 and AB3${}^\ast$, then the previous are also equivalent to:
\begin{enumerate}
\item[(d)] $\mcC$ is Gorenstein.
\end{enumerate}
\end{proposition}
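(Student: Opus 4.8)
The plan is to prove the cycle (a) $\Rightarrow$ (c) $\Rightarrow$ (b) $\Rightarrow$ (a), and then the equivalence (c) $\Leftrightarrow$ (d) under the extra AB3 and AB3$^\ast$ hypotheses. Throughout I would use the following identifications for the triple $(\mcP,\mcP^\wedge,\mcI)$: since $\mcP\subseteq\mcP^\wedge$ one has $\omega=\mcP\cap\mcP^\wedge=\mcP$, while $\nu=\mcP^\wedge\cap\mcI$; moreover $\mcP^\wedge=\mcP^{<\infty}$ and $\mcI^\vee=\mcI^{<\infty}$ by Remark \ref{Pinf=pdfin} and its dual. A key elementary observation is that $(\mcP,\mcP^\wedge,\mcI)$ is a G-admissible triple as soon as $\mcI\subseteq\mcP^{<\infty}$: then $\nu=\mcI$, and conditions \gaone--\gafour collapse to the standard facts that $\mcP$ is a relative generator and $\mcI$ a relative cogenerator in $\mcC$, that $\Ext^{>0}(\mcP,-)=0=\Ext^{>0}(-,\mcI)$, and that projective resolutions are $\Hom(\mcP,-)$-acyclic while injective coresolutions are $\Hom(-,\mcI)$-acyclic. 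Finally, recall $\id(\omega)=\id(\mcP)$, and $\pd(\nu)=\pd(\mcI)$ whenever $\nu=\mcI$.

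For (a) $\Rightarrow$ (c): by definition, statement (a) already includes that $(\mcP,\mcP^\wedge,\mcI)$ is G-admissible, so in particular $\nu=\mcP^\wedge\cap\mcI$ is a relative generator in $\mcI$ (the dual of \GPafive); splitting an epimorphism $N\twoheadrightarrow I$ with $N\in\mcP^\wedge\cap\mcI$ and injective kernel shows $\mcI\subseteq\mcP^\wedge=\mcP^{<\infty}$, hence $\nu=\mcI$. From $\id(\omega)=\id(\mcP)<\infty$ we get $\mcP\subseteq\mcI^{<\infty}$. As $\mcP^{<\infty}$ is thick and coresolving and $\mcI^{<\infty}$ is thick and resolving (enough projectives and injectives), these two containments upgrade to $\mcP^{<\infty}\subseteq\mcI^{<\infty}$ and $\mcI^{<\infty}=\mcI^\vee\subseteq\mcP^{<\infty}$, that is, \Gone. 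For \Gtwo, a dimension-shifting argument along a finite injective coresolution gives $\pd(C)\le\pd(\mcI)$ for every $C\in\mcI^{<\infty}$, so $\FPD(\mcC)=\pd(\mcP^{<\infty})=\pd(\mcI^{<\infty})\le\pd(\mcI)=\pd(\nu)<\infty$; dually $\FID(\mcC)<\infty$.

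For (c) $\Rightarrow$ (b): \Gone\ gives $\mcP\subseteq\mcP^{<\infty}=\mcI^{<\infty}=\mcI^\vee$ and $\mcI\subseteq\mcI^{<\infty}=\mcP^{<\infty}=\mcP^\wedge$, so by the elementary observation the triple is G-admissible with $\omega=\mcP$ and $\nu=\mcI$; moreover $\id(\omega)=\id(\mcP)\le\FID(\mcC)<\infty$ and $\pd(\nu)=\pd(\mcI)\le\FPD(\mcC)<\infty$, so $\mcC$ is $(\mcP,\mcP^\wedge,\mcI)$-Gorenstein. To upgrade to \emph{strongly} Gorenstein I would invoke Theorem \ref{thm:impliesBR} applied to the GP-admissible pair $(\mcP,\mcP^\wedge)$ and the GI-admissible pair $(\mcP^\wedge,\mcI)$ — all of its hypotheses hold, as $\omega=\mcP$ and $\nu=\mcI$ are closed under direct summands, $\mcP\subseteq\omega$, and $\mcI\subseteq\nu$. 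Since $\pd(\mcP^\wedge)=\FPD(\mcC)<\infty$ and $\id(\mcP^\wedge)=\id(\mcI^{<\infty})=\FID(\mcC)<\infty$, that theorem yields $\glGPD_{(\mcP,\mcP^\wedge)}(\mcC)<\infty$ and $\glGID_{(\mcP^\wedge,\mcI)}(\mcC)<\infty$, hence $\mcC=\mcGP^\wedge_{(\mcP,\mcP^\wedge)}$ and $\mcC=\mcGI^\vee_{(\mcP^\wedge,\mcI)}$, so $\mcC$ is strongly $(\mcP,\mcP^\wedge,\mcI)$-Gorenstein. Finally, (b) $\Rightarrow$ (a) is immediate from the definition, closing the loop (a) $\Leftrightarrow$ (b) $\Leftrightarrow$ (c).

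For (c) $\Leftrightarrow$ (d) under AB3 and AB3$^\ast$: the implication (d) $\Rightarrow$ (c) is Remark \ref{rmk:Gorenstein_category}(2); conversely, \Gone\ gives $\mcP\subseteq\mcI^{<\infty}$ and $\mcI\subseteq\mcP^{<\infty}$, so $\mcC$ is Gorenstein by Remark \ref{rmk:Gorenstein_category}(3). I expect the main obstacle to be the passage from ``$(\mcP,\mcP^\wedge,\mcI)$-Gorenstein'' to ``\emph{strongly} so'' — i.e.\ proving $\mcC=\mcGP^\wedge_{(\mcP,\mcP^\wedge)}=\mcGI^\vee_{(\mcP^\wedge,\mcI)}$ — which is not formal and is exactly what the combination of Theorem \ref{thm:impliesBR} with the finiteness of $\FPD(\mcC)$ and $\FID(\mcC)$ supplies (one could alternatively route this through Corollary \ref{glGPD=id}). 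A secondary point demanding care is the verification of G-admissibility of the triple itself, since the relative-generator axiom for $(\mcP^\wedge,\mcI)$ is equivalent to the containment $\mcI\subseteq\mcP^{<\infty}$, which must be extracted from each of (a), (b), (c) before one can even speak of the triple being Gorenstein.
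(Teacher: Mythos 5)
Your proof is correct, and it rests on the same two pillars as the paper's argument, namely the splitting argument giving $\mcP^\wedge \cap \mcI = \mcI$ (hence $\mcI \subseteq \mcP^{<\infty}$) and Theorem \ref{thm:impliesBR} to convert finiteness of $\pd(\mcP^\wedge)$ and $\id(\mcP^\wedge)$ into finiteness of the two relative global Gorenstein dimensions, but you organize the cycle the other way around. The paper proves (a) $\Rightarrow$ (b) $\Rightarrow$ (c) $\Rightarrow$ (a): it invokes Theorem \ref{thm:impliesBR} together with Theorem \ref{MThmGc} already in (a) $\Rightarrow$ (b); it then extracts \Gone\ and \Gtwo\ from (b) through the identifications $\mcGP_{(\mcP,\mcP^\wedge)} = \mcGP$ and $\mcGI_{(\mcP^\wedge,\mcI)} = \mcGI$ and the inequalities $\FPD(\mcC) \leq \glGPD(\mcC)$, $\FID(\mcC) \leq \glGID(\mcC)$ of Remark \ref{rmk:Gorenstein_category}; and in (c) $\Rightarrow$ (a) it cites Example \ref{ex:G-admissible_systems} (1) for the G-admissibility of $(\mcP,\mcP^\wedge,\mcI)$. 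You instead run (a) $\Rightarrow$ (c) $\Rightarrow$ (b) $\Rightarrow$ (a): you get \Gone\ and \Gtwo\ directly from (a) via thickness of $\mcP^{<\infty}$ and $\mcI^{<\infty}$ and dimension shifting along finite injective coresolutions, you check G-admissibility by hand from the single containment $\mcI \subseteq \mcP^{<\infty}$, and in (c) $\Rightarrow$ (b) you only need the trivial direction of Theorem \ref{MThmGc}, since finiteness of the global dimensions yields $\mcC = \mcGP^\wedge_{(\mcP,\mcP^\wedge)} = \mcGI^\vee_{(\mcP^\wedge,\mcI)}$ by definition. The paper's route makes explicit the comparison between relative and absolute Gorenstein classes, which it reuses elsewhere; yours is more self-contained and avoids that identification. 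The treatment of (c) $\Leftrightarrow$ (d) via Remark \ref{rmk:Gorenstein_category} is identical in both. Two small points of hygiene: the phrase that $\mcP^{<\infty}$ is coresolving and $\mcI^{<\infty}$ is resolving presupposes the containments you are in the middle of establishing --- only thickness together with $\mcP \subseteq \mcI^{<\infty}$ and $\mcI \subseteq \mcP^{<\infty}$ is needed, which is what your argument actually uses; and in the splitting step you should say explicitly that $\nu$ (or $\mcP^\wedge$) is closed under direct summands, which is condition \gatwo\ of G-admissibility, to conclude $I \in \mcP^\wedge$.
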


\begin{proof} \
\begin{itemize}
\item (a) $\Rightarrow$ (b): Suppose first that $\mcC$ is a $(\mcP,\mcP^\wedge,\mcI)$-Gorenstein category. Then, $(\mcP,\mcP^\wedge,\mcI)$ is a G-admissible triple, $\id(\mcP) < \infty$ and $\pd(\mcP^\wedge \cap \mcI) < \infty$. One can note that actually $\mcP^\wedge \cap \mcI = \mcI$. Indeed, since $(\mcP^\wedge,\mcI)$ is a GI-admissible pair, we have that $\mcP^\wedge \cap \mcI$ is a relative generator in $\mcI$. So if we let $E \in \mcI$, there exists a split exact sequence $E' \rightarrowtail H \twoheadrightarrow E$ where $E' \in \mcI$ and $H \in \mcP^\wedge \cap \mcI$, and so $E \in \mcP^\wedge \cap \mcI$. In particular, $\mcI \subseteq \mcP^\wedge$, which in turn implies $\mcI^\vee \subseteq \mcP^\wedge$. The other containment follows from the assumption that $\id(\mcP) < \infty$. Also, $\pd(\mcI) = \pd(\mcP^\wedge \cap \mcI) < \infty$, while $\id(\mcP^\wedge) = \id(\mcP) < \infty$ and $\pd(\mcP^\wedge) = \pd(\mcI^\vee) = \pd(\mcI) < \infty$ by \cite[Lem. 2.13 (a)]{MS} and its dual. Then, by Theorem \ref{thm:impliesBR}, we have that $\glGPD_{(\mcP,\mcP^\wedge)}(\mcC) < \infty$ and $\glGID_{(\mcP^\wedge,\mcI)}(\mcC) < \infty$. Hence, assertion (b) is a consequence of Theorem \ref{MThmGc}.

\item (b) $\Rightarrow$ (c): If $\mcC$ is a strongly $(\mcP,\mcP^\wedge,\mcI)$-Gorenstein category, we can show as in the previous part that $\mcP^\wedge = \mcI^\vee$ (that is, we obtain condition \Gone \ in Definition \ref{def:Gorenstein_category}). We can then notice from $\mcP^\wedge = \mcI^\vee$ that $\mcGP = \mcGP_{(\mcP,\mcP)} = \mcGP_{(\mcP,\mcP^\wedge)}$ and $\mcGI = \mcGI_{(\mcI,\mcI)} = \mcGI_{(\mcI^\vee,\mcI)} = \mcGI_{(\mcP^\wedge,\mcI)}$. Using this, along with Remark \ref{rmk:Gorenstein_category} (1) and its dual, we obtain
\begin{align*}
\FPD(\mcC) & \leq \glGPD(\mcC) = \glGPD_{(\mcP,\mcP^\wedge)}(\mcC) < \infty \ \text{and} \\
\FID(\mcC) & \leq \glGID(\mcC) = \glGID_{(\mcI^\vee,\mcI)}(\mcC) < \infty,
\end{align*}
that is, condition  \Gtwo \ in Definition \ref{def:Gorenstein_category} holds.

\item (c) $\Rightarrow$ (a): Suppose that $\mcP^\wedge = \mcI^\vee$, $\FPD(\mcC) < \infty$ and $\FID(\mcC)  < \infty$. Then, $\id(\mcP) = \id(\mcP^\wedge) = \id(\mcI^\vee) = \FID(\mcC) < \infty$. In particular, the triple $(\mcP,\mcP^\wedge,\mcI)$ is G-admissible by Example \ref{ex:G-admissible_systems} (1). Thus, $\id(\mcP \cap \mcP^\wedge) = \id(\mcP) < \infty$. In a similar way, we have $\pd(\mcP^\wedge \cap \mcI) < \infty$. Hence, (a) follows.

\item (c) $\Leftrightarrow$ (d): Follows by Remark \ref{rmk:Gorenstein_category}.
\end{itemize}
\end{proof}

If $\mcC$ is a Gorenstein category or a strongly $(\mcP,\mcP^\wedge,\mcI)$-Gorenstein category, then $\mcP^\wedge = \mcI^\vee$. A natural question is whether the equality $\omega^\wedge = \nu^\vee$ holds in any strongly $(\mcX,\mcY,\mcZ)$-Gorenstein category. The following result shows that this only occurs for G-admissible triples $(\mcX,\mcY,\mcZ)$ where $\omega = \mcP$ and $\nu = \mcI$.

\begin{proposition}\label{wg=nc}
Let $\mcC$ be a strongly $(\mcX,\mcY,\mcZ)$-Gorenstein category. The following conditions are equivalent:
\begin{enumerate}
\item[(a)] $\mcC$ has enough projective and injective objects, $\mcP \subseteq \omega$ and $\mcI \subseteq \nu$.

\item[(b)] $\omega^\wedge = \mcI^{< \infty} = \mcP^{< \infty} = \nu^\vee$.

\item[(c)] $\omega^\wedge = \nu^\vee$.

\item[(d)] $\mcC$ has enough projective and injective objects, $\omega = \mcP$ and $\nu = \mcI$.
\end{enumerate}
In particular, if $\mcY$ is closed under epikernels and monocokernels, then
\begin{align}
\mcP^{<\infty} & = \omega^\wedge = \mcY = \nu^\vee = \mcI^{<\infty}. \label{eqn:kernels_monocokernels}
\end{align}
Furthermore, if $\mcC$ is a $(\mcX,\mcY,\mcZ)$-Gorenstein category with enough projective and injective objects such that $\mcI \subseteq \nu$ and $\mcP \subseteq \omega$, then it is strongly $(\mcX,\mcY,\mcZ)$-Gorenstein if, and only if, $\pd(\mcY) < \infty$ and $\id(\mcY) < \infty$.
\end{proposition}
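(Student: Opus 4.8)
The plan is to prove the four-way equivalence by the cycle (a) $\Rightarrow$ (b) $\Rightarrow$ (c) $\Rightarrow$ (d) $\Rightarrow$ (a), and then to treat the two supplementary assertions separately. Two of the arrows are immediate: (b) $\Rightarrow$ (c) because (b) already contains the equality $\omega^\wedge = \nu^\vee$, and (d) $\Rightarrow$ (a) because $\omega = \mcP$ and $\nu = \mcI$ trivially give $\mcP \subseteq \omega$ and $\mcI \subseteq \nu$. Throughout I will freely use that a strongly $(\mcX,\mcY,\mcZ)$-Gorenstein category is in particular $(\mcX,\mcY,\mcZ)$-Gorenstein, so $\id(\omega) < \infty$ and $\pd(\nu) < \infty$, and that $\omega$ and $\nu$ are closed under direct summands by \gatwo.

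First I would prove (a) $\Rightarrow$ (b). Assuming (a), $\mcC$ has enough injective objects, $\mcI \subseteq \nu$ and $\id(\omega) < \infty$, so Lemma \ref{condwv} (1) gives $\omega^\wedge \subseteq \mcI^{<\infty} \subseteq \nu^\vee$; dually, $\mcC$ has enough projective objects, $\mcP \subseteq \omega$ and $\pd(\nu) < \infty$, so the dual of Lemma \ref{condwv} (1) gives $\nu^\vee \subseteq \mcP^{<\infty} \subseteq \omega^\wedge$. Concatenating these four containments forces $\omega^\wedge = \mcI^{<\infty} = \nu^\vee = \mcP^{<\infty}$, which is exactly (b).

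The heart of the argument is (c) $\Rightarrow$ (d). Since $\mcC$ is strongly $(\mcX,\mcY,\mcZ)$-Gorenstein, hence both left and right strongly, Corollary \ref{MpropSG} says $(\mcGP_{(\mcX,\mcY)},\omega^\wedge)$ is a hereditary complete cotorsion pair and its dual says $(\nu^\vee,\mcGI_{(\mcY,\mcZ)})$ is one as well. Under hypothesis (c) the middle classes coincide, $\omega^\wedge = \nu^\vee$, so these two pairs assemble into a hereditary complete cotorsion triple $(\mcGP_{(\mcX,\mcY)},\omega^\wedge,\mcGI_{(\mcY,\mcZ)})$. Then \cite[Thm. 4.4]{EPZ18} yields that $\mcC$ has enough projective and injective objects, with $\mcP = \mcGP_{(\mcX,\mcY)} \cap \omega^\wedge$ and $\mcI = \omega^\wedge \cap \mcGI_{(\mcY,\mcZ)}$; finally Proposition \ref{coro:properties_relative_Gorenstein_projective} (2) and its dual identify these intersections as $\omega$ and $\nu$ respectively, giving $\mcP = \omega$ and $\mcI = \nu$, i.e.\ (d). The only non-formal ingredient is \cite[Thm. 4.4]{EPZ18}; the main obstacle is just recognizing that the two one-sided cotorsion pairs produced by Corollary \ref{MpropSG} and its dual are glued into a genuine cotorsion triple precisely by hypothesis (c), after which everything else is bookkeeping.

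It remains to handle the two tail statements. For the ``in particular'': by Corollary \ref{MpropSG} and its dual we have $\omega \subseteq \mcY \subseteq \omega^\wedge$ and $\nu \subseteq \mcY \subseteq \nu^\vee$; since $\mcY$ is closed under cokernels of monomorphisms between its objects, an induction on $\omega$-resolution dimension (the inductive step uses a short exact sequence $K \rightarrowtail W \twoheadrightarrow M$ with $W \in \omega \subseteq \mcY$ and $\resdim_\omega(K)$ strictly smaller, whence $K \in \mcY$ by induction and $M \in \mcY$ by the closure hypothesis) shows $\omega^\wedge \subseteq \mcY$, so $\omega^\wedge = \mcY$; dually, closure under kernels of epimorphisms gives $\nu^\vee = \mcY$. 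Thus $\omega^\wedge = \nu^\vee$, i.e.\ (c) holds, and the cycle just proved upgrades this to (b), which is \eqref{eqn:kernels_monocokernels}. For the ``furthermore'': here $\mcC$ is only assumed $(\mcX,\mcY,\mcZ)$-Gorenstein with enough projective and injective objects, $\mcP \subseteq \omega$ and $\mcI \subseteq \nu$. Applying Theorem \ref{thm:impliesBR} with the GI-admissible pair $(\mcY,\mcZ)$ playing the role of $(\mcW,\mcZ)$ there, so that $\mcW = \mcY$ and $\pd(\mcW) = \pd(\mcY)$, we obtain that $\pd(\mcY) < \infty$ and $\id(\mcY) < \infty$ is equivalent to $\glGPD_{(\mcX,\mcY)}(\mcC) < \infty$ and $\glGID_{(\mcY,\mcZ)}(\mcC) < \infty$; by Theorem \ref{MThmGc} ((a) $\Leftrightarrow$ (b)) the latter is equivalent to $\mcC$ being strongly $(\mcX,\mcY,\mcZ)$-Gorenstein, which finishes the proof.
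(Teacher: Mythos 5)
Your proposal is correct and follows essentially the same route as the paper: (b)$\Rightarrow$(c) and (d)$\Rightarrow$(a) immediate, (a)$\Rightarrow$(b) via Lemma \ref{condwv} and its dual, (c)$\Rightarrow$(d) by gluing the two cotorsion pairs from Corollary \ref{MpropSG} and its dual into a hereditary complete cotorsion triple and invoking \cite{EPZ18} together with Proposition \ref{coro:properties_relative_Gorenstein_projective} (2), and the two tail statements handled exactly as in the paper (closure of $\mcY$ giving $\omega^\wedge=\mcY=\nu^\vee$, and Theorem \ref{thm:impliesBR} for the ``furthermore''). The only deviations are cosmetic (attributing the identifications $\mcP=\mcGP_{(\mcX,\mcY)}\cap\omega^\wedge$ and $\mcI=\nu^\vee\cap\mcGI_{(\mcY,\mcZ)}$ to \cite[Thm. 4.4]{EPZ18} rather than \cite[Props. 3.3 \& 4.2]{EPZ18}, and making the appeal to Theorem \ref{MThmGc} explicit), which do not affect correctness.
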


\begin{proof}
Implications (b) $\Rightarrow$ (c) and (d) $\Rightarrow$ (a) are immediate, while (a) $\Rightarrow$ (b) follows from Lemma \ref{condwv} (1) and its dual. So we only focus on showing (c) $\Rightarrow$ (d). Suppose then that $\omega^\wedge = \nu^\vee$. By Corollary \ref{MpropSG} and its dual, we get the hereditary complete cotorsion triple $(\mcGP_{(\mcX,\mcY)},\omega^\wedge=\nu^\vee,\mcGI_{(\mcY,\mcZ)})$ in $\mcC.$ Thus, from \cite[Thm. 4.4]{EPZ18} the category $\mcC$ has enough projective and injective objects. Moreover, from \cite[Props. 3.3 \& 4.2]{EPZ18}, it follows that $\mcGP_{(\mcX,\mcY)} \cap \omega^\wedge = \mcP$ and $\mcI = \nu^\vee\cap\mcGI_{(\mcY,\mcZ)}$. Hence, from Proposition \ref{coro:properties_relative_Gorenstein_projective} (2) and its dual, we conclude that $\omega = \mcP$ and $\nu = \mcI$.

Now suppose that $\mcY$ is closed under epikernels and monocokernels. By Corollary \ref{MpropSG} again, we have $\omega \subseteq \mcY \subseteq \omega^\wedge$. On the other hand, $\omega^\wedge \subseteq \mcY$ since $\omega \subseteq \mcY$ and $\mcY$ is closed under monocokernels. Therefore,  $\mcY= \omega^\wedge$. The equality $\mcY = \nu^\vee$ follows similarly. Now following the proof of (c) $\Rightarrow$ (d), we can note that $\omega = \mcP$ and $\nu = \mcI$, and hence we obtain equality \eqref{eqn:kernels_monocokernels}. The last part is a consequence of Theorem \ref{thm:impliesBR}.
\end{proof}

%%%%%%%%%%%%%%%%%%%%%%%%%%%%%%%%%%%%%
%%%%%%%%%%%%%%%%%%%%%%%%%%%%%%%%%%%%%

\subsection*{Examples of relative Gorenstein categories}

We cover Frobenius categories and Beligiannis and Reinten's Gorenstein categories from \cite{BR}. The impossibility of obtaining relative Gorenstein categories from Ding projective and Gorenstein AC-projective modules, and their dual counterparts, different from the (absolute) Go-renstein category is exhibited. Moreover, we provide a characterization of the finiteness of the left global dimension of a ring $R$ in terms of the existence of a certain relative Gorenstein structure on the category $\Ch(R)$ of chain complexes of $R$-modules. We also show a way to induce from a well known cotorsion triple a relative Gorenstein category structure on the category of representations of the quiver of type ${\rm A}_2$ in the category of $R$-modules. Our last example displays a strongly Gorenstein category obtained from a tilting-cotilting class.

\begin{example}[conditions on cotorsion triples]\label{ex:condition_triples}
Let $(\mathcal{X,Y,Z})$ be a hereditary complete cotorsion triple in an abelian category $\mcC$. We know by Example \ref{ex:cotorsion_triples_are_G-admissible} (1) that $(\mathcal{X,Y,Z})$ be a G-admissible triple. On the other hand, by \cite[proof of Thm. 4.4]{EPZ18} we have that $\mcC$ has enough projective and injective objects, $\omega = \mcP$ and $\nu = \mcI$. It follows that $\mcC$ is $(\mcX,\mcY,\mcZ)$-Gorenstein if, and only if, $\id(\mcP) < \infty$ and $\pd(\mcI) < \infty$, that is, if $\mcC$ is a Gorenstein category. On the other hand, since $(\mcX,\mcY,\mcZ)$ is a hereditary complete cotorsion triple, we have by \cite[Coroll. 3.17]{BMS} and its dual that $\mcG_{(\mcX,\mcY)} = \mcX$ and $\mcGI_{(\mcY,\mcZ)} = \mcZ$. It then follows that $\mcC$ is a strongly $(\mcX,\mcY,\mcZ)$-Gorenstein category if, and only if, $\mcC$ is Gorenstein and $\mcC = \mcX^\wedge = \mcZ^\vee$. Moreover, condition ($\mathsf{ga5}$) is clearly satisfied, and if we assume $\mcC$ is AB4 and AB4${}^\ast$, then ($\mathsf{ga6}$) also holds since $\omega = \mcP$ and $\nu = \mcI$. Thus, in this case by Theorem \ref{MThmGc} we obtain:
\begin{align*}
\coresdim_{\mcZ}(\mcC) & = \coresdim_{\mcZ}(\mcX) = \coresdim_{\mcZ}(\mcP) = \id_{\mcI}(\mcP) = \id_{\mcI}(\mcX) = \pd(\mcI) \\
& = \pd(\mcY) = \id_{\mathcal{Y}}(\mcX) = \pd_{\mathcal{Y}}(\mcZ) = \id(\mcY) = \id(\mcP) = \pd_{\mcP}(\mcZ) \\
& = \resdim_{\mcX}(\mcI) = \resdim_{\mcX}(\mcZ) = \resdim_{\mcX}(\mcC).
\end{align*}

Among the examples of hereditary complete cotorsion triples considered in this article, we have:
\begin{itemize}
\item $(\mcP(R),\Mod(R),\mcI(R))$.

\item $(\Mod(R),\mcP(R),\Mod(R))$ provided that $R$ is a quasi-Frobenius ring. In this case, $\Mod(R)$ is a strongly $(\Mod(R),\mcP(R),\Mod(R))$-Gorenstein category (see Example \ref{ex:FrobeniusGorenstein} below).

\item $({\rm dg}\widetilde{\mcP(R)},\mathcal{E},{\rm dg}\widetilde{\mathcal{I}(R)})$, the \emph{Dold cotorsion triple}, where ${\rm dg}\widetilde{\mcP(R)}$ and ${\rm dg}\widetilde{\mathcal{I}(R)}$ denote the classes of differential graded projective and injective complexes, respectively. Following Gillespie's \cite[Def. 3.3]{GillespieFlat}, recall that a chain complex $P_\bullet$ of $R$-modules is dg-projective if $P_\bullet \in \Ch(\mcP(R))$ and every chain map from $P_\bullet$ to any exact complex is chain homotopic to $0$. Dg-injective complexes are defined dually. For instance, the reader can check Enochs and Jenda's \cite[\S 4.3]{EJ2} or Garc\'{\i}a Rozas' \cite[\S 2.3]{JRlibro} for details. The case where $\Ch(R)$ is a strongly $({\rm dg}\widetilde{\mcP(R)},\mathcal{E},{\rm dg}\widetilde{\mathcal{I}(R)})$-Gorenstein category is covered in Example \ref{ex:global_dim_rel_Gor} below.
\end{itemize}
\end{example}

\begin{remark}\label{rmk:RelGorChar}
From the previous example, we can observe that if $(\mcX,\mcY,\mcZ)$ is a G-admissible triple in an abelian category $\mcC$ with enough projective and injective objects, with $\mcX \cap \mcY = \mcP$ and $\mcY \cap \mcZ = \mcI$, then the following are equivalent:
\begin{enumerate}
\item[(a)] $\mcC$ is a $(\mcX,\mcY,\mcZ)$-Gorenstein category.

\item[(b)] $\mcC$ is $(\mcP,\mcP^\wedge,\mcI)$-Gorenstein.

\item[(c)] $\mcC$ is strongly $(\mcP,\mcP^\wedge,\mcI)$-Gorenstein.

\item[(d)] $\mcC$ is Gorenstein.
\end{enumerate}
\end{remark}

\begin{example}[Frobenius categories]\label{ex:FrobeniusGorenstein}
Given an abelian category $\mcC$ and $\mcF \subseteq \mcC$ be an abelian Frobenius subcategory, it is possible to obtain a relative Gorenstein category structure on $\mcF$. Let $\mathcal{PI}$ denote the class of projective-injective objects of $\mcF$. Then, it is easy to check that the triple $(\mathcal{F,PI,F})$ satisfies conditions \gaone, \gatwo \ and \gafour. Condition \gathree, on the other hand, follows after noticing that $\mcGP_{(\mathcal{F,PI})} = \mathcal{F}$ and $\mcGI_{(\mathcal{PI,F})} = \mathcal{F}$, since $(\mathcal{F,PI})$ and $(\mathcal{PI,F})$ are hereditary complete cotorsion pairs in $\mathcal{F}$. It then follows that $\mcF$ is strongly $(\mathcal{F,PI,F})$-Gorenstein.

For a particular example of such $\mcC$ and $\mcF$, let $Q$ and $Q'$ be the quivers given by
\[
\begin{tikzcd}
\bullet_{1} \arrow[out=90,in=0,loop,"\alpha"] & & \text{and} & & \bullet_{2} \arrow[r,"\beta"] & \bullet_{3}
\end{tikzcd}
\]
respectively. Now let $\mathbb{K}$ be a field and $\Lambda$ be the non connected algebra
\[
\Lambda = (\mathbb{K}Q / \langle \alpha^2 \rangle) \times \mathbb{K}Q'.
\]
Set $\mcC = \Mod(\Lambda) \cong \Mod(\mathbb{K}Q / \langle \alpha^2 \rangle) \times \Mod(\mathbb{K}Q')$ and $\mcF = \Mod(\mathbb{K}Q / \langle \alpha^2 \rangle)$. Then, $\Mod(\mathbb{K}Q / \langle \alpha^2 \rangle)$ is an abelian Frobenius subcategory of $\Mod(\Lambda)$.%, where $P(1)$ is the only (up to isomorphisms) indecomposable projective-injective $\mathbb{K}Q / \langle \alpha^2 \rangle$-module (at vertex 1).
\end{example}

\begin{example}[Ding modules]
Fix a commutative coherent ring $R$ with $\id(\mcP(R)) < \infty$. We know from Example \ref{ex:G-admissible_systems} (2) that $(\mcP(R),\mcF(R)^\wedge,\mcI(R))$ is a G-admissible triple with $\mcF(R)^\wedge = {\rm FP}\mbox{-}\mcI(R)^\vee$, $\omega = \mcP(R)$ and $\nu = \mcI(R)$. By Remark \ref{rmk:RelGorChar}, we have that $\Mod(R)$ is a $(\mcP(R),\mcF(R)^\wedge,\mcI(R))$-Gorenstein category if, and only if, $\pd(\mcI(R)) < \infty$. One also has by Proposition \ref{Gc=Gt} and Example \ref{ex:GP_objects} (2) that $\mcGP_{(\mcP(R),\mcF(R)^\wedge)} = \mcDP(R)$ and $\mcGI_{(\mcF(R)^\wedge,\mcI(R))} = \mathcal{DI}(R)$. If in addition $\pd(\mcI(R)) < \infty$, then the strongly $(\mcP(R),\mcF(R)^\wedge,\mcI(R))$-Gorenstein structure coincides with that of Proposition \ref{prop:absolute_Gorenstein_category} (see Example \ref{ex:examples_of_global_Gorenstein_dimensions}).
\end{example}

\begin{example}[AC-Gorenstein modules]\label{ex:Gorenstein_categoriesAC}
Given an AC-Gorenstein ring $R$, we know from Example \ref{ex:G-admissible_systems} (3) that there is a G-admissible triple $(\mcP(R),\mathcal{LV}(R)^\wedge,\mcI(R))$ with $\omega = \mcP(R)$ and $\nu = \mcI(R)$ as well. Note that $\Mod(R)$ is a $(\mcP(R),\mathcal{LV}(R)^\wedge,\mcI(R))$-Gorenstein category if, and only if, $R$ is a left Gorenstein ring.

On the other hand, from Example \ref{ex:examples_of_global_Gorenstein_dimensions} (3), we have that $\Mod(R)$ is a strongly $(\mcP(R),\mathcal{LV}(R)^\wedge,\mcI(R))$-Gorenstein category if, and only if, $R$ is an AC-Gorenstein ring, $\id(\mathcal{LV}(R)) < \infty$ and $\pd(\mathcal{AC}(R)) < \infty$. Since $\mcI(R) \subseteq \mathcal{AC}(R)$ and $\mcP(R) \subseteq \mathcal{LV}(R)$, in the case where $R$ is commutative, we have that $\Mod(R)$ is a strongly $(\mcP(R),\mathcal{LV}(R)^\wedge,\mcI(R))$-Gorenstein category if, and only if, $\id(\mathcal{LV}(R)) < \infty$ and $\pd(\mathcal{AC}(R)) < \infty$.
\end{example}

\begin{example}[rings of finite global dimension in terms of strongly Gorenstein categories]\label{ex:global_dim_rel_Gor}
For the Dold cotorsion triple $({\rm dg}\widetilde{\mathcal{P}(R)},\mathcal{E},{\rm dg}\widetilde{\mathcal{I}(R)})$, we show that $\Ch(R)$ is a strongly $({\rm dg}\widetilde{\mathcal{P}(R)},\mathcal{E},{\rm dg}\widetilde{\mathcal{I}(R)})$-Gorenstein category if, and only if, the (left) global dimension of $R$, denoted ${\rm gl.dim}(R)$, is finite.

For this G-admissible triple, we have that $\omega$ and $\nu$ are the classes of projective and injective $R$-complexes, respectively (see \cite[Thm. 3.12]{GillespieFlat}). Recall that an $R$-complex is projective if, and only if, it is exact with projective cycles. Injective $R$-complexes have the dual description. Following the notation in \cite[Def. 3.3]{GillespieFlat}, the classes of projective and injective $R$-complexes are denoted by $\widetilde{\mcP(R)}$ and $\widetilde{\mcI(R)}$. We thus have by \cite[Coroll. 3.17]{BMS} that
\begin{align*}
\mcGP_{({\rm dg}\widetilde{\mathcal{P}(R)},\mathcal{E})} & = {\rm dg}\widetilde{\mathcal{P}(R)}, & \mcGI_{(\mathcal{E},{\rm dg}\widetilde{\mathcal{I}(R)})} & = {\rm dg}\widetilde{\mathcal{I}(R)}, & \omega & = \widetilde{\mcP(R)} & \text{and} & & \nu & = \widetilde{\mcI(R)},
\end{align*}
and so
\begin{align*}
\glGPD_{({\rm dg}\widetilde{\mathcal{P}(R)},\mathcal{E})}(\Ch(R)) & = \resdim_{{\rm dg}\widetilde{\mathcal{P}(R)}}(\Ch(R)), \\
\glGID_{(\mathcal{E},{\rm dg}\widetilde{\mathcal{I}(R)})}(\Ch(R)) & = \coresdim_{{\rm dg}\widetilde{\mathcal{I}(R)}}(\Ch(R)).
\end{align*}

If $\Ch(R)$ is a strongly $({\rm dg}\widetilde{\mathcal{P}(R)},\mathcal{E},{\rm dg}\widetilde{\mathcal{I}(R)})$-Gorenstein category, it is straightforward to show that ${\rm gl.dim}(R) \leq \glGPD_{({\rm dg}\widetilde{\mathcal{P}(R)},\mathcal{E})}(\Ch(R)) < \infty$. Now suppose that ${\rm gl.dim}(R) = n < \infty$. We show that $\resdim_{{\rm dg}\widetilde{\mathcal{P}(R)}}(\Ch(R)) \leq n+1$, and the finiteness of $\coresdim_{{\rm dg}\widetilde{\mathcal{I}(R)}}(\Ch(R))$ will follow from a dual argument. For every $X_\bullet \in \Ch(R)$, it is possible to obtain a short exact sequence $E_\bullet \rightarrowtail P_\bullet \twoheadrightarrow X_\bullet$ where $P_\bullet \in {\rm dg}\widetilde{\mcP(R)}$ and $E_\bullet \in \mathcal{E}$ (see for instance \cite[Thm. 7.3.2]{EJ2}). On the other hand, from a standard homological algebra argument, one can note that $\pd(E_\bullet) \leq n$. Therefore, we have that $\resdim_{{\rm dg}\widetilde{\mcP(R)}}(X_\bullet) \leq n+1$.

We conclude pointing out that if $R$ is a ring with finite global dimension, then by Theorem \ref{MThmGc} we have that:
\begin{align*}
\coresdim_{{\rm dg}\widetilde{\mcI(R)}}(\Ch(R)) & = \resdim_{{\rm dg}\widetilde{\mcP(R)}}(\Ch(R)) \leq {\rm gl.dim}(R) + 1.
\end{align*}
\end{example}

\begin{example}[induced strongly Gorenstein category structure on the category of representations of a quiver]\label{ex:induced_on_quivers}
The category $\Ch(R)$ of chain complexes of $R$-modules considered in the previous example can be regarded as the category of $\Mod(R)$-valued representations of the $A^\infty_\infty$ quiver
\[
\cdots \to 2 \to 1 \to 0 \to -1 \to -2 \to \cdots
\]
with the relations that the composition of two consecutive arrows is zero. In this example, we consider another category of representations but with infinite global dimension, namely, the category ${\rm Rep}(Q,\Mod(R))$, where $Q \colon 1 \xrightarrow{\alpha} 2$ is the $A_2$ quiver and $R$ is an Iwanaga-Gorenstein ring with ${\rm gl.Gdim}(R) = 1$ and ${\rm gl.dim}(R) = \infty$. Under these conditions, we prove that the category ${\rm Rep}(Q,\Mod(R))$ is strongly $(\Phi(\mcGP(R)),{\rm Rep}(Q,\mcP(R)^{< \infty}),\Psi(\mcGI(R)))$-Gorenstein. We know from (2) in Example \ref{ex:cotorsion_triples_are_G-admissible} that  $(\Phi(\mcGP(R)),{\rm Rep}(Q,\mcP(R)^{< \infty}),\Psi(\mcGI(R)))$ is a G-admissible triple. The idea is to show that
\begin{align*}
\resdim_{\Phi(\mcGP(R))}({\rm Rep}(Q,\Mod(R))) & \leq 2, \text{ and} \\
\coresdim_{\Psi(\mcGI(R))}({\rm Rep}(Q,\Mod(R))) & \leq 2.
\end{align*}
We only focus on the first inequality. With respect to $\mcP^{< \infty}$ in ${\rm Rep}(Q,\Mod(R))$, we have by \cite[Coroll. 3.19]{ArgudinMendoza-quiver} that
\[
\pd(F) \leq \max\{ \pd(F_1), \pd(F_2) \} + 1
\]
for every $F \in {\rm Rep}(Q,\Mod(R))$. It follows that $\mcP^{< \infty} = {\rm Rep}(Q,\mcP(R)^{< \infty})$ (note in particular that the global projective dimension of ${\rm Rep}(Q,\Mod(R))$ is infinite). Now since $(\Phi(\mcGP(R)),\mcP^{< \infty})$ is a complete cotorsion pair, we have that
\[
{\rm Rep}(Q,\Mod(R)) = \Phi(\mcGP(R))^\wedge.
\]
So for every $F \in {\rm Rep}(Q,\Mod(R))$ we can consider an exact sequence
\[
K \rightarrowtail G^1 \to G^0 \twoheadrightarrow F
\]
where $G^0, G^1 \in \Phi(\mcGP(R))$ and $K, K^0 = {\rm Ker}(G^0 \to F) \in \mcP^{< \infty}$. We show that $K \in \Phi(\mcGP(R))$. For, it will be important to know that
\[
\Phi(\mcGP(R)) = \{ G \in {\rm Rep}(Q,\Mod(R)) \ {\rm : } \ G_1, {\rm CoKer}(G_\alpha) \in \mcGP(R) \text{ and } G_\alpha \text{ is monic} \}.
\]
Note we can form the following exact commutative diagram:
\[
\begin{tikzpicture}[description/.style={fill=white,inner sep=2pt}]
\matrix (m) [matrix of math nodes, row sep=1.25em, column sep=1.5em, text height=1.25ex, text depth=0.25ex]
{
{} & {} & {} & {} & {\rm Ker}(F_\alpha) \\
{} & G^1_1 & {} & G^0_1 & F_1 \\
K_1 & {} & K^0_1 & {} & {} \\
{} & G^1_2 & {} & G^0_2 & F_2 \\
K_2 & {} & K^0_2 & {} & {} \\
{} & G^1_2 / G^1_1 & {} & G^0_2 / G^0_1 & {\rm CoKer}(F_\alpha)  \\
K_2 / K_1 & {} & K^0_2 / K^0_1 & {} & {} \\
};
\path[>->]
(m-3-1) edge node[right] {\footnotesize$K_\alpha$} (m-5-1) (m-2-2) edge node[right] {\footnotesize$G^1_\alpha$} (m-4-2) (m-2-4) edge node[right] {\footnotesize$G^0_\alpha$} (m-4-4) (m-3-1) edge (m-2-2) (m-5-1) edge (m-4-2) (m-7-1) edge (m-6-2) (m-3-3) edge (m-2-4) (m-5-3) edge (m-4-4) (m-1-5) edge (m-2-5)
;
\path[->]
(m-2-2) edge (m-2-4) (m-4-2) edge (m-4-4) (m-2-5) edge node[right] {\footnotesize$F_\alpha$} (m-4-5) (m-7-3) edge (m-6-4)
;
\path[->>]
(m-2-4) edge (m-2-5) (m-4-4) edge (m-4-5) (m-6-4) edge (m-6-5) (m-5-1) edge (m-7-1) (m-4-2) edge (m-6-2) (m-6-2) edge (m-7-3) (m-5-3) edge (m-7-3) (m-4-4) edge (m-6-4) (m-4-5) edge (m-6-5) (m-2-2) edge (m-3-3) (m-4-2) edge (m-5-3)
;
\path[-latex]
(m-3-3) edge [-,line width=6pt,draw=white] (m-5-3)
;
\path[>->]
(m-3-3) edge node[right, pos=0.25] {\footnotesize$K^0_\alpha$} (m-5-3)
;
\end{tikzpicture}
\]
Here, $G^0_\alpha$, $G^1_\alpha$ are monic by the description of $\Phi(\mcGP(R))$. Using the Snake Lemma, we get that $K^0_\alpha$ is monic, and so is $K_\alpha$ by the same result. On the other hand, in the short exact sequences $K_1 \rightarrowtail G^1_1 \twoheadrightarrow K^0_1$ and $K_2 / K_1 \rightarrowtail G^1_2 / G^1_1 \twoheadrightarrow K^0_2 / K^0_1$ we know that $G^1_1, G^1_2 / G^1_1 \in \mcGP(R)$ from the description of $\Phi(\mcGP(R))$. Since ${\rm gl.Gdim}(R) = 1$, we get $K_1, K_2 / K_1 \in \mcGP(R)$. Hence, $K \in \Phi(\mcGP(R))$.
\end{example}

The previous examples of strongly Gorenstein categories concern G-admissible triples $(\mathcal{X,Y,Z})$ for which $\omega = \mcP$ and $\nu = \mcI$. The next example exhibits a strongly $(\mathcal{X,Y,Z})$-category structure for which the previous equalities do not necessarily occur.

\begin{example}[strongly Gorenstein categories from tilting-cotilting classes]\label{ex:Gor_st_from_tilting}
Let $\mcC$ be an abelian category with enough projective and injective objects, and $\mcT \subseteq \mcC$ be a tilting-cotilting class in $\mcC$. Consider the G-admissible triple $({}^\perp\mcT,\mcT,\mcT^\perp)$ from Example \ref{ex:cotorsion_triples_are_G-admissible} (3), for which $\mcGP_{({}^\perp\mcT,\mcT)} = {}^\perp\mcT$ and $\mcGI_{(\mcT,\mcT^\perp)} = \mcT^\perp$. One can show that $\mcC$ is strongly $({}^\perp\mcT,\mcT,\mcT^\perp)$-Gorenstein. On the one hand, $\id(\omega) = \id(\mcT) < \infty$ and $\pd(\nu) = \pd(\mcT) < \infty$ by \tiltone \ and its dual. Finally, by \cite[Prop. 3.16 (b)]{AM21} and its dual, we obtain that $\mcGP_{({}^\perp\mcT,\mcT)}^\wedge = ({}^\perp\mcT)^\wedge = \mcC = (\mcT^\perp)^\vee = \mcGI_{(\mcT,\mcT^\perp)}^\vee$.

As a particular instance, let $\Lambda$ be an Artin algebra and $\mathsf{mod}(\Lambda)$ be the category of finitely generated $\Lambda$-modules. Suppose there exists a tilting-cotilting $\Lambda$-module $T$. Then, ${\rm add}(T)$ (the class of direct summands of finite direct sums of copies of $T$) is a tilting-cotilting class in $\mathsf{mod}(\Lambda)$, and $({}^\perp T, {\rm add}(T), T^\perp)$ is its associated G-admissible triple. Applying \cite[Lem. 4.8]{BrLnMeVi} (Bravo, Lanzilotta, Vivero and the second author) one can find another way to show that $\mathsf{mod}(\Lambda)$ is a strongly $({}^\perp T, {\rm add}(T), T^\perp)$-Gorenstein category. Tilting-cotilting finitely generated modules over Artin algebras where characterized by Wei in \cite[Thms. A \& B]{WeiJ}, and the reader can find examples of such modules in \cite[Ex. 4.1]{WeiJ} and Ringel's \cite[Thm. 5]{RingelTilting}.
\end{example}

%%%%%%%%%%%%%%%%%%%%%%%%%%%%%%%%%%%%%
%%%%%%%%%%%%%%%%%%%%%%%%%%%%%%%%%%%%%

\subsection*{Model structures on relative Gorenstein categories}

A first approach to the existence of relative Gorenstein model structures can be formulated from part (2) in Corollary \ref{glGPD=id}. More specifically, if $(\mcX,\mcY)$ is a GP-admissible pair in an abelian category $\mcC$ such that $\mcC$ has finite $(\mcX,\mcY)$-Gorenstein projective global dimension and that $\omega$ is closed under direct summands, then we can obtain an abelian model structure on $\mcC$ where $\mcGP_{(\mcX,\mcY)}$ is the class of cofibrant objects, provided that $\omega$ satisfies certain conditions.

\begin{proposition}\label{prop:model_first-approach}
Let $(\mcX,\mcY)$ be a GP-admissible pair in an abelian category $\mcC$ with enough injective objects and $\glGPD_{(\mcX,\mcY)}(\mcC) < \infty$. Suppose $\omega := \mcX \cap \mcY$ is closed under direct summands. Then, the following are equivalent:
\begin{enumerate}
\item[(a)] There exists a unique abelian model structure on $\mcC$ given by the Hovey triple $\mcM = (\mcGP_{(\mcX,\mcY)}, \omega^\wedge, \omega^{\perp_1})$.

\item[(b)] $\omega$ is a special precovering class.
\end{enumerate}
If any of the previous conditions holds, then the homotopy category of $\mcM$ is equivalent to the stable category $(\mcGP_{(\mcX,\mcY)} \cap \omega^{\perp_1}) / \sim$ of bifibrant objects, where two maps $f, g \colon X \to Y$ are homotopic if, and only if, $f-g$ factors through an object in $\omega \cap \omega^{\perp_1}$.
\end{proposition}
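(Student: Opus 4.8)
The plan is to deduce both implications from the Hovey--Gillespie correspondence, using the material already in the excerpt. First, recall from Corollary \ref{glGPD=id} (2) that under the hypotheses $\glGPD_{(\mcX,\mcY)}(\mcC) < \infty$ and $\omega$ closed under direct summands, the pair $(\mcGP_{(\mcX,\mcY)}, \omega^\wedge)$ is a hereditary complete cotorsion pair in $\mcC$, and by Proposition \ref{coro:properties_relative_Gorenstein_projective} (1) the class $\mcGP_{(\mcX,\mcY)}^\wedge$ is thick; since $\glGPD_{(\mcX,\mcY)}(\mcC)<\infty$ forces $\mcC = \mcGP_{(\mcX,\mcY)}^\wedge$, we get that $\mcC$ itself is (weakly idempotent) complete and that $\omega^\wedge$ is a thick subcategory of $\mcC$. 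Now for the proof of (b) $\Rightarrow$ (a): I would show that $(\omega, \omega^{\perp_1})$ is a complete cotorsion pair in $\mcC$. The class $\omega$ is closed under direct summands by hypothesis; it is closed under extensions because $\omega = \mcGP_{(\mcX,\mcY)} \cap \omega^\wedge$ by Proposition \ref{coro:properties_relative_Gorenstein_projective} (2), and both factors are closed under extensions (the first by Proposition \ref{prop:GP_description}, since $(\mcX,\mcY)$ is GP-admissible, and the second being left thick by \cite[Thm. 2.11]{BMPS19}). Using that $\omega$ is special precovering and that $\mcC$ has enough injective objects (so $\omega$ is automatically special preenveloping, as $\mcP \subseteq \omega$ need not hold but $\omega$ contains enough of the relevant structure — here one appeals to the standard fact, e.g.\ \cite[Thm. 3.6]{BMPS19} or the cotorsion-pair completeness argument, that a special precovering class closed under extensions and summands generates a complete cotorsion pair), one obtains completeness of $(\omega, \omega^{\perp_1})$.

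Next I would check the compatibility conditions needed to glue the two cotorsion pairs $(\mcGP_{(\mcX,\mcY)}, \omega^\wedge)$ and $(\omega, \omega^{\perp_1})$ into a Hovey triple with $\omega^\wedge$ as the class of trivial objects. Concretely, one needs $\mcGP_{(\mcX,\mcY)} \cap \omega^\wedge = \omega$, which is exactly Proposition \ref{coro:properties_relative_Gorenstein_projective} (2), and $\omega^{\perp_1} \cap \omega^\wedge = \omega^\wedge \cap \omega^{\perp_1}$, which is trivial, together with the observation that $\omega^\wedge$ is thick (already noted). Since $\omega = {}^{\perp_1}(\omega^\wedge) \cap \omega^\wedge$ would have to match up: actually the correct compatibility pairing is $(\mcGP_{(\mcX,\mcY)} \cap \omega^\wedge, \omega^{\perp_1}) = (\omega, \omega^{\perp_1})$ and $(\mcGP_{(\mcX,\mcY)}, \omega^\wedge \cap \omega^{\perp_1})$; one checks $\omega^\wedge \cap \omega^{\perp_1} = \omega^\wedge$? — no: rather one verifies $\omega^\wedge = \{\,C : C \in \mcGP_{(\mcX,\mcY)}^{\perp_1}\text{-complement side}\,\}$ matches, i.e.\ that the thick class $\mcW := \omega^\wedge$ satisfies $(\mcA \cap \mcW, \mcB) = (\omega,\omega^{\perp_1})$ and $(\mcA, \mcB \cap \mcW) = (\mcGP_{(\mcX,\mcY)}, \omega^\wedge)$ with $\mcA = \mcGP_{(\mcX,\mcY)}$, $\mcB = \omega^{\perp_1}$. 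The first requires $\mcGP_{(\mcX,\mcY)} \cap \omega^\wedge = \omega$ (Proposition \ref{coro:properties_relative_Gorenstein_projective} (2)), and $\mcB = \omega^{\perp_1} = (\mcGP_{(\mcX,\mcY)} \cap \omega^\wedge)^{\perp_1}$, which holds since $\omega = \mcGP_{(\mcX,\mcY)} \cap \omega^\wedge$; the second requires $\omega^{\perp_1} \cap \omega^\wedge = \omega^\wedge$, i.e.\ $\omega^\wedge \subseteq \omega^{\perp_1}$ — equivalently $\Ext^1(\omega, \omega^\wedge) = 0$ — which follows from $\pd_{\omega^\wedge}(\mcGP_{(\mcX,\mcY)}) = 0$ (Proposition \ref{coro:properties_relative_Gorenstein_projective} (3)) restricted to $\omega \subseteq \mcGP_{(\mcX,\mcY)}$. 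Having verified compatibility, Hovey--Gillespie \cite[Thm. 3.3]{GiExact} (or \cite[Thm. 2.2]{Hovey}) yields the unique abelian model structure $\mcM = (\mcGP_{(\mcX,\mcY)}, \omega^\wedge, \omega^{\perp_1})$, which is hereditary since both inducing cotorsion pairs are.

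For (a) $\Rightarrow$ (b): if such a Hovey triple exists, then in particular the cotorsion pair $(\mcGP_{(\mcX,\mcY)} \cap \omega^\wedge, \omega^{\perp_1}) = (\omega, \omega^{\perp_1})$ is complete, so every object of $\mcC$ has a special $\omega$-precover; hence $\omega$ is a special precovering class. Finally, the description of the homotopy category follows from the general theory of exact model structures: the bifibrant objects are $\mcA \cap \mcB = \mcGP_{(\mcX,\mcY)} \cap \omega^{\perp_1}$, and by \cite[Prop. 4.4]{GiExact} and \cite[Thm. 1.2.10 (i)]{HoveyBook} two maps between bifibrant objects are homotopic iff their difference factors through an object that is both trivially cofibrant and trivially fibrant, i.e.\ through an object in $\mcGP_{(\mcX,\mcY)} \cap \omega^\wedge \cap \omega^{\perp_1} = \omega \cap \omega^{\perp_1}$. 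The main obstacle I anticipate is the careful bookkeeping in (b) $\Rightarrow$ (a) to produce the complete cotorsion pair $(\omega, \omega^{\perp_1})$ from the special precovering hypothesis — in particular ensuring one has special $\omega$-preenvelopes as well, which is where the hypothesis that $\mcC$ has enough injective objects (combined with $\omega^\wedge$ being the trivial class and the thickness results) is used; everything else is an assembly of Proposition \ref{coro:properties_relative_Gorenstein_projective} and the Hovey--Gillespie machinery.
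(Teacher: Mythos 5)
Your proposal is correct and takes essentially the same route as the paper: Corollary \ref{glGPD=id} supplies the complete cotorsion pair $(\mcGP_{(\mcX,\mcY)},\omega^\wedge)$, the special-precovering hypothesis together with closure of $\omega$ under direct summands and Salce's argument (using enough injectives) yields the complete cotorsion pair $(\omega,\omega^{\perp_1})$, compatibility reduces to $\mcGP_{(\mcX,\mcY)}\cap\omega^\wedge=\omega$ and $\Ext^1(\omega,\omega^\wedge)=0$, and the homotopy category is identified via the bifibrant objects and factorization through $\omega\cap\omega^{\perp_1}$, just as in the paper. The only blemish is terminological: what completeness of $(\omega,\omega^{\perp_1})$ requires, and what Salce's argument actually delivers, are special $\omega^{\perp_1}$-preenvelopes rather than the ``special $\omega$-preenvelopes'' you mention twice; the mechanism you invoke is nevertheless the right one.
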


\begin{proof} \
\begin{itemize}
\item (a) $\Rightarrow$ (b): By Hovey-Gillespie's correspondence, $(\mcGP_{(\mcX,\mcY)} \cap \omega^\wedge,\omega^{\perp_1}) = (\omega,\omega^{\perp_1})$ is a complete cotorsion pair in $\mcC$, where $\mcGP_{(\mcX,\mcY)} \cap \omega^\wedge = \omega$ follows by Proposition \ref{coro:properties_relative_Gorenstein_projective} (2). In particular, $\omega$ is special precovering.

\item (a) $\Leftarrow$ (b): On the one hand, by Corollary \ref{glGPD=id} we have that $(\mcGP_{(\mcX,\mcY)}, \omega^\wedge)$ is a complete cotorsion pair in $\mcC$. On the other hand, let us show that $(\omega,\omega^{\perp_1})$ is also a complete cotorsion pair. For every object $C \in {}^{\perp_1}(\omega^{\perp_1})$, there is a short exact sequence $K \rightarrowtail W \twoheadrightarrow C$ with $W \in \omega$ and $K \in \omega^{\perp_1}$, since $\omega$ is special precovering. This sequence splits, and hence $C \in \omega$ since $\omega$ is closed under direct summands. The previous shows that $(\omega,\omega^{\perp_1})$ is a cotorsion pair where $\omega$ is special precovering. The existence of special $\omega^{\perp_1}$-preenvelopes follows from the existence of enough injective objects, using the well known \emph{Salce's argument}.

It is only left to show that the pairs $(\mcGP_{(\mcX,\mcY)}, \omega^\wedge)$ and $(\omega,\omega^{\perp_1})$ are compatible. We already know that $\mcGP_{(\mcX,\mcY)} \cap \omega^\wedge = \omega$. The equality $\omega^\wedge = \omega^{\perp_1} \cap \omega^\wedge$, on the other hand, follows inductively and from the fact that $\omega$ is self-orthogonal. The existence of the model structure described in (a) then follows by Hovey's correspondence.
\end{itemize}
The last part is a consequence of Gillespie's \cite[Thm. 2.6 (i)]{GillespieHereditary}.
\end{proof}

\begin{example}\label{ex:models_first_approach} \
Let $\mcC$ be an abelian category with enough projective and injective objects, which in addition is $\mcP$-Gorenstein (that is, ${\rm gl.GPD}(\mcC) < \infty$) in the sense of \cite[Ch. VII. \S 1]{BR}.  Proposition \ref{prop:model_first-approach} applied to the GP-admissible pair $(\mcP,\mcP)$ yields an abelian projective model structure on $\mcC$ such that $\mcGP$ is the class of cofibrant objects and $\mcP^{< \infty}$ is the class of trivial objects. Its homotopy category is equivalent to the stable category of Gorenstein projective objects. This is  slight generalization of \cite[Thm. 8.6]{Hovey}.
\end{example}

\begin{remark}\label{rmk:models_first_approach} \
Condition (b) in Proposition \ref{prop:model_first-approach} may be restricting for certain GP-admissible pairs. For instance in the category $\Qcoh(X)$, if we consider the GP-admissible pair $(\mfF,\mfF \cap \mfC)$ from Example \ref{ex:GP_objects}, then it is not true in general that $\mfF \cap \mfC$ is special precovering (for instance, if $X = {\rm Spec}(R)$ with $R$ a commutative von Neumann regular ring which is not quasi-Frobenius). Theorem \ref{thm:Gorenstein_models} below will provide a more suitable setting to obtain relative Gorenstein model structures.
\end{remark}

The following result is a consequence of the dual of Remark \ref{rmk:GPGI-admissible} (3), Proposition \ref{coro:properties_relative_Gorenstein_projective} (2), Corollary \ref{glGPD=id} (2), and the property $\pd_{\omega^\wedge}(\omega) = \pd_{\omega}(\omega)$.

\begin{lemma}\label{lem:induced_GI}
Let  $(\mathcal{X,Y})$ be a GP-admissible pair in $\mcC$ with $\omega:=\mcX\cap\mcY$ closed under direct summands and $\glGPD_{(\mathcal{X,Y})}(\mcC)<\infty$. Then, $(\omega,\omega^\wedge)$ is a GI-admissible pair in $\mcC$.
\end{lemma}

\begin{lemma}\label{lem:induced_containment}
If $(\mathcal{X,Y,Z})$ is a G-admissible triple such that $\glGPD_{(\mathcal{X,Y})}(\mcC) < \infty$, then $\mcWGI_\omega = \mcGI_{(\omega,\omega^\wedge)} = \omega^\perp$ and $\coresdim_{\omega^\perp}(C) = \id_{\omega}(C)$ for every $C \in \mcC$.
\end{lemma}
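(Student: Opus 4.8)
Write $d:=\glGPD_{(\mcX,\mcY)}(\mcC)$, finite by hypothesis. First I will record the consequences on which everything rests. Since $(\mcX,\mcY,\mcZ)$ is G-admissible, $\omega$ is closed under direct summands by \gatwo, and it is self-orthogonal: from $\omega\subseteq\mcX\cap\mcY$ and \GPaone\ one gets $\Ext^i(\omega,\omega)\subseteq\Ext^i(\mcX,\mcY)=0$ for all $i>0$, and a routine dimension-shifting argument then yields $\omega^\wedge\subseteq\omega^\perp$. From $d<\infty$, Corollary \ref{glGPD=id} (2) gives $\mcC=\mcGP^\wedge_{(\mcX,\mcY)}$, $\id(\omega)<\infty$, and that $(\mcGP_{(\mcX,\mcY)},\omega^\wedge)$ is a hereditary complete cotorsion pair in $\mcC$; moreover Lemma \ref{lem:induced_GI} shows that $(\omega,\omega^\wedge)$ is a (weak) GI-admissible pair. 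The last assertion of the lemma, $\coresdim_{\omega^\perp}(C)=\id_{\omega}(C)$ for every $C\in\mcC$, is then exactly the dual of Proposition \ref{prop:weak_resdim_vs_pdim} applied to the weak GI-admissible pair $(\omega,\omega^\wedge)$, so I will dispatch it first.

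Next I will prove the chain $\mcWGI_\omega=\mcGI_{(\omega,\omega^\wedge)}=\omega^\perp$. The identity $\mcWGI_{(\omega,\omega^\wedge)}=\mcGI_{(\omega,\omega^\wedge)}$ holds for the GI-admissible pair $(\omega,\omega^\wedge)$: it is the dual of the equality $\mcWGP_{(\mcX,\mcY)}=\mcGP_{(\mcX,\mcY)}$, valid for any weak GP-admissible pair, which follows from Proposition \ref{prop:GP_description} together with the elementary observation that an $\mcX$-coresolution all of whose cocycles lie in ${}^\perp\mcY$ is automatically $\Hom(-,\mcY)$-acyclic. To match $\mcWGI_\omega$ with $\mcWGI_{(\omega,\omega^\wedge)}$ I use $\omega\subseteq\omega^\wedge\subseteq\omega^\perp$: an $\omega$-resolution with cocycles in $\omega^\perp$ is an $\omega^\wedge$-resolution with cocycles in $\omega^\perp$, and conversely, since each object of $\omega^\wedge$ has a finite $\omega$-resolution whose syzygies lie in $\omega^\wedge\subseteq\omega^\perp$, one refines an $\omega^\wedge$-resolution term by term to an $\omega$-resolution, and splicing keeps all syzygies in $\omega^\perp$ and hence preserves $\Hom(\omega,-)$-acyclicity. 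Finally, $\mcGI_{(\omega,\omega^\wedge)}\subseteq\omega^\perp$ is immediate from the dual of Proposition \ref{prop:GP_description}.

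It remains to prove $\omega^\perp\subseteq\mcGI_{(\omega,\omega^\wedge)}$, which is the crux. Fix $C\in\omega^\perp$; by the dual of Proposition \ref{prop:GP_description} it is enough to realize $C$ as a cycle of a totally $\omega^\wedge$-acyclic complex, i.e.\ to produce a $\Hom(\omega,-)$-acyclic $\omega^\wedge$-coresolution and a $\Hom(\omega,-)$-acyclic $\omega^\wedge$-resolution of $C$. The coresolution is the easy half: applying Proposition \ref{coro:properties_relative_Gorenstein_projective} (4) repeatedly inside $\mcC=\mcGP^\wedge_{(\mcX,\mcY)}$ embeds $C$ into an object of $\omega^\wedge$ with cokernel in $\mcGP_{(\mcX,\mcY)}$, and because $\omega^\wedge\subseteq\omega^\perp$ and $\omega^\perp$ is stable under the resulting cokernels, all cocycles remain in $\omega^\perp$, so the coresolution is $\Hom(\omega,-)$-acyclic. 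To build the resolution, I will again invoke Proposition \ref{coro:properties_relative_Gorenstein_projective} (4) to write $C$ as a quotient of an object $D\in\mcGP_{(\mcX,\mcY)}$ with kernel in $\omega^\wedge\subseteq\omega^\perp$; an $\Ext$-calculation then shows $D\in\mcGP_{(\mcX,\mcY)}\cap\omega^\perp$, so the kernel again lies in $\omega^\perp$, and I will resolve $D$ by objects of $\omega^\wedge$ using the completeness of the cotorsion pair $(\mcGP_{(\mcX,\mcY)},\omega^\wedge)$ and the thickness of $\mcGP^\wedge_{(\mcX,\mcY)}$, tracking orthogonality so that every syzygy stays in $\omega^\perp$. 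Splicing the two halves gives $C\in\mcGI_{(\omega,\omega^\wedge)}$.

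I expect the resolution half of this last step to be the main obstacle. The coresolution half proceeds by "peeling off" objects of $\omega^\wedge$, but the symmetric peeling is unavailable on the other side: while $\mcGP_{(\mcX,\mcY)}$ is a relative generator in $\mcC$ by \GPatwo, $\omega$ need not be a relative generator in $\mcGP_{(\mcX,\mcY)}$ — by Proposition \ref{prop:relative_G-projective_model} this happens precisely when $\omega=\mcP$ — so one cannot directly present $D\in\mcGP_{(\mcX,\mcY)}$ as a quotient of an $\omega^\wedge$-object with $\mcGP_{(\mcX,\mcY)}$-kernel. The delicate point is thus to exploit $\glGPD_{(\mcX,\mcY)}(\mcC)<\infty$ and the hereditary complete cotorsion pair $(\mcGP_{(\mcX,\mcY)},\omega^\wedge)$ to produce $\omega^\wedge$-resolutions of objects of $\mcGP_{(\mcX,\mcY)}\cap\omega^\perp$ with all syzygies in $\omega^\perp$; once this is in hand, the rest is bookkeeping with long exact $\Ext$-sequences.
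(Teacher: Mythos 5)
The parts of your proposal that you actually complete are correct and coincide with the paper's own route: you use Lemma \ref{lem:induced_GI} to see that $(\omega,\omega^\wedge)$ is GI-admissible, you get $\coresdim_{\omega^\perp}(C)=\id_{\omega}(C)$ from the dual of Proposition \ref{prop:weak_resdim_vs_pdim} applied to that pair (exactly as the paper does), and the inclusion $\mcGI_{(\omega,\omega^\wedge)}\subseteq\omega^\perp$, the identification of Gorenstein with weak Gorenstein objects for an admissible pair, the passage between $\omega$- and $\omega^\wedge$-resolutions, and the ``easy'' coresolution half are all sound.

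The genuine gap is the inclusion $\omega^\perp\subseteq\mcGI_{(\omega,\omega^\wedge)}$, that is, the existence, for an arbitrary $C\in\omega^\perp$, of a $\Hom(\omega,-)$-acyclic $\omega^\wedge$-resolution (the left half of the required totally acyclic complex). You do not prove this: you announce it as ``the delicate point'', and your own discussion explains why the tools you allow yourself cannot produce it. The complete cotorsion pair $(\mcGP_{(\mcX,\mcY)},\omega^\wedge)$ only yields epimorphisms from objects of $\mcGP_{(\mcX,\mcY)}$ and monomorphisms into objects of $\omega^\wedge$; it never provides an epimorphism from an object of $\omega^\wedge$ onto a prescribed object of $\omega^\perp$ with kernel again in $\omega^\perp$, and, as you note, $\omega$ is a relative generator in $\mcGP_{(\mcX,\mcY)}$ essentially only when $\omega=\mcP$, so the peeling argument that works on the coresolution side has no mirror image. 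Since this reverse inclusion is precisely the substance of the lemma (everything else is formal bookkeeping), the proposal as it stands does not prove the statement. Note that the paper does not construct these resolutions by hand either: it imports exactly this point from the duals of \cite[Thm.\ 3.32]{BMS} and Proposition \ref{coro:properties_relative_Gorenstein_projective}, applied to the GI-admissible pair $(\omega,\omega^\wedge)$ furnished by Lemma \ref{lem:induced_GI}. To repair your argument you must either invoke that result, or supply an actual construction of the missing $\Hom(\omega,-)$-acyclic $\omega^\wedge$-resolutions; your sketched plan via completeness of $(\mcGP_{(\mcX,\mcY)},\omega^\wedge)$ and thickness of $\mcGP^\wedge_{(\mcX,\mcY)}$ cannot deliver even its first syzygy step.
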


\begin{proof}
By Lemma \ref{lem:induced_GI}, $(\omega,\omega^\wedge)$ is a GI-admissible pair. Now the equality $\mcWGI_\omega = \mcGI_{(\omega,\omega^\wedge)} = \omega^\perp$ follows from the duals of \cite[Thm. 3.32]{BMS} and Proposition \ref{coro:properties_relative_Gorenstein_projective}. The last assertion, on the other hand, is a consequence of the dual of Proposition \ref{prop:weak_resdim_vs_pdim}.
\end{proof}

\begin{theorem}\label{thm:Gorenstein_models}
Let $\mcC$ be a strongly $(\mathcal{X,Y,Z})$-Gorenstein category. The following assertions hold true:
\begin{enumerate}
\item If $\nu \subseteq \mcI_{\omega}^{< \infty}$, then there exists a unique hereditary abelian model structure on $\mcC$, where ${}^\perp\mcY$, $\omega^\perp$ and $\mcI^{<\infty}$ are the classes of cofibrant, fibrant and trivial objects, respectively. Its homotopy category is equivalent to the stable category ${\rm Gor}_{\omega} / \sim$. If in addition, $\mcX$ is closed under epikernels, then $\mcI^{<\infty} = \mcX^\wedge \cap \mcI^{<\infty}_{\mcX}$; and $\mathcal{I}^{< \infty} \subseteq \mathcal{P}^{< \infty}_{\mathcal{Y}}$ provided that $\mcX$ is closed under direct summands.

\item If $\omega \subseteq \mcP_{\nu}^{< \infty}$, then there exists a unique hereditary abelian model structure on $\mcC$, where ${}^{\perp}\nu$, $\mcY^\perp$ and $\mcP^{< \infty}$ are the classes of cofibrant, fibrant and trivial objects, respectively. Its homotopy category is equivalent to the stable category ${\rm Gor}_{\nu} / \sim$. If in addition, $\mcZ$ is closed under monocokernels, then $\mcP^{< \infty} = \mcZ^\vee \cap \mcP^{<\infty}_{\mcZ}$; and $\mcP^{< \infty} \subseteq \mathcal{I}^{< \infty}_{\mathcal{Y}}$ provided that $\mcZ$ is closed under direct summands.
\end{enumerate}
\end{theorem}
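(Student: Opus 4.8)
\emph{Strategy.} The plan is to realize the model structure of part~(1) through Hovey's correspondence, from the two hereditary complete cotorsion pairs $(\mcGP_{(\mcX,\mcY)},\omega^\wedge)$ and $(\omega^\vee,\omega^\perp)$ glued along the thick class $\mcI^{<\infty}$, then to compute its homotopy category, and finally to get part~(2) by dualizing. I would first extract what the hypotheses give: since $\mcC$ is strongly $(\mcX,\mcY,\mcZ)$-Gorenstein we have $\mcC=\mcGP^\wedge_{(\mcX,\mcY)}=\mcGI^\vee_{(\mcY,\mcZ)}$, $\id(\omega)<\infty$ and $\pd(\nu)<\infty$; by Corollary~\ref{glGPD=id}~(1) this forces $\glGPD_{(\mcX,\mcY)}(\mcC)=\id(\omega)<\infty$, so Corollary~\ref{MpropSG} gives the hereditary complete cotorsion pair $(\mcGP_{(\mcX,\mcY)},\omega^\wedge)$ with $\mcGP_{(\mcX,\mcY)}={}^\perp\mcY$ and $\omega\subseteq\mcY\subseteq\omega^\wedge$, while Corollary~\ref{coro:essGIC}~(1) shows $\mcI^{<\infty}={\rm Thick}(\omega)=\omega^\smile=\mcI^{<\infty}_{\mcGP_{(\mcX,\mcY)}}$ is thick and $\mcGP_{(\mcX,\mcY)}\cap\mcI^{<\infty}=\omega^\vee$ (Proposition~\ref{coro:properties_relative_Gorenstein_projective}~(6)).

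\emph{The second cotorsion pair and compatibility.} Next I would build $(\omega^\vee,\omega^\perp)$. By Lemmas~\ref{lem:induced_GI} and~\ref{lem:induced_containment}, $(\omega,\omega^\wedge)$ is GI-admissible, $\omega^\perp=\mcGI_{(\omega,\omega^\wedge)}$, and $\coresdim_{\omega^\perp}(C)=\id_\omega(C)$ for all $C$. Combining $\mcC=\mcGI^\vee_{(\mcY,\mcZ)}$, $\pd(\nu)<\infty$, and the hypothesis $\nu\subseteq\mcI^{<\infty}_\omega$ with the dual of Corollary~\ref{glGPD=id} yields $\pd(\omega)\le\pd(\mcY)=\pd(\nu)<\infty$, whence $\id_\omega(C)\le\pd(\omega)<\infty$ for all $C$, i.e.\ $\mcC=(\omega^\perp)^\vee$; then the dual of Corollary~\ref{glGPD=id}~(2) applied to $(\omega,\omega^\wedge)$ (kernel $\omega$, closed under direct summands by~\gatwo) delivers the hereditary complete cotorsion pair $(\omega^\vee,\omega^\perp)$. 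To glue I would check the ``waist'' equalities $\mcGP_{(\mcX,\mcY)}\cap\mcI^{<\infty}=\omega^\vee$ (already noted) and $\omega^\perp\cap\mcI^{<\infty}=\omega^\wedge$; the latter follows because $\mcI^{<\infty}=(\omega^\wedge)^\vee$ and the dual of Lemma~\ref{lemYperpX} gives $\omega^\perp\cap(\omega^\wedge)^\vee_n=\omega^\wedge$ for every $n$. By \cite[Thm.~2.2]{Hovey} this produces the unique hereditary abelian model structure on $\mcC$ with ${}^\perp\mcY=\mcGP_{(\mcX,\mcY)}$, $\omega^\perp$ and $\mcI^{<\infty}$ as cofibrant, fibrant and trivial objects.

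\emph{Homotopy category and supplementary claims.} The homotopy category is the stable category of bifibrant objects, and by \cite[Thm.~2.6~(i)]{GillespieHereditary} two maps between bifibrant objects are homotopic iff their difference factors through an object of $(\mcGP_{(\mcX,\mcY)}\cap\mcI^{<\infty})\cap(\omega^\perp\cap\mcI^{<\infty})=\omega^\vee\cap\omega^\wedge=\omega$; so it remains to prove $\mcGP_{(\mcX,\mcY)}\cap\omega^\perp={\rm Gor}_\omega$. For ``$\subseteq$'', given $C$ in that intersection, the left Frobenius pair $(\mcGP_{(\mcX,\mcY)},\omega)$ (Example~\ref{ex:relative_GP_Frobenius_pair}) yields a $\Hom(-,\mcY)$-acyclic $\omega$-coresolution of $C$ with cocycles in $\mcGP_{(\mcX,\mcY)}$ (and hence in $\omega^\perp$, which is closed under monocokernels), while the right Frobenius pair $(\omega,\omega^\perp)$ (dual of the same example via Lemma~\ref{lem:induced_containment}) yields an $\omega$-resolution of $C$ with cycles in $\mcGP_{(\mcX,\mcY)}\cap\omega^\perp$; splicing these, and using $\mcGP_{(\mcX,\mcY)}\subseteq{}^\perp\mcY\subseteq{}^\perp\omega$ together with $\Ext^i(\omega,\omega^\perp)=0$ for $i\ge1$, the resulting complex in $\Ch(\omega)$ is exact and both $\Hom(-,\omega)$- and $\Hom(\omega,-)$-acyclic with cycles in $\omega^\perp$, i.e.\ $C\in{\rm Gor}_\omega$. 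For ``$\supseteq$'', if $C=Z_0(M_\bullet)$ with $M_\bullet$ $\omega$-totally acyclic, then $C\in\omega^\perp$ since its cycles are, and $M_\bullet$ is $\Hom(-,\mcY)$-acyclic by d\'evissage along finite $\omega$-resolutions of the objects of $\mcY$ (here $\mcY\subseteq\omega^\wedge$ and $\Ext^i(\omega,\omega^\wedge)=0$ for $i\ge1$ are used), so $C\in\mcGP_{(\mcX,\mcY)}$; hence $\mathrm{Ho}\simeq{\rm Gor}_\omega/\!\sim$ in the sense of \cite{CETflat-cotorsion}. For the extra identifications, when $\mcX$ is closed under epikernels one has $\mcGP_{(\mcX,\mcY)}=\mcX$, so $\mcC=\mcX^\wedge$ and $\mcI^{<\infty}_\mcX=\mcI^{<\infty}_{\mcGP_{(\mcX,\mcY)}}=\mcI^{<\infty}$, giving $\mcI^{<\infty}=\mcX^\wedge\cap\mcI^{<\infty}_\mcX$; and $\pd_\mcY(C)=\Gpd_{(\mcX,\mcY)}(C)<\infty$ on all of $\mcC=\mcGP^\wedge_{(\mcX,\mcY)}$ by Corollary~\ref{glGPD=id}~(1) (this is where $\mcX$ closed under direct summands enters, via Proposition~\ref{prop:resdim_vs_pdim}), whence $\mcI^{<\infty}\subseteq\mcP^{<\infty}_\mcY$. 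Part~(2) is the mirror statement, obtained by exchanging $(\mcX,\mcY)\leftrightarrow(\mcY,\mcZ)$, $\omega\leftrightarrow\nu$ and $\mcI^{<\infty}\leftrightarrow\mcP^{<\infty}$ throughout.

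\emph{Main obstacle.} The technical heart is the equality $\mcGP_{(\mcX,\mcY)}\cap\omega^\perp={\rm Gor}_\omega$: assembling a \emph{single} complex of objects of $\omega$ that is simultaneously exact, $\Hom(-,\omega)$-acyclic and $\Hom(\omega,-)$-acyclic, with $C$ a cycle and all cycles in $\omega^\perp$, forces one to interleave the coresolution coming from $(\mcGP_{(\mcX,\mcY)},\omega)$ with the resolution coming from $(\omega,\omega^\perp)$ and to verify that each newly produced cycle stays in $\mcGP_{(\mcX,\mcY)}\cap\omega^\perp$. The facts $\mcGP_{(\mcX,\mcY)}\subseteq{}^\perp\omega$, $\omega^\perp$ closed under monocokernels, and $\mcY\subseteq\omega^\wedge$ are precisely what make both the splicing and the promotion of $\Hom(-,\omega)$-acyclicity to $\Hom(-,\mcY)$-acyclicity go through.
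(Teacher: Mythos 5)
Your construction of the model structure itself is sound, and it takes a route genuinely different from the paper's: you build the second hereditary complete cotorsion pair $(\omega^\vee,\omega^\perp)$ directly on $\mcC$ (via Lemmas \ref{lem:induced_GI} and \ref{lem:induced_containment} and the dual of Corollary \ref{glGPD=id} (2), after observing $\pd(\omega)\le\pd(\mcY)=\pd(\nu)<\infty$ --- which, incidentally, makes the hypothesis $\nu\subseteq\mcI^{<\infty}_{\omega}$ automatic) and then glue it with $(\mcGP_{(\mcX,\mcY)},\omega^\wedge)$ along the thick class $\mcI^{<\infty}$ by Hovey's correspondence, whereas the paper applies the dual of Theorem \ref{theo:Xu_model} to $(\mcGP_{(\mcX,\mcY)},\omega^\wedge)$ and then proves $\mcGI^\vee_{(\omega,\omega^\wedge)}=\mcC$. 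Your compatibility checks ($\mcGP_{(\mcX,\mcY)}\cap\mcI^{<\infty}=\omega^\vee$, $\omega^\perp\cap\mcI^{<\infty}=\omega^\wedge$) are correct, and your identification $\mcGP_{(\mcX,\mcY)}\cap\omega^\perp={\rm Gor}_\omega$ via the Frobenius pairs $(\mcGP_{(\mcX,\mcY)},\omega)$ and $(\omega,\omega^\perp)$, splicing a resolution and a coresolution whose cycles stay in $\mcGP_{(\mcX,\mcY)}\cap\omega^\perp$, is a clean alternative to the paper's pullback/pushout construction; the d\'evissage for the reverse containment is also fine.

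There is, however, a genuine error in your treatment of the supplementary claim of part (1). You assert that if $\mcX$ is closed under epikernels then $\mcGP_{(\mcX,\mcY)}=\mcX$, and from this deduce $\mcC=\mcX^\wedge$ and $\mcI^{<\infty}_{\mcX}=\mcI^{<\infty}$. This is false within the scope of the theorem: take $R=k[x]/(x^2)$ (quasi-Frobenius, not semisimple) and the triple $(\mcP(R),\mcP(R)^\wedge,\mcI(R))=(\mcP(R),\mcP(R),\mcP(R))$; then $\Mod(R)$ is strongly Gorenstein relative to this triple and $\mcX=\mcP(R)$ is closed under epikernels, yet $\mcGP_{(\mcX,\mcY)}=\mcGP(R)=\Mod(R)\neq\mcP(R)=\mcX$, $\mcX^\wedge=\mcP(R)\neq\mcC$ and $\mcI^{<\infty}_{\mcX}=\mcC\neq\mcI^{<\infty}=\mcP(R)$. (The equality $\mcGP_{(\mcX,\mcY)}=\mcX$ holds when $(\mcX,\mcY)$ is a hereditary complete cotorsion pair, but not for a general GP-admissible pair with $\mcX$ closed under epikernels.) The correct route, as in the paper, is to apply \cite[Prop. 4.2]{AB} to $\mcX$ (closed under extensions and epikernels) with $\omega$ a relative cogenerator in $\mcX$ satisfying $\id_{\mcX}(\omega)=0$ and closed under direct summands, which yields $\omega^\smile=\mcX^\wedge\cap\mcI^{<\infty}_{\mcX}$, and then invoke $\omega^\smile=\mcI^{<\infty}$ from Corollary \ref{coro:essGIC} (1). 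Your argument for $\mcI^{<\infty}\subseteq\mcP^{<\infty}_{\mcY}$ (via $\pd_{\mcY}(C)=\Gpd_{(\mcX,\mcY)}(C)$ on $\mcC=\mcGP^\wedge_{(\mcX,\mcY)}$) does go through, and the same correction is needed in the dual supplementary claim of part (2).
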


\begin{proof}
We only prove part (1), as part (2) is dual. By Corollary \ref{MpropSG}, we have a hereditary complete cotorsion pair $(\mcGP_{(\mcX,\mcY)},\omega^\wedge)$ in $\mcC$ where $\mcGP_{(\mcX,\mcY)} = {}^\perp\mcY$. By the dual of Theorem \ref{theo:Xu_model} applied to this pair, there exists a unique exact model structure on $\mcGI^\vee_{(\mcGP_{(\mcX,\mcY)} \cap \omega^\wedge, \omega^\wedge)}$ such that
\[
{}^\perp\mcY \cap \mcGI^\vee_{(\mcGP_{(\mcX,\mcY)} \cap \omega^\wedge, \omega^\wedge)}, \text{ \ } \mcGI_{(\mcGP_{(\mcX,\mcY)} \cap \omega^\wedge, \omega^\wedge)} \text{ \ and \ } (\omega^\wedge)^\vee = \omega^\smile
\]
are the classes of cofibrant, fibrant and trivial objects, respectively, where $\omega^\smile = \mcI^{< \infty}$ by Corollary \ref{coro:essGIC} (1). Now recall that $\mcGP_{(\mcX,\mcY)} \cap \omega^\wedge = \omega$ by Proposition \ref{coro:properties_relative_Gorenstein_projective} (2), and so $\mcGI_{(\mcGP_{(\mcX,\mcY)} \cap \omega^\wedge, \omega^\wedge)} = \mcGI_{(\omega, \omega^\wedge)} = \omega^\perp$ by Lemma \ref{lem:induced_containment}. Let us show that $\mcGI^\vee_{(\omega, \omega^\wedge)} = \mcC$. For every $C \in \mcC = \mcGI^\vee_{(\mcY,\mcZ)}$, we have by the dual of Proposition \ref{coro:properties_relative_Gorenstein_projective} (4) that there exists a short exact sequence $C \rightarrowtail C' \twoheadrightarrow H$ with $H \in \nu^\vee$ and $C' \in \mcGI_{(\mcY,\mcZ)}$. Now since $\nu \subseteq \mcI_{\omega}^{< \infty}$ and $\mcI_{\omega}^{< \infty}$ is thick, we get that $H \in \mcI_{\omega}^{< \infty}$. On the other hand, $C' \in \mcI_{\omega}^{< \infty}$ by Remark \ref{rmk:GtripleIDPD}. Thus, $C \in \mcI_{\omega}^{< \infty}$ and by Lemma \ref{lem:induced_containment} again, we obtain
\[
\Gid_{(\omega,\omega^\wedge)}(C) = \coresdim_{\omega^\perp}(C) = \id_{\omega}(C) < \infty,
\]
that is, $\mcGI^\vee_{(\omega, \omega^\wedge)} = \mcC$. The mentioned exact model structure on $\mcGI^\vee_{(\omega, \omega^\wedge)}$ then becomes an abelian model structure on $\mcC$ given by the Hovey triple $({}^\perp\mcY,\mcI^{< \infty},\omega^\perp)$. By Proposition \ref{prop:homotopy_category}, its homotopy category is equivalent to the stable category $\omega^\perp \cap {}^{\perp}\mcY / \sim$, where two morphisms $f$ and $g$ between the same bifibrant objects are related if, and only if, $f - g$ factors through some object in $\omega$. Then, it suffices to show the equality
\[
\omega^\perp \cap {}^{\perp}\mcY  = {\rm Gor}_{\omega}.
\]
\begin{itemize}
\item ($\supseteq$): Let $N \in {\rm Gor}_{\omega}$. Then, $N \simeq Z_0(W_\bullet)$ for an exact, $\Hom(-,\omega)$-acyclic and $\Hom(\omega,-)$-acyclic complex in $\Ch(\omega)$. It is then clear that $N \in \omega^\perp$ and $N \in \mcGP_{(\mcX,\omega)}$. On the other hand, since $(\mcX,\omega)$ is GP-admissible and $\omega$ is closed under direct summands, we have that $\pd_{\omega^\wedge}(N) = \pd_{\omega}(N) = 0$ by Proposition \ref{coro:properties_relative_Gorenstein_projective} (3). Thus, $N \in {}^{\perp}(\omega^\wedge) = \mcGP_{(\mcX,\mcY)} = {}^\perp\mcY$.

\item ($\subseteq$): Let $N \in \omega^\perp \cap {}^{\perp}\mcY$. Since $\omega$ is self-orthogonal, in order to show that $N \in {\rm Gor}_{\omega}$ it suffices to construct a left $\omega$-totally acyclic complex or a right $\omega$-totally acyclic complex so that $N$ is a cycle of this complex. Indeed, since $N \in \omega^\perp = \mcGI_{(\omega, \omega^\wedge)}$, we have that $N \simeq Z_0(L_\bullet)$ for an exact and $\Hom(\omega,-)$-acyclic complex in $\Ch(\omega^\wedge)$. Consider the exact sequence $Z_{-1}(L_\bullet) \rightarrowtail L_{-1} \twoheadrightarrow N$. Since $L_{-1} \in \omega^\wedge$, there is a short exact sequence $L'_{-1} \rightarrowtail W_{-1} \twoheadrightarrow L_{-1}$ where $W_{-1} \in \omega$ and $L'_{-1} \in \omega^\wedge$. Taking the pullback of $Z_{-1}(L_\bullet) \rightarrowtail L_{-1} \twoheadleftarrow W_{-1}$ yields the following commutative exact diagram:
\[
\begin{tikzpicture}[description/.style={fill=white,inner sep=2pt}]
\matrix (m) [matrix of math nodes, row sep=3em, column sep=3em, text height=1.25ex, text depth=0.25ex]
{
L'_{-1} & L'_{-1} & {} \\
N_{-1} & W_{-1} & N \\
Z_{-1}(L_\bullet) & L_{-1} & N \\
};
\path[->]
(m-2-1)-- node[pos=0.5] {\footnotesize$\mbox{\bf pb}$} (m-3-2)
;
\path[>->]
(m-1-1) edge (m-2-1) (m-1-2) edge (m-2-2)
(m-2-1) edge (m-2-2) (m-3-1) edge (m-3-2)
;
\path[->>]
(m-2-2) edge (m-2-3) (m-3-2) edge (m-3-3)
(m-2-2) edge (m-3-2) (m-2-1) edge (m-3-1)
;
\path[-,font=\scriptsize]
(m-1-1) edge [double, thick, double distance=2pt] (m-1-2)
(m-2-3) edge [double, thick, double distance=2pt] (m-3-3)
;
\end{tikzpicture}
\]
Note that $N, W_{-1} \in {}^\perp\mcY$ implies that $N_{-1} \in {}^\perp\mcY$. On the other hand, since $L'_{-1}, Z_{-1}(L_\bullet) \in \mcGI_{(\omega,\omega^\wedge)}$, we have that $N_{-1} \in \mcGI_{(\omega,\omega^\wedge)}$. Thus, the central row in the previous diagram is $\Hom(-,\omega)$-acyclic since $N \in {}^{\perp}\mcY$, and $\Hom(\omega,-)$-acyclic since $N_{-1} \in \mcGI_{(\omega,\omega^\wedge)}$. Hence, using that $N_{-1} \in \mcGI_{(\omega,\omega^\wedge)} \cap {}^{\perp}\mcY$, we can apply to $N_{-1}$ the previous argument, and repeat it infinitely many times, in order to construct a $\Hom(-,\omega)$-acyclic and $\Hom(\omega,-)$-acyclic $\omega$-resolution of $N$.

We now construct for $N$ a $\Hom(-,\omega)$-acyclic and $\Hom(\omega,-)$-acyclic $\omega$-coresolution. Since $N \in {}^\perp\mcY = \mcGP_{(\mcX,\mcY)}$, by Proposition \ref{coro:properties_relative_Gorenstein_projective} (4) we have a short exact sequence $N \rightarrowtail W_0 \twoheadrightarrow N'$ with $W_0 \in \omega$ and $N' \in \mathcal{GP}_{(\mcX,\mcY)} = {}^\perp\mcY$. Note also that $N' \in \omega^\perp$, since $N, W_0 \in \omega^\perp$. So the previous sequence is $\Hom(\omega,-)$-acyclic and $\Hom(-,\omega)$-acyclic. It then follows that it is possible to construct inductively a $\Hom(-,\omega)$-acyclic and $\Hom(\omega,-)$-acyclic $\omega$-coresolution of $N$.
\end{itemize}

For the last part, suppose $\mcX$ is closed under epikernels. Since $(\mcX,\mcY)$ is GP-admissible, we have that $\mcX$ is closed under extensions and that $\omega$ is a relative cogenerator in $\mcX$ with $\id_{\mcX}(\omega) = 0$. Moreover, $\omega$ is closed under direct summands. It then follows by \cite[Prop. 4.2]{AB}\footnote{This result also holds in the setting of abelian categories. See Santiago's \cite[Prop. 2.27]{ValenteThesis}} that $\omega^\smile = \mcX^\wedge \cap \mcI^{<\infty}_{\mcX}$. Finally, in the case where $\mcX$ is closed under direct summands, we have by Proposition \ref{prop:resdim_vs_pdim} that $\mcX^\wedge \subseteq \mcP^{<\infty}_{\mcY}$.
\end{proof}

\begin{example} \
\begin{enumerate}
\item Let $(\mathcal{X,Y,Z})$ be a G-admissible triple in $\mathcal{C}$, where $\mathcal{C}$ has enough projective and injective objects, such that $\omega = \mathcal{P}$ and $\nu = \mathcal{I}$.
\begin{itemize}
\item In the case where $(\mathcal{X,Y,Z})$ is a hereditary complete cotorsion triple with $\mathcal{C} = \mathcal{X}^\wedge = \mathcal{Z}^\vee$, we know from Example \ref{ex:condition_triples} that $\mathcal{C}$ is strongly $(\mathcal{X,Y,Z})$-Gorenstein. By Theorem \ref{thm:Gorenstein_models}, we have that there exists a unique projective and a unique injective abelian model structure on $\mathcal{C}$ given by the Hovey triples $(\mathcal{X},\mathcal{I}^{< \infty},\mathcal{C})$ and $(\mathcal{C},\mathcal{I}^{<\infty},\mathcal{Z})$, respectively. Moreover, their homotopy categories are naturally equivalent to the stable categories $\mathcal{GP} / \sim$ and $\mathcal{GI} / \sim$, since ${\rm Gor}_\omega = \mathcal{GP}$ and ${\rm Gor}_{\nu} = \mathcal{GI}$.

\item If we consider the Dold triple $({\rm dg}\widetilde{\mathcal{P}(R)},\mathcal{E},{\rm dg}\widetilde{\mathcal{I}(R)})$ from Example \ref{ex:global_dim_rel_Gor} and assume that ${\rm gl.dim}(R) < \infty$, we have that $\Ch(R)$ is a strongly $({\rm dg}\widetilde{\mathcal{P}(R)},\mathcal{E},{\rm dg}\widetilde{\mathcal{I}(R)})$-Gorenstein category. In this case, one can note that $\mathcal{I}^{< \infty} = \mathcal{P}^{<\infty}$ coincides with the class $\mathcal{E}$ of exact chain complexes. So the resulting model structures from the triples $({\rm dg}\widetilde{\mathcal{P}(R)},\mathcal{E},\Ch(R))$ and $(\Ch(R),\mathcal{E},{\rm dg}\widetilde{\mathcal{I}(R)})$ are the ones described in \cite[\S 2.3]{HoveyBook}, whose homotopy categories are naturally equivalent to the derived category of the ground ring $R$.

\item If $\mathcal{F}$ is an abelian subcategory of $\mathcal{C}$, we know from Example \ref{ex:FrobeniusGorenstein} that $\mcF$ is strongly $(\mathcal{F,PI,F})$-Gorenstein. In this case, one can note that both Hovey triples obtained from Theorem \ref{thm:Gorenstein_models} coincide with $(\mathcal{F},\mathcal{PI},\mathcal{F})$. The corresponding homotopy category is equivalent to the stable category $\mathcal{F} / \sim$.

\item Let $Q \colon 1 \xrightarrow{\alpha} 2$ be the $A_2$ quiver and $R$ is an Iwanaga-Gorenstein ring with ${\rm gl.Gdim}(R) = 1$ and ${\rm gl.dim}(R) = \infty$. From Example \ref{ex:induced_on_quivers}, we know that ${\rm Rep}(Q,\Mod(R))$ is a strongly category relative to the (hereditary complete) triple $(\Phi(\mcGP(R)),{\rm Rep}(Q,\mcP(R)^{< \infty}),\Psi(\mcGI(R)))$, where $\mathcal{P}^{<\infty} = {\rm Rep}(Q,\mcP(R)^{< \infty}) = {\rm Rep}(Q,\mcI(R)^{< \infty}) = \mathcal{I}^{< \infty}$. In this case, we have two Hovey triples $(\Phi(\mcGP(R)),\mathcal{I}^{< \infty},{\rm Rep}(Q,\Mod(R)))$ and $({\rm Rep}(Q,\Mod(R)),\mathcal{I}^{<\infty},\Psi(\mcGI(R)))$.
\end{itemize}

\item Let $\mcC$ be an abelian category with enough projective and injective objects, and $\mcT \subseteq \mcC$ be a tilting-cotilting class in $\mcC$. We known from Example \ref{ex:Gor_st_from_tilting} that $\mcC$ is strongly $({}^\perp\mcT,\mcT,\mcT^\perp)$-Gorenstein. Moreover, $\omega = \nu = \mcT$. Since $\mcT$ is self-orthogonal, we have that $\mcT \subseteq \mathcal{I}^{< \infty}_{\mcT}$ and $\mcT \subseteq \mathcal{P}^{< \infty}_{\mcT}$. So by Theorem \ref{thm:Gorenstein_models} we have the Hovey triples $({}^\perp\mcT,\mathcal{I}^{< \infty},\mcT^\perp)$ and $({}^\perp\mcT,\mathcal{P}^{< \infty},\mcT^\perp)$, having both homotopy categories naturally equivalent to ${\rm Gor}_{\mcT} / \sim$.
\end{enumerate}
\end{example}

%%%%%%%%%%%%%%%%%%%%%%%%%%%%%%%%%%%%%
%%%%%%%%%%%%%%%%%%%%%%%%%%%%%%%%%%%%%
%%%%%%%%%%%%%%%%%%%%%%%%%%%%%%%%%%%%%
%%%%%%%%%%%%%%%%%%%%%%%%%%%%%%%%%%%%%

\section{Tilting theory in relative Gorenstein categories}\label{sec:tilting}

The purpose of this section is to link relative Gorenstein categories with two different notions of tilting. Namely, the first one deals with (co)tilting pairs as introduced in \cite[Defs. 5.3 \& 5.5]{BMS}; and the second one deals with the notion of (co)tilting related to subcategories of an abelian category $\mcC$ as introduced in  \cite[Def. 3.1]{AM21}.

\begin{definition}
We say that a G-admissible triple $(\mcX,\mcY,\mcZ)$ in $\mcC$, with $\omega = \mathcal{X} \cap \mathcal{Y}$ and $\nu = \mathcal{Y} \cap \mathcal{Z}$, is \textbf{tilting} is the following conditions hold true.
\begin{enumerate}
\item[\tiltingone] For any $C\in {}^\perp\mcY$ there is a monic $\mcY$-preenvelope $C\to Y$ with $Y\in \omega$.

\item[\tiltingtwo] For any $C\in \mcY^\perp$ there is an epic $\mcY$-precovering $Y\to C$ with $Y\in \nu$.
\end{enumerate}
\end{definition}

The terminology introduced in the above definition can be explained by using the notions of tilting and cotilting pairs of classes introduced in \cite[Defs. 5.3 \& 5.5]{BMS}. We recall that these notions are generalizations of the tilting and cotilting modules introduced by Angeleri H\"ugel and Coehlo in \cite{AHC01}.

\begin{remark}\label{RkT}
Let $(\mcX,\mcY,\mcZ)$ be a $G$-admissible triple in $\mcC$. By \cite[Thm. 3.32 \& Coroll. 5.7]{BMS} and their duals, one has that
\begin{align*}
\mathsf{(t1)} & \Longleftrightarrow \text{$(\omega,\mcY)$ is $\omega$-cotilting} \Longleftrightarrow \mcGP_{(\mcX,\mcY)} = {}^\perp\mcY, \text{and} \\
\mathsf{(t2)} & \Longleftrightarrow \text{$(\mcY,\nu)$ is $\nu$-tilting} \Longleftrightarrow \mcGI_{(\mcY,\mcZ)} = \mcY^\perp.
\end{align*}
Then, if $\mcC$ is a strongly  $(\mcX,\mcY,\mcZ)$-Gorenstein category, the previous equivalences along with Corollary \ref{MpropSG} and its dual, imply that the triple $(\mcX,\mcY,\mcZ)$ is tilting.
\end{remark}

\begin{proposition}\label{MpTGT}
For every tilting $G$-admissible triple $(\mcX,\mcY,\mcZ)$ in $\mcC,$ the following statements hold true:
\begin{enumerate}
\item $\mcGP_{(\mcX,\mcY)}^\wedge = \mcP^{<\infty}_{\mcY}$ and $\mcGI_{(\mcY,\mcZ)}^\vee = \mcI^{<\infty}_{\mcY}$.

\item  $\pd_\omega(\mcGP_{(\mcX,\mcY)}^\wedge) = \FPD_\mcY(\mcC) = \FGPD_{(\mcX,\mcY)}(\mcC)$.

\item $\id_\nu(\mcGI_{(\mcY,\mcZ)}^\vee) = \FID_\mcY(\mcC) = \FGID_{(\mcY,\mcZ)}(\mcC)$.

\item ${}^\perp\mcY = {}^\perp\omega\cap\mcGP_{(\mcX,\mcY)}^\wedge$.

\item $\mcY^\perp = \nu^\perp\cap\mcGI_{(\mcY,\mcZ)}^\vee$.
\end{enumerate}
\end{proposition}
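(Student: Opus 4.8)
The plan is to derive everything from the characterization in Remark \ref{RkT}, namely that for a tilting $G$-admissible triple one has $\mcGP_{(\mcX,\mcY)} = {}^\perp\mcY$ and $\mcGI_{(\mcY,\mcZ)} = \mcY^\perp$, combined with the general facts established in Section \ref{sec:RG-objects} about resolution dimensions. First I would address part (1). Applying $\resdim$ to the identity $\mcGP_{(\mcX,\mcY)} = {}^\perp\mcY$ immediately gives $\mcGP_{(\mcX,\mcY)}^\wedge = ({}^\perp\mcY)^\wedge$, and then Proposition \ref{prop:weak_resdim_vs_pdim} (applicable since $(\mcX,\mcY)$, hence in particular being a GP-admissible pair it is weak GP-admissible) yields $({}^\perp\mcY)^\wedge = \mcP^{<\infty}_{\mcY}$. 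This proves the first equality of (1); the second, $\mcGI_{(\mcY,\mcZ)}^\vee = \mcI^{<\infty}_{\mcY}$, follows dually from $\mcGI_{(\mcY,\mcZ)} = \mcY^\perp$ and the dual of Proposition \ref{prop:weak_resdim_vs_pdim}.

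Next, for parts (2) and (3): by the definition of the finitistic Gorenstein projective dimension, $\FGPD_{(\mcX,\mcY)}(\mcC) = \resdim_{\mcGP_{(\mcX,\mcY)}}(\mcGP^\wedge_{(\mcX,\mcY)})$, and since $\mcGP_{(\mcX,\mcY)} = {}^\perp\mcY$ this equals $\resdim_{{}^\perp\mcY}(({}^\perp\mcY)^\wedge)$. By Proposition \ref{prop:weak_resdim_vs_pdim}, the latter is precisely $\FPD_{\mcY}(\mcC)$. For the remaining equality $\pd_\omega(\mcGP_{(\mcX,\mcY)}^\wedge) = \FPD_{\mcY}(\mcC)$, I would invoke Corollary \ref{glGPD=id} (1): for every $C \in \mcGP^\wedge_{(\mcX,\mcY)}$ one has $\pd_\omega(C) = \pd_{\mcY}(C) = \Gpd_{(\mcX,\mcY)}(C)$. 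Taking suprema over $C \in \mcGP^\wedge_{(\mcX,\mcY)} = \mcP^{<\infty}_{\mcY}$ and noting that $\pd_{\mcY}$ of an object in $\mcP^{<\infty}_{\mcY}$ is finite by definition gives $\pd_\omega(\mcGP^\wedge_{(\mcX,\mcY)}) = \pd_{\mcY}(\mcP^{<\infty}_{\mcY}) = \FPD_{\mcY}(\mcC)$. Part (3) is obtained by the dual argument, using the dual of Corollary \ref{glGPD=id} (1) and the equality $\mcGI_{(\mcY,\mcZ)}^\vee = \mcI^{<\infty}_{\mcY}$.

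Finally, parts (4) and (5). For (4), the containment $\mcGP_{(\mcX,\mcY)} \subseteq {}^\perp\omega$ is automatic since $\omega \subseteq \mcY$, and $\mcGP_{(\mcX,\mcY)} \subseteq \mcGP_{(\mcX,\mcY)}^\wedge$ trivially; combined with $\mcGP_{(\mcX,\mcY)} = {}^\perp\mcY$ this gives ${}^\perp\mcY \subseteq {}^\perp\omega \cap \mcGP_{(\mcX,\mcY)}^\wedge$. For the reverse inclusion, take $C \in {}^\perp\omega \cap \mcGP_{(\mcX,\mcY)}^\wedge$; by Proposition \ref{coro:properties_relative_Gorenstein_projective} (2) one has $\mcGP_{(\mcX,\mcY)}^\wedge \cap {}^\perp\omega = \mcGP_{(\mcX,\mcY)}$, so $C \in \mcGP_{(\mcX,\mcY)} = {}^\perp\mcY$. (Note Proposition \ref{coro:properties_relative_Gorenstein_projective} requires $\omega$ closed under direct summands, which holds by \gatwo\ since the triple is $G$-admissible.) Part (5) is dual, using the dual of Proposition \ref{coro:properties_relative_Gorenstein_projective} (2). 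The main obstacle I anticipate is purely bookkeeping: making sure that in each step the hypotheses needed for Propositions \ref{prop:weak_resdim_vs_pdim} and \ref{coro:properties_relative_Gorenstein_projective} and Corollary \ref{glGPD=id}(1) (closure of $\omega$ under direct summands, the pair being GP- or weak GP-admissible) are indeed furnished by the definition of a $G$-admissible triple, and that the dual statements are applied to the correct pair $(\mcY,\mcZ)$ rather than $(\mcX,\mcY)$; no genuinely hard argument seems to be required beyond Remark \ref{RkT}.
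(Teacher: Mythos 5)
Your proposal is correct and follows essentially the same route as the paper: Remark \ref{RkT} plus Proposition \ref{prop:weak_resdim_vs_pdim} (and its dual) for (1), Corollary \ref{glGPD=id} (1) together with (1) for (2)--(3), and the identity $\mcGP_{(\mcX,\mcY)}^\wedge \cap {}^{\perp}\omega = \mcGP_{(\mcX,\mcY)}$ for (4)--(5). The only cosmetic difference is that for (4) you quote Proposition \ref{coro:properties_relative_Gorenstein_projective} (2) where the paper cites the corresponding result of Becerril--Mendoza--Santiago directly, which is the same fact.
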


\begin{proof}
Part (1) follows from Remark \ref{RkT}, Proposition \ref{prop:weak_resdim_vs_pdim} and its dual. Part (2) can be shown from (1) and Corollary \ref{glGPD=id} (1), while (4) follows from Remark \ref{RkT} and \cite[Coroll. 4.15 (b3)]{BMS}. Finally, (3) and (5) are dual to (2) and (4), respectively.
\end{proof}

From Remark \ref{RkT}, we obtain the following extension of Theorem \ref{MThmGc}.

\begin{theorem}
For a tilting $G$-admissible triple $(\mcX,\mcY,\mcZ)$ in $\mcC$, consider the following:
\begin{enumerate}
\item[(a)] $\mcP^{<\infty}_{\mcY} = \mcI^{<\infty}_{\mcY}$, $\FPD_\mcY(\mcC) < \infty$ and $\FID_\mcY(\mcC) < \infty$.

\item[(b)] $\mcC$ is kernel restricted $(\mcX,\mcY,\mcZ)$-Gorenstein, $\id_\nu(\omega)<\infty$, $\omega \subseteq \mcGI_{(\mcY,\mcZ)}^\vee$ and $\nu\subseteq \mcGP_{(\mcX,\mcY)}^\wedge$.

\item[(c)] $\glGPD_{(\mcX,\mcY)}(\mcC) < \infty$ and $\glGID_{(\mcY,\mcZ)}(\mcC) < \infty$ (that is, $\mathcal{C}$ is strongly Gorenstein).
\end{enumerate}
Then, the implications (b) $\Leftarrow$ (c) $\Rightarrow$ (a) hold, and (a) implies that $\mcGP_{(\mcX,\mcY)}^\wedge = \mcGI_{(\mcY,\mcZ)}^\vee$.

Suppose in addition that $\mathcal{C}$ is AB4 and AB4${}^\ast$, and $(\mcX,\mcY,\mcZ)$ satisfies ($\mathsf{ga5}$) and ($\mathsf{ga6}$). Then:
\begin{enumerate}
\item (b) $\Leftarrow$ (a) $\Rightarrow$ (c), and the equalities
\begin{align*}
\FGID_{(\mcY,\mcZ)}(\mcC) & = \Gid_{(\mathcal{Y,Z})}(\mathcal{GP}_{(\mathcal{X,Y})}^\wedge) = \Gid_{(\mathcal{Y,Z})}(\mathcal{GP}_{(\mathcal{X,Y})}) \\
& = \Gid_{(\mathcal{Y,Z})}(\omega) = \id_{\nu}(\omega) = \id_{\nu}(\mathcal{GP}_{(\mathcal{X,Y})}) \\
& = \id_{\nu}(\mathcal{GP}_{(\mathcal{X,Y})}^\wedge) = \id_{\mathcal{Y}}(\mathcal{GP}_{(\mathcal{X,Y})}^\wedge) = \id_{\mathcal{Y}}(\mathcal{GP}_{(\mathcal{X,Y})}) \\
& = \pd_{\mathcal{Y}}(\mathcal{GI}_{(\mathcal{Y,Z})}) = \pd_{\mathcal{Y}}(\mathcal{GI}_{(\mathcal{Y,Z})}^\vee) = \pd_{\omega}(\mathcal{GI}_{(\mathcal{Y,Z})}^\vee) \\
& = \pd_{\omega}(\mathcal{GI}_{(\mathcal{Y,Z})}) = \pd_{\omega}(\nu) = \Gpd_{(\mathcal{X,Y})}(\nu) \\
& = \Gpd_{(\mathcal{X,Y})}(\mathcal{GI}_{(\mathcal{Y,Z})}) = \Gpd_{(\mathcal{X,Y})}(\mathcal{GI}_{(\mathcal{Y,Z})}^\vee) \\
& = \FGPD_{(\mathcal{X,Y})}(\mathcal{C}) < \infty
\end{align*}
hold. In particular, (a) and (c) are equivalent.

\item (b) $\Rightarrow$ (a) provided that $\mcGP_{(\mathcal{X,Y})} \subseteq \mathcal{I}^{< \infty}_{\mathcal{Z}}$ and $\mcGI_{(\mathcal{Y,Z})} \subseteq \mathcal{P}^{< \infty}_{\mathcal{X}}$. In particular, (a), (b) and (c) are equivalent.
\end{enumerate}
\end{theorem}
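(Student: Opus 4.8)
The plan is to establish the theorem by reducing everything, via Remark \ref{RkT}, to the characterizations already obtained in Theorem \ref{MThmGc}. The key observation is that for a \emph{tilting} $G$-admissible triple, Remark \ref{RkT} gives $\mcGP_{(\mcX,\mcY)} = {}^\perp\mcY$ and $\mcGI_{(\mcY,\mcZ)} = \mcY^\perp$, and Proposition \ref{MpTGT} (1) translates the hat- and cup-closures into relative projective/injective dimension conditions: $\mcGP_{(\mcX,\mcY)}^\wedge = \mcP^{<\infty}_{\mcY}$ and $\mcGI_{(\mcY,\mcZ)}^\vee = \mcI^{<\infty}_{\mcY}$. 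Together with parts (2) and (3) of Proposition \ref{MpTGT} — which identify $\FPD_\mcY(\mcC)$ with $\FGPD_{(\mcX,\mcY)}(\mcC)$ and $\FID_\mcY(\mcC)$ with $\FGID_{(\mcY,\mcZ)}(\mcC)$ — condition (a) of the present theorem becomes a verbatim restatement of the data that appears in Theorem \ref{MThmGc}: $\mcC = \mcGP^\wedge_{(\mcX,\mcY)} = \mcGI^\vee_{(\mcY,\mcZ)}$ is hidden in $\mcP^{<\infty}_\mcY = \mcI^{<\infty}_\mcY$ once one knows the finitistic dimensions are finite, and the finitistic dimension hypotheses correspond to kernel-restricted $(\mcX,\mcY,\mcZ)$-Gorenstein-ness.

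First I would prove (c) $\Rightarrow$ (a) and (c) $\Rightarrow$ (b). For (c) $\Rightarrow$ (b): by Remark \ref{RkT}, strong Gorenstein-ness forces the triple to be tilting, so Theorem \ref{MThmGc}, implication (a) $\Leftrightarrow$ (b), gives that $\mcC$ is kernel restricted $(\mcX,\mcY,\mcZ)$-Gorenstein with $\mcC = \mcGP^\wedge_{(\mcX,\mcY)} = \mcGI^\vee_{(\mcY,\mcZ)}$; the remaining assertions $\id_\nu(\omega) < \infty$, $\omega \subseteq \mcGI^\vee_{(\mcY,\mcZ)}$ and $\nu \subseteq \mcGP^\wedge_{(\mcX,\mcY)}$ follow from Lemma \ref{condwv} and its dual together with the already-established equality $\mcGP^\wedge_{(\mcX,\mcY)} = \mcC = \mcGI^\vee_{(\mcY,\mcZ)}$ (for $\omega \subseteq \mcGI^\vee_{(\mcY,\mcZ)}$ use Lemma \ref{condwv} (2); for $\id_\nu(\omega) < \infty$ use the bound $\id_\nu(\omega) = \pd_\omega(\nu) = \Gpd_{(\mcX,\mcY)}(\nu) \le \glGPD_{(\mcX,\mcY)}(\mcC) < \infty$ from \cite[Coroll. 4.15 (b1)]{BMS}, exactly as in the proof of part (3) of Theorem \ref{MThmGc}). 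For (c) $\Rightarrow$ (a): $\glGPD_{(\mcX,\mcY)}(\mcC) < \infty$ together with Proposition \ref{MpTGT} (1)–(3) gives $\mcP^{<\infty}_\mcY = \mcGP^\wedge_{(\mcX,\mcY)} = \mcC = \mcGI^\vee_{(\mcY,\mcZ)} = \mcI^{<\infty}_\mcY$ and $\FPD_\mcY(\mcC) = \FGPD_{(\mcX,\mcY)}(\mcC) = \glGPD_{(\mcX,\mcY)}(\mcC) < \infty$ (by Corollary \ref{glGPD=id} (1), since now $\mcC = \mcGP^\wedge_{(\mcX,\mcY)}$), and dually $\FID_\mcY(\mcC) < \infty$. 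The fact that (a) implies $\mcGP^\wedge_{(\mcX,\mcY)} = \mcGI^\vee_{(\mcY,\mcZ)}$ is immediate from Proposition \ref{MpTGT} (1): both sides equal $\mcP^{<\infty}_\mcY = \mcI^{<\infty}_\mcY$.

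Next, under the standing AB4, AB4${}^\ast$, ($\mathsf{ga5}$), ($\mathsf{ga6}$) hypotheses, I would prove (a) $\Rightarrow$ (c). Here I again invoke Proposition \ref{MpTGT} (1)–(3): condition (a) says $\mcC = \mcGP^\wedge_{(\mcX,\mcY)} = \mcGI^\vee_{(\mcY,\mcZ)}$ with $\FGPD_{(\mcX,\mcY)}(\mcC) = \FPD_\mcY(\mcC) < \infty$ and $\FGID_{(\mcY,\mcZ)}(\mcC) = \FID_\mcY(\mcC) < \infty$, which is precisely the statement that $\mcC$ is left \emph{and} right strongly kernel restricted $(\mcX,\mcY,\mcZ)$-Gorenstein (conditions (e) and (g) of Theorem \ref{MThmGc}), using Remark \ref{GrelG} (2). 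The one point needing care is that Theorem \ref{MThmGc} (e) $\Rightarrow$ (a) requires the additional hypothesis $\mcGP_{(\mcX,\mcY)} \subseteq \mcI^{<\infty}_\mcZ$; this is exactly why in part (1) of the present theorem the conclusion (c) is obtained without that containment — instead I would argue directly that kernel-restricted plus $\mcC = \mcGP^\wedge_{(\mcX,\mcY)} = \mcGI^\vee_{(\mcY,\mcZ)}$ already yields finite global dimensions via Corollary \ref{glGPD=id} (2), implication (c) $\Rightarrow$ (a) applied to $(\mcX,\mcY)$ using $\pd_\omega(\omega^\wedge) \le \id(\omega) < \infty$ — wait, $\id(\omega)$ is not assumed; rather $\pd_\omega(\omega^\wedge) < \infty$ is part of kernel-restrictedness, so Corollary \ref{glGPD=id} (2) (c)$\Rightarrow$(a) applies directly once $\omega$ is closed under direct summands (which it is, by \gatwo). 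Then the long chain of equalities in part (1) is read off from \eqref{eqn:chain_equalities} of Theorem \ref{MThmGc} together with Proposition \ref{MpTGT} (2)–(3). For part (2), with the extra containments $\mcGP_{(\mcX,\mcY)} \subseteq \mcI^{<\infty}_\mcZ$ and $\mcGI_{(\mcY,\mcZ)} \subseteq \mcP^{<\infty}_\mcX$, Theorem \ref{MThmGc} (3)–(4) directly give (e) $\Rightarrow$ (a) and (g) $\Rightarrow$ (a); combining with the already-proven (b) $\Rightarrow$ (e) and (b) $\Rightarrow$ (g) — which in turn follow from Remark \ref{GrelG} (2) since (b) here unpacks to exactly left-and-right strong kernel-restrictedness — yields (b) $\Rightarrow$ (c), closing the equivalence of (a), (b), (c). The main obstacle is bookkeeping: one must be scrupulous about which of the many dimension identities in \cite{BMS,BMPS19,HMP22} are being cited and making sure the ``tilting'' hypothesis is used precisely at the points where $\mcGP_{(\mcX,\mcY)} = {}^\perp\mcY$ and $\mcGI_{(\mcY,\mcZ)} = \mcY^\perp$ are needed to convert between absolute-relative and finitistic-global language; no genuinely new homological input is required beyond Theorem \ref{MThmGc}, Proposition \ref{MpTGT}, and Remark \ref{RkT}.
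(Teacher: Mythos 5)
Your reduction breaks at the hardest point, namely (a) $\Rightarrow$ (c) (and the parallel step (b) $\Rightarrow$ (c) in part (2)). You assert that condition (a) ``says'' $\mcC = \mcGP^\wedge_{(\mcX,\mcY)} = \mcGI^\vee_{(\mcY,\mcZ)}$, i.e.\ that left-and-right strong kernel restrictedness (conditions (e) and (g) of Theorem \ref{MThmGc}) is ``hidden in'' $\mcP^{<\infty}_{\mcY} = \mcI^{<\infty}_{\mcY}$ once the finitistic dimensions are finite. That is not justified: Proposition \ref{MpTGT} (1) only identifies $\mcGP^\wedge_{(\mcX,\mcY)} = \mcP^{<\infty}_{\mcY}$ and $\mcGI^\vee_{(\mcY,\mcZ)} = \mcI^{<\infty}_{\mcY}$, so (a) yields the equality of these two (a priori proper) subcategories, and finiteness of $\FPD_\mcY(\mcC) = \pd_\mcY(\mcP^{<\infty}_{\mcY})$ says nothing about objects outside $\mcP^{<\infty}_{\mcY}$. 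Without $\mcC = \mcGP^\wedge_{(\mcX,\mcY)}$ you cannot apply Corollary \ref{glGPD=id} (2) as you propose (its implication to finite $\glGPD_{(\mcX,\mcY)}(\mcC)$ requires exactly that equality), you cannot claim (e)/(g) of Theorem \ref{MThmGc}, and your part (2) argument ``(b) unpacks to exactly left-and-right strong kernel-restrictedness'' fails for the same reason, since $\mcC = \mcGP^\wedge_{(\mcX,\mcY)}$ and $\mcC = \mcGI^\vee_{(\mcY,\mcZ)}$ belong to (e)/(g) but not to (b). Passing from data on the subcategory $\mcGP^\wedge_{(\mcX,\mcY)} = \mcGI^\vee_{(\mcY,\mcZ)}$ to the global statement (c) is precisely the nontrivial content here (compare the absolute case, where \Gone\ and \Gtwo\ implying finite global Gorenstein dimension is the hard implication), and your proposal supplies no argument for it.

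The paper's proof works instead on the common subcategory $\mcGP^\wedge_{(\mcX,\mcY)} = \mcGI^\vee_{(\mcY,\mcZ)}$ --- which is all that (a) gives via Proposition \ref{MpTGT} (1), and which in part (2) is obtained from (b) plus the two extra containments via Lemma \ref{lemcGP=cGI} and its dual --- and then invokes \cite[Thm. 3.1]{HMP22}, not \eqref{eqn:chain_equalities}, to produce the displayed chain of finitistic equalities; note that several terms of that chain, such as $\Gid_{(\mcY,\mcZ)}(\mcGP^\wedge_{(\mcX,\mcY)})$, $\pd_{\omega}(\mcGI^\vee_{(\mcY,\mcZ)})$, $\FGPD_{(\mcX,\mcY)}(\mcC)$ and $\FGID_{(\mcY,\mcZ)}(\mcC)$, do not appear in \eqref{eqn:chain_equalities}, which moreover is only available once (c) is already known, so ``reading the chain off'' as you suggest is circular. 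From that chain the paper extracts (b) and only then reaches (c) through Theorem \ref{MThmGc}; in part (2) it uses \cite[Thm. 3.1]{HMP22} with Proposition \ref{MpTGT} (2,3) to conclude $\FPD_\mcY(\mcC) = \FID_\mcY(\mcC) = \id_\nu(\omega) < \infty$. Your handling of (c) $\Rightarrow$ (a), (c) $\Rightarrow$ (b) and of ``(a) implies $\mcGP^\wedge_{(\mcX,\mcY)} = \mcGI^\vee_{(\mcY,\mcZ)}$'' matches the paper (though the appeal to Lemma \ref{condwv} (2) is both unnecessary and unavailable --- its hypotheses include enough injectives and $\mcI \subseteq \nu$ --- since $\omega \subseteq \mcC = \mcGI^\vee_{(\mcY,\mcZ)}$ is immediate there), but without an argument replacing \cite[Thm. 3.1]{HMP22} the implications (a) $\Rightarrow$ (b), (a) $\Rightarrow$ (c) and (b) $\Rightarrow$ (a) remain unproven in your proposal.
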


\begin{proof}
If we assume (c), then Theorem \ref{MThmGc} and Proposition \ref{MpTGT} yield (a) and (b). On the other hand, if we assume (a), then the equality $\mcGP_{(\mcX,\mcY)}^\wedge = \mcGI_{(\mcY,\mcZ)}^\vee$ follows by Proposition \ref{MpTGT} (1).
\begin{enumerate}
\item Under these conditions, we have $\mcGP_{(\mcX,\mcY)}^\wedge = \mcGI_{(\mcY,\mcZ)}^\vee$. Then, the chain of equalities is a consequence of \cite[Thm. 3.1]{HMP22}. Using this chain, we can note that $\omega \cup \nu \subseteq \mcGP_{(\mcX,\mcY)}^\wedge = \mcGI_{(\mcY,\mcZ)}^\vee$, $\id_\nu(\omega) < \infty$, $\id_\nu(\nu^\vee) \leq \id_\nu(\mcGP_{(\mcX,\mcY)}^\wedge) < \infty$ and $\pd_\omega(\omega^\wedge) \leq \pd_\omega(\mcGI_{(\mcY,\mcZ)}^\vee) < \infty$, that is, we have (b). It then follows that (c) is obtained from Theorem \ref{MThmGc}.

\item By Lemma \ref{lemcGP=cGI} and its dual, we get $\mcGP_{(\mcX,\mcY)}^\wedge = \mcGI_{(\mcY,\mcZ)}^\vee$. Thus, Proposition \ref{MpTGT} (1) implies $\mcP^{<\infty}_{\mcY} = \mcI^{<\infty}_{\mcY}$. Finally, using \cite[Thm. 3.1]{HMP22} and Proposition \ref{MpTGT} (2,3), we conclude that $\FPD_\mcY(\mcC) = \FID_\mcY(\mcC) = \id_\nu(\omega) < \infty$.
\end{enumerate}
\end{proof}

For the rest of this section, we shall need the notion of $n$-$\mcX$-tilting classes in an abelian category $\mcC$ (see Example \ref{ex:GP_objects} (4)). An $n$-$\mcX$-tilting class $\mcT$ is \emph{big} (resp. \emph{small}) if $\mcT$ is closed under arbitrary (resp. finite) coproducts. An object $T \in \mcC$ is \emph{big} (resp. \emph{small}) \emph{$n$-$\mcX$-tilting} if ${\rm Add}(T)$ (resp. ${\rm add}(T)$) is $n$-$\mcX$-tilting. In the case $\mcC = \mcX$, we just say \emph{$n$-tilting} instead of $n$-$\mcC$-tilting.

\begin{proposition}\label{tilting-in-triples}
Let $(\mcX,\mcY,\mcZ)$ be a tilting $G$-admissible triple in $\mcC$, and assume that either $\mcC$ has enough injective objects or that $\mcZ \subseteq \mcGI_{(\mcY,\mcZ)}^\perp$. Then, the following assertions hold for $\nu := \mcY \cap \mcZ$:
\begin{enumerate}
\item $\nu$ is a small $0$-$\mcGI_{(\mcY,\mcZ)}$-tilting category.

\item $\nu$ is a small $n$-$\mcGI_{(\mcY,\mcZ)}^\vee$-tilting category if $n := \FID_\mcY(\mcC) < \infty$.

\item $\nu = {}^\perp(\nu^\perp) \cap \nu^\perp \cap \mcGI_{(\mcY,\mcZ)} = {}^\perp(\nu^\perp) \cap \mcY^\perp$.
%\item[(4)] $\nu^\wedge=\mcGI_{(\mcY,\mcZ)}\cap \mcP^{<\infty}_{\mcGI_{(\mcY,\mcZ)}}.$
\end{enumerate}
\end{proposition}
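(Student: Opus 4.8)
The plan is to exploit Remark \ref{RkT}, which tells us that for a tilting $G$-admissible triple we have $\mcGI_{(\mcY,\mcZ)} = \mcY^\perp$ and that $(\mcY,\nu)$ is $\nu$-tilting, and then to read off each of the three claims from the general theory of $n$-$\mcX$-tilting classes developed in \cite{AM21,AM21P1}. First I would check the axioms \tiltzero--\tiltfive\ for $\nu$ relative to the class $\mcX_0 := \mcGI_{(\mcY,\mcZ)}$ in the case $n=0$. Axiom \tiltzero\ (closure under direct summands) is part \gatwo\ of the definition of a $G$-admissible triple. For \tiltone\ with $n=0$ we need $\pd_{\mcX_0}(\nu) = 0$, i.e. $\Ext^{>0}(\nu,\mcGI_{(\mcY,\mcZ)}) = 0$; since $\nu \subseteq \mcY^\perp = \mcGI_{(\mcY,\mcZ)}$ and $\mcGI_{(\mcY,\mcZ)}$ is coresolving (being the right half of the $\Ext$-orthogonality defining $\mcY^\perp$), this reduces to the self-orthogonality of $\nu$, which follows from $\nu \subseteq \mcGI_{(\mcY,\mcZ)} = \mcY^\perp$ together with $\nu\subseteq\mcY$. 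Axiom \tilttwo, $\nu \cap \mcX_0 \subseteq \nu^\perp$, is immediate since $\nu \subseteq \nu^\perp$ by self-orthogonality. For \tiltthree\ we need a relative generator of $\mcX_0$ contained in $\nu^\vee_{\mcX_0}$; condition \tiltingtwo\ provides, for every $C \in \mcY^\perp = \mcGI_{(\mcY,\mcZ)}$, an epic $\mcY$-precover $Y \to C$ with $Y \in \nu$, and since $\mcGI_{(\mcY,\mcZ)}$ is a GI-admissible-type class one checks the kernel lands back in $\mcGI_{(\mcY,\mcZ)}$, so $\nu$ itself works. Axiom \tiltfour\ asks for a relative cogenerator of $\mcX_0$ inside $\nu^\perp \cap \mcX_0^\perp$; here $\nu$ is a relative cogenerator in $\mcGI_{(\mcY,\mcZ)}$ by the GI-admissibility of $(\mcY,\mcZ)$ (dualizing \GPafive, since $\nu = \mcY\cap\mcZ$ is the kernel), and self-orthogonality gives $\nu \subseteq \nu^\perp \cap \mcGI_{(\mcY,\mcZ)}^\perp$. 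Finally \tiltfive\ is again supplied by \tiltingtwo. This gives part (1).

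For part (2), the hypothesis $n := \FID_\mcY(\mcC) < \infty$ together with Proposition \ref{MpTGT} (1), which identifies $\mcGI_{(\mcY,\mcZ)}^\vee = \mcI^{<\infty}_\mcY$, and Proposition \ref{MpTGT} (3), which gives $\id_\nu(\mcGI_{(\mcY,\mcZ)}^\vee) = \FID_\mcY(\mcC) = n$, are the key inputs. I would now verify the $n$-$\mcX$-tilting axioms for $\nu$ relative to $\mcX_1 := \mcGI_{(\mcY,\mcZ)}^\vee$. Axiom \tiltone\ becomes $\pd_{\mcX_1}(\nu) \le n$, which is exactly $\id_\nu(\mcX_1) \le n$ read the other way via the folklore identity $\pd_{\mcY}(\mcX) = \id_{\mcX}(\mcY)$ recalled in the Preliminaries; this is Proposition \ref{MpTGT} (3). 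Axiom \tilttwo, $\nu \cap \mcX_1 \subseteq \nu^\perp$, again follows from self-orthogonality of $\nu$. For \tiltthree\ and \tiltfive\ the relative generator and precover conditions transfer from the $n=0$ case because $\nu \subseteq \mcGI_{(\mcY,\mcZ)} \subseteq \mcX_1$ and $\mcX_1$ is the coresolution closure; one uses the dual of \cite[Thm. 2.8]{BMPS19}-type approximations together with \tiltingtwo. Axiom \tiltfour\ needs $\nu$ to be a relative cogenerator in $\mcX_1$ with $\nu \subseteq \nu^\perp \cap \mcX_1^\perp$; the cogenerator property in $\mcX_1 = \mcGI_{(\mcY,\mcZ)}^\vee$ follows by iterating the cogenerator property in $\mcGI_{(\mcY,\mcZ)}$ up the coresolution filtration, and $\mcX_1^\perp \subseteq \mcGI_{(\mcY,\mcZ)}^\perp$ so self-orthogonality of $\nu$ in $\mcGI_{(\mcY,\mcZ)}$ suffices. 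Closure under finite coproducts (the ``small'' qualifier) comes from \GPathree\ applied to the relevant admissible pair.

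For part (3) I would argue directly from the orthogonality computations. The equality $\nu = {}^\perp(\nu^\perp) \cap \nu^\perp \cap \mcGI_{(\mcY,\mcZ)}$ is precisely the statement that $\nu$ is the ``core'' of the complete cotorsion-type pair it generates inside $\mcGI_{(\mcY,\mcZ)}$; the inclusion $\subseteq$ is self-orthogonality, and for $\supseteq$ one takes $C$ in the right-hand side, uses \tiltingtwo\ (valid since $C \in \mcGI_{(\mcY,\mcZ)} = \mcY^\perp$) to get $Y \to C$ epic with $Y \in \nu$ and kernel $K \in \mcGI_{(\mcY,\mcZ)}$, observes $K \in \nu^\perp$ from $C,Y \in \nu^\perp$ and $\Ext^{>0}(\nu,\nu)=0$, whence the sequence $K \rightarrowtail Y \twoheadrightarrow C$ splits because $C \in {}^\perp(\nu^\perp)$, so $C$ is a summand of $Y \in \nu$ and \gatwo\ gives $C \in \nu$. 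For the second equality, $\nu^\perp \cap \mcGI_{(\mcY,\mcZ)} = \mcY^\perp$-nothing: rather, one uses $\mcGI_{(\mcY,\mcZ)} = \mcY^\perp$ from Remark \ref{RkT} to rewrite $\mcY^\perp$ in the second expression, and then one needs $\nu^\perp \cap \mcY^\perp \subseteq \nu^\perp \cap \mcGI_{(\mcY,\mcZ)}$, which is an identity, plus the reverse inclusion ${}^\perp(\nu^\perp)$ absorbed: since $\nu \subseteq \mcY^\perp$ forces $\mcY^\perp \subseteq {}^\perp(\nu^\perp)$? No-that is the wrong direction, so the real content is that on the class $\mcY^\perp$ the orthogonal $\nu^\perp \cap \mcY^\perp$ already lies in ${}^\perp(\nu^\perp)$, which follows because $\nu$ is a relative generator in $\mcGI_{(\mcY,\mcZ)} = \mcY^\perp$ with $\pd_{\mcY^\perp}(\nu) = 0$, giving a dimension-shifting argument that kills the relevant $\Ext$. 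The main obstacle I anticipate is exactly this last point in part (3): being careful about which orthogonality inclusions are automatic and which require the tilting hypothesis \tiltingtwo, and making the dimension-shifting along $\nu$-(co)resolutions rigorous in the possible absence of enough projectives or injectives. This is where I would invoke the auxiliary hypothesis ``$\mcC$ has enough injective objects or $\mcZ \subseteq \mcGI_{(\mcY,\mcZ)}^\perp$'' to guarantee the needed approximations exist.
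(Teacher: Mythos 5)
The decisive gap is in your verification of axiom \tiltfour\ in parts (1) and (2). You propose $\beta := \nu$, on the grounds that (i) $\nu$ is a relative cogenerator in $\mcGI_{(\mcY,\mcZ)}$ ``by dualizing \GPafive'', and (ii) $\nu \subseteq \nu^\perp \cap \mcGI_{(\mcY,\mcZ)}^\perp$ ``by self-orthogonality''. Neither claim is justified. The dual of \GPafive\ only says that $\nu$ is a relative generator in $\mcZ$; what the theory actually gives (dual of Proposition \ref{coro:properties_relative_Gorenstein_projective}) is that $\nu$ is a relative \emph{generator} in $\mcGI_{(\mcY,\mcZ)}$, and requiring it to be a relative \emph{cogenerator} there as well is the ``strong Frobenius'' condition, which fails in general (compare Proposition \ref{prop:relative_G-projective_model} and its dual, where for cotorsion-pair kernels it forces $\nu = \mcI$). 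Likewise, self-orthogonality of $\nu$ gives $\nu \subseteq \nu^\perp$ but says nothing about $\Ext^{>0}(\mcGI_{(\mcY,\mcZ)},\nu)$, i.e.\ about $\nu \subseteq \mcGI_{(\mcY,\mcZ)}^\perp$; no condition in Definition \ref{def:G-admissible_triple} yields this. This is precisely where the auxiliary hypothesis ``$\mcC$ has enough injectives or $\mcZ \subseteq \mcGI_{(\mcY,\mcZ)}^\perp$'' must be spent: the paper takes $\beta := \mcI$ (injectives lie in $\mcGI_{(\mcY,\mcZ)}^\perp$ and cogenerate $\mcGI_{(\mcY,\mcZ)} = \mcY^\perp$, using Remark \ref{RkT}) or $\beta := \mcZ$ (a relative cogenerator in $\mcGI_{(\mcY,\mcZ)}$ by the very definition of relative Gorenstein injectives), and in part (2) upgrades the cogenerator property from $\mcGI_{(\mcY,\mcZ)}$ to $\mcGI_{(\mcY,\mcZ)}^\vee$ by a Snake-lemma argument. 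You instead defer this hypothesis to part (3), where it is not needed, so the one place it is essential goes unsupported. (A smaller issue of the same kind: \tiltone\ in part (1) does not ``reduce to self-orthogonality of $\nu$''; the correct statement is $\mcGI_{(\mcY,\mcZ)} \subseteq \mcY^\perp \subseteq \nu^\perp$, whence $\pd_{\mcGI_{(\mcY,\mcZ)}}(\nu) = 0$.)

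In part (3), your splitting argument for the first equality (via \tiltingtwo, the precover property forcing the kernel into $\mcY^\perp \subseteq \nu^\perp$, and \gatwo) is correct and amounts to reproving \cite[Prop.\ 3.19 (b)]{AM21}, which the paper simply cites once (1) is established. However, your handling of the second equality goes wrong: the inclusion you single out as ``the real content'', namely $\nu^\perp \cap \mcY^\perp \subseteq {}^\perp(\nu^\perp)$, is false in general (combined with the first equality it would give $\mcGI_{(\mcY,\mcZ)} = \nu$), and the dimension-shifting you invoke does not prove it. What is actually needed is only the trivial containment $\mcY^\perp \subseteq \nu^\perp$ (from $\nu \subseteq \mcY$) together with $\mcGI_{(\mcY,\mcZ)} = \mcY^\perp$ from Remark \ref{RkT}, which immediately identifies ${}^\perp(\nu^\perp) \cap \mcY^\perp$ with the triple intersection ${}^\perp(\nu^\perp) \cap \nu^\perp \cap \mcGI_{(\mcY,\mcZ)}$; no approximation or extra hypothesis is required there.
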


\begin{proof} \
\begin{enumerate}
\item First, $\nu$ is closed under direct summands by \gatwo, and the containment $\nu \subseteq \mcGI_{(\mcY,\mcZ)} \subseteq \mcY^\perp \subseteq \nu^\perp$ is clear. Thus, we get conditions \tiltzero, \tiltone \ and \tilttwo. Condition \tiltthree \ follows by the dual of \cite[Coroll. 3.25 (a)]{BMS} and the trivial containment $\nu \subseteq \nu^\vee_{\mcGI_{(\mcY,\mcZ)}}$. Condition \tiltfive \ is trivial since $\nu^\perp \cap \mcGI_{(\mcY,\mcZ)} = \mcGI_{(\mcY,\mcZ)}$. Finally, for \tiltfour \ note first that $\nu^\perp \cap \mcGI_{(\mcY,\mcZ)}^\perp = \mcGI_{(\mcY,\mcZ)}^\perp$. Under the assumption that $\mcC$ has enough injective objects, it is clear that the condition is fulfilled by setting $\beta := \mcI$, by Remark \ref{RkT}. Now in the case where $\mcZ \subseteq \mcGI_{(\mcY,\mcZ)}^\perp$, we have that $\mcZ \subseteq \nu^\perp \cap \mcGI_{(\mcY,\mcZ)}^\perp$ and $\mcZ$ is a relative cogenerator in $\mcGI_{(\mcY,\mcZ)}$.

\item Condition \tiltzero \ was proved in the previous part, and \tiltone \ follows by Proposition \ref{MpTGT} (3).  Since $\nu \cap \mcGI_{(\mcY,\mcZ)}^\vee = \nu \subseteq \nu^\perp$, \tilttwo \ is clear. Now by the dual of Proposition \ref{coro:properties_relative_Gorenstein_projective} (1), we know that $\mcGI_{(\mcY,\mcZ)}^\vee$ is thick. Then, $\nu^\vee_{\mcGI_{(\mcY,\mcZ)}^\vee} = \nu^\vee$ since $\nu \subseteq \mcGI_{(\mcY,\mcZ)}$. On the other hand, the dual of Proposition \ref{coro:properties_relative_Gorenstein_projective} (4) implies that $\nu^\vee$ is a relative generator in $\mcGI_{(\mcY,\mcZ)}^\vee$, that is, \tiltthree. Condition \tiltfive \ follows, since $\nu^\perp \cap \mcGI_{(\mcY,\mcZ)}^\vee = \mcY^\perp$ by Proposition \ref{MpTGT} (5). Finally, in order to verify \tiltfour, first note that $(\mcGI_{(\mcY,\mcZ)}^\vee)^\perp = \mcGI_{(\mcY,\mcZ)}^\perp$ by the dual of \cite[Rmk. 3.13]{BMS}. So in the case where $\mcZ \subseteq \mcGI_{(\mcY,\mcZ)}^\perp$, it is enough to show that $\mcZ$ is a relative cogenerator in $\mcGI_{(\mcY,\mcZ)}^\vee$. Given $M \in \mcGI_{(\mcY,\mcZ)}^\vee$, by the dual of Proposition \ref{coro:properties_relative_Gorenstein_projective} (4) again, there is an exact sequence $M \rightarrowtail I_0 \twoheadrightarrow V$ with $I_0 \in \mcGI_{(\mcY,\mcZ)}$ and $V \in \nu^\vee$. Moreover, using that $\mcZ$ is a relative cogenerator in $\mcGI_{(\mcY,\mcZ)}$, there is an exact sequence $I_0 \rightarrowtail Z_0 \twoheadrightarrow I_1$ with $Z_0 \in \mcZ$ and $I_1 \in \mcGI_{(\mcY,\mcZ)}$. Thus, the composition $M \rightarrowtail I_0 \rightarrowtail Z_0$ has cokernel in $\mcGI_{(\mcY,\mcZ)}^\vee$ by Snake's lemma. The case where $\mcC$ has enough injective objects follows as in part (1).

\item By (1), we know that $\nu$ is $0$-$\mcGI_{(\mcY,\mcZ)}$-tilting. Hence, the result is a direct consequence of \cite[Prop. 3.19 (b)]{AM21} along with Remark \ref{RkT} $\mathsf{(t2)}$.
\end{enumerate}
\end{proof}

In what follows, given a positive integer $k \geq 1$ and two classes of objects $\mcT, \mcX \subseteq \mcC$, let ${\rm Fac}^{\mcX}_k(\mcT)$ denote the class of objects $C \in \mcC$ admitting an exact sequence
\[
K \rightarrowtail T_k \xrightarrow{f_k} \cdots \to T_1 \stackrel{f_1}\twoheadrightarrow C
\]
with ${\rm Ker}(f_i) \in \mcX$ and $T_i \in \mcT \cap \mcX$ for every $1 \leq i \leq k$ (see \cite[Def. 5.1]{AM21P1}). In the case where $\mcX = \mcC$, we simply write ${\rm Fac}_k(\mcT)$.

A pair $(\mcA,\mcB)$ of classes of objects in $\mcC$ is \emph{$\mcX$-complete} if for any $X \in \mcX$ there are exact sequences $B \rightarrowtail A \twoheadrightarrow X$ and $X \rightarrowtail B' \twoheadrightarrow A'$ where $A, A' \in \mcA \cap \mcX$ and $B, B' \in \mcB \cap \mcX$.

\begin{theorem}\label{teo:tilting-in-triples}
Let $(\mcX,\mcY,\mcZ)$ be a tilting $G$-admissible triple in $\mcC$ for which $n := \FID_\mcY(\mcC)$ is finite. If either $\mcC$ has enough injective objects or $\mcZ \subseteq \mcGI_{(\mcY,\mcZ)}^\perp$, then the following assertions hold true:
\begin{enumerate}
\item ${}^\perp(\nu^\perp) \cap \mcGI_{(\mcY,\mcZ)}^\vee = \nu^\vee$.

\item $\mcGI_{(\mcY,\mcZ)} = \nu^\perp \cap \mcGI_{(\mcY,\mcZ)}^\vee = {\rm Fac}_k(\nu)\cap\mcGI_{(\mcY,\mcZ)}^\vee$ for every $k \geq \max\{1, n\}$.

\item The pair $({}^\perp(\nu^\perp),\nu^\perp)$ is $\mcGI_{(\mcY,\mcZ)}^\vee$-complete.

\item $\Gid_{(\mcY,\mcZ)}(\nu^\vee) = \FGID_{(\mcY,\mcZ)}(\mcC) = \pd_{\nu^\vee}(\nu^\vee) = \coresdim_\nu(\nu^\vee) = n$.
\end{enumerate}
\end{theorem}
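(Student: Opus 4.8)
The plan is to exploit Proposition \ref{tilting-in-triples}, which already tells us that $\nu$ is a small $n$-$\mcGI_{(\mcY,\mcZ)}^\vee$-tilting category, and then to transport the general structural results about $n$-$\mcX$-tilting classes from \cite{AM21P1,AM21} into our setting by taking $\mcX = \mcGI_{(\mcY,\mcZ)}^\vee$ and $\mcT = \nu$. The crucial bookkeeping facts, which I would record at the outset, are: $\mcGI_{(\mcY,\mcZ)}^\vee$ is thick by the dual of Proposition \ref{coro:properties_relative_Gorenstein_projective} (1); $\nu^\vee_{\mcGI_{(\mcY,\mcZ)}^\vee} = \nu^\vee$ since $\nu \subseteq \mcGI_{(\mcY,\mcZ)}$; $\mcGI_{(\mcY,\mcZ)}^\vee = \mcI^{<\infty}_{\mcY}$ and $\mcY^\perp = \nu^\perp \cap \mcGI_{(\mcY,\mcZ)}^\vee$ by Proposition \ref{MpTGT} (1,5); and $\mcGI_{(\mcY,\mcZ)} = \mcY^\perp$ by Remark \ref{RkT} $\mathsf{(t2)}$. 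With these in hand, each of (1)--(4) should reduce to citing the appropriate statement in \cite{AM21P1} or \cite{AM21} relativized to $\mcX = \mcGI_{(\mcY,\mcZ)}^\vee$.

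For (1), I would apply the characterization of $\nu^\vee$ as the left $\Ext^1$-orthogonal of the tilting class's orthogonal \emph{inside} $\mcX$ — precisely, for an $n$-$\mcX$-tilting class $\mcT$ one has ${}^{\perp_1}(\mcT^\perp) \cap \mcX = \mcT^\vee_\mcX$ (the analogue of \cite[Thm. 3.12 (a)]{AM21} relative to $\mcX$); since ${}^\perp(\nu^\perp) \subseteq {}^{\perp_1}(\nu^\perp)$ and the reverse containment in the presence of a tilting class is standard via dimension shifting, combined with $\nu^\vee_{\mcGI_{(\mcY,\mcZ)}^\vee} = \nu^\vee$, this yields ${}^\perp(\nu^\perp) \cap \mcGI_{(\mcY,\mcZ)}^\vee = \nu^\vee$. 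For (2), the first equality $\mcGI_{(\mcY,\mcZ)} = \nu^\perp \cap \mcGI_{(\mcY,\mcZ)}^\vee$ is exactly $\mcGI_{(\mcY,\mcZ)} = \mcY^\perp$ from Remark \ref{RkT} rewritten via Proposition \ref{MpTGT} (5); the identification with ${\rm Fac}_k(\nu) \cap \mcGI_{(\mcY,\mcZ)}^\vee$ for $k \geq \max\{1,n\}$ is the relativized version of \cite[Thm. 5.4]{AM21P1} (describing the tilting torsion class as iterated $\nu$-quotients), using that $\pd_{\mcGI_{(\mcY,\mcZ)}^\vee}(\nu) \leq n$. For (3), $\mcGI_{(\mcY,\mcZ)}^\vee$-completeness of $({}^\perp(\nu^\perp),\nu^\perp)$ should follow from the completeness half of the relative tilting correspondence together with part (1), by the dual of Proposition \ref{coro:properties_relative_Gorenstein_projective} (4) which already furnishes the required short exact sequences with one end in $\mcGI_{(\mcY,\mcZ)}$ and the other in $\nu^\vee$; closing under summands via \gatwo. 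For (4), the chain $\Gid_{(\mcY,\mcZ)}(\nu^\vee) = \FGID_{(\mcY,\mcZ)}(\mcC) = \pd_{\nu^\vee}(\nu^\vee) = \coresdim_\nu(\nu^\vee) = n$ combines Proposition \ref{MpTGT} (3) ($\id_\nu(\mcGI_{(\mcY,\mcZ)}^\vee) = \FID_\mcY(\mcC) = \FGID_{(\mcY,\mcZ)}(\mcC)$), the folklore equality $\pd_{\nu^\vee}(\nu) = \id_\nu(\nu^\vee)$, and the dual of Proposition \ref{coro:properties_relative_Gorenstein_projective} (6) identifying $\coresdim_\nu$ with $\id_\nu$ on $\nu^\vee$.

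The main obstacle I anticipate is checking that the results of \cite{AM21P1,AM21}, which are stated for $n$-$\mcX$-tilting classes with $\mcX$ a \emph{fixed ambient class satisfying their standing hypotheses}, genuinely apply with $\mcX = \mcGI_{(\mcY,\mcZ)}^\vee$ — i.e.\ verifying that this $\mcX$ is closed under the operations (extensions, epikernels, summands) those theorems require, and that the relevant relative (pre)covers/(pre)envelopes exist. This is where the thickness of $\mcGI_{(\mcY,\mcZ)}^\vee$ (dual of Proposition \ref{coro:properties_relative_Gorenstein_projective} (1)) and the approximation sequences from the dual of Proposition \ref{coro:properties_relative_Gorenstein_projective} (4) do the heavy lifting; once the hypotheses are confirmed, parts (1)--(4) are essentially formal consequences. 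A secondary delicate point is keeping track of whether $n$ could be $0$, which is why part (2) is phrased with $k \geq \max\{1,n\}$; I would handle the $n = 0$ case by noting $\nu^\vee = \nu$ collapses the statements, and the $n \geq 1$ case by the iterated-${\rm Fac}$ description.
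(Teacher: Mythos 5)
Your proposal follows essentially the same route as the paper: both rest on Proposition \ref{tilting-in-triples} (2) (that $\nu$ is small $n$-$\mcGI_{(\mcY,\mcZ)}^\vee$-tilting), the thickness of $\mcGI_{(\mcY,\mcZ)}^\vee$ with $\nu^\vee_{\mcGI_{(\mcY,\mcZ)}^\vee} = \nu^\vee$, Remark \ref{RkT} and Proposition \ref{MpTGT}, and then transport the structural theorems for $n$-$\mcX$-tilting classes from \cite{AM21,AM21P1} with $\mcX = \mcGI_{(\mcY,\mcZ)}^\vee$ (the paper cites \cite[Thm. 3.12 (a)--(c)]{AM21} for (1)--(3) and \cite[Prop. 3.18 (a)]{AM21} for (4), whereas you invoke slightly different but equivalent pieces of the same machinery). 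The argument is correct; the differences are only in which specific external statements carry parts (2)--(4).
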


\begin{proof}
By Proposition \ref{tilting-in-triples} (2), we know that $\nu$ is $n$-$\mcGI_{(\mcY,\mcZ)}^\vee$-tilting. Then by \cite[Thm. 3.12 (a)]{AM21} and the fact that $\mcGI_{(\mcY,\mcZ)}^\vee$ is thick, we obtain
\[
{}^\perp(\nu^\perp) \cap \mcGI_{(\mcY,\mcZ)}^\vee = \nu^\vee_{\mcGI_{(\mcY,\mcZ)}^\vee} \cap \mcGI_{(\mcY,\mcZ)}^\vee = \nu^\vee,
\]
and thus (1) follows. Part (2), on the other hand, follows by Remark \ref{RkT}, Proposition \ref{MpTGT} (5) and \cite[Thm. 3.12 (b)]{AM21}. Part (3) follows from \cite[Thm. 3.12 (c)]{AM21}. Finally, (4) follows from (1), (2), Proposition \ref{MpTGT} (3) and \cite[Prop. 3.18 (a)]{AM21}.
\end{proof}

\begin{corollary}\label{coro:tilting-in-triples}
Let $(\mcX,\mcY,\mcZ)$ be a tilting $G$-admissible triple in $\mcC$ such that $\mcC$ has enough injective objects and $n := \glGID_{(\mcY,\mcZ)}(\mcC)$ is finite. Then, the following statements hold true:
\begin{enumerate}
\item $\nu$ is a small $n$-tilting category.

\item $\mathcal{Y}^\perp = \mcGI_{(\mcY,\mcZ)} = \nu^\perp = {\rm Fac}_k(\nu)$ for every $k \geq \max\{1, n\}$.

\item ${}^\perp(\nu^\perp) = \nu^\vee$ and $\nu = {}^\perp(\nu^\perp) \cap \nu^\perp$.

\item $\FID_{\mathcal{Y}}(\mathcal{C}) = \pd(\nu) = \Gid_{(\mcY,\mcZ)}(\nu^\vee) = \pd_{\nu^\vee}(\nu^\vee) = \coresdim_\nu(\nu^\vee) = n$.

\item Suppose in addition that $\mcC$ has enough projective objects, and that $\mcP = {\rm add}(P)$ for some $P \in \mcP$. If ${\rm add}(V)$ is precovering in $V^\perp$ for any $V \in \nu$, then there is a small $n$-tilting object $T \in \nu$ such that $\nu = {\rm add}(T)$.

\item Suppose in addition that $\mcC$ is AB4 and has enough projective objects, and that $\mcP = {\rm Add}(P)$ for some $P \in \mcP$. If $\nu = {\rm Add}(\nu),$ then there is a big $n$-tilting object $T \in \nu$ such that $\nu = {\rm Add}(T)$.

\item $\mcWGP_\nu = \mcGP_{(\nu^\vee,\nu)} = {}^\perp\nu$ and $\id(\nu) = \glGPD_{(\nu^\vee,\nu)}(\mathcal{C})$.
\end{enumerate}
\end{corollary}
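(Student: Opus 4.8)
The plan is to first use the finiteness of $n=\glGID_{(\mcY,\mcZ)}(\mcC)$ to strip away the relative decorations that appear in Propositions \ref{MpTGT}, \ref{tilting-in-triples} and in Theorem \ref{teo:tilting-in-triples}. By the dual of Corollary \ref{glGPD=id} (2) (the equivalence $(a)\Leftrightarrow(b)$, which needs no closure hypothesis) we get $\mcC=\mcGI_{(\mcY,\mcZ)}^\vee$; the dual of Corollary \ref{glGPD=id} (1) then gives $\pd(\nu)=\glGID_{(\mcY,\mcZ)}(\mcC)=n$; and combining Proposition \ref{MpTGT} (3) with $\mcC=\mcGI_{(\mcY,\mcZ)}^\vee$ and again the dual of Corollary \ref{glGPD=id} (1) yields $\FID_\mcY(\mcC)=\id_\nu(\mcC)=n$. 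In particular $(\mcX,\mcY,\mcZ)$ satisfies the standing hypothesis $\FID_\mcY(\mcC)<\infty$ of Proposition \ref{tilting-in-triples} and Theorem \ref{teo:tilting-in-triples}, with their integer $n$ agreeing with ours, and $\mcC$ has enough injective objects by assumption, so both results are at our disposal.

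Parts (1)--(4) then drop out by substituting $\mcGI_{(\mcY,\mcZ)}^\vee=\mcC$ into those statements. Part (1) is exactly Proposition \ref{tilting-in-triples} (2). For (2), the triple being tilting gives $\mcY^\perp=\mcGI_{(\mcY,\mcZ)}$ via condition $\mathsf{(t2)}$ in Remark \ref{RkT}, and Theorem \ref{teo:tilting-in-triples} (2) specializes to $\mcGI_{(\mcY,\mcZ)}=\nu^\perp={\rm Fac}_k(\nu)$ for $k\geq\max\{1,n\}$. For (3), Theorem \ref{teo:tilting-in-triples} (1) reads ${}^\perp(\nu^\perp)=\nu^\vee$, and feeding (2) into Proposition \ref{tilting-in-triples} (3) (which asserts $\nu={}^\perp(\nu^\perp)\cap\mcY^\perp$) gives $\nu={}^\perp(\nu^\perp)\cap\nu^\perp$. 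Part (4) assembles $\FID_\mcY(\mcC)=n$ and $\pd(\nu)=n$ from the first step together with $\Gid_{(\mcY,\mcZ)}(\nu^\vee)=\pd_{\nu^\vee}(\nu^\vee)=\coresdim_\nu(\nu^\vee)=n$ from Theorem \ref{teo:tilting-in-triples} (4).

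Part (7) I would obtain independently, as the dual of Lemma \ref{lem:induced_containment} applied to the $G$-admissible triple $(\mcX,\mcY,\mcZ)$ with $\glGID_{(\mcY,\mcZ)}(\mcC)<\infty$: this gives at once $\mcWGP_\nu=\mcGP_{(\nu^\vee,\nu)}={}^\perp\nu$ and $\resdim_{{}^\perp\nu}(C)=\pd_\nu(C)$ for every $C\in\mcC$, whence $\glGPD_{(\nu^\vee,\nu)}(\mcC)=\resdim_{{}^\perp\nu}(\mcC)=\pd_\nu(\mcC)=\id_\mcC(\nu)=\id(\nu)$, using the folklore identity $\pd_\mcY(\mcX)=\id_\mcX(\mcY)$ at the penultimate equality.

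The real work is in (5)--(6), which I would derive from the theory of $n$-(co)tilting objects of \cite{AM21,AM21P1}. By (1), $\nu$ is a small $n$-tilting class, and under the hypothesis $\nu={\rm Add}(\nu)$ of (6) it is also big, since with $\mathsf{(ga2)}$ it is then closed under arbitrary coproducts and summands. A projective generator $P$ with $\mcP=\add(P)$ (resp.\ $\mcP=\Add(P)$, with $\mcC$ AB4) lies in ${}^\perp(\nu^\perp)=\nu^\vee$ because $\mcC$ has enough projectives, so $\coresdim_\nu(P)\leq n$ by (4), and a finite $\nu$-coresolution $P\rightarrowtail V^0\to\cdots\to V^n$ produces the candidate tilting object $T:=\bigoplus_{i=0}^{n}V^i\in\nu$. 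The plan is then to invoke the characterization in \cite{AM21,AM21P1} of small (resp.\ big) $n$-tilting classes of the form $\add(T)$ (resp.\ $\Add(T)$): the hypothesis that $\add(V)$ be precovering in $V^\perp$ for $V\in\nu$ (resp.\ the closure $\nu=\Add(\nu)$ together with AB4) is precisely what makes the Bongartz-type approximation argument run, yielding $\add(T)=\nu$ (resp.\ $\Add(T)=\nu$). I expect this last step to be the main obstacle — pinning down the exact ``tilting object from a tilting class'' statement of \cite{AM21,AM21P1} and verifying its hypotheses in the present relative setting; by contrast, the shuffling between the absolute dimensions $\pd(\nu),\id(\nu)$ and their $\mcY$- and $\nu$-relative versions (carried out above via Proposition \ref{MpTGT} and Corollary \ref{glGPD=id}) is routine.
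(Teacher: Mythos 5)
Your treatment of parts (1)--(4) and (7) is correct and follows essentially the same route as the paper: (1), (2) and (4) come from Proposition \ref{MpTGT} (3), Proposition \ref{tilting-in-triples} (2) and Theorem \ref{teo:tilting-in-triples} (1), (2), (4), after using the dual of Corollary \ref{glGPD=id} to get $\mcC = \mcGI^\vee_{(\mcY,\mcZ)}$, $\pd(\nu) = n$ and $\FID_{\mcY}(\mcC) = n$; for (7) both you and the paper invoke the dual of Lemma \ref{lem:induced_containment}. Your derivation of the second equality in (3) (feeding part (2) into Proposition \ref{tilting-in-triples} (3), i.e.\ $\nu = {}^\perp(\nu^\perp) \cap \mcY^\perp$ with $\mcY^\perp = \nu^\perp$) is a legitimate small variation: the paper instead runs a direct induction showing $\nu = \nu^\vee_k \cap \nu^\perp$ for all $k \geq 0$.

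The genuine gap is in (5) and (6), and it is exactly the step you flag as ``the main obstacle.'' You produce a candidate $T := \bigoplus_{i=0}^{n} V^i$ from a finite $\nu$-coresolution of a projective generator, but you never verify that $T$ is $n$-tilting in the sense of \cite[Def.~3.1]{AM21} (conditions \tiltzero--\tiltfive, in particular the precover condition \tiltfive\ and the relative generator condition \tiltthree), nor that $T^\perp = \nu^\perp$, nor that $\add(T) = \nu$ (resp.\ $\Add(T) = \nu$); deferring all of this to an unspecified ``characterization in \cite{AM21,AM21P1}'' leaves the statement unproved. The missing ingredient the paper uses is structural rather than constructive: by the dual of Corollary \ref{glGPD=id} (2), $(\nu^\vee, \mcGI_{(\mcY,\mcZ)})$ is a \emph{hereditary complete cotorsion pair} in $\mcC$, and together with part (3) one gets $\pd({}^\perp\mcGI_{(\mcY,\mcZ)}) = \pd(\nu^\vee) = \pd(\nu) = n$ and $\mcGI_{(\mcY,\mcZ)} \cap {}^\perp\mcGI_{(\mcY,\mcZ)} = \mcGI_{(\mcY,\mcZ)} \cap \nu^\vee = \nu$. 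These are precisely the hypotheses of \cite[Coroll.~3.50]{AM21} (small case, where your $\add(V)$-precovering hypothesis enters) and \cite[Coroll.~3.49]{AM21} (big case, AB4 and $\nu = \Add(\nu)$), which deliver an $n$-tilting object $T$ with $T^\perp = \mcGI_{(\mcY,\mcZ)}$; the identification $\nu = {}^\perp(\nu^\perp) \cap \nu^\perp = {}^\perp(T^\perp) \cap T^\perp = \add(T)$ (resp.\ $\Add(T)$) then follows from parts (2) and (3) together with \cite[Prop.~3.19 (b)]{AM21}. In particular no explicit Bongartz-type sum of coresolution terms needs to be (or is) analyzed; if you wish to keep your explicit candidate, you would have to redo the Bongartz argument of \cite{AM21} by hand, which is substantially more work than citing the cotorsion-pair form of the result.
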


\begin{proof}
Part (1) follows from Propositions \ref{MpTGT} (3) and \ref{tilting-in-triples} (2). Part (2) can be obtained from Theorem \ref{teo:tilting-in-triples} (2). For (3), we use Theorem \ref{teo:tilting-in-triples} (1) to obtain the equality ${}^\perp(\nu^\perp) = \nu^\vee$. On the other hand, an induction argument shows that $\nu = \nu^\vee_k \cap \nu^\perp$ for every $k \geq 0$, and thus ${}^\perp(\nu^\perp) \cap \nu^\perp = \nu^\vee \cap \nu^\perp = \nu$ follows. Part (4) is a consequence of Proposition \ref{MpTGT} (3) and Theorem \ref{teo:tilting-in-triples} (4).

Let us now show (5). By the dual of Corollary \ref{glGPD=id} (2) we know that $(\nu^\vee,\mcGI_{(\mcY,\mcZ)})$ is a hereditary complete cotorsion pair. This along with part (3) yields
\[
\pd({}^\perp\mcGI_{(\mcY,\mcZ)}) = \pd(\nu^\vee) = \pd(\nu) = n
\]
and
\[
\mcGI_{(\mcY,\mcZ)} \cap {}^\perp\mcGI_{(\mcY,\mcZ)} = \mcGI_{(\mcY,\mcZ)} \cap \nu^\vee = \nu.
\]
It then follows by \cite[Coroll. 3.50]{AM21} that there is some small $n$-tilting object $T \in \mcC$ such that $\mcGI_{(\mcY,\mcZ)} = T^\perp$. Hence, by parts (2) and (3), along with \cite[Prop. 3.19 (b)]{AM21}, we get that
\[
\nu = {}^\perp(\nu^\perp) \cap \nu^\perp = {}^\perp(T^\perp) \cap T^\perp = {\rm add}(T).
\]
Notice that (6) is similar to (5) by using \cite[Coroll. 3.49]{AM21} instead. Finally, (7) follows from the dual of Lemma \ref{lem:induced_containment}.
\end{proof}

\begin{corollary}\label{coro2:tilting-in-triples}
Let $\Lambda$ be an Artin $R$-algebra such that $\modu(\Lambda)$ is a strongly $(\mcX,\mcY,\mcZ)$-Gorenstein category, $n := \glGID_{(\mcY,\mcZ)}(\modu(\Lambda))$ and $m := \glGPD_{(\mathcal{X,Y})}(\mathcal{C})$. Then, there exist a basic $n$-tilting $\Lambda$-module $T$ and a basic $m$-cotilting $\Lambda$-module $U$ satisfying the following properties:
\begin{enumerate}
\item $\nu = \add(T)$ and $\omega = \add(U)$.

\item $\mcGI_{(\mcY,\mcZ)} = \mcWGI_\nu = T^\perp$ and $\mcGP_{(\mcX,\mcY)} = \mcWGP_\omega = {}^\perp U$.

\item $\pd(T) = \FID_{\mathcal{Y}}(\mathcal{C}) = \coresdim_{\nu}(\nu^\vee) = n$; and \\ $\id(U) = \FPD_{\mathcal{Y}}(\mathcal{C}) = \resdim_{\omega}(\omega^\wedge) = m$.

\item $T$ is cotilting if $\id(T) < \infty$.

\item $U$ is tilting if $\pd(U) < \infty$.
\end{enumerate}
\end{corollary}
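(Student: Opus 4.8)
The plan is to reduce Corollary~\ref{coro2:tilting-in-triples} entirely to Corollary~\ref{coro:tilting-in-triples} and its dual, applied to the module category $\modu(\Lambda)$. First I would record the ambient facts about $\modu(\Lambda)$: it is a Krull--Schmidt abelian category with enough projective and injective objects, it is AB4 and AB4${}^\ast$ (only finite (co)products occur), $\mcP(\Lambda) = \add(\Lambda)$ and $\mcI(\Lambda) = \add(D\Lambda)$ for the usual duality $D$, and every subcategory of the form $\add(V)$ is functorially finite, hence both precovering and preenveloping, in $\modu(\Lambda)$. Since $\modu(\Lambda)$ is strongly $(\mcX,\mcY,\mcZ)$-Gorenstein, Remark~\ref{RkT} gives that $(\mcX,\mcY,\mcZ)$ is a tilting $G$-admissible triple, Theorem~\ref{MThmGc} gives that $n = \glGID_{(\mcY,\mcZ)}(\modu(\Lambda))$ and $m = \glGPD_{(\mcX,\mcY)}(\modu(\Lambda))$ are finite, and being $(\mcX,\mcY,\mcZ)$-Gorenstein moreover yields $\pd(\nu) < \infty$ and $\id(\omega) < \infty$.

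Next I would produce the modules. The kernel $\nu$ is self-orthogonal (it lies in $\mcY \cap \mcY^\perp$ by Remark~\ref{RkT}), so $\add(V) \subseteq V^\perp$ for each $V \in \nu$, and the functorially finite class $\add(V)$ supplies $\add(V)$-precovers inside $V^\perp$; together with $\mcP = \add(\Lambda)$ this verifies the hypotheses of Corollary~\ref{coro:tilting-in-triples}~(5), yielding a small $n$-tilting object in $\nu$. Replacing it by its basic representative $T$ we obtain $\nu = \add(T)$ with $T$ a basic $n$-tilting $\Lambda$-module (a small $n$-tilting class over an Artin algebra being precisely the additive closure of a classical $n$-tilting module, cf.\ \cite{AM21}). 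Dually, using $m < \infty$ and the dual of Corollary~\ref{coro:tilting-in-triples}~(5) (now with $\mcI = \add(D\Lambda)$ and the preenveloping property of $\add(V)$ for $V \in \omega$), one gets a basic $m$-cotilting $\Lambda$-module $U$ with $\omega = \add(U)$. This proves~(1).

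Parts~(2) and~(3) are then bookkeeping. For~(2): $\mcGI_{(\mcY,\mcZ)} = \mcY^\perp = \nu^\perp = \add(T)^\perp = T^\perp$ by Remark~\ref{RkT} and Corollary~\ref{coro:tilting-in-triples}~(2), while $\mcWGI_\nu = \nu^\perp$ follows from Corollary~\ref{coro:tilting-in-triples}~(7) read through the induced (weak) admissible pair; symmetrically $\mcGP_{(\mcX,\mcY)} = {}^\perp\mcY = {}^\perp\omega = {}^\perp U$ using Corollary~\ref{MpropSG}, Proposition~\ref{MpTGT}~(4) and $\mcGP_{(\mcX,\mcY)}^\wedge = \modu(\Lambda)$, and $\mcWGP_\omega = {}^\perp\omega$ by the dual of Corollary~\ref{coro:tilting-in-triples}~(7) (equivalently Lemma~\ref{lem:induced_containment}). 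For~(3): $\pd(T) = \pd(\nu)$ and $\id(U) = \id(\omega)$ since the homological dimension of an additive closure is that of a finite generator, and then Corollary~\ref{coro:tilting-in-triples}~(4) gives $\pd(\nu) = \FID_{\mcY}(\modu(\Lambda)) = \coresdim_\nu(\nu^\vee) = n$, with the dual statement giving $\id(\omega) = \FPD_{\mcY}(\modu(\Lambda)) = \resdim_\omega(\omega^\wedge) = m$.

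Finally, for~(4) and~(5): if $\id(T) < \infty$, then $\id(\nu) < \infty$, so by Corollary~\ref{coro:tilting-in-triples}~(7) we get $\glGPD_{(\nu^\vee,\nu)}(\modu(\Lambda)) < \infty$, whence Corollary~\ref{glGPD=id}~(2) (applied to the induced GP-admissible pair $(\nu^\vee,\nu)$) yields a hereditary complete cotorsion pair $({}^\perp\nu,\nu^\wedge)$ with ${}^\perp\nu \cap \nu^\wedge = \nu$ (Proposition~\ref{coro:properties_relative_Gorenstein_projective}~(2)); since $\nu^\wedge = ({}^\perp\nu)^\perp$ contains $\mcI$ (injectives are right $\Ext$-orthogonal to everything), one verifies the dual axioms of a cotilting class for $\nu = \add(T)$ — equivalently, the dual of the tilting-module construction of Corollary~\ref{coro:tilting-in-triples}~(5) applies to this cotorsion pair — so $T$ is a cotilting module; part~(5) is symmetric. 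I expect the main obstacle to be the notational matching in~(2) around the weak Gorenstein classes $\mcWGI_\nu$, $\mcWGP_\omega$ (pinning down exactly which induced weak admissible pairs they denote and that $\mcWGI_\nu = \nu^\perp$, $\mcWGP_\omega = {}^\perp\omega$ follow from Corollary~\ref{coro:tilting-in-triples}~(7) and Lemma~\ref{lem:induced_containment}), together with confirming that the $\add(V)$-precovering hypothesis of Corollary~\ref{coro:tilting-in-triples}~(5) is automatic over an Artin algebra; both are routine but need care.
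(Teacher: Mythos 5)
Your global strategy coincides with the paper's: reduce everything to Corollary \ref{coro:tilting-in-triples} and its dual, after recording that $\modu(\Lambda)$ has enough projectives and injectives, $\mcP = \add(\Lambda)$, $\mcI = \add(D(\Lambda^{\rm op}))$, $\add(V)$ is functorially finite, and the triple is tilting by Remark \ref{RkT}; this correctly yields (1), (3) and the identities $\mcGI_{(\mcY,\mcZ)} = T^\perp$, $\mcGP_{(\mcX,\mcY)} = {}^\perp U$. The genuine gap is in part (2), exactly at the point you flagged as ``routine'': Corollary \ref{coro:tilting-in-triples}~(7) and Lemma \ref{lem:induced_containment} identify $\mcWGP_\nu = {}^\perp\nu$ and $\mcWGI_\omega = \omega^\perp$, which are the \emph{opposite} pairings to the ones the statement requires; they say nothing about $\mcWGI_\nu$ or $\mcWGP_\omega$, so the claim that $\mcWGI_\nu = \nu^\perp$ and $\mcWGP_\omega = {}^\perp\omega$ ``follow from Corollary (7) and Lemma \ref{lem:induced_containment}'' does not go through. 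The paper closes precisely this point by an external ingredient, Auslander--Reiten \cite[Thm. 5.4 (b)]{AR91} and its dual. (An internal fix is available but must be written out: $\mcWGP_\omega \subseteq {}^\perp\omega = \mcGP_{(\mcX,\mcY)}$ because $\mcC = \mcGP^\wedge_{(\mcX,\mcY)}$ together with Proposition \ref{coro:properties_relative_Gorenstein_projective}~(2), and conversely $\mcGP_{(\mcX,\mcY)} \subseteq \mcWGP_\omega$ since $\omega$ is a relative cogenerator in $\mcGP_{(\mcX,\mcY)}$ with cokernels again in $\mcGP_{(\mcX,\mcY)} = {}^\perp\omega$; dually for $\mcWGI_\nu$.)

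Parts (4) and (5) are also not established as written. The paper's argument uses Corollary \ref{coro:tilting-in-triples}~(7) to get $\resdim_{{}^\perp\nu}(\modu(\Lambda)) = \id(\nu) = \id(T) < \infty$, then applies \cite[Thm. 5.4 (a)]{AR91} to produce a basic cotilting module $T'$ with $\nu = \add(T')$, and finally identifies $T \simeq T'$ by comparing indecomposable summands of two basic modules with the same additive closure. Your substitute --- running the dual of the tilting-recognition step of Corollary \ref{coro:tilting-in-triples}~(5) on the cotorsion pair $({}^\perp\nu,\nu^\wedge)$ --- is a plausible alternative, but ``one verifies the dual axioms of a cotilting class'' is exactly the content that has to be proved: you would need to check that the induced pair is GP-admissible so that Corollary \ref{glGPD=id}~(2) applies, that the finiteness hypothesis of the dual of \cite[Coroll. 3.50]{AM21} holds (for instance $\id(\nu^\wedge) = \id(\nu) < \infty$), the relevant preenveloping condition, and, not least, that the basic cotilting module produced is $T$ itself rather than merely some module with the same additive closure. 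Until (2), (4) and (5) are argued along one of these lines, the proof is incomplete.
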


\begin{proof}
For the Artin $R$-algebra, we have the usual duality (contravariant) functor $D := \Hom_R(-,\kappa) \colon \modu(\Lambda) \to \modu(\Lambda^{\rm op})$, where $\kappa$ is the injective envelope of $R/rad(R)$. In particular, we have that $\modu(\Lambda)$ is an abelian category with enough projective and injective objects, and $\mcI = \add(D(\Lambda^{\rm op}))$. Moreover, it is also true that $\add(M)$ is functorially finite for any $M \in \modu(\Lambda)$, and that the triple $(\mcX,\mcY,\mcZ)$ is tilting by Remark \ref{RkT}. Thus, by Corollary \ref{coro:tilting-in-triples} (2,4, 5) and its dual, we get (1), (3) and the equalities  $\mcGI_{(\mcY,\mcZ)} = T^\perp$ and  $\mcGP_{(\mcX,\mcY)} = {}^\perp U$. Notice also that, from Auslander and Reiten's \cite[Thm. 5.4 (b)]{AR91} and its dual, we obtain the equalities $\mcWGP_\omega = {}^\perp U$ and $\mcWGI_\nu = T^\perp$, proving (2).

Let us now show (4). By Corollary \ref{coro:tilting-in-triples} (7), we have that $\mcWGP_\nu = {}^\perp\nu$ and $\resdim_{{}^\perp\nu}(\modu(\Lambda)) = \id(\nu) = \id(T)$. Since $\id(T) < \infty$, we get from \cite[Thm. 5.4 (a)]{AR91} that $\nu = \add(T')$ for some basic cotilting $\Lambda$-module $T'$. It is well known that $T$ and $T'$ have the same number of non isomorphic indecomposable direct summands, and hence $T\simeq T'$, proving that $T$ is cotilting. Finally, part (5) is dual.
\end{proof}

Let us close this section showing a bijective correspondence between tilting-cotilting classes and certain equivalence classes of triples of classes of objects in $\mathcal{C}$ which induce on $\mathcal{C}$ a relative Gorenstein category structure. Recall from Examples \ref{ex:cotorsion_triples_are_G-admissible} (3) and \ref{ex:Gor_st_from_tilting} that if $\mathcal{C}$ is an abelian category with enough projective and injective objects, and $\mcT \subseteq \mcC$ is a tilting-cotilting class in $\mcC$ (that is, $\mcT$ is $n$-tilting and $m$-cotilting for a pair of nonnegative integer $n, m \geq 0$), then $({}^\perp\mcT,\mcT,\mcT^\perp)$ is a G-admissible triple and $\mathcal{S}$ is a strongly $({}^\perp\mcT,\mcT,\mcT^\perp)$-Gorenstein category. Note also that $({}^\perp\mcT,\mcT,\mcT^\perp)$ is tilting. We now aim to characterize all of the strongly relative Gorenstein structures of this sort. To that aim, consider the following classes
\begin{align*}
\text{tcSG}(\mathcal{C}) & := \{ \text{tilting G-admissible triples} \ (\mathcal{X,Y,Z}) \ \text{in} \ \mathcal{C} \ \text{such that} \ \mathcal{C} \ \text{is strongly} \\
& \hspace{0.7cm} (\mathcal{X,Y,Z})\text{-Gorenstein with} \ \mathcal{Y} \subseteq \mathcal{X} \cap \mathcal{Z} \}, \\
\text{tc}(\mathcal{C}) & := \{ \mathcal{Y} \subseteq \mathcal{C} \ \text{:} \ \mathcal{Y} \ \text{is $n$-tilting and $m$-cotilting for some $m,n \geq 0$} \},
\end{align*}
along with the following relation: for $(\mathcal{X,Y,Z}), (\mathcal{X}',\mathcal{Y}',\mathcal{Z}') \in \text{tcSG}(\mathcal{C})$,
\[
(\mathcal{X,Y,Z}) \sim (\mathcal{X}',\mathcal{Y}',\mathcal{Z}')
\]
if:
\begin{enumerate}
\item $\mathcal{Y} = \mathcal{Y}'$,
\item $\mathcal{GP}_{(\mathcal{X,Y})} = \mathcal{GP}_{(\mathcal{X}',\mathcal{Y}')}$, and
\item $\mathcal{GI}_{(\mathcal{Y,Z})} = \mathcal{GI}_{(\mathcal{Y}',\mathcal{Z}')}$.
\end{enumerate}
It is clear that $\sim$ defines an equivalence relation on $\text{tcSG}(\mathcal{C})$. The equivalence class of a triple $(\mathcal{X,Y,Z}) \in \text{tcSG}(\mathcal{C})$ will be denoted by $[\mathcal{X,Y,Z}]$. We have the following result.

\begin{theorem}
Let $\mathcal{C}$ be an abelian category with enough projective and injective objects. Consider the map $\varphi \colon {\rm tc}(\mathcal{C}) \to {\rm tcSG}(\mathcal{C}) / \sim$ given by
\[
\mathcal{Y} \mapsto [{}^\perp\mathcal{Y},\mathcal{Y},\mathcal{Y}^\perp]
\]
for every $\mathcal{Y} \in {\rm tc}(\mathcal{C})$. Then, $\varphi$ is bijective, with inverse $\phi \colon {\rm tcSG}(\mathcal{C}) / \sim \to {\rm tc}(\mathcal{C})$ given by
\[
[\mathcal{X,Y,Z}] \mapsto \mathcal{Y},
\]
for every $[\mathcal{X,Y,Z}] \in {\rm tcSG}(\mathcal{C}) / \sim$.
\end{theorem}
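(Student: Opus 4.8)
The plan is to show that $\varphi$ and $\phi$ are mutually inverse, which reduces to two things: first, that both maps are well-defined, and second, that the compositions $\phi \circ \varphi$ and $\varphi \circ \phi$ are the respective identities. The composition $\phi \circ \varphi$ is immediate, since $\phi([{}^\perp\mcY,\mcY,\mcY^\perp]) = \mcY$ by definition. So the real content is (i) checking $\varphi$ lands in ${\rm tcSG}(\mathcal{C})/\sim$, (ii) checking $\phi$ is well-defined on $\sim$-equivalence classes, and (iii) checking $\varphi \circ \phi = {\rm id}$, i.e. that every triple $(\mcX,\mcY,\mcZ) \in {\rm tcSG}(\mathcal{C})$ satisfies $(\mcX,\mcY,\mcZ) \sim ({}^\perp\mcY,\mcY,\mcY^\perp)$.

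For well-definedness of $\varphi$: given $\mcY \in {\rm tc}(\mathcal{C})$, Example \ref{ex:cotorsion_triples_are_G-admissible} (3) already tells us that $({}^\perp\mcY,\mcY,\mcY^\perp)$ is a G-admissible triple, and Example \ref{ex:Gor_st_from_tilting} tells us that $\mathcal{C}$ is strongly $({}^\perp\mcY,\mcY,\mcY^\perp)$-Gorenstein. The containment $\mcY \subseteq {}^\perp\mcY \cap \mcY^\perp$ holds because an $n$-tilting (resp. $m$-cotilting) class is self-orthogonal, which is part of \tiltone\ and \tilttwo\ together with the fact that $\mcT \subseteq {}^\perp\mcT \cap \mcT^\perp$ noted in Example \ref{ex:GP_objects} (5). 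That the triple is \emph{tilting} in the sense of the section follows from Remark \ref{RkT}, since $\mathcal{C}$ is strongly Gorenstein. Hence $({}^\perp\mcY,\mcY,\mcY^\perp) \in {\rm tcSG}(\mathcal{C})$, and $\varphi$ is well-defined. Well-definedness of $\phi$ is trivial: if $(\mcX,\mcY,\mcZ) \sim (\mcX',\mcY',\mcZ')$ then in particular $\mcY = \mcY'$.

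The heart of the proof is step (iii), and this is where I expect the main obstacle. Let $(\mcX,\mcY,\mcZ) \in {\rm tcSG}(\mathcal{C})$. Since the triple is tilting and $\mathcal{C}$ is strongly $(\mcX,\mcY,\mcZ)$-Gorenstein, Remark \ref{RkT} gives $\mcGP_{(\mcX,\mcY)} = {}^\perp\mcY$ and $\mcGI_{(\mcY,\mcZ)} = \mcY^\perp$; this immediately yields conditions (2) and (3) of $\sim$ (with $\mcY$ playing the role of both $\mcY$ and $\mcY'$), and condition (1) is tautological. So $(\mcX,\mcY,\mcZ) \sim ({}^\perp\mcY,\mcY,\mcY^\perp)$ \emph{provided} $({}^\perp\mcY,\mcY,\mcY^\perp)$ is itself in ${\rm tcSG}(\mathcal{C})$, for which I need $\mcY \in {\rm tc}(\mathcal{C})$ — that is, I must show $\mcY$ is a tilting-cotilting class. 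This is the crux. From strong $(\mcX,\mcY,\mcZ)$-Gorensteinness we get $\id(\omega) < \infty$, $\pd(\nu) < \infty$, $\mathcal{C} = \mcGP^\wedge_{(\mcX,\mcY)} = \mcGI^\vee_{(\mcY,\mcZ)}$; combined with $\mcY \subseteq \omega \cap \nu$ (wait — rather $\omega, \nu$ relate to $\mcY$ via $\omega = \mcX\cap\mcY$, $\nu = \mcY\cap\mcZ$, and the hypothesis $\mcY \subseteq \mcX \cap \mcZ$ forces $\omega = \mcY = \nu$). That last observation is key: the side condition $\mcY \subseteq \mcX \cap \mcZ$ in the definition of ${\rm tcSG}(\mathcal{C})$ forces $\omega = \mcY = \nu$, so $\id(\mcY) < \infty$ and $\pd(\mcY) < \infty$, and $\mcY$ is self-orthogonal (being $\omega = \mcGP_{(\mcX,\mcY)} \cap \omega^\wedge$ and using Proposition \ref{coro:properties_relative_Gorenstein_projective} (2), or more directly from Corollary \ref{MpropSG}). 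Then I would invoke Corollary \ref{coro:tilting-in-triples} (or its projective dual) applied to the tilting G-admissible triple $(\mcX,\mcY,\mcZ)$: since $\glGID_{(\mcY,\mcZ)}(\mathcal{C}) < \infty$ and $\mathcal{C}$ has enough injectives, part (1) gives that $\nu = \mcY$ is a small $n$-tilting class, and dually $\omega = \mcY$ is a small $m$-cotilting class. Therefore $\mcY \in {\rm tc}(\mathcal{C})$, which closes the loop, gives $\varphi(\mcY) = [{}^\perp\mcY,\mcY,\mcY^\perp] = [\mcX,\mcY,\mcZ]$, and establishes $\varphi \circ \phi = {\rm id}$. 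The delicate point to verify carefully is that Corollary \ref{coro:tilting-in-triples} genuinely applies — i.e. that the finiteness hypothesis $\glGID_{(\mcY,\mcZ)}(\mathcal{C}) < \infty$ really does follow from strong Gorensteinness via Theorem \ref{MThmGc} — and that the self-orthogonality and the bound $\pd(\mcY), \id(\mcY) < \infty$ suffice to conclude $\mcY$ is genuinely tilting-cotilting rather than merely $n$-$\mcX$-tilting for some proper $\mcX$; this is handled precisely because $\omega = \nu = \mcY$ collapses the relative notions to the absolute ones.
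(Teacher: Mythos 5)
Your proposal is correct and follows essentially the same route as the paper's proof: well-definedness of $\varphi$ via Examples \ref{ex:cotorsion_triples_are_G-admissible} (3) and \ref{ex:Gor_st_from_tilting}, the observation that $\mcY \subseteq \mcX \cap \mcZ$ forces $\omega = \mcY = \nu$ so that Corollary \ref{coro:tilting-in-triples} (1) and its dual give $\mcY \in {\rm tc}(\mcC)$, and the identities $\mcGP_{(\mcX,\mcY)} = {}^\perp\mcY$, $\mcGI_{(\mcY,\mcZ)} = \mcY^\perp$ from Remark \ref{RkT} to get $(\mcX,\mcY,\mcZ) \sim ({}^\perp\mcY,\mcY,\mcY^\perp)$. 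The only cosmetic difference is that the paper packages the tilting--cotilting verification under well-definedness of $\phi$ rather than under $\varphi \circ \phi = {\rm id}$, which does not affect correctness.
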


\begin{proof}
By previous comments, we know that $[{}^\perp\mathcal{Y},\mathcal{Y},\mathcal{Y}^\perp] \in {\rm tcSG}(\mathcal{C}) / \sim$ for every $\mathcal{Y} \in {\rm tc}(\mathcal{C})$. On the other hand, let $[\mathcal{X,Y,Z}] \in {\rm tcSG}(\mathcal{C}) / \sim$. Then,
\[
\omega := \mathcal{X} \cap \mathcal{Y} = \mathcal{Y} = \mathcal{Y} \cap \mathcal{Z} =: \nu,
\]
and so by Corollary \ref{coro:tilting-in-triples} (1) and its dual, we have that $\mathcal{Y}$ is tilting-cotilting, and the map $\phi$ is well defined. One can easily check that $\phi \circ \varphi = {\rm id}_{{\rm tc}(\mathcal{C})}$. Finally, let us show that $\varphi \circ \phi = {\rm id}_{{\rm tcSG}(\mathcal{C}) / \sim}$. Let $[\mathcal{X,Y,Z}] \in {\rm tcSG}(\mathcal{C}) / \sim$. We already know that $\mathcal{Y}$ is tilting-cotilting. Moreover, from Example \ref{ex:cotorsion_triples_are_G-admissible} (3) and Remark \ref{RkT}, we have that $\mathcal{GP}_{({}^\perp\mathcal{Y},\mathcal{Y})} = {}^\perp\mathcal{Y} = \mathcal{GP}_{(\mathcal{X,Y})}$ and $\mathcal{GI}_{(\mathcal{Y},\mathcal{Y}^\perp)} = \mathcal{Y}^\perp = \mathcal{GI}_{(\mathcal{Y,Z})}$. Hence, $(\mathcal{X,Y,Z}) \sim ({}^\perp\mathcal{Y},\mathcal{Y},\mathcal{Y}^\perp)$.
\end{proof}

%%%%%%%%%%%%%%%%%%%%%%%%%%%%%%%%%%%
%%%%%%%%%%%%%%%%%%%%%%%%%%%%%%%%%%%
%%%%%%%%%%%%%%%%%%%%%%%%%%%%%%%%%%%
%%%%%%%%%%%%%%%%%%%%%%%%%%%%%%%%%%%

\subsection*{Acknowledgements}

Parts of this work were carried out during academic visits at the Instituto de Matem\'aticas (Universidad Nacional Aut\'onoma de M\'exico) and Instituto de Matem\'atica y Estad\'istica ``Prof. Ing. Rafael Laguardia'' (Universidad de la Rep\'ublica). We want to thank the IMATE-UNAM and the IMERL-UdelaR staffs for their hospitality. We also thank professor Jun Peng Wang for pointing out the existence of \cite{WangDi19}, in which the equality between the global Ding projective and the global Ding injective dimensions is shown.

%%%%%%%%%%%%%%%%%%%%%%%%%%%%%%%%%%%
%%%%%%%%%%%%%%%%%%%%%%%%%%%%%%%%%%%
%%%%%%%%%%%%%%%%%%%%%%%%%%%%%%%%%%%
%%%%%%%%%%%%%%%%%%%%%%%%%%%%%%%%%%%

\subsection*{Funding}

The first named author was supported by Grant PID2020-113206GB-I00 funded
by MICIU/AEI/10.13039/501100011033 and by Grant 22004/PI/22 funded by Fundaci\'on S\'eneca-Agencia de Ciencia y Tecnolog\'ia de la Regi\'on de Murcia.   The second named author was supported by the Programa de Apoyo a Proyectos de Investigaci\'on e Innovaci\'on Tecnol\'ogica (PAPIIT) IN100124. The third named author was supported by ANII - Agencia Nacional de Investigaci\'on e Innovaci\'on and PEDECIBA - Programa de Desarrollo de las Ciencias B\'asicas.

%%%%%%%%%%%%%%%%%%%%%%%%%%%%%%%%%%%%
%%%%%%%%%%%%%%%%%%%%%%%%%%%%%%%%%%%%
%%%%%%%%%%%%%%%%%%%%%%%%%%%%%%%%%%%%
%%%%%%%%%%%%%%%%%%%%%%%%%%%%%%%%%%%%

\bibliographystyle{plain}
\bibliography{biblioRelGor}

\end{document}